\newcommand{\eps}{\varepsilon}
\newcommand{\ov}{\overline}
\newcommand{\id}{\textnormal{id}}
\newcommand{\mc}{\mathcal}
\newcommand{\mrm}{\mathrm}
\newcommand{\mscr}{\mathscr}
\newcommand{\mf}{\mathfrak}
\newcommand{\msf}{\mathsf}
\newcommand{\I}{\mathbbm{1}}
\newcommand{\vp}{\varphi}
\newcommand{\Lvp}{\Lambda_{\vp}}
\newcommand{\md}{\operatorname{d}\!}
\newcommand{\cst}{\ifmmode \mathrm{C}^* \else $\mathrm{C}^*$\fi}
\newcommand{\la}{\langle}
\newcommand{\ra}{\rangle}
\newcommand{\bbGamma}{{\mathpalette\makebbGamma\relax}}
\newcommand{\makebbGamma}[2]{%
  \raisebox{\depth}{\scalebox{1}[-1]{$\mathsurround=0pt#1\mathbb{L}$}}%
}
\DeclareSymbolFont{bbold}{U}{bbold}{m}{n}
\DeclareSymbolFontAlphabet{\mathbbold}{bbold}
\newcommand{\GGamma}{\mathbbold{\Gamma}}
\newcommand{\LLambda}{\mathbbold{\Lambda}}
\newcommand{\NN}{\mathbb{N}}
\newcommand{\RR}{\mathbb{R}}
\newcommand{\CC}{\mathbb{C}}
\newcommand{\ZZ}{\mathbb{Z}}
\newcommand{\GG}{\mathbb{G}}
\newcommand{\HH}{\mathbb{H}}
\newcommand{\EE}{\mathbb{E}}
\newcommand{\vv}{\mathrm{V}}
\newcommand{\ww}{\mathrm{W}}
\newcommand{\WW}{{\mathds{V}\!\!\text{\reflectbox{$\mathds{V}$}}}}
\newcommand*{\D}{\mathcal{D}}
\newcommand{\ismaa}[2]{\langle#1\,|\,#2\rangle}
\newcommand{\wot}{\ifmmode \textsc{wot} \else \textsc{wot}\fi}
\newcommand{\sot}{\ifmmode \textsc{sot} \else \textsc{sot}\fi}
\newcommand{\sots}{\ifmmode \textsc{sot}^* \else \textsc{sot}$^*$\fi}
\newcommand{\ssot}{\ifmmode \sigma\textsc{-sot} \else $\sigma$-\textsc{sot }\fi}
\newcommand{\ssots}{\ifmmode \sigma\textsc{-sot}^* \else $\sigma$-\textsc{sot }$^*$\fi}
\newcommand{\swot}{\ifmmode \sigma\textsc{-wot} \else $\sigma$-\textsc{wot}\fi}
\newcommand{\Linf}{\operatorname{L}^{\infty}(\GG)}
\newcommand{\Linfd}{\operatorname{L}^{\infty}(\whG)}
\newcommand{\Lj}{\operatorname{L}^{1}(\GG)}
\newcommand{\Ljd}{\operatorname{L}^{1}(\whG)}
\newcommand{\CZG}{\mathrm{C}_0(\GG)}
\newcommand{\CZGD}{\mathrm{C}_0(\whG)}
\newcommand*{\Corep}{\mathsf{Corep}}
\newcommand{\one}{1\!\!1}
\newcommand*{\BT}{\mathbf{T}}
\newcommand{\wh}{\widehat}
\newcommand{\whG}{\widehat{\GG}}
\newcommand{\whH}{\widehat{\HH}}
\newcommand{\hvp}{\widehat{\vp}}
\newcommand{\Lhvp}{\Lambda_{\hvp}}
\newcommand{\LdG}{\operatorname{L}^{2}(\GG)}
\newcommand{\oxx}{\bar{\otimes}}
\newcommand{\oon}{\operatorname}
\newcommand{\wt}{\widetilde}
\newcommand{\red}{\mathsf{red}}
\DeclareMathOperator{\lin}{span}
\DeclareMathOperator{\Irr}{Irr}
\DeclareMathOperator{\Pol}{Pol}
\DeclareMathOperator{\Tr}{Tr}
\DeclareMathOperator{\B}{B}
\DeclareMathOperator{\A}{A}
\DeclareMathOperator{\M}{M}
\DeclareMathOperator{\N}{N}
\DeclareMathOperator{\Dom}{Dom}
\DeclareMathOperator{\Mor}{Mor}
\DeclareMathOperator{\LL}{L}
\DeclareMathOperator{\Rep}{Rep}
\DeclareMathOperator{\SU}{SU}
\DeclareMathOperator{\w}{w}
\DeclareMathOperator{\CB}{CB}
\DeclareMathOperator{\Tub}{Tub}
\DeclareMathOperator{\ad}{ad}
\newtheorem{theorem}{Theorem}[section]
\newtheorem{proposition}[theorem]{Proposition}
\newtheorem{lemma}[theorem]{Lemma}
\theoremstyle{definition}
\newtheorem{corollary}[theorem]{Corollary}
\newtheorem{remark}[theorem]{Remark}
\newtheorem{definition}[theorem]{Definition}
\numberwithin{equation}{section}
\begin{document}

\title{The approximation property for locally compact quantum groups}

\author{Matthew Daws}
\address{Department of Mathematics and Statistics,
Lancaster University,
Lancaster,
LA1 4YF,
United Kingdom}
\email{matt.daws@cantab.net}

\author{Jacek Krajczok}
\address{School of Mathematics and Statistics \\
         University of Glasgow \\
         University Place \\
         Glasgow G12 8QQ \\
         United Kingdom 
}
\email{jacek.krajczok@vub.be}

\author{Christian Voigt}
\address{School of Mathematics and Statistics \\
         University of Glasgow \\
         University Place \\
         Glasgow G12 8QQ \\
         United Kingdom 
}
\email{christian.voigt@glasgow.ac.uk}

\thanks{This work was supported by EPSRC grants EP/T03064X/1 and EP/T030992/1.  For the purpose of open access, the authors have applied a CC BY public copyright licence to any Author Accepted Manuscript version arising. 
No data were created or analysed in this study.}

\subjclass[2020]{Primary 46L67, Secondary 22D55, 43A30} 

\keywords{Locally compact quantum groups, approximation property}

\date{}

\begin{abstract}
We study the Haagerup--Kraus approximation property for locally compact quantum groups, generalising and unifying previous work by Kraus--Ruan and Crann.  Along the way we discuss how multipliers of quantum groups interact with the $\mathrm{C}^*$-algebraic theory of locally compact quantum groups.  Several inheritance properties of the approximation property are established in this setting, including passage to quantum subgroups, free products of discrete quantum groups, and duals of double  crossed products.  We also discuss a relation to the weak$^*$ operator approximation property.  For discrete quantum groups, we introduce a central variant of the approximation property, and relate this to a version of the approximation property for rigid $\mathrm{C}^*$-tensor categories, building on work of Arano--De Laat--Wahl.  
\end{abstract}

\maketitle

\tableofcontents

\section{Introduction}
The approximation property (AP) for locally compact groups, introduced by Haagerup and Kraus \cite{HaagerupKraus}, can be viewed as an analogue of Grothendieck's approximation property for Banach spaces \cite{Grothendieck}. It belongs to a family of widely studied analytical properties 
like amenability, weak amenability, and the Haagerup property. In fact, AP is a weakening of weak amenability and thus a very loose form of amenability. 
It is known that AP passes from locally compact groups to their lattices and vice versa, and that it has better permanence properties with respect to standard constructions 
like extensions and free products, in comparison to weak amenability.

It was an open problem for a long time to exhibit examples of exact groups without AP. In the remarkable paper \cite{LafforgueDeLaSalle}, Lafforgue and de la Salle  
proved that $ \oon{SL}(3, \mathbb{R}) $ fails to have AP, thus confirming a conjecture in \cite{HaagerupKraus}. Building on this result, it was shown later 
by Haagerup, Knudby and De Laat that a connected Lie group has AP if and only if all simple factors in its Levi decomposition have real rank at most one 
\cite{HAAGERUP_DELAAT_simpleap}, \cite{HAAGERUP_DELAAT_simpleap2}, \cite{HAAGERUP_KNUDBY_DELAAT_completecharacterization}. 

The approximation property has a wide range of applications. As shown by Haagerup-Kraus \cite{HaagerupKraus}, in the case of discrete groups there is a connection between AP and the slice map property (or equivalently, the operator approximation property) of the associated crossed products. It was recently proven in the full generality of locally compact groups that AP implies exactness \cite{Suzuki} (see also \cite{CrannNeufang}), which makes it relevant to a number of problems in operator algebras.  Let us also mention that AP was shown to be equivalent to a non-commutative version of Fej{\'e}r's theorem \cite{CrannNeufang}, and used to prove results concerning convolution operators on $\oon{L}^p(G)$ \cite{Cowling, DawsSpronk}.

Amenability, weak amenability and the Haagerup property have also been studied extensively in the broader setting of locally compact quantum groups, see \cite{Brannan} 
for a survey. An interesting new feature in the quantum setting is the interplay between discrete quantum groups, their Drinfeld doubles, and 
the associated \cst-tensor categories \cite{DFY_CCAP}. In fact, the central versions of amenability, the Haagerup property, weak amenability and central property (T) for discrete quantum groups have been recast at the level of \cst-tensor categories \cite{PVcstartensor}, thus building a natural bridge to the study 
of subfactors. 

In the present paper we undertake a systematic study of the approximation property for locally compact quantum groups.  Kraus and Ruan introduced a version of the approximation property for Kac algebras in \cite{KrausRuan}, requiring 
the existence of a net in the Fourier algebra $\A(\GG)$ such that the associated net of completely bounded operators on $\Linfd$ converges to the identity in the stable point weak$^*$-topology of $\CB^\sigma(\Linfd)$. Crann studied this property for general locally compact quantum 
groups, and showed for example that in the presence of this property, amenability is equivalent to coamenability of the dual quantum group \cite[Corollary 7.4]{CRANN_inneramenability}.

Our starting point is the original work by Haagerup and Kraus.  We say that a locally compact quantum group has AP if it admits a net of elements in the Fourier algebra $\A(\GG)$ which converges weak$^*$ to $\I$ in the space of left CB multipliers $\M^l_{cb}(\A(\GG))$.  We show that this definition is in fact equivalent to the definition of AP used in \cite{KrausRuan}, \cite{CRANN_inneramenability}, thereby verifying a conjecture in \cite{KrausRuan}.  This result was proved by Haagerup and Kraus in the case of locally compact groups (\cite[Theorem 1.9]{HaagerupKraus}).  Our methods are necessarily different, as we cannot work with points or compactly supported functions.  Along the way, we obtain a useful alternative description of the weak$^*$-topology on the space of left CB multipliers (Theorem \ref{thm5}). We prove that working with left or right multipliers does not change the theory, and that passing from a quantum group to its opposite or commutant preserves AP. We also show that if a quantum group $\GG$ has the AP exhibited by a net which is uniformly bounded in the norm of $\A(\GG)$ (resp.~$\M^l_{cb}(\A(\GG))$), then the dual is coamenable (resp.~ the quantum group is weakly amenable).

We then derive a number of permanence properties of AP in analogy to the classical setting. In particular, we show that AP passes to closed quantum subgroups 
of locally compact quantum groups, and to duals of double crossed products. This includes the passage to direct products of quantum groups 
as a special case. In the setting of discrete quantum groups we verify that AP is inherited by free products and direct limits of directed systems 
of discrete quantum groups with injective connecting maps. We also introduce a central version of the approximation property for discrete quantum groups and show that it is related to a natural notion of AP for rigid \cst-tensor categories, building on work 
of Arano--De Laat--Wahl \cite{ARANO_DELAAT_WAHL_fourier}, \cite{ARANO_DELAAT_WAHL_howemoore}. 

Using our results we can provide a range of examples of quantum groups with and without AP. For instance, the Drinfeld double of $\oon{SL}(3,\RR)$ does not have AP, and the free 
product $\oon{SL}(3,\ZZ)\star \widehat{\oon{SU}_q(2)}$ does not have AP. Examples with AP can be obtained via free products as well, for example $\oon{SL}(2,\ZZ)\star\widehat{\oon{SU}_q(2)}$.  Using infinite products, one may find examples of discrete quantum groups with AP which are not weakly amenable. We make further remarks at the start of Section~\ref{section ap}.

Let us now briefly describe more of our results and explain how the paper is organised. In Section~\ref{section preliminaries} we collect some background material on locally compact quantum groups and fix our notation.
In Section~\ref{section multiplier} we review several characterisations of the space $\M_{cb}^l(\A(\GG))$ of left cb-multipliers of the Fourier algebra $\A(\GG)$ and 
its natural predual $ Q^l(\A(\GG)) $.  By definition, $\M_{cb}^l(\A(\GG))$ is a (in general not closed) subalgebra of $\Linf$, but it is also isomorphic to $\prescript{}{\Ljd}{\CB}^{\sigma}(\Linfd)$, the space of a normal left module maps on $\Linfd$.  Any map in $\prescript{}{\Ljd}{\CB}^{\sigma}(\Linfd)$ restricts to $\CZGD$, and we provide a characterisation of the maps on $\CZGD$ which so arise.  This leads to a description of $ Q^l(\A(\GG)) $ as a quotient of the projective tensor product $\mrm{C}_0(\whG)\wh{\otimes}\LL^1(\whG) $. We were unable to locate this description in the literature, even for classical groups.

In Section~\ref{section ap} we define the Haagerup-Kraus approximation property for locally compact quantum groups, and survey some examples and counter-examples, making use of later results in the paper.  We verify that AP passes to opposite and commutant quantum groups.  
We compare our definition to the version of AP given by Kraus and Ruan, showing that they are equivalent.  We show that $\M^l_{cb}(\A(\GG))$ admits an involution linked to the antipode of $\GG$, and to the fact that elements of $\prescript{}{\Ljd}{\CB}^{\sigma}(\Linfd)$ act boundedly on the Hilbert space $\LL^2(\GG)$.  We finish the section by showing that AP is independent of the choice of working with left or right multipliers.

In Section~\ref{section relation} we discuss the relation of AP with weak amenability and coamenability.

Section~\ref{section discrete} is devoted to the special case of discrete quantum groups.  When $\GGamma$ is discrete, we have the notion of a finitely-supported function leading to the algebra $\mrm{c}_{00}(\GGamma)$.  It suffices to work with $\mrm{c}_{00}(\GGamma)$ when considering AP, and we show further that the approximating net can be chosen to satisfy other properties.  We introduce the central approximation property for discrete quantum groups and prove 
that central AP is equivalent to AP in the unimodular case. Building on the work of Kraus-Ruan \cite{KrausRuan} and Crann \cite{CRANN_inneramenability}, we show that if a locally compact quantum group $\GG$ has AP then the von Neumann algebra $\LL^{\infty}(\whG)$ has W$^*$OAP.  We study the relation between AP of $\GGamma$ and (strong) OAP of $\mrm{C}(\wh{\GGamma})$ or 
W$^*$OAP of $\LL^\infty(\wh{\GGamma})$.  Finally, we introduce strengthenings of these concepts which take into account also the algebra $\ell^\infty(\GGamma)$, and show that these are equivalent to AP even in the non-unimodular case.

In Section \ref{section permanence} we establish a number of permanence properties. We show that the AP is inherited by arbitrary closed quantum subgroups and by the duals 
of double crossed products.  In particular, the direct product of two quantum groups with AP also has AP.  For discrete quantum groups we investigate the passage to free products and direct unions, again showing that AP is preserved.

Finally, in Section~\ref{section categorical} we define the approximation property for rigid \cst-tensor categories and verify that the categorical AP is equivalent 
to the central AP for discrete quantum groups. This implies in particular that the central AP is invariant under monoidal equivalence. We also relate these properties to the AP of the Drinfeld double.

We conclude with some general remarks on notation. If $\mc{A}$ is a $\cst$-algebra we write $\M(\mc{A})$ for its multiplier algebra. 
For a map $\Phi\colon \mc{A}\rightarrow \mc{A}$, the symbol $\Phi^\dagger$ stands for the map $\mc{A}\ni a\mapsto \Phi(a^*)^*\in \mc{A}$. 
If $\omega:\mc{A} \rightarrow \mathbb{C} $ is a linear functional we write $\ov{\omega}$ for the linear functional given by $\ov{\omega}(x)=\overline{\omega(x^*)} $. 

We write $ \odot $ for the algebraic tensor product, $\otimes$ for the tensor product of Hilbert spaces or the minimal tensor product of \cst-algebras, 
$ \check \otimes$ for the injective tensor product of operator spaces and $\bar\otimes$ for the spatial tensor product of von Neumann algebras. 
We denote by $\chi$ the flip map for tensor products of algebras, and use the symbol $\Sigma$ for the flip map of Hilbert spaces. 

We freely use the basic theory of operator spaces, following \cite{EffrosRuan}, see also \cite{Paulsen_CB_Book, Pisier_OS_Book} for example.  When $X,Y$ are operator spaces, $\CB(X,Y)$ denotes the space of completely bounded (CB) linear maps $X\rightarrow Y$. For dual operator spaces $X,Y$ we write $\CB^\sigma(X,Y)$ 
for the subset of $\CB(X,Y)$ consisting of all maps which are weak$^*$-weak$^*$-continuous. 
In the case $X=Y$ we simply write $\CB(X)=\CB(X,X)$ and $\CB^\sigma(X)=\CB^\sigma(X,X)$. If $\M$ is a von Neumann algebra, then $\CB^\sigma(\M)$ can be equipped with the stable point-weak$^*$-topology: $T_i\xrightarrow[i\in I]{}T$ with respect to this topology if and only if $(T_i\otimes \id)x\xrightarrow[i\in I]{} (T\otimes \id)x$ in the weak$^*$-topology, for any separable Hilbert space $\msf{H}$ and $x\in \M\bar{\otimes}\B(\msf{H})$, see \cite{HaagerupKraus}. Whenever we have a left $\N$-module structure on an operator space $X$, the space of 
left $\N$-module CB maps is denoted by $\prescript{}{\N}{\CB}(X)$. Similarly, if $X$ is a right module or a bimodule, the corresponding spaces are denoted 
by $\CB_{\M}(X)$ and $\prescript{}{\N}{\CB}_{\M}(X)$, respectively.  We denote the operator space projective tensor product by $\wh\otimes$, and recall that $(X\wh\otimes Y)^* = \CB(X,Y^*)$ completely isometrically.  The canonical pairing between an operator space $X$ and its dual $X^*$ is denoted by $\la \omega,x\ra_{X^*,X}$
for $\omega\in X^*,x\in X$, or simply $\la\omega,x\ra$ if there is no risk of confusion. 

For a n.s.f.~weight $\theta$ on a von Neumann algebra $\M$, we denote the GNS Hilbert space by $\msf{H}_\theta$, and we use the notation $\mf{N}_{\theta}=\{x\in \M\,|\, \theta(x^*x)<+\infty\}$. We write $\Lambda_\theta\colon \mf{N}_\theta\rightarrow \msf{H}_\theta$ for the GNS map. Typically we then represent $\M$ on $\msf{H}_\theta$ and  
identify $\M\subseteq\B(\msf{H}_\theta)$.

\section{Preliminaries} \label{section preliminaries}

Throughout the paper we will work in the setting of locally compact quantum groups introduced by Kustermans and Vaes \cite{KustermansVaes}. In this section we recall some 
fundamental constructions and results of the theory, more information can be found in \cite{KustermansUniversal, KVVN, LCQGDaele}. For background on operator algebras and 
operator spaces we refer to \cite{BrownOzawa, EffrosRuan, TakesakiII}.

By definition, a \emph{locally compact quantum group} $\GG$ is given by a von Neumann algebra $\Linf$ together with a normal unital $\star$-homomorphism $\Delta_{\GG}\colon \Linf\rightarrow \Linf \bar{\otimes }\Linf$ called \emph{comultiplication}, satisfying $(\Delta_{\GG}\otimes \id) \Delta_{\GG}=(\id \otimes \Delta_{\GG}) \Delta_{\GG}$, 
and \emph{left} resp. \emph{right} \emph{Haar integrals} $\vp$ and $\psi$. These are normal, semifinite, faithful (n.s.f.) weights 
on $\Linf$ satisfying certain invariance conditions with respect to $\Delta_{\GG}$. In general, the von Neumann algebra $\Linf$ is non-commutative and will not be an algebra of functions on a measure space. Following this notational convention, the predual of $\Linf$ is denoted by $\Lj$ and the GNS Hilbert space of $\vp$ is denoted by $\LdG$. 

Every locally compact group $G$ can be seen as a locally compact quantum group $\GG$ by taking $\Linf=\LL^{\infty}(G)$, the algebra of classes of measurable, bounded functions on $G$, 
and letting $\Delta_\GG$ be the pullback of multiplication in $G$. The weights $\vp,\psi$ are given by integration with respect to left (right) Haar measure in this case.

Out of the axioms, one can construct a number of additional objects associated to a locally compact quantum group $\GG$. 
First of all, there is the \emph{Kac-Takesaki operator} $\ww^{\GG}\in \Linf\bar\otimes \Linfd$, which is a unitary operator on $\LdG\otimes \LdG$ defined via
\begin{equation}\label{eq23}
((\omega\otimes\id)\ww^{\GG *})\Lambda_\vp(x)=\Lambda_{\vp}((\omega\otimes \id)\Delta_{\GG}(x))\qquad(\omega\in \Lj, x\in\mf{N}_\vp).
\end{equation}
It implements the comultiplication via $\Delta_{\GG}(x)=\ww^{\GG *}(\I\otimes x)\ww^{\GG}$ for $x\in\Linf$. 
Tomita-Takesaki theory yields two groups of \emph{modular automorphisms} $(\sigma^\vp_t)_{t\in \RR}, (\sigma^\psi_t)_{t\in \RR}$ 
and \emph{modular conjugations} $J_{\vp},J_{\psi}$ associated with the weights $\vp,\psi$, respectively \cite{TakesakiII}. The left and right Haar integrals are linked by the \emph{modular element} $\delta_{\GG}$, which is a strictly positive, self-adjoint operator affiliated with $\Linf$. 

In the theory of quantum groups, the role of the inverse operation is played by two maps: the \emph{antipode} $S_{\GG}$ and the \emph{unitary antipode} $R_{\GG}$. 
The antipode is in general an unbounded (densely defined) map on $\Linf$ such that
\[
(\id\otimes \omega)\ww^{\GG}\in \oon{Dom}(S_{\GG})\;\textnormal{ and }S_{\GG}( (\id\otimes\omega)\ww^{\GG})=(\id\otimes \omega)\ww^{\GG *}\qquad(\omega\in \Ljd).
\]
The unitary antipode, on the other hand, is a bounded, normal, $\star$-preserving, antimultiplicative map on $\Linf$ satisfying $\Delta_{\GG}R_{\GG}=\chi (R_{\GG}\otimes R_{\GG}) \Delta_{\GG}$. These maps are linked via $S_{\GG}=R_{\GG} \tau^{\GG}_{-i/2}=\tau^{\GG}_{-i/2} R_{\GG}$, where $(\tau^{\GG}_t)_{t\in \RR}$ is the group of \emph{scaling automorphisms} of $\Linf$. There is a self-adjoint, strictly positive (in general unbounded) operator $P_{\GG}$ acting on $\LL^2(\GG)$ which implements scaling automorphisms via $\tau^{\GG}_t(x)=P^{it}_{\GG} x P^{-it}_{\GG}\,(x\in \LL^{\infty}(\GG),t\in\RR)$. Furthermore, $P_{\GG}$ is self-dual in the sense that $P_{\whG}=P_{\GG}$. The left and right Haar integrals are unique up to a scalar, and we shall fix normalisations such that $\vp=\psi\circ R_{\GG}$. The scaling constant of $\GG$ will be denoted by $\nu_{\GG}>0$. It is characterised via for example $\vp\circ \tau^{\GG}_t=\nu_{\GG}^{-t}\vp$ for $t\in \RR$.

With any locally compact quantum group $\GG$ one can associate the \emph{dual} locally compact quantum group $\whG$ in such a way that the correspondence between $\GG$ and $\whG$ extends 
Pontryagin duality. Furthermore, the Hilbert spaces $\LdG, \LL^2(\whG)$ can be identified in a canonical way, 
and the Kac-Takesaki operators of $\GG$ and $\whG$ are linked via $\ww^{\whG}=\chi (\ww^{\GG *})$. 
If there is no risk of confusion we will simply write $\Delta$ for $\Delta_{\GG}$, $\wh\Delta$ for $\Delta_{\whG}$, and similarly $R,S,\wh{R},\wh{S}$ for the (unitary) antipode 
of $\GG$ or $\whG$. 
Using the canonical identification of $\LdG$ and $\LL^2(\whG)$ one obtains a number of useful formulae. Let us mention in particular that the right regular 
representation $\vv^{\GG}\in \Linfd'\bar\otimes \Linf$ is given by $ \vv^{\GG}=(J_{\hvp}\otimes J_{\hvp}) \chi(\ww^{\GG})^* (J_{\hvp}\otimes J_{\hvp}) $.

We will also work with the weak$^*$-dense \cst-subalgebra $\mrm{C}_0(\GG)\subseteq\Linf$. It is defined as the norm-closure of the space $\{(\id\otimes\omega)\ww^{\GG}\,|\,\omega\in \Ljd\}$. After restriction, the comultiplication becomes a non-degenerate $\star$-homomorphism $\mrm{C}_{0}(\GG)\rightarrow \M(\mrm{C}_0(\GG)\otimes \mrm{C}_0(\GG))$. 
Similarly one defines $\mrm{C}_0(\whG)$, and then $\ww^{\GG}\in \M(\mrm{C}_0(\GG)\otimes \mrm{C}_0(\whG))$. 
Using the comultiplication of $\Linf$, we define a Banach algebra structure on $\Lj$ via $\omega\star\nu=(\omega\otimes\nu)\Delta_{\GG}$ for $\omega,\nu\in \Lj$.  As $\Linf$ is the dual of $\Lj$, we have a canonical $\Lj$-bimodule structure on $\Linf$, which is given by $\omega\star x=(\id\otimes\omega)\Delta_{\GG}(x)$ and $x\star\omega=(\omega\otimes\id)\Delta_{\GG}(x)$.  Treating $\Lj$ as the predual of the von Neumann algebra $\Linf$ gives, as usual, an $\Linf$-bimodule structure on $\Lj$ defined via $x\omega=\omega(\cdot \,x),\omega x=\omega(x\,\cdot)$ for $x\in\Linf,\omega\in\Lj$.

Let us introduce the map $\lambda_{\GG}\colon \Lj\rightarrow \mrm{C}_0(\whG)$ by $\lambda_{\GG}(\omega)=(\omega\otimes\id)\ww^{\GG}$, and similarly for $\whG$. Using this we define the \emph{Fourier algebra of $\GG$} as $\A(\GG)=\lambda_{\whG}(\Ljd)$. One can check that $\lambda_{\whG}$ is multiplicative, so that $\A(\GG)$ is a dense subalgebra of $\mrm{C}_0(\whG)$. 
As $\lambda_{\whG}$ is also injective, we can define an operator space structure on $\A(\GG)$ by imposing the condition that $\lambda_{\whG}\colon \Ljd\rightarrow \A(\GG)$ is 
completely isometric.

In the text, we will use certain subspaces of $\Lj$, which consist of functionals having nice additional properties. Firstly, let us introduce
\begin{equation}\begin{split}\label{eq18}
\LL^1_{\sharp}(\GG)&=\{\omega\in \Lj\,|\,
\exists_{ \theta\in \Lj}\,\lambda_{\GG}(\omega)^*=\lambda_{\GG}(\theta)\}\\
&=
\{\omega\in \Lj\,|\, \exists_{\theta\in \Lj}\,
\forall_{x\in \oon{Dom}(S_{\GG})}\, \ov{\omega}(S_{\GG}(x))=\theta(x)\}.
\end{split}\end{equation}
For a given $\omega\in \LL^1_{\sharp}(\GG)$, the functional $\theta$ is characterised uniquely by any of the properties in \eqref{eq18}, hence we can write $\theta=\omega^\sharp$.  The mapping $\omega\mapsto \omega^\sharp$, and the restriction of the multiplication from $\Lj$, turn $\LL^1_{\sharp}(\GG)$ into a $\star$-algebra. The second subspace we will use is
\begin{equation}
\mscr{J}=\{\omega\in \Lj\,|\,
\exists_{M>0}\,\forall_{x\in \mf{N}_{\vp}}\,
|\omega(x^*)|\le M \|\Lambda_\vp(x)\|\}. \label{eq:scriptI}
\end{equation}
This subspace appears in the construction of the left Haar integral $\hvp$ for $\whG$. Indeed, for $\omega\in \mscr{J}$ we have
\[
\lambda_{\GG}(\omega)\in \mf{N}_{\hvp}\quad\textnormal{ and }\quad
\forall_{x\in \mf{N}_{\vp}}\,
\ismaa{\Lvp(x)}{\Lambda_{\hvp}(\lambda_{\GG}(\omega))}=\omega(x^*).
\]
In a couple of places we will need the following result, which says that there are many functionals with desirable properties.

\begin{lemma}\label{lemma22}
The subspace
\[\begin{split}
\mscr{J}_{0}=\{\omega\in \mscr{J}\cap \LL^1_{\sharp}(\GG)\,|\,& \ov{\omega},\ov{\omega}^{\sharp} \in \mscr{J}\cap\LL^1_{\sharp}(\GG)
\textnormal{ and }f\colon \RR\ni t\mapsto (\omega \delta_{\GG}^{it})\circ \tau^{\GG}_{t}\in \Lj\\
&\textnormal{extends to an entire function such that } \forall_{z\in \CC}\, f(z)\in \mscr{J}\cap \LL^1_{\sharp}(\GG)\}
\end{split}\]
is dense in $\Lj$, and $\lambda_{\GG}(\mscr{J}_0)$ forms a $\ssots\times \|\cdot\|$ core for $\Lambda_{\hvp}$.
\end{lemma}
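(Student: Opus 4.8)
The plan is to produce the elements of $\mscr{J}_0$ by Gaussian smoothing along the one-parameter group $\alpha_t(\omega)=(\omega\delta_{\GG}^{it})\circ\tau^{\GG}_t$ appearing in the definition, and to read off both assertions from a single construction. For $\omega\in\Lj$ and $n\in\NN$ set
\[
\omega_n=\frac{n}{\sqrt{\pi}}\int_{\RR}e^{-n^2t^2}\,\alpha_t(\omega)\,\md t .
\]
Since $(\alpha_t)_{t\in\RR}$ is a point-norm continuous group of isometries of $\Lj$ (here one uses $\tau^{\GG}_t(\delta_{\GG})=\delta_{\GG}$), this Bochner integral converges in norm, $\omega_n\to\omega$ as $n\to\infty$, and the Gaussian weight forces $s\mapsto\alpha_s(\omega_n)$ to extend to an entire $\Lj$-valued function, namely $\alpha_z(\omega_n)=\tfrac{n}{\sqrt\pi}\int_{\RR}e^{-n^2(t-z)^2}\alpha_t(\omega)\,\md t$. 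This is the standard device for producing analytic elements; the real content is to verify that $\omega_n$ and its entire extension never leave the (non-closed) subspaces $\mscr{J}$ and $\LL^1_{\sharp}(\GG)$, and to control the conjugate and $\sharp$ conditions.

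The key input is that $\lambda_{\GG}$ intertwines $(\alpha_t)$ with the modular automorphism group $(\sigma^{\hvp}_t)$ of the dual left Haar weight, up to a power of the scaling constant $\nu_{\GG}$. Combining the pairing $\ismaa{\Lambda_{\vp}(x)}{\Lambda_{\hvp}(\lambda_{\GG}(\omega))}=\omega(x^*)$ recorded before the statement with the transformation rules of $\Lambda_{\vp}$ under $\tau^{\GG}_t$ and right multiplication by $\delta_{\GG}^{it}$, one finds
\[
\Lambda_{\hvp}(\lambda_{\GG}(\alpha_t(\omega)))=V_t\,\Lambda_{\hvp}(\lambda_{\GG}(\omega)),
\]
where $V_t$ is a strongly continuous group on $\LL^2(\whG)$ of the form $\nu_{\GG}^{t/2}$ times a unitary built from $P_{\GG}^{it}$ and the modular data of $\delta_{\GG}$; in particular $\alpha_t$ preserves $\mscr{J}$. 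As $\sigma^{\hvp}_t$ is a $\star$-automorphism one gets $(\alpha_t(\omega))^{\sharp}=\alpha_t(\omega^{\sharp})$, so $\alpha_t$ preserves $\LL^1_{\sharp}(\GG)$, while a direct computation from the bimodule formulas yields $\ov{\alpha_t(\omega)}=\tilde\alpha_t(\ov\omega)$ for the companion group $\tilde\alpha_t(\mu)=(\delta_{\GG}^{-it}\mu)\circ\tau^{\GG}_t$, whose image under $\Lambda_{\hvp}\circ\lambda_{\GG}$ is governed by an analogous strongly continuous group on $\LL^2(\whG)$. Since both $\lambda_{\GG}(\omega_n)=\tfrac{n}{\sqrt\pi}\int e^{-n^2t^2}\lambda_{\GG}(\alpha_t(\omega))\,\md t$ and $\tfrac{n}{\sqrt\pi}\int e^{-n^2t^2}V_t\Lambda_{\hvp}(\lambda_{\GG}(\omega))\,\md t$ converge in norm, closedness of $\Lambda_{\hvp}$ places $\lambda_{\GG}(\omega_n)$ in $\mf{N}_{\hvp}$ with the second integral as value, so $\omega_n\in\mscr{J}$; running the same argument with $t-z$ in the exponent gives $\alpha_z(\omega_n)\in\mscr{J}$ for all $z\in\CC$, and the $\sharp$- and $\tilde\alpha$-versions show $\alpha_z(\omega_n)\in\LL^1_{\sharp}(\GG)$ and that $\ov{\omega_n}$, $\ov{\omega_n}^{\sharp}$ (being $\tilde\alpha$-smoothings of $\ov\omega$, $\ov\omega^{\sharp}$) remain in $\mscr{J}\cap\LL^1_{\sharp}(\GG)$, provided $\omega$ is chosen with $\ov\omega,\ov\omega^{\sharp}\in\mscr{J}\cap\LL^1_{\sharp}(\GG)$.

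It remains to fix a good starting family. I would take an explicit dense subspace $\mc{D}\subseteq\Lj$ of functionals $\omega$ with $\omega,\ov\omega,\ov\omega^{\sharp}\in\mscr{J}\cap\LL^1_{\sharp}(\GG)$ — for instance vector functionals attached to vectors analytic for the modular group of $\vp$, the scaling group, and the adjoint action of $\delta_{\GG}$ — which, lying in $\mscr{J}$, can moreover be arranged to form a $\ssots\times\|\cdot\|$ core for $\Lambda_{\hvp}$, exactly the kind of core produced in the Kustermans--Vaes construction of $\hvp$. By the preceding paragraph $\omega_n\in\mscr{J}_0$ for $\omega\in\mc{D}$, and $\omega_n\to\omega$ in $\Lj$, so $\mscr{J}_0$ is dense in $\Lj$. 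For the core statement, for $\omega\in\mc{D}$ we have $\lambda_{\GG}(\omega_n)\to\lambda_{\GG}(\omega)$ in norm (hence in $\ssots$) since $\lambda_{\GG}$ is bounded, while $\Lambda_{\hvp}(\lambda_{\GG}(\omega_n))=T_n\Lambda_{\hvp}(\lambda_{\GG}(\omega))\to\Lambda_{\hvp}(\lambda_{\GG}(\omega))$ in norm, where $T_n=\tfrac{n}{\sqrt\pi}\int e^{-n^2t^2}V_t\,\md t$ converges strongly to $\id$ on $\LL^2(\whG)$. Thus the $\ssots\times\|\cdot\|$-closure of $\lambda_{\GG}(\mscr{J}_0)$ contains the core $\lambda_{\GG}(\mc{D})$, whence $\lambda_{\GG}(\mscr{J}_0)$ is itself a core.

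The step I expect to be the main obstacle is the middle one: showing that the Gaussian average and its entire continuation stay inside the non-closed spaces $\mscr{J}$ and $\LL^1_{\sharp}(\GG)$, and that the conjugate and $\sharp$ conditions are preserved. Everything reduces to re-expressing the actions of $\alpha_t$, $\tilde\alpha_t$ and $\sharp$ through strongly continuous one-parameter groups on $\LL^2(\whG)$ and invoking closedness of $\Lambda_{\hvp}$; assembling the precise intertwining identities, tracking the scaling constant $\nu_{\GG}$, and confirming compatibility with conjugation and $\sharp$ is where the care lies. A secondary difficulty is exhibiting the family $\mc{D}$ that is simultaneously norm-dense, a core for $\Lambda_{\hvp}$, and stable under the conjugate/$\sharp$ membership requirements.
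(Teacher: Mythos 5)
There is a genuine gap, and it sits exactly where you demoted it to a ``secondary difficulty'': the existence of the starting family $\mc{D}$. Your smearing is taken along the single group $\alpha_t(\omega)=(\omega\delta_{\GG}^{it})\circ\tau^{\GG}_t$, which under $\lambda_{\GG}$ corresponds to the dual modular group, $\lambda_{\GG}(\alpha_t(\omega))=\sigma^{\hvp}_{-t}(\lambda_{\GG}(\omega))$, hence is implemented on the GNS space of $\hvp$ by the unitaries $\nabla_{\hvp}^{-it}$. Consequently, Gaussian averaging along $\alpha_t$ \emph{preserves} membership in $\mscr{J}$, $\LL^1_{\sharp}(\GG)$ and the conjugate conditions (your verifications of this, and of the entire extension, are essentially correct), but it cannot \emph{create} the conjugate conditions: the conditions $\omega\in\mscr{J}$ and $\ov{\omega}\in\mscr{J}$ are bounds of the form $|\omega(x)|\le M\|\Lambda_\vp(x^*)\|$ and $|\omega(x)|\le M\|\Lambda_\vp(x)\|$ respectively, and these differ by the unbounded operator $\nabla_\vp^{1/2}$ (since $\Lambda_\vp(x^*)=J_\vp\nabla_\vp^{1/2}\Lambda_\vp(x)$). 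Averaging in a direction that acts by unitaries commuting with this obstruction provides no regularisation in the direction that matters. The families actually available from the Kustermans--Vaes construction --- in the paper, $\mscr{J}^{\sharp}=\{\omega\in\mscr{J}\cap\LL^1_{\sharp}(\GG)\,|\,\omega^{\sharp}\in\mscr{J}\}$ from \cite[Proposition 2.6]{KVVN}, which is dense and gives a core --- carry \emph{no} information about $\ov{\omega}$ or $\ov{\omega}^{\sharp}$. So a dense, core-forming $\mc{D}$ with $\omega,\ov{\omega},\ov{\omega}^{\sharp}\in\mscr{J}\cap\LL^1_{\sharp}(\GG)$ is not available off the shelf, and proving its existence is not a side issue: it is, up to the (easy) entire-extension condition, precisely the statement of the lemma. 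As written, your argument is circular at this point.

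For comparison, the paper starts from $\mscr{J}^{\sharp}$ and applies \emph{three} commuting mollifiers, in the directions of $\sigma^\vp_t$, $\tau^{\GG}_s$ and right multiplication by $\delta_{\GG}^{ip}$, i.e.\ $\omega_n=M^\tau_n M^\vp_n M^\delta_n(\omega)$. The point of smearing in those particular directions is that the resulting Gaussian factors admit contour shifts $t\mapsto t+i/2$, $s\mapsto s\pm i/2$, $p\mapsto p\pm i/2$, which absorb exactly the operators $\nabla_\vp^{1/2}$, $\tau_{-i/2}$ (from the antipode $S=R\tau_{-i/2}$ entering $\sharp$) and $\delta_{\GG}^{1/2}$ (from $\Lambda_\vp(z)=\Lambda_\psi(z\delta_{\GG}^{1/2})$); this is how the two long estimates manufacture $\ov{\omega_n},\ov{\omega_n}^{\sharp}\in\mscr{J}\cap\LL^1_{\sharp}(\GG)$ out of hypotheses that say nothing about conjugates. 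Your observation that the entire-extension requirement in $\mscr{J}_0$ is governed by $\sigma^{\hvp}$, and your closedness-of-$\Lambda_{\hvp}$ and core arguments at the end, are sound and coincide with the paper's final step; to repair the proposal you would either have to prove the existence of $\mc{D}$ (which amounts to redoing the paper's contour-shift estimates) or replace your one-parameter smearing by a multi-directional one as in the paper.
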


\begin{proof}
Our approach is standard, compare for example \cite[Lemma 14.5]{KrajczokDissertationes} for a similar result. Therefore we only give a sketch of the argument.

According to \cite[Proposition 2.6]{KVVN}, the space $\mscr{J}^{\sharp}=\{\omega\in \mscr{J}\cap\LL^1_{\sharp}(\GG)\,|\, \omega^{\sharp}\in \mscr{J}\}$ is dense in $\Lj$ and $\lambda_{\GG}(\mscr{J}^{\sharp})$ is a $\ssots\times\|\cdot\|$ core of $\Lambda_{\hvp}$. Let us introduce three mollifier operations: for $n\in\NN$ let
\[\begin{split}
M^\vp_n&\colon \Lj\ni \omega\mapsto \sqrt{\tfrac{n}{\pi}}
\int_{\RR} e^{-nt^2} \omega\circ \sigma^\vp_t\md t\in \Lj,\\
M^\tau_n&\colon \Lj\ni \omega\mapsto \sqrt{\tfrac{n}{\pi}}
\int_{\RR} e^{-ns^2} \omega\circ \tau^{\GG}_s \md s\in \Lj,
\\
M^\delta_n&\colon \Lj\ni \omega\mapsto \sqrt{\tfrac{n}{\pi}}
\int_{\RR} e^{-np^2} \omega\delta_{\GG}^{ip} \md p\in \Lj.
\end{split}\]
Next, let $\omega_n=M^\tau_n\circ M^\vp_n\circ M^\delta_n(\omega)$ and set $\mscr{J}_{00}=\lin \{\omega_n \,|\, n\in\NN,\omega\in \mscr{J}^{\sharp}\}$. 
It suffices to show that $\mscr{J}_{00}$ is dense in $\Lj$, that $\mscr{J}_{00}\subseteq \mscr{J}_0$, and that $\lambda_{\GG}(\mscr{J}_{00})$ forms a $\ssots\times \|\cdot\|$ core for $\Lambda_{\hvp}$.

Choose $n\in\NN,\omega\in \mscr{J}^{\sharp}, x\in \mf{N}_\vp,y\in \Dom(S_{\GG})$. It is elementary to check that $\omega_n\in \mscr{J}\cap \LL^1_{\sharp}(\GG)$ and $\RR\ni t\mapsto (\omega_n \delta_{\GG}^{it})\circ \tau_t^{\GG}\in \Lj$ extends to an entire function with the desired property. Since
\[\begin{split}
&\quad\;
|\ov{\omega_n}(x^*)|=
|\omega_n(x)|=
(\tfrac{n}{\pi})^{3/2}\bigl|\int_{\RR^3}
e^{-n( t^2+s^2+p^2)}
\omega ( \delta_{\GG}^{ip} \sigma^\vp_t\circ\tau^{\GG}_s(x))
\md t\md s \md p
 \bigr|\\
 &=
(\tfrac{n}{\pi})^{3/2}\bigl|\omega\bigl(
\int_{\RR^3}
e^{-n( t^2+s^2+p^2)}
 \delta_{\GG}^{ip} \sigma^\vp_t\circ\tau^{\GG}_s(x)
\md t\md s \md p\bigr)
 \bigr| \\
 &\le 
 (\tfrac{n}{\pi})^{3/2}
 \|\Lambda_{\hvp}(\lambda_{\GG}(\omega))\|
 \bigl\| \Lambda_{\vp}\bigl(
\int_{\RR^3}
e^{-n( t^2+s^2+p^2)}
\sigma^\vp_t\circ\tau^{\GG}_s(x^*) \delta_{\GG}^{-ip} 
\md t\md s \md p\bigr)
\bigr\| \\
&=
 (\tfrac{n}{\pi})^{3/2}
 \|\Lambda_{\hvp}(\lambda_{\GG}(\omega))\|
 \bigl\|J_{\vp} \nabla_\vp^{1/2} \Lambda_{\vp}\bigl(
\int_{\RR^3}
e^{-n( t^2+s^2+p^2)}
 \delta_{\GG}^{ip} 
\sigma^\vp_t\circ\tau^{\GG}_s(x)
\md t\md s \md p\bigr)
\bigr\| \\
&=
 (\tfrac{n}{\pi})^{3/2}
 \|\Lambda_{\hvp}(\lambda_{\GG}(\omega))\|
 \bigl\| \Lambda_{\vp}\bigl(
\int_{\RR^3}
e^{-n( (t+i/2)^2+s^2+p^2)}
\nu_{\GG}^{p/2}
 \delta_{\GG}^{ip} 
\sigma^\vp_t\circ\tau^{\GG}_s(x)
\md t\md s \md p\bigr)
\bigr\| \\
&\le
 (\tfrac{n}{\pi})^{3/2}
 \|\Lambda_{\hvp}(\lambda_{\GG}(\omega))\|
\bigl(\int_{\RR^3}
|e^{-n( (t+i/2)^2+s^2+p^2)}|
\nu_{\GG}^{p/2-s/2}
\md t\md s \md p\bigr)
\|\Lvp(x)\|,
\end{split}\]
where $\nu_{\GG}$ is the scaling constant of $\GG$, we have $\ov{\omega_n}\in\mscr{J}$. Here we used $\sigma^\vp_t(\delta_{\GG}^{ip})=\nu_{\GG}^{itp} \delta^{ip}$ and $\vp\circ \tau_s=\nu_{\GG}^{-s}\vp$. It is automatic that $\ov{\omega_n}\in \LL^1_{\sharp}(\GG)$. Indeed,
\[\begin{split}
\ov{ \ov{\omega_n}}(S_{\GG}(y))=
\omega_n(S_{\GG}(y))&=
\sqrt{\tfrac{n}{\pi}} \int_{\RR}e^{-ns^2}  M^\vp_n\circ M^\delta_n (\omega)(\tau^{\GG}_s\circ R_{\GG}\circ \tau^{\GG}_{-i/2}(y)) \md s\\
&=
\bigl(
\sqrt{\tfrac{n}{\pi}} \int_{\RR}e^{-n(s+i/2)^2}  M^\vp_n\circ M^\delta_n (\omega)\circ\tau^{\GG}_s\circ R_{\GG} \md s
\bigr)(y).
\end{split}\]
The above calculation shows also that $\ov{\omega_n}^{\sharp}=
\sqrt{\tfrac{n}{\pi}} \int_{\RR}e^{-n(s+i/2)^2}  M^\vp_n\circ M^\delta_n (\omega)\circ\tau^{\GG}_s\circ R_{\GG} \md s$. Moreover we have $\ov{\omega_n}^{\sharp}\in \mscr{J}$, which is a consequence of the following calculation:
\[\begin{split}
&\quad\;
|\ov{\omega_n}^{\sharp}(x^*)|=
(\tfrac{n}{\pi})^{3/2} \bigl| \int_{\RR^3} 
e^{-n( t^2+(s+i/2)^2+p^2)} \omega\bigl(
\delta_{\GG}^{ip}\sigma^\vp_t\circ \tau_{s}^{\GG}\circ R_{\GG}(x^*)
\bigr)\md t \md s\md p\bigr|\\
&\le 
(\tfrac{n}{\pi})^{3/2}
\|\Lhvp(\lambda_{\GG}(\omega))\|
\bigl\|
\Lambda_{\vp}\bigl(
\int_{\RR^3} e^{-n (t^2 +(s-i/2)^2+p^2)}
\sigma^\vp_t\circ \tau^{\GG}_s\circ R_{\GG}(x) \delta_{\GG}^{-ip} \md t \md s \md p
\bigr)
\bigr\|\\
&=
(\tfrac{n}{\pi})^{3/2}
\|\Lhvp(\lambda_{\GG}(\omega))\|
\bigl\|
\Lambda_{\psi}\bigl(
\int_{\RR^3} e^{-n (t^2 +(s-i/2)^2+(p+i/2)^2)}
\sigma^\vp_t\circ \tau^{\GG}_s\circ R_{\GG}(x) \delta_{\GG}^{ip} \md t \md s \md p
\bigr)
\bigr\|\\
&=
(\tfrac{n}{\pi})^{3/2}
\|\Lhvp(\lambda_{\GG}(\omega))\|
\bigl\|
\Lambda_{\psi}\bigl(
\int_{\RR^3} e^{-n (t^2 +(s-i/2)^2+(p+i/2)^2)}
\delta_{\GG}^{-it} \sigma^\psi_t\circ \tau^{\GG}_s\circ R_{\GG}(x) \delta_{\GG}^{i(t+p)} \md t \md s \md p
\bigr)
\bigr\|\\
&=\!
(\tfrac{n}{\pi})^{3/2}
\|\Lhvp(\lambda_{\GG}(\omega))\|
\bigl\|
J_{\psi} \nabla_{\psi}^{1/2}
\Lambda_{\psi}\bigl(\!
\int_{\RR^3}\!\! e^{-n (t^2 +(s+i/2)^2+(p-i/2)^2)}
\delta_{\GG}^{-i(t+p)} \sigma^\psi_t\circ \tau^{\GG}_s\circ R_{\GG}(x^*) \delta_{\GG}^{it} \md t \md s \md p
\bigr)
\bigr\|\\
&= \!
(\tfrac{n}{\pi})^{3/2}
\|\Lhvp(\lambda_{\GG}(\omega))\|
\bigl\|
\Lambda_{\psi}\!\bigl(\!
\int_{\RR^3}\!\! e^{-n ((t+i/2)^2 \!+(s+i/2)^2\!+(p-i/2)^2)}\!
\nu_{\GG}^{-p/2}\!
\delta_{\GG}^{-i(t+p)} \sigma^\psi_t\!\circ\! \tau^{\GG}_s\!\circ\! R_{\GG}(x^*) \delta_{\GG}^{it} \md t \md s \md p
\bigr)
\bigr\|\\
&\le  
(\tfrac{n}{\pi})^{3/2}
\|\Lhvp(\lambda_{\GG}(\omega))\|
\int_{\RR^3} |e^{-n ((t+i/2)^2 +(s+i/2)^2+(p-i/2)^2)}|
\nu_{\GG}^{s/2-t/2-p/2}
\|\Lambda_{\psi}(R_{\GG}(x^*))\| \md t \md s \md p\\
&=
(\tfrac{n}{\pi})^{3/2}
\|\Lhvp(\lambda_{\GG}(\omega))\|
\bigl(\int_{\RR^3} |e^{-n ((t+i/2)^2 +(s+i/2)^2+(p-i/2)^2)}|
\nu_{\GG}^{s/2-t/2-p/2}
 \md t \md s \md p\bigr)
 \|\Lambda_{\vp}(x)\|,
\end{split}\]
where we used $\Lvp(z)=\Lambda_{\psi}(z \delta_{\GG}^{1/2})$ for sufficiently good operators $z$, $\sigma^\psi_t=\oon{Ad}(\delta^{it}_{\GG})\circ \sigma^\vp_t$, $\psi\circ \tau^{\GG}_s=\nu_{\GG}^{-s}\psi$ and $\sigma^\psi_t(\delta_{\GG}^{ip})=\nu_{\GG}^{itp}\delta_{\GG}^{ip}$. We are left to argue that $\mscr{J}_{00}$ is dense and that $\lambda_{\GG}(\mscr{J}_{00})$ is a $\sots\times \|\cdot\|$ core for $\Lhvp$. It follows from 
\[
\omega_n\xrightarrow[n\to\infty]{} \omega,\quad
\Lambda_{\hvp}(\lambda_{\GG}(\omega_n))\xrightarrow[n\to\infty]{} \Lhvp(\lambda_{\GG}(\omega))
\]
for $\omega\in \mscr{J}^{\sharp}$. The first claim is standard (see e.g.~\cite[Proposition 2.25]{KustermansOneParam} where a similar method is used), the second one can be seen as follows: first, observe that for $t,s,p\in \RR$ we have using $(\sigma^\vp_t\otimes \id)\ww^{\GG}=(\id\otimes \tau^{\whG}_{-t})(\ww^{\GG}) (\I\otimes \delta_{\whG}^{it})$ (see equation \eqref{eq20}) and $(\tau^{\GG}_s\otimes \tau^{\whG}_{s})\ww^{\GG}=\ww^{\GG}$ that $(\tau^{\GG}_{-t}\otimes \sigma^{\hvp}_{-t})\ww^{\GG}=(\delta_{\GG}^{it}\otimes \I)\ww^{\GG}$ and
\[\begin{split}
&\quad\;
\lambda_{\GG}( (\omega\delta_{\GG}^{ip})\circ \tau^{\GG}_s\circ \sigma^\vp_t)=
((\omega\delta_{\GG}^{ip})\circ \tau^{\GG}_s\otimes \id)\bigl( 
(\id\otimes \tau^{\whG}_{-t})(\ww^{\GG}) (\I\otimes \delta_{\whG}^{it})
\bigr)\\
&=
(\omega\otimes\tau^{\whG}_{-t-s})\bigl( 
(\delta^{ip}_{\GG}\otimes \I)\ww^{\GG}\bigr) \delta_{\whG}^{it}=
(\omega\circ \tau^{\GG}_{-p} \otimes\tau^{\whG}_{-t-s}\circ \sigma^{\hvp}_{-p})(
\ww^{\GG}\bigr) \delta_{\whG}^{it}\\
&=
(\omega \otimes\tau^{\whG}_{p-t-s}\circ \sigma^{\hvp}_{-p})(
\ww^{\GG}\bigr) \delta_{\whG}^{it}
=
\tau^{\whG}_{p-t-s}\circ \sigma^{\hvp}_{-p}(\lambda_{\GG}(\omega)) \delta_{\whG}^{it}.
\end{split}\]
Hence
\[\begin{split}
\Lambda_{\hvp}(\lambda_{\GG}(\omega_n))&=
(\tfrac{n}{\pi})^{3/2}\int_{\RR^3} e^{-n(t^2+s^2+p^2)}
\Lambda_{\hvp}(\lambda_{\GG}( (\omega \delta_{\GG}^{ip})\circ \tau^{\GG}_s\circ \sigma^\vp_t))\md t \md s \md p\\
&=
(\tfrac{n}{\pi})^{3/2}\int_{\RR^3} e^{-n(t^2+s^2+p^2)}
\Lambda_{\hvp}\bigl(
\tau^{\whG}_{p-t-s}\circ \sigma^{\hvp}_{-p}(\lambda_{\GG}(\omega)) \delta_{\whG}^{it}\bigr)
\md t \md s \md p\\
&=
(\tfrac{n}{\pi})^{3/2}\int_{\RR^3} e^{-n(t^2+s^2+p^2)}
\nu_{\whG}^{s/2-p/2}J_{\hvp} \delta_{\whG}^{-it} J_{\hvp}
P^{i(p-t-s)} \nabla_{\hvp}^{-ip}\Lambda_{\hvp}(\lambda_{\GG}(\omega)) 
\md t \md s \md p,
\end{split}\]
where $P$ is the operator implementing the scaling group via $P^{it}\Lambda_{\vp}(x)=\nu_{\GG}^{t/2}\Lambda_{\vp}(\tau^{\GG}_t(x))$. Convergence $\Lambda_{\hvp}(\lambda_{\GG}(\omega_n))\xrightarrow[n\to\infty]{} \Lambda_{\hvp}(\lambda_{\GG}(\omega))$ follows now using a standard argument.
\end{proof}

\section{Completely bounded multipliers} \label{section multiplier}

Unless stated otherwise, in this section $\GG$ is an arbitrary locally compact quantum group. 

\subsection{Definitions and fundamental facts}

We start by discussing the notions of (left/right) centralisers and multipliers. In the main part of the text we will focus on the left version of these 
objects -- this is simply a matter of choice, compare Proposition~\ref{prop14}.

Following \cite{CBMultipliers}, we say that a linear map $T\colon \Ljd\rightarrow\Ljd$ is a \emph{left (respectively, right) centraliser} if
\[
T(\omega\star \omega')=T(\omega)\star \omega'\quad\bigl(
\textnormal{respectively}\; T(\omega\star \omega')=\omega\star T(\omega')\bigr)
\qquad (\omega,\omega'\in \Ljd).
\]
We denote by $C^l_{cb}(\Ljd)$ the space of completely bounded left centralisers. Together with the completely bounded norm and composition as product, $C^l_{cb}(\Ljd)$ becomes a Banach algebra. Similarly, $C^r_{cb}(\Ljd)$ stands for the space of completely bounded right centralisers, where now it is natural to use the opposite composition as product. We equip these spaces with an operator space structure by requiring that the embeddings $C^l_{cb}(\Ljd),C^r_{cb}(\Ljd)\hookrightarrow \CB(\Ljd)$ are completely isometric; both then become completely contractive Banach algebras.

An operator $b\in \Linf$ is said to be a \emph{completely bounded left multiplier} if $b \A(\GG)\subseteq \A(\GG)$ and the associated map
\[
\Theta^l(b)_*\colon \Ljd\rightarrow\Ljd
\quad\text{satisfying}\quad
b\wh\lambda(\omega)=\wh{\lambda}(\Theta^l(b)_*(\omega))\qquad(\omega\in\Ljd)
\]
is completely bounded.  As $\wh\lambda$ is injective, this definition makes sense.  We follow here the notation of \cite{Crann}; sometimes the notation $m^l_{b} = \Theta^l(b)_*$ is used instead.  As $\wh\lambda$ is multiplicative, for any completely bounded left multiplier $b$ we have that $\Theta^l(b)_*\in C^l_{cb}(\Ljd)$.  We write $\Theta^l(b)=(\Theta^l(b)_*)^*$, and denote the space of CB left multipliers by $\M^l_{cb}(\A(\GG))$.  Any Fourier algebra element $\wh{\lambda}(\omega)\in\A(\GG)$ is a CB left multiplier with $\Theta^l(\wh{\lambda}(\omega))_*\in \CB(\Ljd)$ being the left multiplication by $\omega$ and $\Theta^l(\wh{\lambda}(\omega))=(\omega\otimes\id)\wh\Delta$. Moreover, it holds that $\M^l_{cb}(\A(\GG))\subseteq \M(\mrm{C}_0(\GG))$, see \cite[Theorem 4.2]{Daws_Hilbert}.

Conversely, if $T\in C^l_{cb}(\Ljd)$ is a left centraliser, then its Banach space dual $T^*$ is a normal CB map on $\Linfd$ which is a left $\Ljd$-module homomorphism, i.e. $T^*\in \prescript{}{\Ljd}{\CB^{\sigma}(\Linfd)}$. Then, by \cite[Corollary 4.4]{JungeNeufangRuan}, there exists a unique CB left multiplier $b\in \M^l_{cb}(\A(\GG))$ satisfying $\Theta^l(b) = T^*$, that is, $\Theta^l(b)_* = T$.  These constructions are mutually inverse, and so the map $\Theta^l(\cdot)_*:\M^l_{cb}(\A(\GG)) \rightarrow C^l_{cb}(\Ljd)$ is bijective. We define the operator space structure on $\M^l_{cb}(\A(\GG))$ so that these spaces become completely isometric.

The above notions have right counterparts. Recalling that $\vv^{\whG}\in \Linf'\bar{\otimes}\Linfd$ is the right Kac-Takesaki operator, let us introduce the map $\wh{\rho}\colon \Ljd\ni \omega\mapsto (\id\otimes\omega)\vv^{\whG}\in \Linf'$. Its image $\wh{\rho}(\Ljd)$ should be thought of as a right analogue of the Fourier algebra $\A(\GG)=\wh{\lambda}(\Ljd)$. An operator $b'\in \Linf'$ is called a \emph{completely bounded right multiplier} if $\wh{\rho}(\Ljd)b'\in \wh{\rho}(\Ljd)$ and the associated map $\Theta^r(b')_*\colon \Ljd\rightarrow\Ljd$ is CB. Similarly as in the left version, we write $\Theta^r(b')\in\CB^\sigma_{\Ljd}(\Linfd)$ for $(\Theta^r(b')_*)^*$ and $\M^r_{cb}(\wh{\rho}(\Ljd))$ for the space of CB right multipliers. Any CB right centraliser $T\in C^r_{cb}(\Ljd)$ is associated to a unique CB right multiplier $b'\in \M^r_{cb}(\wh{\rho}(\Ljd))$ via $T=\Theta^r(b')_*$ and this assignment is bijective.  We similarly define an operator space structure on $\M^r_{cb}(\wh{\rho}(\Ljd))$ to make it completely isometric with $C^r_{cb}(\Ljd)$.

We will write e.g.~$\|b\|_{cb}=\|\Theta^l(b)\|_{cb}$ for $b\in \M^l_{cb}(\A(\GG))$.  Observe that $b\wh\lambda(\omega)=\wh{\lambda}(\Theta^l(b)_*(\omega))$ for each $\omega\in\Ljd$ if and only if $(\I\otimes b)\ww^{\whG}=(\Theta^l(b)\otimes \id)(\ww^{\whG})$, and from this, it follows that $\|b\|\le \|b\|_{cb}$. Similarly we have $\|b'\|\le \|b'\|_{cb}$ for $b'\in \M^r_{cb}(\wh{\rho}(\Ljd))$.\\

As a consequence of the above discussion, we have a commutative diagram
\begin{equation}\label{eq16} \xymatrix{
  C^l_{cb}(\Ljd)  \ar[r]^{\cong} & \M^l_{cb}(\A(\GG)) \ar[rd] \\
  && \Linf \\
  \Ljd \ar[uu]\ar[r]^{\cong} & \A(\GG)\ar[uu] \ar[ru]
  }
\end{equation}
The two diagonal maps to $\Linf$ are the canonical inclusions, as is the map $\A(\GG)\rightarrow \M^l_{cb}(\A(\GG))$, while the vertical map $\Ljd\rightarrow C^l_{cb}(\Ljd)$ is given by left multiplication. A simple calculation shows that this diagram indeed commutes. We obtain an immediate corollary: the map $\Ljd \rightarrow C^l_{cb}(\Ljd)$ is injective, equivalently, if $\omega\in\Ljd$ with $\omega\star\omega'=0$ for all $\omega'\in\Ljd$, then $\omega=0$.

There is a canonical way of moving between left and right CB multipliers using the extension of the unitary antipode $\wh{R}$ of $\whG$. Recall that it is implemented 
via $\wh{R}=J_{\vp} (\cdot)^* J_\vp$, and let us denote its canonical extension to a bounded linear map on $\B(\LdG)$ by ${\wh{R}}^{\sim}=J_\vp (\cdot)^* J_\vp$. The following result will be used in Proposition~\ref{prop14} to show that it does not matter if we use left CB multipliers, or right CB multipliers, when we introduce the approximation property (AP), see Definition~\ref{def1} below.

\begin{lemma}\label{lemma19}
For $\omega\in \Ljd$ we have $\wh{\rho}(\omega)=\wh{R}^{\sim}(\wh{\lambda}(\omega\circ \wh{R}))$. Furthermore, $\wh{R}^{\sim}( \M^l_{cb}(\A(\GG)))=\M^r_{cb}(\wh{\rho}(\Ljd))$ and for $a\in \M^l_{cb}(\A(\GG))$ we have $\Theta^r(\wh{R}^{\sim}(a))=\wh{R}\circ \Theta^l(a)\circ \wh{R} $.
\end{lemma}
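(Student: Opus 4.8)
The plan is to establish the three assertions in turn; the first identity carries the structural content, after which the statements about $\M^{r}_{cb}(\wh{\rho}(\Ljd))$ follow by transporting the defining relations of a multiplier through $\wh{R}^{\sim}$.

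\emph{The identity $\wh{\rho}(\omega)=\wh{R}^{\sim}(\wh{\lambda}(\omega\circ\wh{R}))$.} First I would rewrite $\vv^{\whG}$ so that $\wh{R}^{\sim}$ appears. Applying the formula $\vv^{\GG}=(J_{\hvp}\otimes J_{\hvp})\chi(\ww^{\GG})^{*}(J_{\hvp}\otimes J_{\hvp})$ with $\whG$ in place of $\GG$ (so that $J_{\hvp}$ becomes $J_{\vp}$) and using $\ww^{\whG}=\chi(\ww^{\GG*})$, which gives $\chi(\ww^{\whG})^{*}=\ww^{\GG}$, one finds $\vv^{\whG}=(J_{\vp}\otimes J_{\vp})\ww^{\GG}(J_{\vp}\otimes J_{\vp})$. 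Since $\wh{R}^{\sim}=J_{\vp}(\cdot)^{*}J_{\vp}$ is a weak$^{*}$-continuous linear anti-automorphism of $\B(\LdG)$ satisfying $(\wh{R}^{\sim}\otimes\wh{R}^{\sim})(Y)=(J_{\vp}\otimes J_{\vp})Y^{*}(J_{\vp}\otimes J_{\vp})$, and since $(\ww^{\GG})^{*}=\chi(\ww^{\whG})$, this reads \[ \vv^{\whG}=(\wh{R}^{\sim}\otimes\wh{R}^{\sim})(\chi(\ww^{\whG})). \] The one elementary point I need is the slicing rule $(\id\otimes\omega)\circ(\phi\otimes\phi)=\phi\circ(\id\otimes(\omega\circ\phi))$, valid for every linear anti-automorphism $\phi$. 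Applying it with $\phi=\wh{R}^{\sim}$, noting that $\wh{R}^{\sim}$ restricts to $\wh{R}$ on the second leg of $\chi(\ww^{\whG})$ (which lies in $\Linfd$) and that $(\id\otimes\theta)\chi(\ww^{\whG})=(\theta\otimes\id)\ww^{\whG}=\wh{\lambda}(\theta)$, I obtain $\wh{\rho}(\omega)=(\id\otimes\omega)\vv^{\whG}=\wh{R}^{\sim}(\wh{\lambda}(\omega\circ\wh{R}))$.

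\emph{The module map and the formula for $\Theta^{r}$.} Fix $a\in\M^{l}_{cb}(\A(\GG))$ and put $b'=\wh{R}^{\sim}(a)\in\Linf'$. Write $\wh{R}_{*}\colon\Ljd\to\Ljd$, $\omega\mapsto\omega\circ\wh{R}$, which is an involution because $\wh{R}^{2}=\id$. Combining the first identity, the anti-multiplicativity of $\wh{R}^{\sim}$, and the defining relation $a\wh{\lambda}(\nu)=\wh{\lambda}(\Theta^{l}(a)_{*}(\nu))$ of a left multiplier, I would compute \[ \wh{\rho}(\omega)b'=\wh{R}^{\sim}\bigl(a\,\wh{\lambda}(\wh{R}_{*}\omega)\bigr)=\wh{R}^{\sim}\bigl(\wh{\lambda}(\Theta^{l}(a)_{*}\wh{R}_{*}\omega)\bigr)=\wh{\rho}\bigl(\wh{R}_{*}\Theta^{l}(a)_{*}\wh{R}_{*}\omega\bigr). \] This shows $\wh{\rho}(\Ljd)b'\subseteq\wh{\rho}(\Ljd)$ and $\Theta^{r}(b')_{*}=\wh{R}_{*}\circ\Theta^{l}(a)_{*}\circ\wh{R}_{*}$; taking Banach-space adjoints and using $(\wh{R}_{*})^{*}=\wh{R}$ then gives the asserted $\Theta^{r}(\wh{R}^{\sim}(a))=\wh{R}\circ\Theta^{l}(a)\circ\wh{R}$.

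\emph{Complete boundedness, and equality of the images.} The remaining, and main, difficulty is to see that $\Theta^{r}(b')_{*}$ is completely bounded, so that $b'$ is genuinely a CB right multiplier; granting this, the reverse inclusion $\M^{r}_{cb}(\wh{\rho}(\Ljd))\subseteq\wh{R}^{\sim}(\M^{l}_{cb}(\A(\GG)))$ follows by the symmetric argument applied to $b'\mapsto\wh{R}^{\sim}(b')$, since $\wh{R}^{\sim}$ is an involution interchanging $\Linf$ and $\Linf'$. The subtlety is that $\wh{R}_{*}$ is \emph{not} completely bounded in general --- a normal $*$-anti-automorphism of a von Neumann algebra behaves like a transpose --- so the naive estimate $\|\wh{R}_{*}\circ\Theta^{l}(a)_{*}\circ\wh{R}_{*}\|_{cb}\le\|\wh{R}_{*}\|_{cb}^{2}\|\Theta^{l}(a)_{*}\|_{cb}$ is worthless. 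What saves the day is that conjugating the \emph{left} centraliser $\Theta^{l}(a)_{*}$ by the order-reversing involution $\wh{R}_{*}$ (here one uses $\wh{R}_{*}(\omega\star\omega')=\wh{R}_{*}(\omega')\star\wh{R}_{*}(\omega)$, a consequence of $\wh{\Delta}\wh{R}=\chi(\wh{R}\otimes\wh{R})\wh{\Delta}$) yields a \emph{right} centraliser, and the transpose-type defects of the two copies of $\wh{R}_{*}$ cancel --- exactly as the ordinary transpose $\BT$ on $\B(\msf{H})$ satisfies $\BT\circ L_{x}\circ\BT=R_{x^{t}}$ with $\|R_{x^{t}}\|_{cb}=\|x\|=\|L_{x}\|_{cb}$ although $\BT$ is wildly non-cb. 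I would make this rigorous by transporting the Kac--Takesaki characterisation $(\I\otimes a)\ww^{\whG}=(\Theta^{l}(a)\otimes\id)\ww^{\whG}$ through $(\wh{R}^{\sim}\otimes\wh{R}^{\sim})$ and the identity $\vv^{\whG}=(\wh{R}^{\sim}\otimes\wh{R}^{\sim})(\chi(\ww^{\whG}))$ to the right-multiplier relation $\vv^{\whG}(b'\otimes\I)=(\id\otimes\Theta^{r}(b'))\vv^{\whG}$, whence $\|b'\|_{cb}=\|a\|_{cb}$ from the equality of the cb-norm of a multiplier and of its associated centraliser. In effect this exhibits $T\mapsto\wh{R}_{*}T\wh{R}_{*}$ as a completely isometric isomorphism $C^{l}_{cb}(\Ljd)\to C^{r}_{cb}(\Ljd)$.
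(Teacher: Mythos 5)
Your first identity and the module computation are correct and essentially identical to the paper's own proof: you derive $\wh{\rho}(\omega)=\wh{R}^{\sim}(\wh{\lambda}(\omega\circ\wh{R}))$ from $\vv^{\whG}=(J_\vp\otimes J_\vp)\chi(\ww^{\whG})^*(J_\vp\otimes J_\vp)$, and then use antimultiplicativity of $\wh{R}^{\sim}$ to get $\wh{\rho}(\omega)\wh{R}^{\sim}(a)=\wh{\rho}\bigl(\Theta^l(a)_*(\omega\circ\wh{R})\circ\wh{R}\bigr)$, hence $\Theta^r(\wh{R}^{\sim}(a))=\wh{R}\circ\Theta^l(a)\circ\wh{R}$, exactly as in the paper. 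You also correctly isolate the one nontrivial analytic point: $\wh{R}_*$ is not completely bounded, so no submultiplicative estimate for $\wh{R}_*\circ\Theta^l(a)_*\circ\wh{R}_*$ is available.

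However, your proposed resolution of that point is circular, so there is a genuine gap. Transporting the Kac--Takesaki characterisation through $\wh{R}^{\sim}\otimes\wh{R}^{\sim}$ yields the intertwining relation $\vv^{\whG}(b'\otimes\I)=(\id\otimes\Theta^{r}(b'))(\vv^{\whG})$, but this is only a restatement of the algebraic module property you have already established; it carries no information about complete boundedness. The inference ``whence $\|b'\|_{cb}=\|a\|_{cb}$ from the equality of the cb-norm of a multiplier and of its associated centraliser'' presupposes that $b'$ \emph{is} a CB right multiplier, i.e.\ that $\Theta^r(b')_*$ is completely bounded --- which is precisely what remains to be shown. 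The transpose analogy does not rescue this: in $\BT\circ L_x\circ\BT=R_{x^t}$ the conclusion is automatic because right multiplication by a fixed element of a \cst-algebra is always completely bounded with cb-norm $\|x\|$, whereas $\Theta^l(a)_*$ is a general CB map on $\Ljd$, not multiplication by an element. What is needed is a direct proof that conjugation $T\mapsto\wh{R}\circ T\circ\wh{R}$ maps $\CB^\sigma(\Linfd)$ into itself cb-isometrically; this is exactly the paper's Lemma~\ref{lemma18}, which the paper invokes at this point. Concretely: $\wh{R}\circ T\circ\wh{R}(x)=J_\vp\,T^\dagger(J_\vp xJ_\vp)\,J_\vp$, so taking a normal Wittstock factorisation $T^\dagger=W^*\pi(\cdot)V$ and a self-adjoint antiunitary $\mc{J}$ one gets $\wh{R}\circ T\circ\wh{R}(x)=(\mc{J}WJ_\vp)^*\bigl(\mc{J}\pi(J_\vp xJ_\vp)\mc{J}\bigr)(\mc{J}VJ_\vp)$, where $x\mapsto\mc{J}\pi(J_\vp xJ_\vp)\mc{J}$ is a $\star$-homomorphism; hence $\|\wh{R}\circ T\circ\wh{R}\|_{cb}\le\|W\|\,\|V\|=\|T\|_{cb}$, with equality since the conjugation is involutive. (Equivalently, one can realise the extension of $\Theta^l(a)$ as a normal $\Linf'$-bimodule map $x\mapsto\sum_i a_ixb_i$ on $\B(\LdG)$ and observe that the two antilinear conjugations cancel there.) With such a lemma supplied, the rest of your argument, including the symmetry argument for the reverse inclusion, goes through.
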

\begin{proof}
Recall that $\vv^{\whG}=(J_\vp\otimes J_\vp) \chi(\ww^{\whG})^* (J_\vp\otimes J_\vp)$, see \cite[Proposition~2.15]{KVVN}.  It follows that
\begin{equation}\label{eq12}
\wh{\rho}(\omega)=
(\id\otimes\omega)\vv^{\whG}=
\wh{R}^{\sim}\bigl((\id \otimes \omega\circ \wh{R})(
\chi(\ww^{\whG}))\bigr)=
\wh{R}^{\sim}\bigl( (\omega\circ \wh{R}\otimes\id)(\ww^{\whG})\bigr)=
\wh{R}^{\sim} ( \wh{\lambda}(\omega\circ \wh{R})).
\end{equation}
Next, take $a\in \M^l_{cb}(\A(\GG))$ and $\omega\in \Ljd$. We have $\wh{R}^{\sim}(a) =J_\vp a^* J_\vp \in \Linf'$, so by \eqref{eq12},
\begin{align*}
\wh{\rho}(\omega) \wh{R}^{\sim}(a)
&= \wh{R}^{\sim} ( \wh{\lambda}(\omega\circ \wh{R})) \wh{R}^{\sim}(a)=
\wh{R}^{\sim}( a \wh{\lambda}(\omega\circ \wh{R})) \\
&=
\wh{R}^{\sim}\bigl( \wh{\lambda}(\Theta^l(a)_*(\omega\circ \wh{R}))\bigr)=
\wh{\rho}\bigl(\Theta^l(a)_*(\omega\circ \wh{R})\circ \wh{R}\bigr).
\end{align*}
Hence $\wh{R}^{\sim}(a)\in \M^r_{cb}(a)$ with $\Theta^r(\wh{R}^{\sim}(a))=\wh{R}\circ \Theta^l(a)\circ\wh{R}$; this map is indeed CB, compare Lemma~\ref{lemma18}. We have shown that $\wh{R}^{\sim}(\M^l_{cb}(\A(\GG)))\subseteq \M^r_{cb}(\wh{\rho}(\Ljd))$; the converse inclusion is analogous.
\end{proof}

We finish by recording a known result for which we have not found a convenient reference.

\begin{lemma}\label{lem:cent_unit_preserving}
Let $b\in\M^l_{cb}(\A(\GG))$.  There is $\beta\in\mathbb C$ with $\Theta^l(b)(\I) = \beta \I$.
\end{lemma}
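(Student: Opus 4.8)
The plan is to show that $y:=\Theta^l(b)(\I)$ satisfies $\wh\Delta(y)=y\otimes\I$, and then that the only such elements are the scalar multiples of $\I$. Write $S=\Theta^l(b)$. By the discussion preceding the lemma, $S=(\Theta^l(b)_*)^*$ is the Banach-space adjoint of the left centraliser $\Theta^l(b)_*\in C^l_{cb}(\Ljd)$, so $S\in\prescript{}{\Ljd}{\CB}^{\sigma}(\Linfd)$ is a normal left $\Ljd$-module map; here the relevant module action is $\omega\star x=(\id\otimes\omega)\wh\Delta(x)$ for $\omega\in\Ljd$, $x\in\Linfd$ (this is the action dual to the centraliser identity, as a direct computation with the pairing confirms). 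Since $\wh\Delta$ is unital we have $\omega\star\I=\omega(\I)\I$, so applying $S$ and using module-linearity gives
\[
(\id\otimes\omega)\wh\Delta(y)=\omega\star S(\I)=S(\omega\star\I)=\omega(\I)\,y\qquad(\omega\in\Ljd).
\]
As the slices $(\id\otimes\omega)$, $\omega\in\Ljd$, jointly separate the points of $\Linfd\bar\otimes\Linfd$, this forces $\wh\Delta(y)=y\otimes\I$.

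Next I would exploit that $\wh\Delta$ is implemented by the Kac--Takesaki operator. From $\wh\Delta(y)=\ww^{\whG*}(\I\otimes y)\ww^{\whG}=y\otimes\I$ we obtain $(\I\otimes y)\ww^{\whG}=\ww^{\whG}(y\otimes\I)$. Slicing the first leg against $\theta\in\Ljd$ and recalling $\lambda_{\whG}(\theta)=(\theta\otimes\id)\ww^{\whG}$, the left-hand side yields $y\,\lambda_{\whG}(\theta)$, while the right-hand side yields $\lambda_{\whG}(y\theta)$, where $y\theta\in\Ljd$ is the functional $c\mapsto\theta(cy)$. Hence
\[
y\,\lambda_{\whG}(\theta)=\lambda_{\whG}(y\theta)\in\A(\GG)\subseteq\Linf\qquad(\theta\in\Ljd).
\]
Thus $y\,\A(\GG)\subseteq\Linf$, and since $\A(\GG)$ is norm-dense in $\mrm{C}_0(\GG)$ and $\Linf$ is norm-closed, $y\,\mrm{C}_0(\GG)\subseteq\Linf$. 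Applying this to a contractive approximate identity $(e_i)$ of $\mrm{C}_0(\GG)$, which converges strictly and hence strongly to $\I$, we get $ye_i\in\Linf$ with $ye_i\to y$ strongly and $\sup_i\|ye_i\|\le\|y\|$; as $\Linf$ is closed in the strong topology on bounded sets, $y\in\Linf$.

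Finally, $y\in\Linf\cap\Linfd$, and I would conclude by invoking the standard fact that $\Linf\cap\Linfd=\CC\I$ inside $\B(\LdG)$, giving $y=\beta\I$ for some $\beta\in\CC$. The main obstacle is precisely this last implication: the passage from $\wh\Delta(y)=y\otimes\I$ to $y\in\CC\I$. Everything preceding it is formal module theory and slicing, whereas the triviality of $\Linf\cap\Linfd$ (equivalently, that $y\otimes\I$ arising as $\wh\Delta(y)$ forces $y$ scalar) is the one genuinely structural input; I would either cite it directly or, if a self-contained argument is wanted, derive it from the non-degeneracy of the pairing of $\Linf$ and $\Linfd$ through the multiplicative unitary. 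A minor secondary point to handle carefully is the leg and flip bookkeeping in $\ww^{\whG}=\chi((\ww^{\GG})^{*})$ when justifying the slicing identity above.
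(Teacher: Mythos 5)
Your proof is correct, and its core coincides with the paper's: you use the left $\Ljd$-module property of $\Theta^l(b)\in\prescript{}{\Ljd}{\CB}^{\sigma}(\Linfd)$ (equivalently $\wh\Delta\circ\Theta^l(b)=(\Theta^l(b)\otimes\id)\wh\Delta$) to conclude that $y:=\Theta^l(b)(\I)$ satisfies $\wh\Delta(y)=y\otimes\I$. Where the two arguments part ways is in converting this invariance into $y\in\CC\I$. The paper treats that step as a black box, citing \cite[Theorem~2.1]{DawsCPMults}, which is exactly the statement that elements with $\wh\Delta(y)=y\otimes\I$ are scalar. You instead unpack it: from $(\I\otimes y)\ww^{\whG}=\ww^{\whG}(y\otimes\I)$ you slice the first leg to get $y\,\wh{\lambda}(\theta)=\wh{\lambda}(y\theta)\in\A(\GG)$ for all $\theta\in\Ljd$, hence $y\,\mrm{C}_0(\GG)\subseteq\Linf$ by density, hence $y\in\Linf$ by the approximate identity argument, and finally you invoke the trivial-intersection theorem $\Linf\cap\Linfd=\CC\I$. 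This last fact is indeed standard (it is proved in the von Neumann algebraic treatment of Kustermans--Vaes, the paper's reference \cite{KVVN}), so your reduction is legitimate; note also that your slicing is unambiguous because both sides of the commutation relation lie in $\Linfd\bar\otimes\B(\LdG)$. In effect you have reproved most of the cited theorem, trading one standard citation for another, arguably more primitive, one: your version buys self-containedness, the paper's buys brevity. Your closing instinct is also exactly right — the passage from invariance to scalarity is the genuinely structural input that neither proof can make purely formal; the only weak point in your write-up is the vague suggestion that the trivial intersection could be derived "from non-degeneracy of the pairing", since the known proofs of that fact go through the Haar weights, but as you primarily propose to cite it, this does not affect correctness.
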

\begin{proof}
It suffices to show that for $T\in \prescript{}{\Ljd}{\CB^{\sigma}(\Linfd)}$ there is $\beta$ with $T(\I)=\beta \I$.  By definition, $\Delta\circ T = (T\otimes\id)\Delta$ and so $\Delta(T(\I)) = T(\I)\otimes \I$.  By \cite[Theorem~2.1]{DawsCPMults} (and references therein) it follows that $T(\I)\in\mathbb C \I$, as required.
\end{proof}

\subsection{Predual}\label{sec:predual}

Since the inclusion $\M^l_{cb}(\A(\GG))\hookrightarrow \Linf$ is bounded (actually, contractive), we can consider the restriction of the Banach space adjoint of this map, giving a map $\alpha^l\colon \Lj\rightarrow \M^l_{cb}(\A(\GG))^*$. Let us define the space $Q^l(\A(\GG))$ as the closure of the image of $\alpha^l$, so that
\[
Q^l(\A(\GG))=\ov{\alpha^l(\Lj)}\subseteq \M^l_{cb}(\A(\GG))^*.
\]
According to \cite[Theorem 3.4]{CBMultipliers}, the space $Q^l(\A(\GG))$ is a predual of $\M^l_{cb}(\A(\GG))$, i.e.~we have
\[
Q^l(\A(\GG))^*\cong \M^l_{cb}(\A(\GG))
\]
completely isometrically. Whenever we speak about the weak$^*$-topology on $\M^l_{cb}(\A(\GG))$ we will have in mind this particular choice of predual -- to the best of our knowledge, uniqueness of predual of $\M^l_{cb}(\A(\GG))$ is unknown.

Similarly, we can restrict functionals in $\LL^1(\GG')$ to $\M^r_{cb}(\wh{\rho}(\Ljd))$ via map $\alpha^r$, and after taking the closure obtain the predual $Q^r(\wh{\rho}(\Ljd))\subseteq \M^r_{cb}(\wh{\rho}(\Ljd))^*$. From now on, we will restrict our discussion to the ``left'' setting.

\begin{proposition}\label{prop18}
$\M^l_{cb}(\A(\GG))$ is a dual Banach algebra, that is, the multiplication of $\M^l_{cb}(\A(\GG))$ is separately weak$^*$-continuous.
\end{proposition}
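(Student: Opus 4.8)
The plan is to prove separate weak$^*$-continuity of multiplication by transporting the question to the concretely realised algebra $C^l_{cb}(\Ljd)$ of completely bounded left centralisers, where the module structure makes the computation transparent, and then invoking the identification of the predual $Q^l(\A(\GG))$ with (the closure of the image of) $\Lj$ under $\alpha^l$. Since $\M^l_{cb}(\A(\GG))\cong C^l_{cb}(\Ljd)$ completely isometrically, and since multiplication on the multiplier side corresponds to composition of centralisers, it suffices to check that composition $T\mapsto T\circ S$ and $T\mapsto S\circ T$ are weak$^*$-continuous on $C^l_{cb}(\Ljd)$ for each fixed $S$. The weak$^*$-topology here is the one induced by the predual $Q^l(\A(\GG))$, so concretely a net $b_i\to b$ weak$^*$ means $\la\alpha^l(\omega),b_i\ra\to\la\alpha^l(\omega),b\ra$ for all $\omega\in\Lj$, i.e.~$\omega(b_i)\to\omega(b)$, which is just the weak$^*$-topology inherited from $\Linf$ restricted to $\M^l_{cb}(\A(\GG))$ together with the functionals coming from $\Lj$.

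The key computational step is the following. For $a,b\in\M^l_{cb}(\A(\GG))$, the product in the multiplier algebra corresponds to $\Theta^l(ab)_* = \Theta^l(b)_*\circ\Theta^l(a)_*$ (or the reverse, depending on the convention fixed earlier in the excerpt), so one must understand how the pairing $\la\alpha^l(\omega),ab\ra$ decomposes. First I would establish the module-action formula expressing this pairing: for fixed $a$, the functional $b\mapsto\la\alpha^l(\omega),ab\ra$ should again be of the form $\la\alpha^l(\omega'),b\ra$ for some $\omega'\in\Lj$ depending (bounded-linearly) on $\omega$ and $a$; symmetrically for fixed $b$. Because $Q^l(\A(\GG))$ is defined as the \emph{closure} of $\alpha^l(\Lj)$, it then suffices to verify continuity against the dense set $\alpha^l(\Lj)$, and the uniform boundedness of weak$^*$-convergent nets (which follows from Banach--Alaoglu, as weak$^*$-convergent nets are norm-bounded) lets one pass from the dense subspace to all of the predual by a standard $\eps/3$ argument.

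Concretely, the mechanism I expect to use is that an element $b\in\M^l_{cb}(\A(\GG))$ acts as a multiplier: $b\,\wh\lambda(\nu)=\wh\lambda(\Theta^l(b)_*\nu)$, and that $\M^l_{cb}(\A(\GG))\subseteq\Linf$. Using that $\Theta^l(b)$ is a normal left $\Ljd$-module map on $\Linfd$, one writes the pairing $\omega(ab)$ in terms of the module action and slides the fixed factor across, producing the required reparametrised functional $\omega'$. For the right-multiplication continuity (fixed $b$, varying $a$) one uses instead that $a\in\M(\CZG)$ multiplies $\Linf$ and that the functional $\omega(\cdot\,b)$ lies in $\Lj$, so $a\mapsto\omega(ab)$ is weak$^*$-continuous already on the level of $\Linf$; the point is only to check it is continuous for the $Q^l$-weak$^*$-topology, which is weaker, hence this follows immediately. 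The genuinely asymmetric direction is left multiplication, where one must verify that $\omega(ab)$, as a functional of $b$, extends to an element of $Q^l(\A(\GG))$ rather than merely of $\M^l_{cb}(\A(\GG))^*$.

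**The main obstacle** I anticipate is precisely this last point: showing that the reparametrised functional lands in the predual $Q^l(\A(\GG))$ and not in a larger dual space, since $Q^l$ is only the \emph{closed} image of $\Lj$ and the natural candidate $\omega'$ must be produced as a genuine element of $\Lj$ (or a norm-limit of such). The resolution should come from the explicit form of $\Theta^l(a)$ as a normal module map: writing $\omega(ab)$ via the comultiplication and the $\Ljd$-module structure, the slice against the fixed factor $a$ produces a functional of the form $\theta\circ\Theta^l(a)$ or $a\cdot\omega$-type expression that manifestly lies in $\Lj$ by normality of $\Theta^l(a)$ and the fact (recorded earlier) that $\M^l_{cb}(\A(\GG))\subseteq\M(\CZG)$ acts on $\Linf$ preserving $\Lj$ under the relevant slice maps. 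Once membership in $\Lj$ is confirmed, continuity against the dense subspace $\alpha^l(\Lj)$ combined with norm-boundedness of weak$^*$-convergent nets finishes both separate-continuity statements, establishing that $\M^l_{cb}(\A(\GG))$ is a dual Banach algebra.
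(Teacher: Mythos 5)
Your core computation is exactly the paper's: for $\omega\in\Lj$ and fixed $a$ (resp.\ fixed $b$), the functional $b\mapsto\omega(ab)$ (resp.\ $a\mapsto\omega(ab)$) is represented by $\omega a=\omega(a\,\cdot)\in\Lj$ (resp.\ $b\omega=\omega(\cdot\,b)\in\Lj$), so the adjoints of the two multiplication maps send $\alpha^l(\Lj)$ back into $\alpha^l(\Lj)$. In the paper this is the one-line display $\la ab,f\ra=\la a,bf\ra=\la b,fa\ra$, which shows that $Q^l(\A(\GG))$ is a closed submodule of $\M^l_{cb}(\A(\GG))^*$, after which separate weak$^*$-continuity is the standard dual-Banach-algebra fact \cite[Proposition~1.2]{runde_dba}. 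Note that what you flag as the ``main obstacle'' is a non-issue: membership of the reparametrised functional in $\Lj$ requires nothing about normality of $\Theta^l(a)$ or about $\M^l_{cb}(\A(\GG))\subseteq\M(\CZG)$ --- it is immediate from $a,b\in\Linf$ together with the $\Linf$-bimodule structure of $\Lj$. The detour through $C^l_{cb}(\Ljd)$ is likewise unnecessary, since the product of two multipliers is simply their product in $\Linf$.

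There is, however, a genuine error in your final limiting step: weak$^*$-convergent \emph{nets} need not be norm-bounded (this holds for sequences by uniform boundedness, but fails for nets; Banach--Alaoglu gives compactness of balls, not boundedness of convergent nets). Hence the $\eps/3$ argument, which needs $\sup_i\|b_i\|_{cb}<\infty$, breaks down. The correct way to pass from the dense subspace is the pre-adjoint criterion: left (or right) multiplication by a fixed element is a norm-bounded linear map $L$ on $\M^l_{cb}(\A(\GG))=Q^l(\A(\GG))^*$; its Banach-space adjoint $L^*$ is norm-continuous and, by your computation, maps $\alpha^l(\Lj)$ into $Q^l(\A(\GG))$, hence maps all of $Q^l(\A(\GG))$ into itself because the latter is norm-closed; and a bounded linear map on a dual space is weak$^*$-weak$^*$-continuous precisely when its adjoint preserves the canonical image of the predual --- this is the paper's Lemma~\ref{biduallemma}, applied with $E=F=Q^l(\A(\GG))$ and $D=\alpha^l(\Lj)$, and it is what the citation of \cite{runde_dba} packages. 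Relatedly, your remark that the right-multiplication case ``follows immediately'' because the $Q^l$-topology is weaker than the one induced from $\Lj$ has the comparison backwards: since $\alpha^l(\Lj)\subseteq Q^l(\A(\GG))$, the topology $\sigma(\M^l_{cb}(\A(\GG)),Q^l(\A(\GG)))$ is \emph{finer}, so that case too needs the closedness argument above rather than being automatic.
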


\begin{proof}
We turn $\M^l_{cb}(\A(\GG))^*$ into a $\M^l_{cb}(\A(\GG))$-bimodule in the usual way.  Let $a,b\in \M^l_{cb}(\A(\GG)) \subseteq \Linf$ and $f\in \Lj$.  Writing $\la\cdot,\cdot\ra$ for the pairing between $\M^l_{cb}(\A(\GG)$ and $Q^l(\A(\GG))$, or between $\Linf$ and $\Lj$, we have
\begin{align*}
\langle ab, \alpha^l(f) \rangle
= \langle ab, f \rangle
&= \langle a, bf \rangle = \langle a, \alpha^l(bf) \rangle  \\
&= \langle b, fa \rangle = \langle b, \alpha^l (fa) \rangle.
\end{align*}
This calculation shows that $b\cdot\alpha^l(f) = \alpha^l(bf)$ and $\alpha^l(f)\cdot a = \alpha^l(fa)$, so by continuity, it follows that $Q^l(\A(\GG))$ is a closed submodule of $\M^l_{cb}(\A(\GG))^*$.  It is now standard, see \cite[Proposition~1.2]{runde_dba} for example, that the product is separately weak$^*$-continuous in $\M^l_{cb}(\A(\GG))$.
\end{proof}

Our next goal is to obtain a characterisation of functionals in $Q^l(\A(\GG))$. We will do this by obtaining an alternative description of the weak$^*$-topology on $\M^l_{cb}(\A(\GG))$. In the process, we also discuss CB maps on the \cst-algebra $\mrm{C}_0(\whG)$ which are associated to left centralisers.

To start, we observe that the adjoint $T^*$ of a CB left centraliser $T\in C^l_{cb}(\Ljd)$ restricts to a CB map on $\mrm{C}_0(\whG)$. Indeed, we can write $T^*=\Theta^l(a)$ for some $a\in \M^l_{cb}(\A(\GG))$ and then the claim follows from the equality $(\I\otimes a)\ww^{\whG}=(T^*\otimes \id)\ww^{\whG}$ and density of $\A(\whG)$ in $\mrm{C}_0(\whG)$. We seek a characterisation of which CB maps on $\mrm{C}_0(\whG)$ occur in this way as restrictions of duals to left CB centralisers, in terms of a property similar to the characterisation $C^l_{cb}(\Ljd) \cong \prescript{}{\Ljd}{\CB}^{\sigma}(\Linfd)$.

In the following statement, recall that $(\CZGD, \wh\Delta)$ is bisimplifiable, and so elements of the form $\wh\Delta(a)(\I\otimes b)$, for $a,b\in \CZGD$, form a linearly dense subset of $\CZGD \otimes \CZGD$.  Hence the left-hand-side of (\ref{eq:weak_cent_cond}) is contained in $\Linfd \otimes \CZGD \subseteq \Linfd\oxx\Linfd$, while the right-hand-side is 
in $\Linfd\oxx\Linfd$. We also recall that, by Kaplansky density, $\Ljd$ is (completely) isometrically a subspace of $\CZGD^*$.

\begin{lemma}\label{lemma20}
Let $L\in\CB(\CZGD, \Linfd)$ be such that
\begin{equation}
(L\otimes\id)( \wh\Delta(a)(\I\otimes b) )=  \wh\Delta(L(a))(\I\otimes b)
  \qquad (a,b\in \CZGD). \label{eq:weak_cent_cond}
\end{equation}
Embedding $\Ljd$ into the duals of $\Linfd$ and $\CZGD$ in the usual way, we have that $L^*$ maps $\Ljd$ to itself, and the resulting restriction $T\in\CB(\Ljd)$ is a left centraliser.  Furthermore $T^*\in\CB(\Linfd)$ restricts to $L$, so consequently $L\in \CB(\mrm{C}_0(\whG))$.
\end{lemma}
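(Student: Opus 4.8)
The plan is to slice the hypothesis (\ref{eq:weak_cent_cond}) by a normal functional on the second tensor leg, which collapses it to the clean statement that $L$ is a left $\Ljd$-module map, and then to transport this to the predual. First I would apply $\id\otimes\nu$, for $\nu\in\Ljd$, to both sides of (\ref{eq:weak_cent_cond}). Since $L$ acts on the first leg the slice commutes with $L\otimes\id$, and using $(\id\otimes\nu)(\widehat\Delta(a)(\I\otimes b)) = (b\nu)\star a$ (with $(b\nu)(y)=\nu(yb)$ and $\star$ the left action $\mu\star x=(\id\otimes\mu)\widehat\Delta(x)$), both sides reduce to $L((b\nu)\star a) = (b\nu)\star L(a)$. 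As $\{b\nu : b\in\CZGD,\ \nu\in\Ljd\}$ exhausts $\Ljd$ (Cohen factorisation, $\Ljd$ being an essential $\CZGD$-module), this yields the module identity
\[
L(\mu\star a) = \mu\star L(a)\qquad(\mu\in\Ljd,\ a\in\CZGD),
\]
so $L$ intertwines the left $\Ljd$-actions on $\CZGD$ and $\Linfd$.

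Next I would dualise this. Write $T=L^*$ and, for $g\in\CZGD^*$ and $\nu\in\Ljd$, let $g\star\nu\in\CZGD^*$ be the convolution dual to $\widehat\Delta$, so $(g\star\nu)(c)=g(\nu\star c)$. A direct computation using the module identity gives, for $\omega,\nu\in\Ljd$ and $c\in\CZGD$,
\[
(T(\omega)\star\nu)(c) = \omega(L(\nu\star c)) = \omega(\nu\star L(c)) = (\omega\star\nu)(L(c)) = T(\omega\star\nu)(c),
\]
so that $T(\omega\star\nu)=T(\omega)\star\nu$ as functionals on $\CZGD$ — crucially, before any normality is known.

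The crux is to show $L^*(\Ljd)\subseteq\Ljd$, i.e.\ that each $\omega\circ L$ is $\sigma$-weakly continuous, and I would extract this from the identity above together with two structural facts. First, $\Ljd$ is a closed ideal in the (reduced Fourier--Stieltjes) Banach algebra $(\CZGD^*,\star)$; granting this, $T(\omega)\star\nu\in\CZGD^*\star\Ljd\subseteq\Ljd$, whence $T(\omega\star\nu)\in\Ljd$ for all $\omega,\nu$, that is $T(\Ljd\star\Ljd)\subseteq\Ljd$. Second, $\Ljd\star\Ljd$ is norm-dense in $\Ljd$: if $x\in\Linfd$ annihilates $\Ljd\star\Ljd$ then $(\omega\otimes\nu)\widehat\Delta(x)=0$ for all $\omega,\nu$, hence $\widehat\Delta(x)=0$ and so $x=0$ by injectivity of $\widehat\Delta$, and Hahn--Banach gives density. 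Since $T=L^*$ is bounded and $\Ljd$ is norm-closed in $\CZGD^*$, we conclude $T(\Ljd)=T(\overline{\Ljd\star\Ljd})\subseteq\overline{\Ljd}=\Ljd$. I expect the ideal property to be the genuine obstacle: it is exactly where the reduced structure must enter, since every purely algebraic manipulation returns $\omega\circ L$ in the guise of a normal functional composed with $L$ again, so it cannot be bypassed and must be imported from the general theory of the quantum Fourier--Stieltjes algebra (alternatively, it can be obtained by implementing the action $\nu\star(\cdot)$ through the fundamental unitary and reading off a vector-functional representation).

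Everything else is formal. With $T:=L^*\restriction_{\Ljd}\in\CB(\Ljd)$ in hand, the identity $T(\omega\star\nu)=T(\omega)\star\nu$ now lives in $\Ljd$ and is precisely the left-centraliser condition. That $T^*$ restricts to $L$ follows by pairing: for $c\in\CZGD$ and $\omega\in\Ljd$ we have $\la T^*c,\omega\ra=\la c,\omega\circ L\ra=\omega(L(c))=\la L(c),\omega\ra$, so $T^*c=L(c)$. Finally, since $T$ is a CB left centraliser, the fact already recorded above (that the dual of such a centraliser restricts to a CB map on $\CZGD$) shows $L(\CZGD)=T^*(\CZGD)\subseteq\CZGD$, giving $L\in\CB(\CZGD)$.
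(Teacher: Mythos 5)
Your proof is correct and follows essentially the same route as the paper's: both hinge on Cohen--Hewitt factorisation of the essential $\CZGD$-module $\Ljd$, the identity $L^*(\omega\star\nu)=L^*(\omega)\star\nu$ in $\CZGD^*$, the fact that $\Ljd$ is a closed ideal in $\CZGD^*$ together with density of the span of products $\Ljd\star\Ljd$, and the same final pairing computation showing $T^*$ restricts to $L$. The only difference is organisational --- you first extract the primal module identity $L(\mu\star a)=\mu\star L(a)$ by slicing and then dualise, whereas the paper runs the computation directly at the level of dual pairings --- which changes nothing of substance.
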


\begin{proof}
As $\Ljd$ is an essential $\CZGD$-module, by Cohen--Hewitt factorisation, given $\omega_2\in\Ljd$ there are $\omega_3\in\Ljd$ and $b\in\CZGD$ with $\omega_2 = b\omega_3$. 
Then, for $\omega_1\in\Ljd$ and $a\in\CZGD$ we have 
\[\begin{split}
&\quad\;
\langle L^*(\omega_1\star\omega_2), a \rangle_{\CZGD^*, \CZGD}= \langle \wh\Delta(L(a)), \omega_1\otimes\omega_2 \rangle_{\Linfd\bar\otimes\Linfd, \Ljd\wh\otimes\Ljd} \\
&= \langle \wh\Delta(L(a))(\I\otimes b), \omega_1\otimes\omega_3 \rangle_{\Linfd\bar\otimes\Linfd, \Ljd\wh\otimes\Ljd} \\
&= \langle (L\otimes\id)( \wh\Delta(a)(\I\otimes b) ), \omega_1\otimes\omega_3 \rangle_{\Linfd\bar\otimes\Linfd, \Ljd\wh\otimes\Ljd}  \\
&= \langle L^*(\omega_1)\otimes \omega_3, \wh\Delta(a)(\I\otimes b) \rangle_{(\CZGD\otimes \CZGD)^*, \CZGD\otimes \CZGD} \\
&= \langle L^*(\omega_1), (\id\otimes \omega_3)(\wh\Delta(a)(\I\otimes b)) \rangle_{\CZGD^*, \CZGD} \\
&= \langle L^*(\omega_1), (\id\otimes \omega_2)\wh\Delta(a) \rangle_{\CZGD^*, \CZGD} \\
&= \langle L^*(\omega_1)\star \omega_2, a \rangle_{\CZGD^*, \CZGD}.
\end{split}\]
It follows that $ L^*(\omega_1\star\omega_2) = L^*(\omega_1) \star \omega_2$ in $\CZGD^*$.  As $\Ljd$ is an ideal in $\CZGD^*$ (\cite[Proof of Proposition 8.3]{KustermansVaes}), this shows that $L^*(\omega_1\star\omega_2) \in \Ljd$, and as products have dense linear span in $\Ljd$ (\cite[Section 3]{Crann}), we conclude that $L^*$ restricts to a map on $\Ljd$, say $T\in\CB(\Ljd)$.  Then $T(\omega_1\star\omega_2) = T(\omega_1) \star \omega_2$ for all $\omega_1,\omega_2$, and so $T\in C^l_{cb}(\Ljd)$.  We finally calculate that, for $a\in\CZGD, \omega\in\Ljd$,
\begin{align*}
\langle T^*(a), \omega \rangle_{\Linfd, \Ljd}
&= \langle a, T(\omega) \rangle_{\Linfd,\Ljd}
= \langle L^*(\omega), a \rangle_{\CZGD^*, \CZGD}
= \langle L(a), \omega \rangle_{\Linfd, \Ljd},
\end{align*}
and so $T^*$ restricts to $L$, as required.
\end{proof}

We can now characterise what it means for an operator in $\CB(\CZGD)$ to be a centraliser.  Condition~(\ref{prop:cent_to_cz:two}) in the following proposition should be thought of as a $\CZGD$ variant of what it means to be a left $\Ljd$-module homomorphism.

\begin{proposition}\label{prop13}
For $L \in \CB(\CZGD)$ the following are equivalent:
\begin{enumerate}
\item\label{prop:cent_to_cz:one} there is $T\in C^l_{cb}(\Ljd)$ such that $T^*$ restricts to $L$;
\item\label{prop:cent_to_cz:two} $(L\otimes\id)( \wh\Delta(a)(\I\otimes b) ) = \wh\Delta(L(a))(\I\otimes b)$ for each $a,b\in \CZGD$;
\end{enumerate}
Furthermore, the restriction map $C^l_{cb}(\Ljd) \cong \prescript{}{\Ljd}{\CB}^{\sigma}(\Linfd) \rightarrow \CB(\CZGD)$ is a complete isometry; in particular, there is a bijection between $ C^l_{cb}(\Ljd)$ and the space of all maps $L \in \CB(\CZGD)$ satisfying (\ref{prop:cent_to_cz:two}).
\end{proposition}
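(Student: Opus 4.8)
The plan is to prove the two implications directly and then treat the complete isometry as a separate, slightly more delicate point. For (\ref{prop:cent_to_cz:one})$\Rightarrow$(\ref{prop:cent_to_cz:two}), I would start from $T\in C^l_{cb}(\Ljd)$ with $T^*$ restricting to $L$ and recall from the discussion preceding the statement that $T^*\in\prescript{}{\Ljd}{\CB}^{\sigma}(\Linfd)$. Unwinding the left $\Ljd$-module property in terms of $\wh\Delta$ gives the normal identity $\wh\Delta\circ T^*=(T^*\otimes\id)\wh\Delta$ on $\Linfd$, since slicing the second leg by an arbitrary $\omega\in\Ljd$ commutes with applying $T^*$ to the first leg. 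The key observation is then that $T^*\otimes\id$, being the identity on the second leg, commutes with right multiplication by $\I\otimes b$ on $\Linfd\oxx\Linfd$; hence for $a,b\in\CZGD$ one computes $(L\otimes\id)(\wh\Delta(a)(\I\otimes b))=(T^*\otimes\id)(\wh\Delta(a))\,(\I\otimes b)=\wh\Delta(T^*(a))(\I\otimes b)=\wh\Delta(L(a))(\I\otimes b)$, which is exactly (\ref{prop:cent_to_cz:two}). (Here I use that $\wh\Delta(a)(\I\otimes b)\in\CZGD\otimes\CZGD\subseteq\Linfd\oxx\Linfd$, as noted before Lemma~\ref{lemma20}.)

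The converse (\ref{prop:cent_to_cz:two})$\Rightarrow$(\ref{prop:cent_to_cz:one}) is essentially Lemma~\ref{lemma20}: viewing $L\in\CB(\CZGD)$ as a map into $\Linfd$ satisfying \eqref{eq:weak_cent_cond}, that lemma produces a left centraliser $T\in C^l_{cb}(\Ljd)$ obtained by restricting $L^*$ to $\Ljd$, and shows that $T^*$ restricts to $L$. So beyond invoking Lemma~\ref{lemma20} no further argument is required for this direction.

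For the final assertion I would factor the restriction map as the identification $C^l_{cb}(\Ljd)\cong\prescript{}{\Ljd}{\CB}^{\sigma}(\Linfd)$ recalled above, which is the complete isometry $T\mapsto T^*$, followed by restriction to $\CZGD$. Injectivity is immediate, because each $T^*$ is normal and $\CZGD$ is weak$^*$-dense in $\Linfd$, so the restriction determines $T^*$ and hence $T$; surjectivity onto the maps satisfying (\ref{prop:cent_to_cz:two}) is precisely the equivalence just established. This gives the claimed bijection once the complete isometry is known.

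The step I expect to require the most care is that restriction to $\CZGD$ preserves the cb norm. One inequality is clear, since $M_n(\CZGD)\subseteq M_n(\Linfd)$ isometrically forces $\|L\|_{cb}\le\|T^*\|_{cb}$. For the reverse I would fix $n$ and $x\in M_n(\Linfd)$ of norm at most one; by Kaplansky density there is a net in the unit ball of $M_n(\CZGD)$ converging weak$^*$ to $x$, and since $T^*\otimes\id_{M_n}$ is normal its images converge weak$^*$ to $(T^*\otimes\id_{M_n})(x)$. Each such image has norm at most $\|L\|_{cb}$, so by weak$^*$ lower semicontinuity of the norm the same bound persists in the limit, yielding $\|T^*\|_{cb}\le\|L\|_{cb}$. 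Combining the two inequalities gives the complete isometry, and with it the bijection between $C^l_{cb}(\Ljd)$ and the maps in $\CB(\CZGD)$ satisfying (\ref{prop:cent_to_cz:two}).
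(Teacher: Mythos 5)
Your proof is correct, and for the equivalence of (\ref{prop:cent_to_cz:one}) and (\ref{prop:cent_to_cz:two}) it follows exactly the paper's route: (\ref{prop:cent_to_cz:one})$\Rightarrow$(\ref{prop:cent_to_cz:two}) from the dualised centraliser identity $\wh\Delta\circ T^*=(T^*\otimes\id)\wh\Delta$ (your explicit check that $T^*\otimes\id$ commutes with right multiplication by $\I\otimes b$ is a valid normality argument that the paper leaves implicit), and (\ref{prop:cent_to_cz:two})$\Rightarrow$(\ref{prop:cent_to_cz:one}) by invoking Lemma~\ref{lemma20}. Where you genuinely diverge is the complete isometry. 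The paper's argument is purely formal: since Lemma~\ref{lemma20} produces $T$ as the \emph{restriction of $L^*$} to $\Ljd\subseteq\CZGD^*$ (a complete contraction, using that $\Ljd\hookrightarrow\CZGD^*$ is completely isometric by Kaplansky density), and since taking adjoints is completely isometric, one gets the chain $\|L\|_{cb}\le\|T^*\|_{cb}=\|T\|_{cb}\le\|L^*\|_{cb}=\|L\|_{cb}$, forcing equality. You instead prove the hard inequality $\|T^*\|_{cb}\le\|L\|_{cb}$ directly at each matrix level, approximating the unit ball of $\M_n(\Linfd)$ by the unit ball of $\M_n(\CZGD)$ via Kaplansky density and using normality of $T^*\otimes\id_{\M_n}$ together with weak$^*$ lower semicontinuity of the norm. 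Both arguments are sound and both ultimately rest on Kaplansky density; the paper's version is shorter because it recycles the extra structural information from Lemma~\ref{lemma20} (that $T$ is itself a restriction of $L^*$), whereas yours only uses the weaker fact that $T^*$ restricts to $L$, and so is more self-contained and would survive in settings where the predual-level factorisation is unavailable.
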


\begin{proof}
If (\ref{prop:cent_to_cz:one}) holds then $\widehat{\Delta} T^* = (T^*\otimes\id)\widehat{\Delta}$ and so certainly the condition in (\ref{prop:cent_to_cz:two}) will hold for $T^*$ and hence also for $L$.  Conversely, suppose that (\ref{prop:cent_to_cz:two}) holds. Then due to Lemma~\ref{lemma20} we know that $L^*$ restricts to a map $T\in C^l_{cb}(\Ljd)$ such that $T^*$ restricts to $L$, showing (\ref{prop:cent_to_cz:one}).

The restriction map $\prescript{}{\Ljd}{\CB}^{\sigma}(\Linfd) \rightarrow \CB(\CZGD)$ is clearly a complete contraction.  With $T,L$ as above, this restriction map is given by $T^* \mapsto L$, and as $T$ is the restriction of $L^*$ and $L\mapsto L^*,T\mapsto T^*$ are completely isometric, it follows that $\prescript{}{\Ljd}{\CB}^{\sigma}(\Linfd) \rightarrow \CB(\CZGD)$ is a complete isometry.
\end{proof}

\begin{proposition}\label{prop:new_wstar_cent}
Equip the space $\CB(\CZGD, \Linfd)$ with the weak$^*$-topology arising from the canonical predual $\CZGD \wh\otimes \Ljd$. The restriction map 
$$
C^l_{cb}(\Ljd) \cong \prescript{}{\Ljd}{\CB}^{\sigma}(\Linfd) \rightarrow \CB(\CZGD, \Linfd)
$$ 
is a complete isometry which has weak$^*$-closed image.
\end{proposition}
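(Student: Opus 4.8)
The plan is to treat the two assertions separately: the complete isometry follows quickly from Proposition~\ref{prop13}, while the weak$^*$-closedness of the image is reduced to the observation that the centraliser condition \eqref{eq:weak_cent_cond} is, when tested against a single functional, an equality between two weak$^*$-continuous functionals of $L$. Throughout I use that $\CB(\CZGD, \Linfd) = (\CZGD \wh\otimes \Ljd)^*$ via the duality $(X\wh\otimes Y)^* = \CB(X, Y^*)$, so the weak$^*$-topology is exactly the one in the statement, and that weak$^*$-continuous functionals are precisely evaluations against elementary tensors $e\otimes\omega$ with $e\in\CZGD$, $\omega\in\Ljd$.

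For the complete isometry, Proposition~\ref{prop13} already gives that $\prescript{}{\Ljd}{\CB}^{\sigma}(\Linfd) \rightarrow \CB(\CZGD)$ is a complete isometry, its image consisting of maps sending $\CZGD$ into $\CZGD$. Since $\CZGD \hookrightarrow \Linfd$ is a complete isometry, so is $M_n(\CZGD)\hookrightarrow M_n(\Linfd)$, whence post-composition yields a complete isometry $\CB(\CZGD)\hookrightarrow\CB(\CZGD,\Linfd)$. Composing, the map in the statement is a complete isometry.

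For the image, I would first pin it down exactly. By Proposition~\ref{prop13}, every $L$ in the image satisfies \eqref{eq:weak_cent_cond}; conversely, Lemma~\ref{lemma20} shows that any $L\in\CB(\CZGD,\Linfd)$ satisfying \eqref{eq:weak_cent_cond} is the restriction of some $T^*\in\prescript{}{\Ljd}{\CB}^{\sigma}(\Linfd)$, hence lies in the image. So the image is precisely $\{L\in\CB(\CZGD,\Linfd): L \text{ satisfies } \eqref{eq:weak_cent_cond}\}$, and it remains to show this set is weak$^*$-closed. Since $\Ljd\wh\otimes\Ljd$ separates the points of $\Linfd\bar\otimes\Linfd$, equation \eqref{eq:weak_cent_cond} is equivalent to the family, indexed by $a,b\in\CZGD$ and $\omega_1,\omega_2\in\Ljd$, of scalar equalities obtained by pairing both sides against $\omega_1\otimes\omega_2$. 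The key step is to rewrite each side of such an equality as a weak$^*$-continuous functional of $L$. For the left-hand side I would use the slice identity $\langle (L\otimes\id)(z), \omega_1\otimes\omega_2\rangle = \langle L, (\id\otimes\omega_2)(z)\otimes\omega_1\rangle$ with $z=\wh\Delta(a)(\I\otimes b)\in\CZGD\otimes\CZGD$; here $(\id\otimes\omega_2)(z)\in\CZGD$, so this is evaluation of $L$ against a fixed element of $\CZGD\wh\otimes\Ljd$. For the right-hand side I would absorb $\I\otimes b$ into $\omega_2$ using the $\CZGD$-module action on $\Ljd$, obtaining $\omega_2'\in\Ljd$, and then use $\langle \wh\Delta(L(a)), \omega_1\otimes\omega_2'\rangle = \langle L(a), \omega_1\star\omega_2'\rangle = \langle L, a\otimes(\omega_1\star\omega_2')\rangle$, again weak$^*$-continuous in $L$. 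Thus each fixed choice of $a,b,\omega_1,\omega_2$ cuts out the kernel of a weak$^*$-continuous functional, and the image, being the intersection of all these kernels, is weak$^*$-closed.

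The main obstacle is the bookkeeping in this last step: one must verify that after slicing both sides genuinely land in the predual $\CZGD\wh\otimes\Ljd$, the crucial point being that the slice $(\id\otimes\omega_2)(\wh\Delta(a)(\I\otimes b))$ stays inside $\CZGD$ rather than merely $\Linfd$, which holds because $\wh\Delta(a)(\I\otimes b)\in\CZGD\otimes\CZGD$. Granting this, the functionals involved are weak$^*$-continuous and not merely bounded, and no appeal to Krein--Smulian is needed since the image appears directly as an intersection of weak$^*$-closed hyperplanes.
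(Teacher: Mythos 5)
Your proposal is correct and takes essentially the same route as the paper: both arguments identify the image with the set of $L\in\CB(\CZGD,\Linfd)$ satisfying \eqref{eq:weak_cent_cond} via Proposition~\ref{prop13} and Lemma~\ref{lemma20}, and both rest on the same two pairing identities, including the key observation that $(\id\otimes\omega_2)\bigl(\wh\Delta(a)(\I\otimes b)\bigr)\in\CZGD$ so that everything is tested against elements of the predual $\CZGD\wh\otimes\Ljd$. The only difference is presentational: you realise the image as an intersection of kernels of weak$^*$-continuous functionals, whereas the paper takes a net $(T_i^*)_{i\in I}$ converging weak$^*$ to $L$ and passes to the limit in exactly these pairings --- the same computation in different packaging.
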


\begin{proof}
Proposition~\ref{prop13} shows that the restriction map $\prescript{}{\Ljd}{\CB}^{\sigma}(\Linfd) \rightarrow \CB(\CZGD, \Linfd)$ is a complete isometry.  Let $(T_i)_{i\in I}$ be a net in $C^l_{cb}(\Ljd)$ such that the image of the net $(T_i^*)_{i\in I}$ in $\CB(\CZGD, \Linfd)$ converges weak$^*$ to $L\in \CB(\CZGD, \Linfd)$.  Let $a,b\in\CZGD$ and $\omega_1,\omega_2\in \Ljd$, and note that
$(\id\otimes\omega_2)( \wh\Delta(a)(\I\otimes b) ) \in \CZGD$.  We now calculate that
\begin{align*}
&\quad\;
\langle \wh\Delta(L(a))(\I\otimes b),  \omega_1\otimes\omega_2 \rangle
= \lim_{i\in I} \langle T_i^*(a), \omega_1 \star (b\omega_2) \rangle
= \lim_{i\in I} \langle a, T_i(\omega_1) \star (b\omega_2) \rangle\\
&= \lim_{i\in I} \langle \wh\Delta(a)(\I\otimes b), T_i(\omega_1) \otimes \omega_2 \rangle = \lim_{i\in I} \langle T_i^*\big( (\id\otimes\omega_2)(\wh\Delta(a)(\I\otimes b)) \big), \omega_1\rangle\\
&= \langle L\big( (\id\otimes\omega_2)(\wh\Delta(a)(\I\otimes b)) \big), \omega_1\rangle = \langle (L\otimes\id)(\wh\Delta(a)(\I\otimes b)), \omega_1\otimes\omega_2 \rangle.
\end{align*}
All the above pairings are between a von Neumann algebra and its predual. It follows that we have $\wh\Delta(L(a))(\I\otimes b) = (L\otimes\id)(\wh\Delta(a)(\I\otimes b))$ in $\Linfd\bar\otimes\Linfd$.  By Lemma~\ref{lemma20}, $L^*$ restricts to $T\in C^l_{cb}(\Ljd)$ such that $T^*$ restricts back to give $L$.  That is, $T_i^* \xrightarrow[i\in I]{} T^*$ weak$^*$ in $\CB(\CZGD, \Linfd)$, as required.
\end{proof}

We now wish to show that the resulting weak$^*$-topology on $C^l_{cb}(\Ljd)$ given by Proposition~\ref{prop:new_wstar_cent} agrees with the weak$^*$-topology on $C^l_{cb}(\Ljd) \cong \M^l_{cb}(\A(\GG))$ given by the predual $Q^l(\A(\GG))$. In the following, for a Banach space $E$, we denote by $\kappa_E\colon E\rightarrow E^{**}$ the canonical map to the bidual.  

\begin{lemma} \label{biduallemma}
Let $E,F$ be Banach spaces, and let $\alpha:E^*\rightarrow F^*$ be a bounded linear map. Let $D\subseteq F$ be a subset with dense linear span.  Then $\alpha$ is weak$^*$-weak$^*$-continuous if and only if $\alpha^*\kappa_F(D) \subseteq \kappa_E(E)$.  In this case, and when further $\alpha$ is a bijection, the resulting preadjoint $\alpha_*:F\rightarrow E$ is also an isomorphism of Banach spaces and $\alpha$ is a weak$^*$-weak$^*$-homeomorphism.
\end{lemma}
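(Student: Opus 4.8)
The plan is to reduce the statement to the classical fact that a bounded linear map between dual Banach spaces is weak$^*$-weak$^*$-continuous exactly when it is the adjoint of a map between the preduals, together with the bidual criterion: $\alpha$ is weak$^*$-weak$^*$-continuous if and only if $\alpha^*\kappa_F(F)\subseteq\kappa_E(E)$. Seen this way, the only novelty is that the hypothesis is phrased in terms of the set $D$ rather than all of $F$, so the whole point is to show that the $D$-version of the inclusion is equivalent to the $F$-version.

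For the forward direction I would argue that if $\alpha$ is weak$^*$-weak$^*$-continuous then it admits a preadjoint $\alpha_*\colon F\to E$ with $(\alpha_*)^*=\alpha$, so that $\alpha^*=(\alpha_*)^{**}$ and naturality of the canonical embeddings yields $\alpha^*\kappa_F=\kappa_E\alpha_*$. In particular $\alpha^*\kappa_F(F)\subseteq\kappa_E(E)$, which is already stronger than $\alpha^*\kappa_F(D)\subseteq\kappa_E(E)$.

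For the reverse direction, the key observation---and the only place where density of $\lin(D)$ enters---is the upgrade from $D$ to all of $F$. By linearity $\alpha^*\kappa_F(\lin(D))\subseteq\kappa_E(E)$; since $\kappa_E$ is isometric and $E$ is complete, $\kappa_E(E)$ is norm-closed in $E^{**}$; and since $\alpha^*$ is bounded (hence norm-continuous) and $\kappa_F$ is isometric, passing to norm-closures and using that $\lin(D)$ is dense in $F$ gives $\alpha^*\kappa_F(F)\subseteq\kappa_E(E)$. One then defines $\alpha_*:=\kappa_E^{-1}\alpha^*\kappa_F\colon F\to E$, which is well defined and bounded because $\kappa_E$ is an isometric embedding onto its (closed) range, and a short pairing computation verifies $(\alpha_*)^*=\alpha$; as any adjoint is automatically weak$^*$-weak$^*$-continuous, this proves the claim.

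For the ``furthermore'', assume in addition that $\alpha=(\alpha_*)^*$ is a bijection. Here I would invoke the standard duality dictionary: surjectivity of $(\alpha_*)^*$ forces $\alpha_*$ to be bounded below, i.e.\ injective with closed range, while injectivity of $(\alpha_*)^*$ forces $\alpha_*$ to have dense range; combining these, $\alpha_*$ is a continuous bijection and hence an isomorphism by the open mapping theorem. Then $\alpha^{-1}=((\alpha_*)^{-1})^*$ is again an adjoint, so it too is weak$^*$-weak$^*$-continuous, and therefore $\alpha$ is a weak$^*$-weak$^*$-homeomorphism. The only genuinely delicate step in the whole argument is the upgrade from $D$ to $F$ in the reverse direction; everything else is routine duality theory.
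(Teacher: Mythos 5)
Your proof is correct and follows essentially the same route as the paper's: both upgrade the inclusion from $D$ to all of $F$ via linearity, norm-continuity of $\alpha^*$ and norm-closedness of $\kappa_E(E)$, define the preadjoint as $\kappa_E^{-1}\alpha^*\kappa_F$ and verify the adjoint identity by a pairing computation, and for the bijection statement both combine the Open Mapping Theorem with the standard duality between surjectivity/injectivity of an adjoint and boundedness below/density of range of the underlying map. The only difference is presentational: the paper derives boundedness below of $\alpha_*$ from the intertwining relation $\alpha^*\kappa_F=\kappa_E\alpha_*$ and writes out the annihilator computation for dense range, where you cite these duality facts as known.
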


\begin{proof}
If $\alpha$ is weak$^*$-continuous, then there is a preadjoint operator $\alpha_*:F\rightarrow E$ with $(\alpha_*)^* = \alpha$, and so $\alpha^*\kappa_F(D) = (\alpha_*)^{**} \kappa_F(D) = \kappa_E\alpha_*(D) \subseteq \kappa_E(E)$, as claimed.  Conversely, if $\alpha^*\kappa_F(D) \subseteq \kappa_E(E)$ then by norm density of $\lin D$ in $F$, and norm continuity of $\alpha^*$, we have that $\alpha^*\kappa_F(F) \subseteq \kappa_E(E)$.  We could now directly apply \cite[Lemma~10.1]{Daws_mults_si}, but let us give the argument.  There is a linear map $T:F\rightarrow E$ with $\alpha^*\kappa_F(x) = \kappa_E(T(x))$ for each $x\in F$.  As $\kappa_E, \kappa_F$ are isometries, $T$ is bounded with $\|T\|\leq\|\alpha^*\|=\|\alpha\|$.  Then for $x\in F, \mu\in E^*$,
\[ \langle T^*(\mu), x \rangle = \langle \mu, T(x) \rangle
= \langle \kappa_E(T(x)), \mu \rangle
= \langle \alpha^*\kappa_F(x), \mu \rangle
= \langle \alpha(\mu), x \rangle. \]
Hence $T^* = \alpha$ and so $\alpha$ is weak$^*$-continuous, with preadjoint $T$.

When $\alpha$ is a bijection, by the Open Mapping Theorem, it is an isomorphism.  Thus also $\alpha^*$ is an isomorphism, and so as $\alpha^*\kappa_F = \kappa_E \alpha_*$ it follows that $\alpha_*$ is bounded below and so has closed image.  If $\mu\in (\alpha_*(F))^\perp$ then $0 = \langle \mu, \alpha_*(x) \rangle = \langle \alpha(\mu), x \rangle$ for all $x,\mu$ and so $\alpha(\mu) = 0$ so $\mu=0$.  Hence $\alpha_*$ is a surjection, and so an isomorphism.
\end{proof}

\begin{theorem}\label{thm5}
The weak$^*$-topology on $C^l_{cb}(\Ljd)$ given by the embedding into $\CB(\CZGD, \Linfd)$ agrees with the weak$^*$-topology on $\M_{cb}^l(\A(\GG))$ given by $Q^l(\A(\GG))$.
\end{theorem}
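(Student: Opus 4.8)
The plan is to deduce the statement from Lemma~\ref{biduallemma}, applied to the canonical identification of the two dual-space models of $\M^l_{cb}(\A(\GG))$. Write $M=\M^l_{cb}(\A(\GG))\cong C^l_{cb}(\Ljd)$. The predual $Q^l(\A(\GG))$ equips $M$ with one weak$^*$-topology, while Proposition~\ref{prop:new_wstar_cent} realises $C^l_{cb}(\Ljd)$ as a weak$^*$-closed subspace of $\CB(\CZGD,\Linfd)=(\CZGD\wh\otimes\Ljd)^*$, hence as the dual of the quotient $F$ of $\CZGD\wh\otimes\Ljd$ by the pre-annihilator of this image, and this gives the second weak$^*$-topology. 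Taking $E=Q^l(\A(\GG))$, so that $E^*=M=F^*$, I would apply Lemma~\ref{biduallemma} to the identity map $\alpha\colon E^*\to F^*$, which is a bijective (complete) isometry. Since the quotient map is surjective, the images in $F$ of the elementary tensors $a\otimes\omega$ (with $a\in\CZGD$, $\omega\in\Ljd$) have dense linear span, so it suffices to check that $\alpha^*\kappa_F$ sends each such image into $\kappa_E(E)$; by the bijective case of the lemma this already forces $\alpha$ to be a weak$^*$-homeomorphism, and hence the two topologies coincide. Note that only this one direction is needed, which is the point of using the lemma.

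Unwinding the pairings, $\alpha^*\kappa_F$ applied to the class of $a\otimes\omega$ identifies with the functional $\Phi_{a,\omega}\colon b\mapsto \langle\Theta^l(b)(a),\omega\rangle_{\Linfd,\Ljd}=\langle a,\Theta^l(b)_*(\omega)\rangle$ on $M$, and the condition $\alpha^*\kappa_F(D)\subseteq\kappa_E(E)$ becomes the single claim that $\Phi_{a,\omega}\in Q^l(\A(\GG))$. The computation is cleanest on the dense subspace of $\CZGD$ consisting of elements $a=(\id\otimes\mu)\ww^{\whG}$ with $\mu\in\Lj$. Using normality of $\Theta^l(b)$ together with the identity $(\Theta^l(b)\otimes\id)\ww^{\whG}=(\I\otimes b)\ww^{\whG}$, I would compute
\begin{equation*}
\Theta^l(b)(a)=(\id\otimes\mu)\big((\I\otimes b)\ww^{\whG}\big)=(\id\otimes(\mu b))\ww^{\whG},
\end{equation*}
whence, pairing the first leg against $\omega$ and recalling $\wh{\lambda}(\omega)=(\omega\otimes\id)\ww^{\whG}$,
\begin{equation*}
\Phi_{a,\omega}(b)=(\mu b)(\wh{\lambda}(\omega))=\mu\big(b\,\wh{\lambda}(\omega)\big)=\langle b,\wh{\lambda}(\omega)\mu\rangle_{\Linf,\Lj}.
\end{equation*}
Thus $\Phi_{a,\omega}=\alpha^l(\wh{\lambda}(\omega)\mu)$ lies in $\alpha^l(\Lj)\subseteq Q^l(\A(\GG))$ for these $a$.

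To reach an arbitrary $a\in\CZGD$, I would invoke the uniform bound $\|\Phi_{a,\omega}\|_{M^*}\le\|a\|\,\|\omega\|$, which follows from $|\Phi_{a,\omega}(b)|\le\|a\|\,\|\Theta^l(b)_*(\omega)\|\le\|a\|\,\|\omega\|\,\|b\|_{cb}$ together with the linearity of $a\mapsto\Phi_{a,\omega}$; approximating $a$ in norm by elements $(\id\otimes\mu)\ww^{\whG}$ then places $\Phi_{a,\omega}$ in the norm-closure $Q^l(\A(\GG))$ of $\alpha^l(\Lj)$. This verifies the hypothesis of Lemma~\ref{biduallemma} and completes the argument. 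The main obstacle is the middle computation: one must track the two leg-conventions of $\ww^{\whG}$ and the $\Linf$- and $\Lj$-module actions carefully, so that $\Phi_{a,\omega}$ is correctly recognised as the restriction to $M$ of the normal functional $\wh{\lambda}(\omega)\mu\in\Lj$. Everything else is a formal application of the bidual criterion of Lemma~\ref{biduallemma}, together with the harmless passage from a dense set of $a$'s to all of $\CZGD$.
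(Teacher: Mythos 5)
Your proof is correct and takes essentially the same route as the paper: both apply Lemma~\ref{biduallemma} with $E=Q^l(\A(\GG))$ and $F$ the canonical predual of the weak$^*$-closed image in $\CB(\CZGD,\Linfd)$, and both rest on the same core computation identifying the functional induced by $(\id\otimes\mu)\ww^{\whG}\otimes\omega$ with $\alpha^l(\wh{\lambda}(\omega)\mu)\in Q^l(\A(\GG))$. The only (harmless) difference is that the paper chooses $D$ to consist of the images of these special tensors, whose span is already dense in $F$, whereas you verify the condition for all elementary tensors $a\otimes\omega$ by extending from the dense set of special $a$ via the uniform bound $\|\Phi_{a,\omega}\|\le\|a\|\,\|\omega\|$ and norm-closedness of $Q^l(\A(\GG))$.
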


\begin{proof}
We use Lemma~\ref{biduallemma}.  Set $E = Q^l(\A(\GG))$.  To avoid confusion, for this proof only, we shall write $\theta:C^l_{cb}(\Ljd) \rightarrow \CB(\CZGD, \Linfd)$ for the complete isometry $T\mapsto T^*|_{\CZGD}$, given by Proposition~\ref{prop:new_wstar_cent}.  As the image of $\theta$ is weak$^*$-closed, it has canonical predual $F$ which is a quotient of $\CZGD \wh\otimes \Ljd$.  Let $\pi : \CZGD \wh\otimes \Ljd \rightarrow F$ be the quotient map.  We hence corestrict $\theta$ to give an isomorphism $\theta:C^l_{cb}(\Ljd) \rightarrow F^*$.  Let $\alpha_0:E^* = \M_{cb}^l(\A(\GG)) \rightarrow C^l_{cb}(\Ljd)$ be the canonical bijection, and set $\alpha = \theta \circ \alpha_0: E^*\rightarrow F^*$.

Given $a\in \M^l_{cb}(\A(\GG))$ set $T=\alpha_0(a)$, so by definition, $a \wh\lambda(\omega) = \wh\lambda(T(\omega))$ for each $\omega\in \Ljd$.  Equivalently, $(\I\otimes a) {\ww}^{\whG} = (T^*\otimes\id)({\ww}^{\whG})$.  Given $\omega\in \Ljd, f\in \Lj$, set $u = \pi((\id\otimes f)({\ww}^{\whG}) \otimes \omega) \in F$, and calculate that
\begin{align*}
& \ \la \kappa_E(\widehat{\lambda}(\omega)f ) , a \ra_{E^{**}, E^*}
= \langle a, \wh\lambda(\omega) f \rangle_{E^*,E}
= \la a \wh{\lambda}(\omega),f\ra_{\Linf, \Lj}
= \langle (T^*\otimes\id)({\ww}^{\whG}), \omega\otimes f \rangle \\
&= \langle T^*\big( (\id\otimes f)({\ww}^{\whG}) \big), \omega \rangle_{\Linfd, \Ljd}
= \langle \theta(T) \big( (\id\otimes f)({\ww}^{\whG}) \big), \omega \rangle_{\Linfd, \Ljd}
= \la \alpha(a), u \ra_{F^*,F}.
\end{align*}
It follows that $\alpha^*(\kappa_F(u)) = \kappa_E(\widehat{\lambda}(\omega)f ) \in \kappa_E(E)$.  As the linear span of such elements $u$ is dense in $F$, the conditions of the lemma are verified, and the result follows.
\end{proof}

Using this result we can characterise functionals in the predual space $Q^l(\A(\GG))$.  In the following, we work with infinite matrices with entries in operator spaces, see \cite[Chapter~10]{EffrosRuan}.

\begin{proposition}\label{prop15}
For any Hilbert space $\msf{H}$ and $x\in \CZGD\otimes \mc{K}(\msf{H}),\,\omega\in \Ljd\wh{\otimes}\B(\msf{H})_*$, the bounded linear functional
\[
\Omega_{x,\omega}\colon \M^l_{cb}(\A(\GG))\ni a \mapsto
\la (\Theta^l(a)\otimes\id) x , \omega \ra \in \CC.
\]
belongs to $Q^l(\A(\GG))$. Furthermore, all functionals in $Q^l(\A(\GG))$ are of this form for some separable Hilbert space.
\end{proposition}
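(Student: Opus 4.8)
The plan is to reduce everything to the matricial description of the operator space projective tensor product, using Theorem~\ref{thm5} to identify the predual. By Proposition~\ref{prop:new_wstar_cent} the map $C^l_{cb}(\Ljd)\to\CB(\CZGD,\Linfd)=(\CZGD\wh\otimes\Ljd)^*$, $T\mapsto T^*|_{\CZGD}$, is a complete isometry with weak$^*$-closed range, and by Theorem~\ref{thm5} the induced weak$^*$-topology is the one defining $Q^l(\A(\GG))$. Since a predual is exactly the space of weak$^*$-continuous functionals, it follows that $Q^l(\A(\GG))$ is the quotient of $\CZGD\wh\otimes\Ljd$ by the pre-annihilator of this range; concretely, every element of $Q^l(\A(\GG))$ has the form $a\mapsto\la\Theta^l(a)|_{\CZGD},w\ra$ for some $w\in\CZGD\wh\otimes\Ljd$, where $\la L,b\otimes\nu\ra=\la L(b),\nu\ra$ is the $\CB(\CZGD,\Linfd)$--$(\CZGD\wh\otimes\Ljd)$ duality. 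The whole task is therefore to match the functionals $\Omega_{x,\omega}$ with functionals of this form.

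The bridge is a slicing map. For $x\in\CZGD\otimes\mc K(\msf H)$ the slice $R_x\colon\B(\msf H)_*\ni\phi\mapsto(\id\otimes\phi)(x)\in\CZGD$ is completely bounded with $\|R_x\|_{cb}\le\|x\|$, since $\CZGD\otimes\mc K(\msf H)$ embeds completely isometrically into $\CB(\B(\msf H)_*,\CZGD)$ via $x\mapsto R_x$. Composing the flip $\Ljd\wh\otimes\B(\msf H)_*\cong\B(\msf H)_*\wh\otimes\Ljd$ with $R_x\wh\otimes\id$ gives a map $\Psi_x\colon\Ljd\wh\otimes\B(\msf H)_*\to\CZGD\wh\otimes\Ljd$ with $\|\Psi_x\|_{cb}\le\|x\|$. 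Evaluating on elementary tensors $\nu\otimes\phi$ and using $(\id\otimes\phi)(L\otimes\id)x=L((\id\otimes\phi)x)$, one obtains the identity
\[
\la L,\Psi_x(\omega)\ra=\la(L\otimes\id)x,\omega\ra\qquad(L\in\CB(\CZGD,\Linfd)),
\]
valid for all $\omega$ by continuity. Taking $L=\Theta^l(a)|_{\CZGD}$ yields $\Omega_{x,\omega}(a)=\la\Theta^l(a)|_{\CZGD},\Psi_x(\omega)\ra$, so $\Omega_{x,\omega}$ is the functional attached to $w=\Psi_x(\omega)\in\CZGD\wh\otimes\Ljd$ and hence lies in $Q^l(\A(\GG))$, proving the first assertion.

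For the converse I read off $x$ and $\omega$ from a matricial representation of $w$. With matrix units $e_{ij}$ for $\mc K(\msf H)$ and $e_{kl}$ for $\B(\msf H)_*$, the identity above computes $\Psi_x(\omega)=\sum_{i,j}b_{ij}\otimes\nu_{ji}$ whenever $x=\sum_{i,j}b_{ij}\otimes e_{ij}$ and $\omega=\sum_{k,l}\nu_{kl}\otimes e_{kl}$; note this is the \emph{trace} pairing. By the Effros--Ruan description of the projective tensor norm (\cite[Chapter~10]{EffrosRuan}), a finite-rank $w$ can be written as $\sum_{i,j=1}^n b_{ij}\otimes\nu_{ji}$ with $\|[b_{ij}]\|_{M_n(\CZGD)}\,\|[\nu_{kl}]\|_{S^n_1(\Ljd)}$ (the trace-class $n\times n$ matrices over $\Ljd$) arbitrarily close to $\|w\|_\wedge$; the corresponding finite-dimensional $x,\omega$ then satisfy $\Psi_x(\omega)=w$. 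For general $w$ I first split $w=\sum_m u_m$ into finite-rank pieces with $\sum_m\|u_m\|_\wedge<\infty$, realise $u_m=\Psi_{x_m}(\omega_m)$ over finite-dimensional $\msf H_m$ with $\sum_m\|x_m\|\,\|\omega_m\|<\infty$, and after a standard rescaling $x_m\mapsto t_m x_m$, $\omega_m\mapsto t_m^{-1}\omega_m$ arrange that $\|x_m\|\to0$ while $\sum_m\|\omega_m\|<\infty$. Assembling block-diagonally on the separable space $\msf H=\bigoplus_m\msf H_m$, the operator $x=\bigoplus_m x_m$ lies in $\CZGD\otimes\mc K(\msf H)$ because $\|x_m\|\to0$, the sum $\omega=\sum_m\omega_m$ converges in $\Ljd\wh\otimes\B(\msf H)_*$ because the corner inclusions $\B(\msf H_m)_*\hookrightarrow\B(\msf H)_*$ are completely isometric, and block-orthogonality gives $\Psi_x(\omega)=\sum_m u_m=w$; thus $\Omega_{x,\omega}$ is the functional associated to $w$.

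The main obstacle is the norm bookkeeping in this last reduction. The representation provided by \cite{EffrosRuan} naturally produces only a \emph{bounded} matrix $[b_{ij}]$, whereas the statement demands $x\in\CZGD\otimes\mc K(\msf H)$ genuinely \emph{compact}, and it demands convergence of the assembled $\omega$ in the projective norm rather than merely weakly; both are secured by the summable block-diagonal assembly together with the rescaling forcing $\|x_m\|\to0$. The point that must be verified with care is that it is the trace pairing $\sum_{i,j}b_{ij}\otimes\nu_{ji}$, and not the naive pairing $\sum_{i,j}b_{ij}\otimes\nu_{ij}$, that arises from $\Psi_x$: only the former carries no dimension-dependent constant, so that the norm estimates pass to the limit over infinite-dimensional $\msf H$.
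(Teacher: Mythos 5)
Your proof is correct, but both halves run differently from the paper's. For membership of $\Omega_{x,\omega}$ in $Q^l(\A(\GG))$ the paper never touches Theorem~\ref{thm5}: it approximates $x$ and $\omega$ by elementary tensors, collapses the $\mc{K}(\msf{H})$--$\B(\msf{H})_*$ pairing to reduce to $\msf{H}=\CC$ with $x=(\id\otimes\theta)\ww^{\whG}$, $\theta\in\Lj$, and then computes $\Omega_{x,\omega}=\alpha^l(\wh{\lambda}(\omega)\theta)$, so membership follows from the very definition of $Q^l(\A(\GG))$ as $\ov{\alpha^l(\Lj)}$. Your route through Theorem~\ref{thm5} and the slice map $\Psi_x$ is more structural and has the advantage of unifying the two halves, since the converse becomes exactly the question of surjectivity of $(x,\omega)\mapsto\Psi_x(\omega)$ onto $\CZGD\wh{\otimes}\Ljd$ modulo the pre-annihilator. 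For the converse the paper appeals to the infinite-matrix factorisation \cite[Theorem 10.2.1]{EffrosRuan}: the representing tensor is written as $\alpha(\beta\otimes\gamma)\alpha'$ with $\gamma\in\msf{K}_\infty(\CZGD)$, so compactness of $x=\gamma$ comes for free, and $\omega=\alpha(\beta\otimes[e_{j,i}]_{i,j})\alpha'$ uses the same transposed (trace) pairing you isolate. You trade that theorem for the finite-matrix description of the projective norm plus a block-diagonal $\ell^1$-assembly with rescaling; this is more elementary and does work, but two steps deserve to be written out in full: (i) the finite trace-form formula (representations $u=\sum_{i,j}b_{ij}\otimes\nu_{ji}$ with $\|[b_{ij}]\|\,\|[\nu_{kl}]\|$ arbitrarily close to $\|u\|_{\wedge}$) is not stated verbatim in \cite{EffrosRuan}; it follows from the Chapter 7 definition $u=\alpha(v\otimes w)\beta$ by absorbing the scalar row $\alpha$ and column $\beta$ into the trace-class leg, using that the matrix pairing $[\la c_{ij},w_{kl}\ra]$ is contractive; (ii) the rescaling: from $\sum_m d_m<\infty$ with $d_m=\|x_m\|\,\|\omega_m\|$ take $\eps_m=(\sum_{k\geq m}d_k)^{1/2}$, so that $\eps_m\to 0$ and $\sum_m d_m/\eps_m\leq 2\eps_1<\infty$. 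Your closing remark is exactly right and matches the paper's choice of $[e_{j,i}]$ rather than $[e_{i,j}]$: with the untransposed pairing the transpose map on the trace-class factor intervenes, and its cb-norm grows with the dimension, so the estimates would not pass to infinite $\msf{H}$.
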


This result was recorded without proof in \cite[Proposition 3.2]{Crann}. In the classical context of locally compact groups, an analogous result was proved by Haagerup and Kraus in \cite[Proposition 1.5]{HaagerupKraus}. For the convenience of the reader, we give a proof using Theorem \ref{thm5}.

\begin{proof}
We first show that $\Omega_{x,\omega}$ is a member of $Q^l(\A(\GG))$. As $Q^l(\A(\GG))\subseteq \M^l_{cb}(\A(\GG))^*$ is norm closed, by first approximating $x,\omega$ by sums of elementary tensors, and then collapsing the pairing between $\mc{K}(\msf{H})$ and $\B(\msf{H})_*$, we reduce the problem to the case when $\msf{H} = \mathbb C$, when $x = (\id\otimes\theta){\ww}^{\whG}$ for some $\theta\in \Lj$, and when $\omega\in \Ljd$.  We then calculate
\[
\la \Theta^l(a) \bigl( (\id\otimes\theta){\ww}^{\whG}\bigr) , \omega\ra=
\la \wh{\lambda}(\omega\circ \Theta^l(a)),\theta \ra =
\la a \wh{\lambda}(\omega),\theta \ra =
\la a , \wh{\lambda}(\omega)\theta\ra\quad(a\in \M^l_{cb}(\A(\GG))),
\]
which shows that $\Omega_{x,\omega} = \alpha^l(\wh{\lambda}(\omega)\theta) \in Q^l(\A(\GG))$, as required.

Now, take any functional in $Q^l(\A(\GG))$, which by Theorem \ref{thm5} is represented by some element $\rho\in \Ljd\wh{\otimes}\CZGD$ (note that the projective operator space tensor product is symmetric). By \cite[Theorem 10.2.1]{EffrosRuan}, we can find infinite matrices
\[
\alpha\in \M_{1,\infty\times \infty},\; \beta\in \msf{K}_{\infty}(\Ljd),\;\gamma\in \msf{K}_\infty(\CZGD),\;
\alpha'\in \M_{\infty\times \infty,1}
\]
such that $\rho=\alpha(\beta\otimes \gamma)\alpha'$ (for the introduction to infinite matrices with entries in an operator space, see \cite[Sections 10.1, 10.2]{EffrosRuan}). Writing $\alpha=[\alpha_{1,(i,j)}]_{(i,j)\in \NN^2}$ etc., this means that 
\begin{equation}\label{eq14}
\la \Theta^l(a) ,\rho\ra=
\sum_{i,j,k,l=1}^{\infty} \alpha_{1,(i,j)} \la \Theta^l(a)( \gamma_{j,l}) , \beta_{i,k}\ra \alpha'_{(k,l),1}\qquad(a\in \M^l_{cb}(\A(\GG))).
\end{equation}
Let $\msf{H}$ be an infinite dimensional, separable Hilbert space with orthonormal basis $\{e_n\}_{n=1}^{\infty}$ and let $e_{i,j}\,(i,j\in\NN)$ be the corresponding rank one operators. Write $T(\msf{H})$ for the operator space of trace class operators, identified in a completely isometric way with $\B(\msf{H})_*$. For any $n\in\NN$ we have $\| [e_{j,i}]_{i,j=1}^{n}\|_{\M_n(T(\msf{H}))}=1$. Indeed, the matrix $[e_{j,i}]_{i,j=1}^{n}$ corresponds to the map $E_n\in \CB(\B(\msf{H}), \M_n)$ given by $E_n(x)=[ \Tr(e_{j,i} x)]_{i,j=1}^{n}$. If we denote by $V_n\colon \CC^n\rightarrow \msf{H}$ the canonical inclusion associated with the choice of basis, one easily sees that $E_n(x)=V_n^* x V_n\,(x\in \B(\msf{H}))$ and $\|E_n\|_{cb}=1$ follows. Consequently $[e_{j,i}]_{i,j=1}^{\infty}$ is a well defined matrix in $\M_\infty( T(\msf{H}))$. Finally, define
\[
\omega=\alpha(\beta\otimes [e_{j,i}]_{i,j=1}^{\infty})\alpha'\in \Ljd\wh{\otimes} T(\msf{H})=
\Ljd\wh\otimes \B(\msf{H})_*.
\]
A choice of basis gives us an isomorphism $\msf{H}\cong \ell^2$ and consequently we can consider $\gamma$ as an element of $\CZGD\otimes \mc{K}(\msf{H})$ (\cite[Equation 10.1.2]{EffrosRuan}). Finally, using equation \eqref{eq14} we can show that the functional associated to $\rho$ is of the form $\Omega_{\gamma,\omega}$.  Indeed, we have
\[\begin{split}
\la a , \Omega_{\gamma,\omega}\ra =
\la (\Theta^l(a)\otimes \id)\gamma,\omega \ra &=
\sum_{i,j,k,l=1}^{\infty}
\alpha_{1,(i,j)}
\la (\Theta^l(a)\otimes\id)\gamma,
\beta_{i,k}\otimes e_{l,j}\ra \alpha'_{(k,l),1}\\
&=
\sum_{i,j,k,l=1}^{\infty} \alpha_{1,(i,j)} \la \Theta^l(a)( \gamma_{j,l}) , \beta_{i,k}\ra \alpha'_{(k,l),1}
\end{split}\]
for any $a\in \M^l_{cb}(\A(\GG))$.
\end{proof}

\subsection{Viewing multipliers as bimodule maps}\label{section isomorphism}

In this section we provide another way of looking at CB multipliers and the associated weak$^*$-topology which will be useful in later considerations.

Let us first introduce some terminology. As usual, let $\CB^\sigma(\B(\LdG))$ be the space of normal CB maps on $\B(\LdG)$.  Observe $\CB^\sigma(\B(\LdG)) \cong \CB( \mc{K}(\LdG), \B(\LdG) )$. Indeed, any normal CB map on $\B(\LdG)$ can be restricted to $\mc{K}(\LdG)$ without changing its norm, and conversely, since $\mc{K}(\LdG)^{**}\simeq \B(\LdG)$, any CB map $\mc{K}(\LdG)\rightarrow \B(\LdG)$ uniquely extends to a normal CB map on $\B(\LdG)$. $\CB^\sigma(\B(\LdG))$ is an operator space which is equipped with the weak$^*$-topology given by the predual $\mc{K}(\LdG)\wh{\otimes} \B(\LdG)_*$. Via left and right multiplication, $\B(\LdG)$ becomes a $\Linf'$-bimodule, hence we can consider normal CB bimodule maps on $\B(\LdG)$. We can also look at those maps which leave $\Linfd\subseteq \B(\LdG)$ globally invariant. We will denote the set of CB normal $\Linf'$-bimodule maps on $\B(\LdG)$ which leave $\Linfd$ globally invariant by $\prescript{}{\Linf'}{\CB}_{\Linf'}^{\sigma,\Linfd}(\B(\LdG))$. One easily checks that this space is weak$^*$-closed 
in $\CB^\sigma(\B(\LdG))$, hence it naturally inherits an operator space structure and a weak$^*$-topology.

According to \cite[Theorem 4.5]{JungeNeufangRuan} (and \cite[Proposition 3.3]{DawsCPMults} for the left version), for any $a\in \M^l_{cb}(\A(\GG))$ there exists a unique map $\Phi(a)\in \prescript{}{\Linf'}{\CB}_{\Linf'}^{\sigma,\Linfd}(\B(\LdG))$ which extends $\Theta^l(a)\in \CB^\sigma(\Linfd)$. This map satisfies
\[
\I\otimes \Phi(a)(x)=
{\ww}^{\whG}\bigl(
(\Theta^l(a)\otimes \id)({\ww}^{\whG *}(\I\otimes x){\ww}^{\whG})\bigr)
{\ww}^{\whG *}\qquad(x\in \B(\LdG)).
\]
Furthermore, the resulting map
\begin{equation}\label{eq15}
\M^l_{cb}(\A(\GG))\ni a \mapsto
\Phi(a)\in
\prescript{}{\Linf'}{\CB}_{\Linf'}^{\sigma,\Linfd}(\B(\LdG))
\end{equation}
is a completely isometric isomorphism which is additionally a weak$^*$-homeomorphism (\cite[Theorem 6.2]{DawsCPMults}).

When $a$ arises from an element of the Fourier algebra, $\Phi(a)$ takes a special form.

\begin{lemma}\label{lemma9}
For $\omega\in\LL^1(\whG)$ let $a = \wh{\lambda}(\omega)\in \A(\GG)$, so that $\Theta^l(a) = (\omega\otimes\id)\wh{\Delta}$.  The associated map $\Phi(a)$ is
\[
\Phi(a)\colon \B(\LL^{2}(\GG))\ni x\mapsto (\omega\otimes\id)(\ww^{\whG *}(\I\otimes x)\ww^{\whG})\in \B(\LL^{2}(\GG)).
\]
\end{lemma}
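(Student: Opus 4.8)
The plan is to invoke the uniqueness built into the characterisation of $\Phi(a)$ recalled just before the statement: it suffices to produce a map in $\prescript{}{\Linf'}{\CB}_{\Linf'}^{\sigma,\Linfd}(\B(\LdG))$ that extends $\Theta^l(a)$ and is given by the claimed formula. Accordingly I would define the candidate
\[
\Psi\colon \B(\LdG)\ni x\mapsto (\omega\otimes\id)\bigl(\ww^{\whG *}(\I\otimes x)\ww^{\whG}\bigr)\in\B(\LdG),
\]
and verify the required properties, concluding $\Psi=\Phi(a)$ by uniqueness.

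First I would check that $\Psi$ extends $\Theta^l(a)$. Since $\ww^{\whG}\in\Linfd\bar\otimes\Linf$, for any $x\in\B(\LdG)$ the element $\ww^{\whG *}(\I\otimes x)\ww^{\whG}$ lies in $\Linfd\bar\otimes\B(\LdG)$, so the slice $(\omega\otimes\id)$ is well defined and intrinsic to $\omega\in\Ljd$. For $y\in\Linfd$ we have $\whD(y)=\ww^{\whG *}(\I\otimes y)\ww^{\whG}$, whence $\Psi(y)=(\omega\otimes\id)\whD(y)=\Theta^l(a)(y)$; in particular $\Psi$ leaves $\Linfd$ globally invariant. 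Normality and complete boundedness are immediate, as $\Psi$ is the composition of conjugation by the fixed unitary $\ww^{\whG}$ (a normal complete isometry) with the normal slice map $(\omega\otimes\id)$, of cb-norm at most $\|\omega\|$.

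The one substantive point is the $\Linf'$-bimodule property, and the key fact is precisely that the second leg of $\ww^{\whG}$ lies in $\Linf$: hence $\I\otimes x'$ commutes with both $\ww^{\whG}$ and $\ww^{\whG *}$ for every $x'\in\Linf'$. Combining this with the elementary module identities for the slice map in its second leg, namely $(\omega\otimes\id)\bigl((\I\otimes x')z\bigr)=x'(\omega\otimes\id)(z)$ and $(\omega\otimes\id)\bigl(z(\I\otimes x')\bigr)=(\omega\otimes\id)(z)x'$, a short computation gives $\Psi(x'x)=x'\Psi(x)$ and $\Psi(xx')=\Psi(x)x'$ for $x'\in\Linf'$, $x\in\B(\LdG)$. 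Thus $\Psi\in\prescript{}{\Linf'}{\CB}_{\Linf'}^{\sigma,\Linfd}(\B(\LdG))$ extends $\Theta^l(a)$, and uniqueness of such an extension forces $\Psi=\Phi(a)$.

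I expect no genuine obstacle: the whole argument is a verification against the defining properties of $\Phi(a)$, and the only place requiring care is keeping track of which leg of $\ww^{\whG}$ sits in $\Linf$ versus $\Linfd$. One could instead verify the explicit intertwining identity $\I\otimes\Psi(x)=\ww^{\whG}\bigl((\Theta^l(a)\otimes\id)(\ww^{\whG *}(\I\otimes x)\ww^{\whG})\bigr)\ww^{\whG *}$ directly, but that route invokes the pentagon relation and is messier than appealing to uniqueness.
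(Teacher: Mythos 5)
Your proof is correct, but it takes a genuinely different route from the paper's. The paper proves the lemma by direct computation of the intertwining identity displayed just before the statement: substituting $\Theta^l(a)=(\omega\otimes\id)\wh\Delta$ into $\I\otimes\Phi(a)(x)=\ww^{\whG}\bigl((\Theta^l(a)\otimes\id)(\ww^{\whG *}(\I\otimes x)\ww^{\whG})\bigr)\ww^{\whG *}$ and using $(\wh\Delta\otimes\id)(\ww^{\whG})=\ww^{\whG}_{13}\ww^{\whG}_{23}$, the $\ww^{\whG}_{23}$-factors commute past the slice and the outer unitaries cancel, leaving the claimed formula. You instead define the candidate $\Psi$ by the claimed formula and identify it with $\Phi(a)$ via the uniqueness clause of the Junge--Neufang--Ruan/Daws characterisation \cite{JungeNeufangRuan}, \cite{DawsCPMults}, checking that $\Psi$ is a normal CB $\Linf'$-bimodule map leaving $\Linfd$ invariant and restricting to $\Theta^l(a)$; your two key observations --- that $\Psi|_{\Linfd}=(\omega\otimes\id)\wh\Delta$ because $\wh\Delta(y)=\ww^{\whG *}(\I\otimes y)\ww^{\whG}$, and that $\I\otimes\Linf'$ commutes with both legs of $\ww^{\whG}\in\Linfd\bar\otimes\Linf$ --- are both correct, as is the point that $\ww^{\whG *}(\I\otimes x)\ww^{\whG}\in\Linfd\bar\otimes\B(\LdG)$ so the slice by $\omega\in\Ljd$ is intrinsic. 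Since both characterisations of $\Phi(a)$ (unique bimodule extension, and the explicit intertwining formula) are quoted in Section~\ref{section isomorphism}, either route is legitimate. Yours buys a computation-free argument: no pentagon-type relation is needed, only bookkeeping of which leg of $\ww^{\whG}$ sits in $\Linf$ versus $\Linfd$, and it makes transparent why the bimodule property holds. The paper's computation buys independence from the abstract uniqueness statement --- it uses only the displayed formula --- and is the pattern that extends verbatim to arbitrary Fourier--Stieltjes elements, as remarked after the lemma.
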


A similar formula holds for an arbitrary element of the Fourier-Stieltjes algebra, see the discussion after Proposition 4.1 in \cite{DawsCPMults}.
 
\begin{proof}
As before, the left centraliser associated with $a$ is simply left multiplication by $\omega$, and so $\Theta^l(a)$ has the given form. Then for $x\in\B(\LL^2(\GG))$, using that $(\wh\Delta\otimes\id)(\ww^{\whG}) = \ww^{\whG}_{13}\ww^{\whG}_{23}$,
\begin{align*}
\I\otimes\Phi(a)(x) &= \ww^{\whG}\bigl( ((\omega\otimes\id)\wh{\Delta}\otimes\id)(\ww^{\whG *}(\I\otimes x)\ww^{\whG}) \bigr)\ww^{\whG *} \\
&= \ww^{\whG}\bigl( (\omega\otimes\id\otimes\id)(\ww_{23}^{\whG *} \ww_{13}^{\whG *} (\I\otimes \I\otimes x)\ww^{\whG}_{13} \ww^{\whG}_{23}) \bigr)\ww^{\whG *} \\
&= \ww^{\whG}\ww^{\whG *} (\omega\otimes\id\otimes\id)(\ww_{13}^{\whG *}(\I\otimes \I\otimes x) \ww_{13}^{\whG}) \ww^{\whG} \ww^{\whG *} \\
&= \I\otimes (\omega\otimes\id)(\ww^{\whG *}(\I\otimes x)\ww^{\whG}),
\end{align*}
and so $\Phi(a)$ has indeed the claimed form.
\end{proof}

\section{The approximation property} \label{section ap}

We define the approximation property for a locally compact quantum group $\GG$ (abbreviated AP) in a way completely analogous to the definition of AP for locally compact groups by Haagerup and Kraus in \cite{HaagerupKraus}. Recall that we fix the predual $Q^l(\A(\GG))$ of $\M^l_{cb}(\A(\GG))$, and we always refer to the corresponding weak$^*$-topology on $\M^l_{cb}(\A(\GG))$.

\begin{definition}\label{def1}
We say that a locally compact quantum group $\GG$ has the \emph{approximation property (AP)} if there is a net $(a_i)_{i\in I}$ in $\A(\GG)$ which converges to $\I$ in the weak$^*$-topology of $\M^l_{cb}(\A(\GG))$.
\end{definition}

\begin{remark}$ $
\begin{itemize}
\item We could call the above property ``left AP'' and introduce also a right variant of AP. However, in Proposition \ref{prop14} we will show that these properties are equivalent, so that
there is no need to distinguish between them.
\item A variant of AP was considered by Kraus-Ruan \cite{KrausRuan} (for Kac algebras) and Crann in \cite{CRANN_inneramenability}. Their property is a priori stronger, but in Theorem \ref{thm6} we show that this variant is in fact equivalent to Definition \ref{def1}. This proves a conjecture by Kraus-Ruan \cite[Remark 4.2]{KrausRuan}.
\end{itemize}
\end{remark}

Let us list some examples and counter-examples:

\begin{itemize}
\item In Section \ref{section relation} we show weak amenability implies AP, therefore all compact quantum groups and the discrete quantum groups $\wh{O_F^+}, \wh{U_F^+}$ have AP (\cite{Freslon, DFY_CCAP}). Furthermore, the locally compact quantum group $\SU_q(1,1)_{ext}$ has AP, see \cite{Caspers}.
\item Permanence properties of AP with respect to quantum subgroups (Theorem \ref{thm2}), direct products (Proposition \ref{prop9}), free products (Theorem \ref{thm1}) and the 
Drinfeld double construction (Theorem \ref{thm3}) allow us to construct examples with and without AP. For instance, the Drinfeld double of $\oon{SL}(3,\RR)$, or of any classical 
locally compact group without AP, see \cite{LafforgueDeLaSalle}, gives rise to non-classical quantum groups without AP.  Similarly, $\oon{SL}(3,\ZZ)\star \widehat{\oon{SU}_q(2)}$ is a quantum group without AP, as AP passes to quantum subgroups.
\item Examples of quantum groups with AP similarly arise.  As AP passes to free products, 
$\oon{SL}(2,\ZZ)\star\widehat{\oon{SU}_q(2)}$ is a quantum group with AP.  Similarly, $D(\oon{SL}(2,\RR))^{\wedge}$, the dual of the quantum double of $\oon{SL}(2,\RR)$, has AP (Theorem~\ref{thm3}).
\item The paper \cite{FimaMukherjeePatri} by Fima, Mukherjee and Patri makes a careful study of compact bicrossed products, and in particular, \cite[Theorem 6.7]{FimaMukherjeePatri} provides estimates on the Cowling--Haagerup constant (see Definition~\ref{defotherap}) of the resulting discrete quantum groups.  Using this, if one starts with a discrete group $\Gamma$ with AP and with $\Lambda_{cb}(\Gamma)\ge n$ (see for example \cite[Theorem~12.3.8]{BrownOzawa} and references therein) then one can construct a non-classical discrete quantum group with AP but with Cowling--Haagerup constant $\ge n$. Then by taking infinite products we obtain a discrete quantum group which has AP but which is not weakly amenable.
\item We leave as a question whether the Drinfeld double of $\SU_q(3)$ might be an example of a locally compact quantum group which does not have AP; for more see Remark~\ref{rem:suq3}.
\end{itemize}

\subsection{Equivalent characterisations} 

We check first that the approximation property is preserved under taking the \emph{commutant} quantum group $\GG'$, or the \emph{opposite} quantum group $\GG^{\oon{op}}$, for definitions see \cite[Section 4]{KVVN}.

\begin{proposition}\label{prop11} 
The following conditions are equivalent:
\begin{enumerate}
\item $\GG$ has AP,
\item $\GG'$ has AP.
\item $\GG^{\oon{op}}$ has AP,
\end{enumerate}
\end{proposition}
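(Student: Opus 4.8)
The plan is to establish both equivalences by analysing how the Fourier algebra, the space of CB multipliers, and the relevant weak$^*$-topology transform under passage to $\GG'$ and $\GG^{\oon{op}}$. The key observation is that the commutant and opposite constructions are essentially implemented by unitary/spatial operations that intertwine the Kac--Takesaki operators, so that the map $\wh{\lambda}(\omega)\mapsto\I$ and the whole multiplier apparatus transport functorially. First I would recall from \cite[Section 4]{KVVN} the explicit relationship between the objects of $\GG$ and those of $\GG'$ and $\GG^{\oon{op}}$: in particular how $\ww^{\GG'}$ and $\ww^{\GG^{\oon{op}}}$ relate to $\ww^{\GG}$ (via the modular conjugations $J_\vp, J_{\hvp}$ and the flip $\chi$), and the induced identifications of $\Linf$, $\Ljd$, and the dual Haar weights. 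The goal is to produce, for each construction, a bijective linear isomorphism $\A(\GG)\to\A(\GG')$ (resp.\ $\A(\GG)\to\A(\GG^{\oon{op}})$) which sends $\I$ to $\I$, and which extends to a weak$^*$-homeomorphism $\M^l_{cb}(\A(\GG))\to\M^l_{cb}(\A(\GG'))$ (resp.\ for the opposite).

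The core of the argument is to show that these transport maps are weak$^*$-homeomorphisms. Here I would use the description of the weak$^*$-topology furnished by Theorem~\ref{thm5}, namely via the embedding of $C^l_{cb}(\Ljd)\cong\M^l_{cb}(\A(\GG))$ into $\CB(\CZGD,\Linfd)$ with its canonical predual-induced topology, rather than trying to work directly with $Q^l(\A(\GG))$. Because the commutant and opposite constructions are implemented spatially and respect the comultiplication up to flips, the centralisers $C^l_{cb}(\Ljd)$ and the module-map conditions of Proposition~\ref{prop13} are carried to the analogous conditions for $\GG'$ (resp.\ $\GG^{\oon{op}}$). Concretely, for the commutant I expect $\wh{R}^{\sim}$-type conjugation (as in Lemma~\ref{lemma19}) to do the work, converting left multipliers on one side to the appropriate multipliers on the other; for the opposite, the flip $\chi$ relating $\ww^{\GG^{\oon{op}}}$ to $\ww^{\GG}$ should induce the transport, swapping left and right structures in a controlled way. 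Once the transport map is shown to intertwine the embeddings into the respective $\CB(\mathrm{C}_0(\cdot),\mathrm{L}^\infty(\cdot))$ spaces and to be weak$^*$-continuous with weak$^*$-continuous inverse, convergence of a net $a_i\to\I$ in one setting immediately transfers to the other.

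I would then conclude by noting the definitional symmetry: since a net in $\A(\GG)$ converging weak$^*$ to $\I$ is mapped to a net in $\A(\GG')$ (resp.\ $\A(\GG^{\oon{op}})$) converging weak$^*$ to $\I$, AP for $\GG$ yields AP for $\GG'$ and for $\GG^{\oon{op}}$, and the inverse transport gives the reverse implications. The main obstacle I anticipate is the bookkeeping of exactly which spatial map implements each construction and verifying that it respects the predual-level structure, i.e.\ that it is genuinely weak$^*$-to-weak$^*$-continuous and not merely a bounded algebra isomorphism; Lemma~\ref{biduallemma} is the natural tool to verify weak$^*$-continuity, by checking that the adjoint of the transport map sends a dense set of predual elements (such as those of the form $\wh{\lambda}(\omega)f$ appearing in the proof of Theorem~\ref{thm5}) back into the relevant predual. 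A secondary subtlety is keeping the normalisation conventions ($\vp=\psi\circ R_{\GG}$, the scaling constant $\nu_{\GG}$) consistent across the two constructions, though this should not affect the topological conclusion. Modulo this identification work, the equivalences follow formally, and one notes that the result parallels the stability of AP under the standard dualities in the classical group case.
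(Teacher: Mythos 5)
Your outline is correct in strategy and, suitably completed, would prove the proposition, but it is organised rather differently from the paper's argument, and the comparison is instructive. The paper never constructs an abstract weak$^*$-homeomorphism of multiplier algebras: starting from a net $(\lambda_{\whG}(\omega_i))_{i\in I}$ witnessing AP, it writes down the candidate nets explicitly, namely $\lambda_{\whG^{\oon{op}}}(\wh{R}_*(\omega_i))$ for $\GG'$ (using $\wh{\GG'}=\whG^{\oon{op}}$) and $\lambda_{\whG'}(\omega_i\circ R^{\sim})$ for $\GG^{\oon{op}}$ (using $\wh{\GG^{\oon{op}}}=\whG'$), and then tests weak$^*$-convergence against an \emph{arbitrary} predual functional. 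Because the approximating net is not assumed bounded, convergence cannot be checked only on a dense subset of the predual; so the paper invokes Proposition~\ref{prop15}, by which every element of $Q^l(\A(\GG'))$ is a stabilised functional $\Omega_{y,\nu}$ with $y\in \mrm{C}_0(\wh{\GG'})\otimes\mc{K}(\msf{H})$, and transports these back to $Q^l(\A(\GG))$. Pushing the antimultiplicative map $\wh{R}$ past the leg $\mc{K}(\msf{H})$ is the one real trick in the paper's proof: one tensors $\wh{R}$ with an antiunitarily implemented antiautomorphism $j$ of $\mc{K}(\msf{H})$, so that $\wh{R}\otimes j$ and $\wh{R}_*\otimes j^*$ are well-defined isometries. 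Your route redistributes this burden. Since your transport map $\alpha$ is norm-bounded (indeed isometric, by Lemma~\ref{lemma18}), Lemma~\ref{biduallemma} legitimately reduces weak$^*$-continuity to checking a \emph{dense} set of predual elements --- for instance $\alpha^l(f)$ with $f\in\LL^1(\GG')$, or the elements used in the proof of Theorem~\ref{thm5} --- which avoids the stabilisation and the antiunitary trick entirely. What you must pay for this is proving beforehand that $\alpha$ really is a bijection of $\M^l_{cb}(\A(\GG))$ onto $\M^l_{cb}(\A(\GG'))$ (respectively $\M^l_{cb}(\A(\GG^{\oon{op}}))$) which carries $\A(\GG)$ onto the other Fourier algebra and fixes $\I$; that is precisely the Lemma~\ref{lemma19}-type computation you defer, and it is where the substance of the proof lies.

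One point of bookkeeping in your sketch is actually off. For $\GG^{\oon{op}}$ the transport is not induced by the flip $\chi$: since $\wh{\GG^{\oon{op}}}=\whG'$, the correct spatial map is the extension $R^{\sim}=J_{\hvp}(\cdot)^*J_{\hvp}$ of the unitary antipode of $\GG$, which maps $\Linfd$ onto $\Linfd'$ and satisfies $\Delta_{\whG'}=(R^{\sim}\otimes R^{\sim})\circ\Delta_{\whG}\circ R^{\sim}$, with no flip occurring. The flip enters in the \emph{commutant} case instead, through $\wh{\GG'}=\whG^{\oon{op}}$ and $\Delta_{\whG^{\oon{op}}}=\chi\Delta_{\whG}$, where it is absorbed by the antipode relation $\Delta_{\whG}\wh{R}=\chi(\wh{R}\otimes\wh{R})\Delta_{\whG}$. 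With this correction, and with the converse implications obtained from $(\GG')'=\GG$ and $(\GG^{\oon{op}})^{\oon{op}}=\GG$ exactly as you say, your plan goes through.
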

\begin{proof}
Assume that $\GG$ has AP, i.e.~ there is a net $(\omega_i)_{i\in I}$ in $\LL^1(\whG)$ such that $\lambda_{\whG}(\omega_i) \xrightarrow[i\in I]{} \I$ weak$^*$ in $\M_{cb}^l(\A(\GG))$.

First we will prove that $\GG'$ has AP. Recall that $\wh{\GG'}=\wh{\GG}^{\oon {op}}$ (\cite[Proposition 4.2]{KVVN}). Using Proposition \ref{prop15}, choose an arbitrary functional $\Omega_{y,\nu}\in Q^l(\A(\GG'))$ where $\msf{H}$ is a separable Hilbert space, $y\in \mrm{C}_0(\whG)\otimes\mc{K}(\msf{H})$ and $\nu \in \LL^1(\whG) \wh\otimes \B(\msf{H})_*$. Pick some selfadjoint antiunitary $\mc{J}$ on $\msf{H}$ (for example, pick an orthonormal basis and let $\mc{J}$ be coordinate-wise complex conjugation) and define $j:\mc{K}(\msf{H}) \rightarrow\mc{K}(\msf{H})$ by $j(x) = \mc{J}x^*\mc{J}$, an $\star$-antihomomorphism with $j^2=\id$.  Then $\wh{R}\otimes j$ is a well-defined bounded map on the spatial tensor product $\mrm{C}_0(\whG)\otimes\mc{K}(\msf{H})$, and $\wh{R}_* \otimes j^*$ is well-defined on $\LL^1(\whG) \wh\otimes \B(\msf{H})_*$. Indeed, $\wh{R}\otimes j$ acts via $(\wh{R}\otimes j)(X)=(J_{\vp} \otimes \mc{J})X^*(J_{\vp}\otimes \mc{J})$ for $X\in \mrm{C}_0(\whG)\otimes \mc{K}(\msf{H})$ and then we can define $\wh{R}_*\otimes j^*$ as the restriction of $(\wh{R}\otimes j)^*$ to $\LL^1(\whG)\wh\otimes \B(\msf{H})_*$: $(\wh{R}\otimes j)^*$ preserves this subspace as $(\wh{R}\otimes j)^*(\omega_{\xi\otimes \eta})=\omega_{J_\vp \xi\otimes \mc{J}\eta}$ for $\xi\in \LdG,\eta\in \msf{H}$. Since $\LL^1(\whG)\wh\otimes \mc{K}(\msf{H})^*\subseteq (\mrm{C}_0(\whG)\otimes\mc{K}(\msf{H}))^*$ is closed, the claim follows. Furthermore, both these maps are isometric bijections.

Set $x = (\wh{R}\otimes j)(y), \omega = (\wh{R}_*\otimes j^*)(\nu)$. Then using Lemma \ref{lemma9}
\begin{align*}
\la \lambda_{\whG}(\omega_i),\Omega_{x,\omega}\ra 
&= \langle ((\omega_i\otimes\id)\Delta_{\whG}\otimes\id)(\wh{R}\otimes j)(y), (\wh{R}_*\otimes j^*)(\nu) \rangle \\
&= \langle (\wh{R}(\omega_i\otimes\id)\Delta_{\whG}\wh{R}\otimes\id) (y), \nu \rangle
= \langle ((\id\otimes \wh{R}_*(\omega_i))\Delta_{\whG} \otimes\id) (y), \nu \rangle \\
&= \langle ((\wh{R}_*(\omega_i) \otimes \id)\Delta_{\whG^{\oon{op}}} \otimes\id) (y), \nu \rangle
= \la \lambda_{\whG^{\oon{op}}}(\wh{R}_*(\omega_i)) , \Omega_{y,\nu} \ra 
\end{align*}
and since $\lambda_{\whG}(\omega_i) \xrightarrow[i\in I]{}\I$ weak$^*$, we conclude $\lambda_{\whG^{\oon{op}}}(\wh{R}_*(\omega_i)) \xrightarrow[i\in I]{}\I$ weak$^*$. 
This shows that $\GG'$ has AP.

Next we prove that $\GG^{\oon{op}}$ has AP. By \cite[Proposition 4.2]{KVVN} we have $\wh{\GG^{\oon{op}}}=\wh{\GG}'$. Write $R^{\sim}$ for the extension of the unitary antipode on $\GG$, $\B(\LdG)\ni x\mapsto J_{\hvp} x^* J_{\hvp} \in \B(\LdG)$, so that $\tilde{\omega}_i=\omega_i\circ R^{\sim}\in \LL^1({\whG}')$. We claim that the net $(\lambda_{\whG'}(\omega_i\circ R^{\sim}))_{i\in I}$ converges weak$^*$ to $\I$ in $\M^l_{cb}(\A(\GG^{\oon{op}}))$. Take $z\in \mrm{C}_0(\whG') \otimes \mc{K}(\msf{H}), \theta\in \LL^1(\whG')\wh{\otimes}\B(\msf{H})_*$. Recall that $\mrm{C}_0(\whG')=J_{\hvp} \mrm{C}_0(\whG)J_{\hvp}$ and 
\[\begin{split}
\Delta_{\whG'}\colon \Linfd'\ni x \mapsto
&(J_{\hvp}\otimes J_{\hvp})\Delta_{\whG}( J_{\hvp}xJ_{\hvp})
(J_{\hvp}\otimes J_{\hvp})\\
=&
(R^{\sim}\otimes R^{\sim})\Delta_{\whG}(R^{\sim}(x))\in
\Linfd'\bar{\otimes}\Linfd'.
\end{split}\]
Using this, we obtain
\[\begin{split}
&\quad\;
\la \lambda_{\whG'}(\omega_i\circ R^{\sim}) , \Omega_{z,\theta}\ra=
\la (\Theta^l(\lambda_{\whG'}(\omega_i \circ R^{\sim}))\otimes\id)
z,\theta \ra =
\la
((\omega_i\circ R^{\sim}\otimes \id)\Delta_{\whG'}\otimes \id)z,\theta\ra \\
&=
\la (R^{\sim}\otimes j)\bigl((\omega_i \otimes \id)\Delta_{\whG}\otimes \id\bigr)(R^{\sim}\otimes j)z,\theta\ra =
\la \lambda_{\whG}(\omega_i),\Omega_{(R^{\sim}\otimes j)z,\theta\circ (R^{\sim}\otimes j)}\ra\\
&\xrightarrow[i\in I]{}
\la \I, \Omega_{(R^{\sim}\otimes j)z,\theta\circ (R^{\sim}\otimes j)}\ra =
\la z,\theta\ra = \la \I,\Omega_{z,\theta}\ra
\end{split}\]
and thus $\GG^{\oon{op}}$ has AP.   The converse implications follow since $(\GG')'=\GG$ and $(\GG^{\oon{op}})^{\oon{op}}=\GG$.
\end{proof}

The next result shows that the version of the approximation property considered in \cite{KrausRuan} and \cite{CRANN_inneramenability} is equivalent to AP as defined in Definition~\ref{def1}.  Both \cite[Definition~4.1]{KrausRuan} and \cite[Page~1728]{CRANN_inneramenability} take condition (\ref{thm6:2}) of the following theorem as their definition of AP.

\begin{theorem}\label{thm6}
The following conditions are equivalent:
\begin{enumerate}
\item\label{thm6:1} $\GG$ has AP,
\item\label{thm6:2} there is a net $(a_i)_{i\in I}$ in the Fourier algebra $\A(\GG)$, such that the corresponding net $(\Theta^l(a_i))_{i\in I}$ converges to the identity in the stable point-weak$^*$-topology of $\CB^\sigma(\Linfd)$.
\end{enumerate}
\end{theorem}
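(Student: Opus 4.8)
The plan is to show that the weak$^*$-topology on $\M^l_{cb}(\A(\GG))$ coming from $Q^l(\A(\GG))$ and the stable point-weak$^*$-topology, pulled back along $\Theta^l$, induce the same convergent nets with limit $\I$. I would treat the two implications separately, the implication (\ref{thm6:1})$\Rightarrow$(\ref{thm6:2}) being the substantial one.

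For (\ref{thm6:2})$\Rightarrow$(\ref{thm6:1}) I argue directly from Proposition~\ref{prop15}. Every functional in $Q^l(\A(\GG))$ has the form $\Omega_{x,\omega}$ for a separable Hilbert space $\msf{H}$, $x\in\CZGD\otimes\mc{K}(\msf{H})$ and $\omega\in\Ljd\wh\otimes\B(\msf{H})_*$. Since $x\in\CZGD\otimes\mc{K}(\msf{H})\subseteq\Linfd\bar\otimes\B(\msf{H})$, the assumed stable point-weak$^*$ convergence gives $(\Theta^l(a_i)\otimes\id)x\to x$ weak$^*$ in $\Linfd\bar\otimes\B(\msf{H})$, whence $\la a_i,\Omega_{x,\omega}\ra=\la(\Theta^l(a_i)\otimes\id)x,\omega\ra\to\la x,\omega\ra=\la\I,\Omega_{x,\omega}\ra$, using $\Theta^l(\I)=\id$. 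As this holds for every element of $Q^l(\A(\GG))$, we conclude $a_i\to\I$ weak$^*$, i.e.\ (\ref{thm6:1}).

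For (\ref{thm6:1})$\Rightarrow$(\ref{thm6:2}) I would route the argument through the completely isometric weak$^*$-homeomorphism $\Phi$ of \eqref{eq15} onto $\prescript{}{\Linf'}{\CB}_{\Linf'}^{\sigma,\Linfd}(\B(\LdG))$. Since $\Phi$ is a weak$^*$-homeomorphism (the source carrying the $Q^l(\A(\GG))$-topology, the target the topology inherited from $\CB^\sigma(\B(\LdG))$ with its predual $\mc{K}(\LdG)\wh\otimes\B(\LdG)_*$), condition (\ref{thm6:1}) is equivalent to $\Phi(a_i)\to\id$ weak$^*$ in $\CB^\sigma(\B(\LdG))$. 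The key step is then to identify this weak$^*$-topology with the stable point-weak$^*$-topology on $\B(\LdG)$: a net $\Psi_i\to\Psi$ converges against $\mc{K}(\LdG)\wh\otimes\B(\LdG)_*$ if and only if $(\Psi_i\otimes\id)x\to(\Psi\otimes\id)x$ weak$^*$ for every separable $\msf{H}$ and $x\in\B(\LdG)\bar\otimes\B(\msf{H})$. Granting this, $\Phi(a_i)\to\id$ in the stable point-weak$^*$-topology on $\B(\LdG)$; restricting the test elements to $x\in\Linfd\bar\otimes\B(\msf{H})$ and using $\Phi(a_i)\restriction_{\Linfd}=\Theta^l(a_i)$ together with the invariance of $\Linfd$ under $\Theta^l(a_i)$, we obtain $(\Theta^l(a_i)\otimes\id)x\to x$ weak$^*$ in $\Linfd\bar\otimes\B(\msf{H})$ (as $\Linfd\bar\otimes\B(\msf{H})$ is weak$^*$-closed in $\B(\LdG)\bar\otimes\B(\msf{H})$), which is exactly (\ref{thm6:2}).

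The main obstacle is the identification, in the previous paragraph, of the weak$^*$-topology of $\CB^\sigma(\B(\LdG))$ given by the predual $\mc{K}(\LdG)\wh\otimes\B(\LdG)_*$ with the stable point-weak$^*$-topology. The point is that the operator space projective tensor product already encodes all matrix amplifications, so that testing against a single element $\sum_n k_n\otimes\mu_n$ of the predual can be repackaged, using a separable auxiliary $\msf{H}=\ell^2$, as testing an amplification $(\Psi_i\otimes\id)x$ against a functional in $(\B(\LdG)\bar\otimes\B(\msf{H}))_*=\B(\LdG)_*\wh\otimes\B(\msf{H})_*$, and conversely every stable test arises this way. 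Crucially, this equivalence requires \emph{no} a priori norm bound on the net --- which is essential, since the AP net is not assumed bounded --- because the completeness and stability information is carried entirely by the operator space projective predual rather than by uniform boundedness; this is precisely why a naive attempt to deduce (\ref{thm6:2}) from Proposition~\ref{prop15} alone fails, as one cannot extend convergence from $\CZGD\otimes\mc{K}(\msf{H})$ to all of $\Linfd\bar\otimes\B(\msf{H})$ without such a bound. This is the quantum-group analogue of the reasoning underlying the Haagerup--Kraus theory \cite{HaagerupKraus}; one must also observe that the countable sums defining elements of $\mc{K}(\LdG)\wh\otimes\B(\LdG)_*$ are compatible with the restriction to separable $\msf{H}$ in the definition of the stable point-weak$^*$-topology, which is automatic since normal functionals are supported on separable subspaces.
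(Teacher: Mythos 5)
Your proof of (\ref{thm6:2})$\Rightarrow$(\ref{thm6:1}) is correct and is exactly the paper's argument: by Proposition~\ref{prop15} every functional in $Q^l(\A(\GG))$ has the form $\Omega_{x,\omega}$ with $x\in\CZGD\otimes\mc{K}(\msf{H})\subseteq\Linfd\bar\otimes\B(\msf{H})$, so stable point-weak$^*$ convergence of $(\Theta^l(a_i))_{i\in I}$ to the identity immediately gives $a_i\to\I$ weak$^*$.

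The implication (\ref{thm6:1})$\Rightarrow$(\ref{thm6:2}), however, rests on a false claim. Passing through the weak$^*$-homeomorphism $\Phi$ of \eqref{eq15} is legitimate, but your ``key step'' --- that the weak$^*$-topology on $\CB^\sigma(\B(\LdG))$ coming from the predual $\mc{K}(\LdG)\wh\otimes\B(\LdG)_*$ coincides with the stable point-weak$^*$-topology --- is not true, because elements of that predual only ever produce functionals of the form $T\mapsto\la(T\otimes\id)x,\omega\ra$ with $x$ a \emph{compact} operator, whereas the stable point-weak$^*$-topology tests against arbitrary bounded $x$; your assertion that ``every stable test arises this way'' is precisely what fails. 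Concretely, fix an orthonormal basis $(e_n)_{n\in\NN}$ of $\LdG$, a rank-one projection $b$, and set $T_n=\omega_{e_n,e_n}(\cdot)\,b$. Each $T_n$ is normal and completely bounded with $\|T_n\|_{cb}=1$, and for $k\in\mc{K}(\LdG)$, $\mu\in\B(\LdG)_*$ one has $\la T_n,k\otimes\mu\ra=\la e_n,ke_n\ra\,\mu(b)\to 0$ (compact operators send weakly null sequences to norm null ones); since the sequence is bounded, $T_n\to 0$ weak$^*$. Yet $T_n(\I)=b$ for every $n$, so $(T_n)_{n\in\NN}$ does not converge to $0$ even in the point-weak$^*$-topology. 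Thus the weak$^*$-topology is \emph{strictly} coarser than the stable point-weak$^*$-topology, even on bounded sets, so the obstruction has nothing to do with the absence of a norm bound on the net: evaluation at non-compact elements such as $\I$ is simply not weak$^*$-continuous, and no repackaging inside the projective predual can make it so.

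Note also that your argument, if it worked, would prove something stronger than the theorem: that the \emph{original} AP net $(a_i)_{i\in I}$ itself satisfies (\ref{thm6:2}). Neither the paper nor the classical Haagerup--Kraus argument establishes this, and this is the tell-tale sign of the gap: the paper fixes a state $f\in\Lj$ and passes to the \emph{new} net $b_i=a_i\star f\in\A(\GG)$ (Proposition~\ref{prop16}), and the entire technical content of the proof is Proposition~\ref{prop17} --- resting on Proposition~\ref{prop16} and Lemma~\ref{lemma21} --- which shows that for arbitrary $x\in\Linfd\bar\otimes\B(\msf{H})$ the functional $a\mapsto\la(\Theta^l(a\star f)\otimes\id)x,\omega\ra$ does lie in $Q^l(\A(\GG))$. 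It is the convolution with $f$ that moves bounded test elements back into the ``$\mrm{C}_0$'' regime where weak$^*$-continuity can be verified; this smoothing step is missing from, and cannot be avoided in, your approach.
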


In order to prove Theorem \ref{thm6} we need to establish some preliminary results. Recall that $\Linf$ is a right $\Lj$-module via $x\star\omega=(\omega\otimes\id)\Delta(x)$ 
for $x\in\Linf,\omega\in\Lj$.

\begin{proposition}\label{prop16}
Let $a\in \Linf$ and $\omega\in \Lj$.
\begin{enumerate}
\item\label{prop16:one} If $a\in \A(\GG)$ then $a\star\omega\in \A(\GG)$.
\item\label{prop16:two} If $a\in \M^l_{cb}(\A(\GG))$ then $a\star\omega \in \M_{cb}^l(\A(\GG))$ with $\|a\star\omega\|_{cb} \leq \|a\|_{cb} \|\omega\|$ and
\[
\Theta^l(a\star\omega)(\wh x) = (\omega\otimes\id)\big( (\id\otimes\Theta^l(a))((\I\otimes\wh x){\ww}^{\GG *}) \ww^{\GG} \big)
\qquad (\wh x\in L^\infty(\wh\GG)).
\]
\end{enumerate}
\end{proposition}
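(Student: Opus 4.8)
The plan is to reduce both statements to manipulations of the Kac--Takesaki operators $\ww^{\GG}$ and $\ww^{\whG}$, with part~(\ref{prop16:one}) serving as a warm-up. For (\ref{prop16:one}), write $a=\wh\lambda(\mu)$ with $\mu\in\Ljd$, so $a=(\mu\otimes\id)\ww^{\whG}$. Applying $\Delta$ and using the bicharacter identity $(\id\otimes\Delta)\ww^{\whG}=\ww^{\whG}_{12}\ww^{\whG}_{13}$ gives $\Delta(a)=(\mu\otimes\id\otimes\id)(\ww^{\whG}_{12}\ww^{\whG}_{13})$, whence $a\star\omega=(\mu\otimes\omega\otimes\id)(\ww^{\whG}_{12}\ww^{\whG}_{13})$. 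Since $\ww^{\whG}_{13}$ is trivial on the middle leg, slicing that leg by $\omega$ collapses the expression to $a\star\omega=\wh\lambda(\mu p)$, where $p=(\id\otimes\omega)\ww^{\whG}\in\Linfd$ and $\mu p\in\Ljd$ is the functional $x\mapsto\mu(px)$. This exhibits $a\star\omega\in\wh\lambda(\Ljd)=\A(\GG)$.

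For (\ref{prop16:two}), set $m=\Theta^l(a)$ and let $\Psi$ be the map defined by the right-hand side of the claimed formula. The easy half is that each constituent of $\Psi$ --- the complete isometry $\wh x\mapsto(\I\otimes\wh x)\ww^{\GG *}$, right multiplication by the unitary $\ww^{\GG}$, the amplified slice $\omega\otimes\id$, and $\id\otimes m$ --- is normal and completely bounded, so $\Psi$ is a normal CB map $\Linfd\to\Linfd$ with $\|\Psi\|_{cb}\le\|\omega\|\,\|a\|_{cb}$. Under the identification $\M^l_{cb}(\A(\GG))\cong\prescript{}{\Ljd}{\CB}^{\sigma}(\Linfd)$ it then suffices to prove the operator identity $(\I\otimes(a\star\omega))\ww^{\whG}=(\Psi\otimes\id)\ww^{\whG}$: slicing its first leg by $\nu\in\Ljd$ yields $(a\star\omega)\wh\lambda(\nu)=\wh\lambda(\nu\circ\Psi)$, which at once gives membership in $\M^l_{cb}(\A(\GG))$, the formula $\Theta^l(a\star\omega)=\Psi$, and the norm bound.

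I would prove this identity on $\LdG^{\otimes 3}$, with leg $0$ carrying the $\omega$-slice and $\ww^{\GG}$, leg $1$ carrying $m$ and the first leg of $\ww^{\whG}$, and leg $2$ the second leg of $\ww^{\whG}$. Expanding both sides into the form $(\omega_0\otimes\id_{12})[\,\cdot\,]$, it suffices to prove the $\omega$-free identity
\[
(\id\otimes m\otimes\id)\big(\ww^{\whG}_{12}\,\ww^{\GG *}_{01}\big)\,\ww^{\GG}_{01}\,\ww^{\whG *}_{12}=\Delta(a)_{02},
\]
where $\Delta(a)_{02}$ denotes $\Delta(a)=\ww^{\GG *}(\I\otimes a)\ww^{\GG}$ placed on legs $0,2$; slicing leg $0$ by $\omega$ then recovers the desired identity.

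The hard part is this reduced identity, and it hinges on three ingredients: the defining multiplier relation $(m\otimes\id)\ww^{\whG}=(\I\otimes a)\ww^{\whG}$; the commutation relation $\ww^{\GG *}_{02}\,\ww^{\whG}_{12}\,\ww^{\GG}_{02}=\ww^{\whG}_{12}\,\ww^{\GG *}_{01}$, which I would derive from the pentagon equation for $\ww^{\GG}$ together with $\ww^{\whG}=\chi(\ww^{\GG *})$; and the fact that $\id\otimes m\otimes\id$ is a bimodule map over operators supported on legs $0,2$, so that conjugation by $\ww^{\GG}_{02}$ commutes with it. Reading the multiplier relation on legs $1,2$ as $(\I\otimes a)\ww^{\whG}_{12}=(\id\otimes m\otimes\id)\ww^{\whG}_{12}$ and conjugating by $\ww^{\GG}_{02}$, the left-hand side becomes $\Delta(a)_{02}\,\ww^{\whG}_{12}\,\ww^{\GG *}_{01}$ (using the commutation relation and the formula for $\Delta(a)_{02}$), while the right-hand side becomes $(\id\otimes m\otimes\id)(\ww^{\whG}_{12}\ww^{\GG *}_{01})$ (pulling the conjugation through the bimodule map and again applying the commutation relation); equating and right-multiplying by $\ww^{\GG}_{01}\ww^{\whG *}_{12}$ gives the reduced identity. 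The main obstacle is thus isolating the correct commutation relation between $\ww^{\GG}$ and $\ww^{\whG}$ and checking that the a priori one-sided module map $m$ is compatible with it; the mechanism that makes everything close up is that the multiplier relation trades the map $m$ for honest multiplication by $a$ at precisely the place where conjugation by $\ww^{\GG}_{02}$ turns it into $\Delta(a)$.
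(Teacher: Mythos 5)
Your proof of part~(\ref{prop16:two}) is correct, and it takes a genuinely different route from the paper's. The paper first shows that the map $T=\Psi$ intertwines $\wh\Delta$, i.e.\ is the adjoint of a left centraliser, then invokes the correspondence $C^l_{cb}(\Ljd)\cong\M^l_{cb}(\A(\GG))$ --- which rests on the Junge--Neufang--Ruan representation theorem --- to produce some $b\in\M^l_{cb}(\A(\GG))$ with $\Theta^l(b)=T$, and finally identifies $b=a\star\omega$ by a second leg-numbering computation comparing $(\id\otimes T)(\ww^{\GG *})\ww^{\GG}$ with $(a\star\omega)\otimes\I$. You instead prove in one stroke the identity $(\I\otimes(a\star\omega))\ww^{\whG}=(\Psi\otimes\id)(\ww^{\whG})$, which by the very definition of a CB left multiplier yields membership, the formula $\Theta^l(a\star\omega)=\Psi$, and the norm bound simultaneously. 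Your reduction of this identity --- conjugating the multiplier relation $(m\otimes\id)\ww^{\whG}=(\I\otimes a)\ww^{\whG}$, $m=\Theta^l(a)$, by $\ww^{\GG}_{02}$, using the commutation relation $\ww^{\GG *}_{02}\ww^{\whG}_{12}\ww^{\GG}_{02}=\ww^{\whG}_{12}\ww^{\GG *}_{01}$ (which is indeed exactly the pentagon for $\ww^{\GG}$ with legs relabelled) together with the $\B(\msf{H}_0)\,\bar\otimes\,\CC\I\,\bar\otimes\,\B(\msf{H}_2)$-bimodule property of $\id\otimes m\otimes\id$ --- is valid; the paper's computations use the same ingredients, packaged as the coproduct formulas $(\Delta\otimes\id)\ww^{\GG}=\ww^{\GG}_{13}\ww^{\GG}_{23}$ and $(\id\otimes\wh\Delta)\ww^{\GG}=\ww^{\GG}_{13}\ww^{\GG}_{12}$. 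What each approach buys: yours is more self-contained, never needing the surjectivity half of the centraliser--multiplier correspondence, precisely because the candidate multiplier $a\star\omega$ is already in hand; the paper's factorisation through the centraliser property is more modular, since it shows a priori that any map of this shape comes from a multiplier, a pattern reused elsewhere in the paper.

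In part~(\ref{prop16:one}) there is a small error: the coproduct identity should read $(\id\otimes\Delta)\ww^{\whG}=\ww^{\whG}_{13}\ww^{\whG}_{12}$, not $\ww^{\whG}_{12}\ww^{\whG}_{13}$; your version is the identity for the opposite coproduct (it is the relation $(\id\otimes\wh\Delta)\ww=\ww_{13}\ww_{12}$ applied to $\whG$, as one can check in the classical case). With the correct order, slicing the middle leg by $\omega$ gives $a\star\omega=\wh{\lambda}\bigl(\mu(\cdot\,p)\bigr)$ rather than $\wh{\lambda}\bigl(\mu(p\,\cdot)\bigr)$, where $p=(\id\otimes\omega)\ww^{\whG}$. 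Since both functionals lie in $\Ljd$, the conclusion $a\star\omega\in\A(\GG)$ is unaffected, but your displayed formula needs this correction.
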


\begin{proof}
(1) Write $a=\wh{\lambda}(\wh\omega)$ for $\wh\omega\in\Ljd$. Then
\[\begin{split}
&\quad\;
a\star\omega=
(\omega\otimes\id)\Delta\bigl( (\wh\omega\otimes\id){\ww}^{\whG}\bigr)=
(\wh\omega \otimes \omega \otimes\id)\bigl({\ww}^{\whG}_{13}{\ww}^{\whG}_{12}\bigr)\\
&=
(\wh\omega\otimes\id)\bigl( {\ww}^{\whG} ((\id\otimes\omega)\ww^{\whG}\otimes \I)\bigr)=
\wh{\lambda}\bigl(\wh\omega\bigl(\cdot\;
(\id\otimes\omega)\ww^{\whG}\bigr)\bigr)\in \A(\GG)
\end{split}\]
as required. \\
(2) As $\ww^{\GG}\in \Linf\bar{\otimes}\Linfd$, there is a well-defined linear map $T$ on $\Linfd$ given by
\[
T(\wh x) = (\omega\otimes\id)\big( (\id\otimes\Theta^l(a))((\I\otimes\wh x)\ww^{\GG *}) \ww^{\GG} \big)
\qquad (\wh x\in L^\infty(\wh\GG)).
\]
Clearly $T$ is completely bounded with $\|T\|_{cb}\le \|a\|_{cb}\|\omega\|$ and weak$^*$-continuous.  We first show that $T$ is the adjoint of a centraliser,
equivalently, that $\wh\Delta T = (T\otimes\id)\wh\Delta$. If $\wh x\in L^\infty(\wh\GG)$ then using $\wh\Delta\Theta^l(a) = (\Theta^l(a)\otimes\id)\wh\Delta$ gives
\begin{align*}
\wh\Delta T(\wh x) &= (\omega\otimes\id\otimes\id)\bigl( (\id\otimes\wh\Delta\Theta^l(a))((\I\otimes\wh x)\ww^{\GG *}) \ww^{\GG}_{13} \ww^{\GG}_{12} \bigr) \\
&= (\omega\otimes\id\otimes\id)\bigl( (\id\otimes\Theta^l(a)\otimes\id)\bigl((\I\otimes\wh\Delta(\wh x))\ww^{\GG *}_{12} \ww^{\GG*}_{13}\bigr) \ww^{\GG}_{13} \ww^{\GG}_{12} \bigr) \\
&= (\omega\otimes\id\otimes\id)\bigl( (\id\otimes\Theta^l(a)\otimes\id)\bigl((\I\otimes\wh\Delta(\wh x))\ww^{\GG *}_{12}\bigr) \ww^{\GG}_{12} \bigr) \\
&= (T\otimes\id)\wh\Delta(\wh x).
\end{align*}
Consequently $T$ is the adjoint of a centraliser, and so there exists $b\in \M_{cb}^l(\A(\GG))$ with $T = \Theta^l(b)$. Then $b \wh\lambda(\wh\omega) = \wh\lambda(\wh\omega\circ T)$ for each $\wh\omega\in\Ljd$, equivalently, $(\I\otimes b)\ww^{\whG} = (T\otimes\id)(\ww^{\whG})$. In other words, we have 
\[
(b\otimes \I)\ww^{\GG *} = (\id\otimes T)(\ww^{\GG *})
= (\id\otimes\omega\otimes\id)\big( (\id\otimes\id\otimes\Theta^l(a))( \ww^{\GG *}_{13} \ww^{\GG *}_{23} ) \ww^{\GG}_{23} \big), 
\]
or equivalently 
\begin{align}
b\otimes \I &= (\id\otimes\omega\otimes\id)\bigl( (\id\otimes\id\otimes\Theta^l(a))( \ww^{\GG *}_{13} \ww^{\GG *}_{23} ) \ww^{\GG}_{23} \ww^{\GG}_{13} \bigr) \notag \\
&= (\omega\otimes\id\otimes\id)\bigl( (\id\otimes\id\otimes\Theta^l(a))( \ww^{\GG *}_{23} \ww^{\GG *}_{13} ) \ww^{\GG}_{13} \ww^{\GG}_{23} \bigr). \label{eq:p16:one}
\end{align}
Now, $(\I\otimes a) \ww^{\whG}= (\Theta^l(a)\otimes\id)(\ww^{\whG})$ so $a \otimes \I = (\id\otimes\Theta^l(a))(\ww^{\GG *})\ww^{\GG}$ and hence
\begin{align}
a\star\omega\otimes \I &= (\omega\otimes\id)\Delta(a)\otimes \I
= ((\omega\otimes\id)\Delta\otimes\id)\bigl( (\id\otimes\Theta^l(a))(\ww^{\GG *})\ww^{\GG}\bigr) \notag \\
&= (\omega\otimes\id\otimes\id)\bigl( (\id\otimes\id\otimes\Theta^l(a))(\ww^{\GG *}_{23} \ww^{\GG *}_{13})\ww^{\GG}_{13} \ww^{\GG}_{23} \bigr). \label{eq:p16:two}
\end{align}
As \eqref{eq:p16:one} and \eqref{eq:p16:two} agree, we conclude that $b = a\star\omega$.  Thus $a\star\omega\in \M_{cb}^l(\A(\GG))$ with $\Theta^l(a\star\omega)=T$ as required. 
\end{proof}

\begin{lemma}\label{lemma21}
For any locally compact quantum group $\GG$ and $x\in \Linf, a\in \mrm{C}_0(\GG), \omega\in \Lj$ we have $\omega \star (xa) \in \mrm{C}_0(\GG)$.
\end{lemma}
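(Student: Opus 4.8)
The plan is to establish membership in $\CZG$ by trapping $z:=\omega\star(xa)$ between two descriptions: a \emph{decay} estimate forcing $z$ into the ``essentially vanishing'' subspace $\overline{\langle\Linf\,\CZG\rangle}$ (the norm-closed linear span of products $yc$ with $y\in\Linf$, $c\in\CZG$), together with the fact that $z$ is a \emph{multiplier} of $\CZG$; a soft intersection argument then yields $z\in\CZG$. First I would record the easy case $\omega\star\CZG\subseteq\CZG$: for $a\in\CZG$ and a bounded approximate identity $(e_i)$ of $\CZG$, bisimplifiability of $(\CZG,\Delta)$ gives $\Delta(a)(\I\otimes e_i)\in\CZG\otimes\CZG$, so that $(\id\otimes\omega)\bigl(\Delta(a)(\I\otimes e_i)\bigr)=(\id\otimes e_i\omega)\Delta(a)\in\CZG$; since $\Lj$ is an essential $\CZG$-module one has $e_i\omega\to\omega$ in norm, and $\|(\id\otimes(\omega-e_i\omega))\Delta(a)\|\le\|\omega-e_i\omega\|\,\|a\|\to0$, whence $\omega\star a\in\CZG$.

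For the decay estimate, assume first $\omega\ge0$, so that $\Psi:=\id\otimes\omega\colon\Linf\oxx\Linf\to\Linf$ is completely positive. Applying the Kadison--Schwarz inequality to $M=\Delta(xa)$ gives $z^*z=\Psi(M)^*\Psi(M)\le\|\Psi(\I)\|\,\Psi(M^*M)=\|\omega\|\,\omega\star(a^*x^*xa)$, and since $x^*x\le\|x\|^2\I$ and $\omega\star(\cdot)$ is order-preserving this is dominated by $C\,p$, where $p:=\omega\star(a^*a)\in\CZG_+$ by the easy case and $C=\|\omega\|\,\|x\|^2$. Writing $g_n(t)=t(t+\tfrac1n)^{-1}$, one has $g_n(p)\in\CZG$ and $\|z(\I-g_n(p))\|^2=\|(\I-g_n(p))\,z^*z\,(\I-g_n(p))\|\le C\|p(\I-g_n(p))^2\|\to0$, so $z=\lim_n z\,g_n(p)\in\overline{\langle\Linf\,\CZG\rangle}$. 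A general $\omega\in\Lj$ decomposes into four positive pieces, so $z\in\overline{\langle\Linf\,\CZG\rangle}$ in all cases.

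The hard part is to show that $z$ is a right multiplier of $\CZG$, that is, $z\,\CZG\subseteq\CZG$; this is the quantum counterpart of $\mathrm L^1(G)\star\mathrm L^\infty(G)\subseteq\mathrm C_b(G)$, and it is precisely here that the normality of $\omega$ must be used. The point is a genuine smoothing (``right uniform continuity''), which no purely algebraic manipulation can supply, since $\CZG$ is not an ideal of $\Linf$: indeed, reducing $\Delta(a)(\I\otimes c)\in\CZG\otimes\CZG$ to elementary tensors only ever lands one in $\overline{\langle\Linf\,\CZG\rangle}$, never in $\CZG$. I would obtain the multiplier property from the theory of uniformly continuous elements, namely the containment $\Lj\star\Linf\subseteq\M(\CZG)$, and I expect \emph{this} to be the main obstacle to a self-contained proof.

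Granting $z\in\M(\CZG)$, the proof closes by a general intersection lemma: if $z\in\M(\CZG)\cap\overline{\langle\Linf\,\CZG\rangle}$, then for a bounded approximate identity $(e_i)$ of $\CZG$ the membership $z\in\overline{\langle\Linf\,\CZG\rangle}$ yields $z\,e_i\to z$ in norm (approximate $z$ by finite sums $\sum_k y_k c_k$ and use $\|c_k-c_ke_i\|\to0$), while $z\in\M(\CZG)$ gives $z\,e_i\in\CZG$; as $\CZG$ is norm-closed, $z\in\CZG$. Applying this to $z=\omega\star(xa)$ completes the argument.
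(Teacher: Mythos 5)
Your proof is correct, and it takes a genuinely different route from the paper's, although both arguments ultimately hinge on the same non-trivial external input: the uniform-continuity theorem $\Lj\star\Linf\subseteq\M(\CZG)$ (Runde, Theorem~2.4), which the paper cites at its key step and which you invoke as a black box for the multiplier property of $z=\omega\star(xa)$. The paper's proof is purely algebraic and shorter: it uses Cohen--Hewitt factorisation $\omega=b\omega_1$ (Runde, Lemma~2.1) together with $\Delta(a)(\I\otimes b)\in\CZG\otimes\CZG$ to write $z$ as a norm limit of terms $\bigl((\id\otimes d\omega_1)\Delta(x)\bigr)c$, each of which lies in $\M(\CZG)\cdot\CZG\subseteq\CZG$ by Runde's theorem, so no separate decay argument is needed. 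You instead split membership in $\CZG$ into two halves, exactly as in the classical proof that a uniformly continuous function vanishing at infinity lies in $\mrm{C}_0$: the Kadison--Schwarz domination $z^*z\le\|\omega\|\,\|x\|^2\,\omega\star(a^*a)$ is the correct quantum analogue of the pointwise bound $|\omega\star(xa)|\le\|x\|\,(|\omega|\star|a|)$, and your functional-calculus argument with $g_n(p)$ and the final BAI intersection argument are both sound (note also that your "easy case" $\omega\star\CZG\subseteq\CZG$ and the decomposition of $\omega$ into four positive normal functionals are standard and correctly executed). What each approach buys: the paper's is more economical once both of Runde's results are granted, while yours avoids Cohen--Hewitt factorisation in $\Lj$ entirely (only BAI essentiality is used), isolates the "vanishing at infinity" phenomenon in a self-contained, conceptually transparent operator-algebraic step, and makes explicit that the only genuinely hard ingredient is the uniform-continuity containment --- an assessment that the paper's own proof confirms.
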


\begin{proof}
As $\Lj$ is a closed $\mrm{C}_0(\GG)$-submodule of $\mrm{C}_0(\GG)^*$, by \cite[Lemma~2.1]{RundeUCB} we know that $\omega = b \omega_1$ for some $b\in \mrm{C}_0(\GG)$ and $\omega_1\in \Lj$.  Then
\[ \omega \star (xa) = (\id\otimes\omega)\Delta(xa)
= (\id\otimes \omega_1)\big( \Delta(x) \Delta(a)(\I\otimes b) \big). \]
As $a,b\in \mrm{C}_0(\GG)$ we know that $\Delta(a)(\I\otimes b) \in \mrm{C}_0(\GG)\otimes \mrm{C}_0(\GG)$, the minimal $C^*$-algebraic tensor product.  By continuity, it hence suffices to prove that
\[
 (\id\otimes \omega_1)\big( \Delta(x)(c\otimes d) \big) \in \mrm{C}_0(\GG) 
\]
for $c,d\in \mrm{C}_0(\GG)$.  However, this equals $\bigl( (\id\otimes d\omega_1)\Delta(x)\bigr) c$ and by \cite[Theorem~2.4]{RundeUCB} we know that $(\id\otimes d\omega_1)\Delta(x) \in \M (\mrm{C}_0(\GG))$, and so the result follows.
\end{proof}

Next we introduce certain functionals in $Q^l(\A(\GG))$ in analogy to \cite[Proposition~1.3]{HaagerupKraus}. For a Hilbert space $\msf{H}$, $x\in \Linfd\bar{\otimes}\B(\msf{H}), \omega\in \Ljd\wh{\otimes}\B(\msf{H})_*$ and $f\in\Lj$ define
\begin{equation}\label{eq17}
\Omega_{x,\omega,f}\colon \M^l_{cb}(\A(\GG))\ni a\mapsto
\la 
(\Theta^l(a\star f)\otimes \id)x,\omega \ra \in \CC.
\end{equation}
Note that $\Omega_{x,\omega,f}$ is well-defined and bounded by Proposition \ref{prop16}. 

\begin{proposition}\label{prop17}
The linear functional $\Omega_{x,\omega,f}$ is weak$^*$-continuous, hence is contained in $Q^l(\A(\GG))$.
\end{proposition}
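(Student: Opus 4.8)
The plan is to show directly that $\Omega_{x,\omega,f}$ lies in $Q^l(\A(\GG))$, by massaging it into one of the functionals of Proposition~\ref{prop15}, i.e.\ into the form $a\mapsto\la(\Theta^l(a)\otimes\id)y,\nu\ra$ with $y\in\CZGD\otimes\mc{K}(\msf{H}')$ and $\nu\in\Ljd\wh\otimes\B(\msf{H}')_*$ for some Hilbert space $\msf{H}'$. First I would carry out two harmless reductions. Since $Q^l(\A(\GG))$ is norm-closed in $\M^l_{cb}(\A(\GG))^*$ and, by Proposition~\ref{prop16}, the assignment $(x,\omega)\mapsto\Omega_{x,\omega,f}$ is jointly bounded (with $\|\Omega_{x,\omega,f}\|\le\|f\|\,\|x\|\,\|\omega\|$), it suffices to treat $\omega=\mu\otimes\rho$ an elementary tensor; slicing the $\B(\msf{H})$-leg against $\rho$ then reduces the claim to the functionals $a\mapsto\la\Theta^l(a\star f)(\wh x_0),\mu\ra$ with $\wh x_0=(\id\otimes\rho)(x)\in\Linfd$ and $\mu\in\Ljd$.

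Next I would substitute the explicit formula of Proposition~\ref{prop16}(2) for $\Theta^l(a\star f)$, giving $\la\Theta^l(a\star f)(\wh x_0),\mu\ra=\la(\id\otimes\Theta^l(a))((\I\otimes\wh x_0)\ww^{\GG *})\,\ww^{\GG},\,f\otimes\mu\ra$. The role of the regularisation is that the auxiliary leg of the two copies of $\ww^{\GG}$ is sliced against $f\in\Lj$, and such slices land in $\CZGD$ because $\lambda_{\GG}(\Lj)\subseteq\CZGD$ (equivalently $\ww^{\GG}\in\M(\mrm{C}_0(\GG)\otimes\CZGD)$). Combining this with a factorisation $\mu=\mu_1\star\mu_2$ (convolution products have dense linear span in $\Ljd$, so after another appeal to norm-closedness of $Q^l$ one may take a single product) and the comodule identity $\wh\Delta\,\Theta^l(a)=(\Theta^l(a)\otimes\id)\wh\Delta$, I would push the convolution by $\mu_2$ onto the argument of $\Theta^l(a)$ and transfer the $\CZGD$-coefficients coming from $\ww^{\GG}$ to the functional side via the $\Linfd$-module actions on $\Ljd$, so that $\Theta^l(a)$ is evaluated on a single element built from $\wh x_0$ and a slice $c\in\CZGD$ of $\ww^{\GG*}$.

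That element is a priori only of the form $\wh x_0\cdot c$ with $c\in\CZGD$, i.e.\ it sits in $\Linfd\cdot\CZGD$ rather than in $\CZGD$; the crux is to promote it to a genuine element of $\CZGD$. This is exactly what Lemma~\ref{lemma21}, applied to $\whG$, delivers: the convolution $\mu_2\star(\wh x_0 c)$ of such a product lies in $\CZGD$. Feeding this back, the functional acquires the shape $\la(\Theta^l(a)\otimes\id)y,\nu\ra$ with $y\in\CZGD\otimes\mc{K}(\msf{H}')$ and $\nu\in\Ljd\wh\otimes\B(\msf{H}')_*$, so Proposition~\ref{prop15} identifies it as a member of $Q^l(\A(\GG))$; undoing the two reductions then yields $\Omega_{x,\omega,f}\in Q^l(\A(\GG))$ in general.

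I expect the main obstacle to be precisely this promotion step. For a \emph{general} $x\in\Linfd\oxx\B(\msf{H})$ the unregularised functional $a\mapsto\la(\Theta^l(a)\otimes\id)x,\omega\ra$ is only continuous for the stable point-weak$^*$ topology, which is strictly finer than the $Q^l$-weak$^*$ topology; what makes $\Omega_{x,\omega,f}$ weak$^*$-continuous is that the convolution by $f$ (manifested through the extra $\ww^{\GG}$ in Proposition~\ref{prop16}(2)) together with Lemma~\ref{lemma21} forces the relevant operator out of the von Neumann algebra $\Linfd$ and into the C$^*$-algebra $\CZGD$, matching the description of the weak$^*$-topology in Theorem~\ref{thm5} and of $Q^l(\A(\GG))$ in Proposition~\ref{prop15}. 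Keeping careful track of the leg-numbering while performing these transfers, and checking that the Hilbert-space enlargement is compatible with Proposition~\ref{prop15}, is the technical heart of the argument.
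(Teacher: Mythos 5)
Your proposal is correct and follows essentially the same route as the paper's proof: reduce to elementary tensors by boundedness and norm-closedness of $Q^l(\A(\GG))$, insert the formula of Proposition~\ref{prop16}(2), approximate $\ww^{\GG}(f\otimes\mu)$ by elementary tensors so that the slice of $\ww^{\GG *}$ lands in $\CZGD$, factor the remaining functional as a convolution product (Lemma~\ref{lemma17}), push the convolution onto the argument of $\Theta^l(a)$ via the centraliser property, and invoke Lemma~\ref{lemma21} for $\whG$ to promote $\wh x\,\wh y$ into $\CZGD$ so that Proposition~\ref{prop15} applies. You have also correctly identified the crux — the promotion from $\Linfd\cdot\CZGD$ into $\CZGD$ via Lemma~\ref{lemma21} — which is exactly where the paper's argument turns.
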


\begin{proof}
Clearly $\Omega_{x,\omega,f}$ is bounded with $\|\Omega_{x,\omega,f}\|\le \|x\| \|f\|\|\omega\|$, so it suffices to prove the result when $\omega$ is in the algebraic tensor 
product of $\Ljd$ with $\B(\msf H)_*$, and hence by linearity, we may suppose that $\omega = \wh\omega\otimes u$.  Then
\[ \Omega_{x,\omega,f}(a) = \langle (\Theta^l(a\star f)\otimes \id)(x), \wh\omega\otimes u \rangle
= \langle \Theta^l(a\star f)( (\id\otimes u)(x) ), \wh\omega \rangle
\qquad(a\in\M^l_{cb}(\A(\GG))).
\]
Thus, it suffices to show that for $\wh\omega\in \Ljd$ and $\wh x\in \Linfd$
\[
\mu\colon\M^l_{cb}(\A(\GG))\ni a\mapsto \langle \Theta^l(a\star f)(\wh x), \wh\omega \rangle\in \CC
 \]
is weak$^*$-continuous. By Proposition~\ref{prop16}, given the form of $\Theta^l(a\star f)$,
\[
\mu(a) = \langle (\id\otimes\Theta^l(a))((\I\otimes\wh x)\ww^{\GG *}) \ww^{\GG}, f\otimes \wh\omega \rangle
\qquad(a\in \M^l_{cb}(\A(\GG))).
\]
As $\ww^{\GG}\in \Linf\bar{\otimes}\Linfd$, also $\ww^{\GG}(f\otimes \wh\omega) \in \Lj\wh\otimes \Ljd$, so again by approximation, it suffices to show that for $f'\in \Lj, \wh\omega'\in \Ljd$ the map 
\[
\mu'\colon \M^l_{cb}(\A(\GG))\ni a\mapsto \langle (\id\otimes\Theta^l(a))((\I\otimes\wh x)\ww^{\GG *}), f'\otimes \wh\omega' \rangle
= \langle \Theta^l(a)( \wh x \wh y \big), \wh\omega' \rangle\in \CC
 \]
is weak$^*$-continuous, where $\wh y = (f'\otimes\id)(\ww^{\GG *}) \in \mrm{C}_0(\wh\GG)$.

By linear density of products, compare Lemma \ref{lemma17}, it suffices to consider the case when $\wh\omega' = \wh\omega_1\star\wh\omega_2$ for $\wh\omega_1,\wh\omega_2\in \Ljd$.  As $\Theta^l(a)_*(\wh\omega_1\star\wh\omega_2) = \Theta^l(a)_*(\wh\omega_1)\star\wh\omega_2$ by the left centraliser property, we see that
\[
\mu'(a) = 
\la \Theta^l(a)(\wh x \wh y),\wh\omega_1\star \wh\omega_2\ra=
\langle \wh\omega_2\star(\wh x \wh y), \Theta^l(a)_*(\wh\omega_1) \rangle
\qquad(a\in \M^l_{cb}(\A(\GG))).
\]
As $\wh y\in \mrm{C}_0(\wh\GG)$, by Lemma~\ref{lemma21} applied to $\wh\GG$, we know that $\wh\omega_2\star(\wh x \wh y) \in \mrm{C}_0(\wh\GG)$.  Thus $\mu' \in Q^l(\A(\GG))$ by Proposition \ref{prop15}.
\end{proof}

We are now ready to prove Theorem~\ref{thm6}.

\begin{proof}[Proof of Theorem \ref{thm6}]
$(2)\Rightarrow (1)$ follows directly from the characterisation of functionals in the predual $Q^l(\A(\GG))$ of $\M^l_{cb}(\A(\GG))$ in Proposition~\ref{prop15}.

$(1)\Rightarrow (2)$ Assume that $(a_i)_{i\in I}$ is a net in $\A(\GG)$ which converges weak$^*$ to $\I$ in $\M^l_{cb}(\A(\GG))$. Pick a state $f\in \Lj$, and for each $i\in I$ set $b_i = a_i \star f$. By Proposition~\ref{prop16} we have  $b_i \in \A(\GG)$. We now show that $(\Theta^l(b_i))_{i\in I}$ converges to the identity in the stable point-weak$^*$-topology. 

Given a separable Hilbert space $\msf{H}$, $x\in \Linfd\bar\otimes \B(\msf{H})$, and $\omega \in \Ljd \wh\otimes \B(\msf H)_*$, using Proposition~\ref{prop17} we see that
\[\begin{split}
&
\langle (\Theta^l(b_i)\otimes\id)x, \omega \rangle=
\la (\Theta^l(a_i\star f)\otimes\id) x, \omega \ra 
= \langle a_i, \Omega_{x,\omega,f} \rangle\\
\xrightarrow[i\in I]{}&
 \langle \I, \Omega_{x,\omega,f} \rangle
= \langle (\Theta^l(\I\star f)\otimes\id)(x), \omega \rangle,
\end{split}\]
Since $f$ is a state, we have $\I\star f=(f\otimes \id)\Delta(\I)=\I$, hence $\Theta^l(\I\star f)=\id$, and so $(\Theta^l(b_i)\otimes\id)(x)\xrightarrow[i\in I]{} x$ weak$^*$ as required.
\end{proof}

\subsection{Further general properties}

Let us start with an auxiliary technical result. Recall that for a von Neumann algebra $\M$ and a linear map $T\colon \M\rightarrow \M$ we define $T^\dagger\colon \M\ni x \mapsto T(x^*)^*\in \M$.

\begin{lemma}\label{lemma18}
If $T\in \CB^\sigma(\M)$, then $T^\dagger\in \CB^\sigma(\M)$ and $\|T^\dagger\|_{cb}=\|T\|_{cb}$. If $\M=\Linf$ for a locally compact quantum group $\GG$ then $R\circ T\circ R\in \CB^\sigma(\Linf)$ and $\|R\circ T\circ R\|_{cb}=\|T\|_{cb}$. Both operations $T\mapsto T^\dagger, T\mapsto R\circ T \circ R$ are continuous with respect to the stable point-weak$^*$-topology.
\end{lemma}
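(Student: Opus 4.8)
The plan is to establish the three assertions in sequence, treating $T \mapsto T^\dagger$ and $T \mapsto R \circ T \circ R$ in parallel since both are obtained by sandwiching $T$ between well-behaved (anti-)automorphisms. For the first claim, I would observe that $T^\dagger = (\cdot)^* \circ T \circ (\cdot)^*$, where $(\cdot)^*$ denotes the involution on $\M$. The involution is a complete isometry once we pass to the opposite operator space structure: the key identity is that $x \mapsto x^*$ induces a completely isometric isomorphism $\M \to \M^{\mathrm{op}}$ (equivalently, at the matrix level $[x_{ij}] \mapsto [x_{ji}^*]$ is a complete isometry). Composing two such involutions with $T$ in between brings us back to $\M$ and yields a map that is again completely bounded with $\|T^\dagger\|_{cb} = \|T\|_{cb}$; normality (weak$^*$-continuity) is preserved because the involution is weak$^*$-continuous on a von Neumann algebra.

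For the second claim, I would use that $R = R_\GG$ is a normal, $\star$-preserving, antimultiplicative, \emph{bounded} involution on $\Linf$ (it satisfies $R^2 = \id$). As with the involution, an antihomomorphism that is also a $\star$-map is a complete isometry for the same reason the involution is: at the matrix level $R$ acts entrywise on an anti-transposed matrix, and this is completely isometric. Hence $R \circ T \circ R$ is completely bounded with $\|R \circ T \circ R\|_{cb} = \|T\|_{cb}$, and normality is inherited because $R$ is normal and composition of normal maps is normal. I would note explicitly that no quantum-group structure beyond these properties of $R$ is needed here.

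For the continuity statements with respect to the stable point-weak$^*$-topology, recall that $T_i \to T$ in this topology means $(T_i \otimes \id)(x) \to (T \otimes \id)(x)$ weak$^*$ for every separable Hilbert space $\msf{H}$ and every $x \in \M \oxx \B(\msf{H})$. For the dagger operation, I would compute $(T^\dagger \otimes \id)(x)$ and relate it to $(T \otimes \id)$ applied to a suitably transformed element: since $T^\dagger \otimes \id = (T \otimes \id)^\dagger$ under the involution $x \mapsto (\ast \otimes \ast)(x)^*$ on $\M \oxx \B(\msf{H})$ (where the second involution is the one on $\B(\msf{H})$), and since the involution is weak$^*$-continuous, convergence of $(T_i \otimes \id)(y)$ weak$^*$ for all $y$ transfers to convergence of $(T_i^\dagger \otimes \id)(x)$ weak$^*$. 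The same argument works for $R \circ T \circ R$, using that $(R \circ T \circ R) \otimes \id = (R \otimes \id) \circ (T \otimes \id) \circ (R \otimes \id)$ and that $R \otimes \id$ is a weak$^*$-continuous map on $\Linf \oxx \B(\msf{H})$.

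I expect the main obstacle to be a careful bookkeeping point rather than a conceptual one: one must confirm that tensoring the operations with $\id_{\B(\msf{H})}$ behaves as claimed, i.e.\ that $T^\dagger \otimes \id = (T \otimes \id)^\dagger$ (with the involution on the enlarged algebra $\M \oxx \B(\msf{H})$) and analogously $(R \otimes \id) \circ (T \otimes \id) \circ (R \otimes \id) = (R \circ T \circ R) \otimes \id$. These are straightforward once one writes out the action on elementary tensors and invokes weak$^*$-density, but they are precisely the identities that make the continuity statements work, so the verification should be stated explicitly. The fact that $R \otimes \id$ is weak$^*$-continuous on $\Linf \oxx \B(\msf{H})$ (so that it can be passed through the weak$^*$-limit) is the one place where normality of $R$ is genuinely used.
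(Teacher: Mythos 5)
Your treatment of $T^\dagger$ is essentially correct, and your route to the CB-norm identities differs from the paper's: you exploit that the involution (and $R$) are complete isometries onto the \emph{opposite} operator space, so that the two anti-transposes occurring in $T\mapsto T^\dagger$ and $T\mapsto R\circ T\circ R$ cancel around $T$; the paper instead factorises $T$ (respectively $T^\dagger$) by the normal Wittstock theorem as $W^*\pi(\cdot)V$ and absorbs the antilinear/antimultiplicative operations into a new $\star$-representation. Both work, though your slogan ``a $\star$-antihomomorphism is a complete isometry'' is only true as a statement about the map $\M\rightarrow\M^{\mathrm{op}}$; as a map $\M\rightarrow\M$ it is false (the transpose on $\B(\ell^2)$ is a $\star$-antiautomorphism with $\|\top\|_{\CB(\M_n)}=n$). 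Your continuity argument for $T\mapsto T^\dagger$ is also fine: $(T\otimes\id)^\dagger=T^\dagger\otimes\id$ and the involution of $\M\oxx\B(\msf{H})$ is weak$^*$-continuous, so one can pass it through the limit.

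The genuine gap is in the continuity argument for $T\mapsto R\circ T\circ R$. You invoke ``the weak$^*$-continuous map $R\otimes\id$ on $\Linf\oxx\B(\msf{H})$'', but this map does not exist as a bounded operator in general: $R$ is a $\star$-\emph{anti}automorphism, hence typically not completely bounded, and complete boundedness is exactly what is required for a map to tensorise with $\id_{\B(\msf{H})}$ on the spatial tensor product. For instance, if $\Linf$ contains matrix blocks of unbounded size which $R$ maps to one another by anti-isomorphisms (twists of the transpose), as happens for $\ell^\infty(\widehat{\SU_q(2)})=\prod_{n}\M_n$, then $R\odot\id$ is unbounded on the algebraic tensor product and has no bounded extension. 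Consequently the factorisation $(R\circ T\circ R)\otimes\id=(R\otimes\id)(T\otimes\id)(R\otimes\id)$ is only formal: the left-hand side is a bounded normal map (since $R\circ T\circ R$ is CB), but the factors on the right are meaningless, so nothing can be ``passed through the weak$^*$-limit''. This is precisely the point the paper's proof is engineered to avoid: it fixes a self-adjoint antiunitary $\mc{J}'$ on $\ell^2$, sets $j=\mc{J}'(\cdot)^*\mc{J}'$, a \emph{normal $\star$-antiautomorphism} of $\B(\ell^2)$, and uses that $R\otimes j$ (antihomomorphism tensor antihomomorphism) is a well-defined bounded normal map on $\Linf\oxx\B(\ell^2)$ — it is the dual of $R_*\otimes(j|_{\mc{K}(\ell^2)})^*$, as checked in the proof of Proposition~\ref{prop11}. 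Since $j^2=\id$, one then has
\begin{equation*}
\la (R\circ T_i\circ R\otimes\id)x,\omega\ra
=\la (R\circ T_i\circ R\otimes j^2)x,\omega\ra
=\la (T_i\otimes\id)\bigl((R\otimes j)(x)\bigr),\,\omega\circ(R\otimes j)\ra,
\end{equation*}
and stable point-weak$^*$ convergence of $(T_i)_{i\in I}$ applies to the transformed pair $\bigl((R\otimes j)(x),\,\omega\circ(R\otimes j)\bigr)$. Your argument becomes correct once every occurrence of $R\otimes\id$ is replaced by $R\otimes j$ in this way; without that replacement the step fails.
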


\begin{proof}
Let $\M\subseteq \B(\msf{H})$. Using the normal version of Wittstock's Theorem (compare with the start of the proof of \cite[Theorem~2.5]{haagerupmusat} for example), we can find a Hilbert space $\msf{K}$, bounded linear maps $V,W\colon \msf{H}\rightarrow\msf{K}$ and a normal representation $\pi\colon \M\rightarrow\B(\msf{K})$ such that $T=W^* \pi(\cdot ) V$ and $\|T\|_{cb}=\|V\|\|W\|$. Now proving the claimed properties of $T^{\dagger}$ is elementary.

Assume next that $\M=\Linf$. If $T\in \CB^\sigma(\Linf)$, then clearly $R\circ T\circ R$ is normal since the unitary antipode $R$ is normal. Using again Wittstock's Theorem,
write $T^\dagger=W^*\pi(\cdot)V$ and choose an antiunitary $\mc{J}$ on $\msf{H}$ which satisfies $\mc{J}^*=\mc{J}$. Then, for $x\in\Linf$,
\[\begin{split}
&\quad\;
R\circ T\circ R(x)=
J_{\hvp} T( J_{\hvp} x^* J_{\hvp})^* J_{\hvp}=
J_{\hvp} T^\dagger( J_{\hvp} x J_{\hvp}) J_{\hvp}\\
&=
J_{\hvp} W^* \pi( J_{\hvp} x J_{\hvp}) V J_{\hvp}=
(\mc{J} W J_{\hvp})^* \mc{J} \pi(J_{\hvp} x J_{\hvp})
\mc{J} (\mc{J} V J_{\hvp}).
\end{split}\]
As $\Linf \ni x\mapsto \mc{J} \pi( J_{\hvp} x J_{\hvp} ) \mc{J}\in \B(\msf{H})$ is a $\star$-homomorphism, it follows that $R\circ T \circ R$ is CB with $\|R\circ T\circ R\|_{cb}\leq \|T\|_{cb}$.  Again, as $R\circ(R\circ T\circ R)\circ R = T$, we have in fact $\|R\circ T\circ R\|_{cb} = \|T\|_{cb}$.

Let $(T_i)_{i\in I}$ be a net in $\CB^\sigma(\Linf)$ converging to $T$ in the stable point-weak$^*$-topology. Choose a self-adjoint antiunitary $\mc{J}'$ on $\ell^2$ and define $j=\mc{J}'(\cdot)^* \mc{J}',$ a normal $\star$-antiautomorphism of $\B(\ell^2)$. Then $R\otimes j$ extends to a well-defined normal bounded linear map on $\Linf\bar{\otimes} \B(\ell^2)$. Indeed, in the proof of Proposition~\ref{prop11} we argued that $R_*\otimes ( j|_{\mc{K}(\ell^2)})^*$ is a bounded linear map on $\LL^1(\GG)\wh{\otimes} \B(\ell^2)_*$, and we just need to take the dual map $R\otimes j=(R_*\otimes (j|_{\mc{K}(\ell^2)})^*)^*$. For $x\in \Linf\bar{\otimes} \B(\ell^2), \omega\in \Lj\wh{\otimes}\B(\ell^2)_*$ we have
\[\begin{split}
&
\la (R\circ T_i\circ R\otimes \id) x, \omega \ra =
\la (R\circ T_i\circ R\otimes j^2) x, \omega \ra =
\la (T_i\otimes \id) \bigl(
(R\otimes j)(x)\bigr) , \omega \circ (R\otimes j)\ra \\
\xrightarrow[i\in I]{}&
\la (T\otimes \id) \bigl(
(R\otimes j)(x)\bigr) , \omega \circ (R\otimes j)\ra =
\la (R\circ T\circ R\otimes \id)x,\omega \ra,
\end{split}\]
which concludes the proof.
\end{proof}

We now show that $\M^l_{cb}(\A(\GG))$ admits an interesting involution. Recall that $S$ denotes the antipode of $\GG$.

\begin{proposition}\label{prop12}
Let $a\in \M^l_{cb}(\A(\GG))$. Then $a^*\in \Dom(S)$ and $S(a^*)\in \M^l_{cb}(\A(\GG))$ with $\Theta^l( S(a^*))=\Theta^l(a)^{\dagger}$.
\end{proposition}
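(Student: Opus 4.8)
The plan is to first realise $\Theta^l(a)^\dagger$ as the symbol of an honest multiplier $b\in\M^l_{cb}(\A(\GG))$, and then to identify this $b$ with $S(a^*)$ by sliding the antipode through the multiplier relations for $a$ and for $b$ and passing to a limit along a core for $S$.

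First I would check that $\Theta^l(a)^\dagger$ is again the adjoint of a left centraliser. By Lemma~\ref{lemma18} we have $\Theta^l(a)^\dagger\in\CB^\sigma(\Linfd)$. Since $\wh\Delta$ is a $\star$-homomorphism and $\wh\Delta\circ\Theta^l(a)=(\Theta^l(a)\otimes\id)\wh\Delta$, a direct computation using $\wh\Delta(x^*)=\wh\Delta(x)^*$ gives $\wh\Delta\circ\Theta^l(a)^\dagger=(\Theta^l(a)^\dagger\otimes\id)\wh\Delta$, so $\Theta^l(a)^\dagger\in\prescript{}{\Ljd}{\CB}^\sigma(\Linfd)$. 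Under the bijection $\M^l_{cb}(\A(\GG))\cong\prescript{}{\Ljd}{\CB}^\sigma(\Linfd)$ there is then a unique $b\in\M^l_{cb}(\A(\GG))$ with $\Theta^l(b)=\Theta^l(a)^\dagger$. It remains to prove $a^*\in\Dom(S)$ and $S(a^*)=b$.

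Next I would rewrite the defining relation of a multiplier in terms of $\ww^{\GG}$. Using $\ww^{\whG}=\chi(\ww^{\GG*})$, the identity $(\I\otimes c)\ww^{\whG}=(\Theta^l(c)\otimes\id)\ww^{\whG}$ becomes $(c\otimes\I)\ww^{\GG*}=(\id\otimes\Theta^l(c))\ww^{\GG*}$. Applying this to $c=b$ yields
\[ (b\otimes\I)\ww^{\GG*}=(\id\otimes\Theta^l(b))\ww^{\GG*}, \]
while taking the adjoint of the relation for $c=a$ and using $\Theta^l(a)^\dagger=\Theta^l(b)$ yields
\[ \ww^{\GG}(a^*\otimes\I)=(\id\otimes\Theta^l(b))\ww^{\GG}. \]
Now fix $\omega\in\Ljd$ and put $p=(\id\otimes\omega)\ww^{\GG}$; by the defining property of the antipode $p\in\Dom(S)$ with $S(p)=(\id\otimes\omega)\ww^{\GG*}$. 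Slicing the two identities with $(\id\otimes\omega)$ on the $\Linfd$-leg, and noting that $(\id\otimes\Theta^l(b)_*\omega)\ww^{\GG}$ is again a slice (hence lies in $\Dom(S)$ with $S$-image $(\id\otimes\Theta^l(b)_*\omega)\ww^{\GG*}$), I obtain
\[ S(p\,a^*)=(\id\otimes\Theta^l(b)_*\omega)\ww^{\GG*}=b\,S(p). \]

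Finally I would pass to the limit. The slices $\{(\id\otimes\omega)\ww^{\GG}:\omega\in\Ljd\}$ form a core for the $\sigma$-strong$^*$-closed operator $S$, and $\I\in\Dom(S)$ with $S(\I)=\I$. Choosing $\omega_j\in\Ljd$ with $p_j=(\id\otimes\omega_j)\ww^{\GG}\to\I$ and $S(p_j)\to\I$ $\sigma$-strongly$^*$, and using that right multiplication by $a^*$ and left multiplication by $b$ are $\sigma$-strong$^*$-continuous, I get $p_j a^*\to a^*$ and $S(p_j a^*)=b\,S(p_j)\to b$ $\sigma$-strongly$^*$. Closedness of $S$ then gives $a^*\in\Dom(S)$ and $S(a^*)=b$, whence $\Theta^l(S(a^*))=\Theta^l(a)^\dagger$. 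The main obstacle is precisely this limiting step: one must invoke that the slices of $\ww^{\GG}$ form a core for the closed antipode, and track the leg and adjoint bookkeeping carefully enough to land on $S(a^*)=b$ rather than the symmetric statement $S(b^*)=a$.
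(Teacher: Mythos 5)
Your proof is correct, and its first half (realising $\Theta^l(a)^\dagger$ as $\Theta^l(b)$ for an honest multiplier $b$, then translating the multiplier relations for $a$ and $b$ into identities among slices of $\ww^{\GG}$) coincides with the paper's direct argument. The differences lie in the endgame. First, your bookkeeping is oriented so that the slice identity reads $S(pa^*)=b\,S(p)$, which lets you conclude $a^*\in\Dom(S)$ and $S(a^*)=b$ in one stroke; the paper instead derives $S(pb^*)=a\,S(p)$, concludes $b^*\in\Dom(S)$ with $S(b^*)=a$, and then needs the involutive property $S(S(x)^*)=x^*$ of the antipode to convert this into the stated claim. Your version is marginally cleaner. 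Second, and more substantively, for the limiting step you invoke the fact that the slices $\{(\id\otimes\omega)\ww^{\GG}\,|\,\omega\in\Ljd\}$ form a core for $S$, together with $\I\in\Dom(S)$, $S(\I)=\I$. This is legitimate, but you must be precise about which antipode you mean: it has to be the von Neumann algebraic antipode, i.e.\ the $\sigma$-strong$^*$-closed operator on $\Linf$ for which the slices form a $\sigma$-strong$^*$ core (the standard characterisation going back to Kustermans--Vaes, see also van Daele); the \cst-algebraic statement (norm core inside $\CZG$) would be useless here, because $\I\notin\CZG$ when $\GG$ is noncompact. Granting that citation, your argument closes: $\I\in\Dom(S)$ since $\tau_t(\I)=\I$ for all $t$, the core property produces a net of slices $p_j\to\I$ with $S(p_j)\to\I$, and multiplication by a fixed element is $\sigma$-strong$^*$-continuous on either side, so $\sigma$-strong$^*$-closedness of $S$ finishes the proof. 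The paper does not cite the core property; instead it proves by hand exactly the instance you need --- a net of slices converging to $\I$ together with their $S$-images --- via Gaussian smearing along the scaling group, strict convergence of the smeared net, and Cohen--Hewitt factorisation to pass from strict to weak$^*$ convergence. So the mathematical content of the two arguments is essentially the same: yours outsources the analytic work to the standard characterisation of the antipode, while the paper keeps it self-contained (its official proof is in any case a citation of \cite[Theorem~5.9]{Daws_Hilbert}, the direct argument being included only for the reader's convenience).
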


\begin{proof}
By Lemma \ref{lemma18}, we know that $\Theta^l(a)^{\dagger}\in \CB^{\sigma}(\Linf)$. One easily checks that $\Theta^l(a)^{\dagger}$ is a left $\Ljd$-module map, hence $\Theta^l(a)^{\dagger}=\Theta^l(b)$ for some $b\in \M^l_{cb}(\A(\GG))$. The claim follows now from \cite[Theorem 5.9]{Daws_Hilbert}. 

For the convenience of the reader let us also indicate a direct argument. As $\Theta^l(a)^{\dagger}=\Theta^l(b)$ we have that
\[
(\I\otimes b) \ww^{\whG}=(\Theta^l(b)\otimes\id)\ww^{\whG}=
(\Theta^l(a)^{\dagger}\otimes\id)\ww^{\whG}=
( (\Theta^l(a)\otimes\id)\ww^{\whG *})^*,
\]
and hence
\[
\ww^{\whG *}(\I\otimes b^*)=
(\Theta^l(a)\otimes \id)\ww^{\whG *}\quad\Rightarrow\quad
(\id\otimes \wh\omega)(\ww^{\GG}) b^*=(\id\otimes \wh\omega\circ \Theta^l(a))\ww^{\GG}
\]
for each $\wh\omega\in \Ljd$.  In what follows, we treat $S$ as a densely-defined, closed operator on $\Linf$ equipped with the weak$^*$-topology.  From \cite[Proposition~2.24]{LCQGDaele}, compare \cite[Proposition~8.3]{KustermansVaes}, we know that for any $\wh\omega$ we get $(\id\otimes\wh\omega)\ww^{\GG} \in D(S)$ and $S((\id\otimes\wh\omega)\ww^{\GG}) = (\id\otimes\wh\omega)\ww^{\GG *}$. It follows that $(\id\otimes \wh\omega)(\ww^{\GG}) b^* \in D(S)$ with
\begin{align}
S\bigl( (\id\otimes\wh\omega) (\ww^{\GG}) b^*\bigr) &=
(\id\otimes \wh\omega\circ \Theta^l(a))\ww^{\GG *} =
(\wh\omega\circ \Theta^l(a)\otimes \id)\ww^{\whG} =
a\,(\wh\omega\otimes\id)(\ww^{\whG})    \notag  \\
&= a\,(\id\otimes\wh\omega)\ww^{\GG *}
= a\,S\big( (\id\otimes\wh\omega)\ww^{\GG} \big).
\label{eq19}
\end{align}
Let $C = \{ (\id\otimes\wh\omega)\ww^{\GG} \,|\, \wh\omega \in \Ljd \} \subseteq D(S)$.  We shall show that $C$ contains a net $(a_i)_{i\in I}$ such that both $a_i\xrightarrow[i\in I]{}\I$ and $S(a_i)\xrightarrow[i\in I]{}\I$ weak$^*$ in $\Linf$.  It follows that $a_i b^* \xrightarrow[i\in I]{} b^*$ and $a S(a_i) \xrightarrow[i\in I]{} a$ weak$^*$, and so as $S$ is weak$^*$-closed, it follows from \eqref{eq19} that $b^* \in D(S)$ with $S(b^*) = a$.  Hence $a^*=S(b^*)^*\in \Dom(S)$ and $S(a^*)=S( S(b^*)^*)=b$, as claimed.

We now show the claim about $C$, using some standard ``smearing'' techniques, compare \cite{KustermansOneParam}.  For $a\in\CZG$ and $r>0, z\in\mathbb C$, define
\[ a(r,z) = \frac{r}{\sqrt\pi} \int_{\mathbb R} \exp(-r^2(t-z)^2) \tau_t(a) \md t. \]
Then $a(r,z)$ is analytic for the one-parameter automorphism group $(\tau_t)_{t\in \RR}$ with $\tau_w(a(r,z)) = a(r,z+w)$ for $w\in\mathbb C$.  Similarly, for $\wh\omega\in \Ljd$, define
\[ \wh\omega(r,z) = \frac{r}{\sqrt\pi} \int_{\mathbb R} \exp(-r^2(t-z)^2) \wh\omega\circ\wh\tau_t \md t. \]
Given $\wh\omega$, let $a = (\id\otimes\wh\omega)(\ww^{\GG})$.  As $(\tau_t\otimes\wh\tau_t)(\ww^{\GG}) = \ww^{\GG}$, it follows that $\tau_t(a) = (\id\otimes\wh\omega\circ\wh\tau_{-t})(\ww^{\GG})$ and hence $a(r,z) = (\id\otimes \wh\omega(r,-z))(\ww^{\GG})$.  Finally, as $S = R\tau_{-i/2}$, it follows that $S(a(r,z)) = R(a(r,z-i/2)) = (\id\otimes(\wh\omega\circ\wh R)(r,-z+i/2))(\ww^{\GG})$.

As $C$ is norm dense in $\CZG$, we can find a net $(\wh\omega_i)_{i\in I}$ with $a_i = (\id\otimes\wh\omega_i)(\ww^{\GG})\xrightarrow[i\in I]{} \I$ strictly.  By \cite[Proposition~2.25]{KustermansOneParam}, the net $(a_i(r,z))_{i\in I}$ converges strictly to $\I(r,z) = \I$, for any choice of $r,z$.  By the above discussion, $a_i(r,z)\in C$.  Then also $S(a_i(r,z)) = R(a_i(r,z-i/2)) \xrightarrow[i\in I]{} R(\I)=\I$ strictly. Cohen--Hewitt's Factorisation Theorem shows that strict convergence implies weak$^*$-convergence in $\Linf$, hence we have constructed the required net.
\end{proof}

\begin{corollary}\label{cor2}
Let $a\in \M^l_{cb}(\A(\GG))$. Then $\Theta^l(a)\in\CB^\sigma(\Linfd)$ preserves the adjoint if and only if $S(a^*)=a$.
\end{corollary}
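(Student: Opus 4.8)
The plan is to read off the corollary almost directly from Proposition~\ref{prop12}, so the proof is essentially a matter of unwinding definitions. First I would recall that saying $\Theta^l(a)$ \emph{preserves the adjoint} means precisely that $\Theta^l(a)(x^*) = \Theta^l(a)(x)^*$ for every $x\in\Linfd$, which in the notation of the paper is exactly the condition $\Theta^l(a)^\dagger = \Theta^l(a)$. Thus the whole statement amounts to showing that $\Theta^l(a)^\dagger = \Theta^l(a)$ holds if and only if $S(a^*)=a$.

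Next I would invoke Proposition~\ref{prop12}, which tells us that $a^*\in\Dom(S)$, that $S(a^*)\in\M^l_{cb}(\A(\GG))$, and crucially that $\Theta^l(S(a^*)) = \Theta^l(a)^\dagger$. Substituting this identity, the condition $\Theta^l(a)^\dagger = \Theta^l(a)$ becomes $\Theta^l(S(a^*)) = \Theta^l(a)$. At this point the only remaining ingredient is the injectivity of the assignment $b\mapsto\Theta^l(b)$ on $\M^l_{cb}(\A(\GG))$: since $\Theta^l(\cdot)_* \colon \M^l_{cb}(\A(\GG))\to C^l_{cb}(\Ljd)$ is a bijection and $\Theta^l(b) = (\Theta^l(b)_*)^*$, the map $b\mapsto\Theta^l(b)$ is injective, so $\Theta^l(S(a^*)) = \Theta^l(a)$ is equivalent to $S(a^*)=a$. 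Combining the two equivalences yields the claim.

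There is essentially no obstacle here beyond correctly applying Proposition~\ref{prop12}; the substantive content (that $a^*$ lies in $\Dom(S)$ and that the antipode intertwines $\Theta^l$ with the dagger operation) has already been established there. The only point requiring a word of care is the final appeal to injectivity of $\Theta^l$, which is what lets us pass from equality of the associated maps on $\Linfd$ back to equality of the multipliers themselves in $\M^l_{cb}(\A(\GG))$.
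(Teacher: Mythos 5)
Your proof is correct and is exactly the argument the paper intends: the corollary is stated as an immediate consequence of Proposition~\ref{prop12}, obtained by rewriting ``preserves the adjoint'' as $\Theta^l(a)^\dagger=\Theta^l(a)$, substituting $\Theta^l(a)^\dagger=\Theta^l(S(a^*))$, and using injectivity of $b\mapsto\Theta^l(b)$. Your explicit remark that injectivity follows from the bijection $\Theta^l(\cdot)_*\colon \M^l_{cb}(\A(\GG))\to C^l_{cb}(\Ljd)$ is the right justification for the final step.
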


Next we check that $\M^l_{cb}(\A(\GG))$ is globally invariant under the scaling and modular automorphism groups.

\begin{lemma}\label{lemma6}
Let $\GG$ be a locally compact quantum group and let $a\in \M^l_{cb}(\A(\GG))$.  For any $t\in \RR$,
\begin{itemize}
\item $\tau_t(a)\in \M^l_{cb}(\A(\GG))$ with $\Theta^l(\tau_t(a))=\hat{\tau}_t\circ \Theta^l(a)\circ \hat{\tau}_{-t}$.
\item $\sigma^\vp_t(a)\in \M^l_{cb}(\A(\GG))$ with 
$\Theta^l(\sigma^\vp_t(a))\colon x\mapsto 
\hat{\delta}^{it}\,\hat{\tau}_t\circ\Theta^l(a)\bigl(\hat{\delta}^{-it} \hat{\tau}_{-t}(x) \bigr)$.
\item $\sigma^\psi_t(a)\in \M^l_{cb}(\A(\GG))$ with 
$\Theta^l(\sigma^\psi_t(a))\colon x\mapsto\hat{\tau}_{-t}\circ\Theta^l(a)\bigl( \hat{\tau}_{t}(x) \hat{\delta}^{-it} \bigr) \hat{\delta}^{it}$.
\end{itemize}
\end{lemma}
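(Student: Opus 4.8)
The plan is to treat all three bullet points by one common three-step scheme, so fix $a\in\M^l_{cb}(\A(\GG))$ and let $\alpha$ denote one of $\tau_t,\sigma^\vp_t,\sigma^\psi_t$. First I would \emph{define} a candidate map $T\colon\Linfd\to\Linfd$ by the formula on the right-hand side of the corresponding bullet. Each such $T$ is manifestly normal and completely bounded, being a composition of the normal $\star$-automorphisms $\hat\tau_{\pm t}$, of left or right multiplication by the unitaries $\hat\delta^{\pm it}\in\Linfd$, and of the normal CB map $\Theta^l(a)$. The goal is then to show $T\in\prescript{}{\Ljd}{\CB}^{\sigma}(\Linfd)$, i.e.\ that $T$ is a left $\Ljd$-module map, equivalently $\wh\Delta\circ T=(T\otimes\id)\circ\wh\Delta$; by the dictionary $C^l_{cb}(\Ljd)\cong\prescript{}{\Ljd}{\CB}^{\sigma}(\Linfd)\cong\M^l_{cb}(\A(\GG))$ recalled in Section~\ref{section multiplier} this produces a unique $b\in\M^l_{cb}(\A(\GG))$ with $\Theta^l(b)=T$, and it remains only to identify $b=\alpha(a)$.

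For the module identity I would use the four structural relations $\wh\Delta\circ\hat\tau_t=(\hat\tau_t\otimes\hat\tau_t)\circ\wh\Delta$, the group-likeness $\wh\Delta(\hat\delta^{it})=\hat\delta^{it}\otimes\hat\delta^{it}$, the multiplier identity $\wh\Delta\circ\Theta^l(a)=(\Theta^l(a)\otimes\id)\circ\wh\Delta$, and the invariance $\hat\tau_t(\hat\delta^{is})=\hat\delta^{is}$. Applying $\wh\Delta$ to $T(x)$ and pushing it through these relations, the $\hat\tau$- and $\hat\delta$-factors produced on the second tensor leg cancel (using $\hat\tau_{-t}\hat\tau_t=\id$ and $\hat\delta^{-it}\hat\delta^{it}=\I$), while the first leg reassembles into $T$ applied to the first Sweedler leg of $\wh\Delta(x)$. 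This is a short computation identical in shape for all three cases.

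To identify $b$ I would use that $\Theta^l(b)=T$ is equivalent to $(\I\otimes b)\ww^{\whG}=(T\otimes\id)\ww^{\whG}$, together with the fact that $c\mapsto(\I\otimes c)\ww^{\whG}$ is injective since $\ww^{\whG}$ is unitary. The right-hand side I would compute from the covariance of the Kac--Takesaki operator: the invariance $(\hat\tau_s\otimes\tau_s)\ww^{\whG}=\ww^{\whG}$, the identity $(\Theta^l(a)\otimes\id)\ww^{\whG}=(\I\otimes a)\ww^{\whG}$, and the modular covariance relations obtained by dualising \eqref{eq20} and the relation $(\tau^{\GG}_{-t}\otimes\sigma^{\hvp}_{-t})\ww^{\GG}=(\delta_{\GG}^{it}\otimes\I)\ww^{\GG}$ to $\whG$; for example, in the $\sigma^\vp$ case one obtains $(\hat\delta^{-it}\otimes\I)(\hat\tau_{-t}\otimes\id)\ww^{\whG}=(\id\otimes\sigma^\vp_t)\ww^{\whG}$. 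Applying $T$'s first-leg operations to $\ww^{\whG}$ and transporting them to the second leg via these relations converts the pre- and post-processing in $T$ into the automorphism $\alpha$ acting on the second leg, so that the first-leg residue recombines to $\ww^{\whG}$ and one reads off $(T\otimes\id)\ww^{\whG}=(\I\otimes\alpha(a))\ww^{\whG}$, whence $b=\alpha(a)$.

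The scaling case is immediate and the $\sigma^\vp$ case needs only the dualised form of \eqref{eq20}, so I expect the main obstacle to be the $\sigma^\psi$ case. There the formula involves \emph{right} (rather than left) multiplications by $\hat\delta^{\pm it}$ together with the opposite signs of $\hat\tau$, reflecting $\sigma^\psi_t=\mathrm{Ad}(\delta_{\GG}^{it})\circ\sigma^\vp_t$ and $\psi=\vp\circ R$. Because $\Theta^l(a)$ is not multiplicative, a right multiplication by $\hat\delta^{-it}$ sitting between the $\hat\tau$-layers cannot simply be pulled through $\Theta^l(a)$, so one must instead set up the $\sigma^\psi$-analogue of \eqref{eq20} directly, keeping careful track of which copy of $\LdG$ each of $\hat\delta$, $\delta_{\GG}$, $\hat\tau$ and $\sigma^\psi$ acts on (the two legs carry $\Linfd$ and $\Linf$, and $\hat\delta$, $\delta_{\GG}$ live in different algebras). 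Once that covariance relation is in hand, the identification of $b$ proceeds exactly as in the $\sigma^\vp$ case.
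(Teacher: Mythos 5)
Your plan is correct, and both of its computational steps do go through in all three cases, but it is organised differently from the paper's proof. The paper does not pass through the correspondence $\prescript{}{\Ljd}{\CB}^{\sigma}(\Linfd)\cong\M^l_{cb}(\A(\GG))$ at all: it verifies the definition of a CB left multiplier directly, writing $\alpha(a)\wh{\lambda}(\omega)=\alpha\bigl(a\,\alpha^{-1}(\wh{\lambda}(\omega))\bigr)$ for $\alpha\in\{\tau_t,\sigma^\vp_t,\sigma^\psi_t\}$ and using the covariance relations \eqref{eq20} and $(\id\otimes\sigma^\psi_t)(\ww^{\whG})=(\hat{\tau}_t\otimes\id)(\ww^{\whG})(\hat{\delta}^{-it}\otimes\I)$ --- both derived inside the proof by a GNS-level computation from $(\sigma^\vp_t\otimes\sigma^\psi_{-t})\Delta=\Delta\tau_t$ and $(\sigma^\psi_t\otimes\tau_{-t})\Delta=\Delta\sigma^\psi_t$ --- to rewrite $\alpha^{\pm 1}(\wh{\lambda}(\omega))$ as $\wh{\lambda}$ of a transformed functional. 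Your route buys a clean, purely algebraic module-map verification (it needs only $\wh\Delta\circ\hat{\tau}_t=(\hat{\tau}_t\otimes\hat{\tau}_t)\circ\wh\Delta$, group-likeness of $\hat{\delta}^{it}$, $\hat{\tau}_t(\hat{\delta})=\hat{\delta}$, and the module property of $\Theta^l(a)$), at the cost of invoking the Junge--Neufang--Ruan representation theorem. Note, however, that your step 2 is in fact redundant: once you have the step 3 identity $(T\otimes\id)(\ww^{\whG})=(\I\otimes\alpha(a))\ww^{\whG}$ with $T$ normal and CB, slicing with $\omega\otimes\id$ gives $\alpha(a)\wh{\lambda}(\omega)=\wh{\lambda}(T_*(\omega))$ for all $\omega\in\Ljd$, which is already the definition of $\alpha(a)\in\M^l_{cb}(\A(\GG))$ with $\Theta^l(\alpha(a))=T$ (the centraliser property of $T_*$ is automatic, as remarked after the definition of multipliers in Section~\ref{section multiplier}); this collapsed version of your argument is essentially the paper's proof, read at the level of $\ww^{\whG}$ rather than of functionals. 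Finally, the one genuinely analytic ingredient --- the $\sigma^\psi$-covariance relation you defer --- cannot be obtained by purely formal manipulation: the paper derives it exactly as it derives \eqref{eq20}, via modular theory on $\LL^2(\GG)$ (using $\hat{\delta}^{-it}=\nabla_\psi^{it}P^{it}$ and $\hat{\tau}_t=\oon{Ad}(P^{it})$), and your proposed shortcut through $\sigma^\psi_t=\oon{Ad}(\delta_{\GG}^{it})\circ\sigma^\vp_t$ still requires knowing how conjugation by $\delta_{\GG}^{it}$ in the second leg interacts with $\ww^{\whG}$, so that step costs the same amount of work either way.
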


\begin{proof}
Take $\omega\in \Ljd$. Using $(\hat{\tau}_t\otimes \tau_t)\ww^{\whG}=\ww^{\whG}$ (see proof of \cite[Proposition 6.10]{KustermansVaes}) we obtain
\[\begin{split}
&\quad\;
\tau_t(a) \wh{\lambda}(\omega)=\tau_t \bigl(a\, \tau_{-t}( \wh{\lambda}(\omega ))\bigr)=
\tau_t\bigl( a \wh{\lambda}( \omega\circ \hat{\tau}_t)\bigr)=
\wh{\lambda}\bigl( \Theta^l(a)_*( \omega\circ\hat{\tau}_t)\circ \hat{\tau}_{-t}\bigr)
\end{split}\]
which implies $\tau_t(a)\in \M^l_{cb}(\A(\GG))$ and $\Theta^l(\tau_t(a))=\hat{\tau}_t\circ \Theta^l(a)\circ \hat{\tau}_{-t}$; notice that clearly the right-hand-side of this final expression gives a completely bounded map.

We now use the following facts.  Firstly, by the definition of $\ww^{\GG}$ we have $(\rho\otimes \id)(\ww^{\GG *})\Lambda_{\vp}(x)=\Lvp( (\rho\otimes \id)\Delta(x))$ for $\rho \in \LL^1(\GG), x\in \mf{N}_{\vp}$.  Secondly, $(\sigma^\vp_t\otimes \sigma^\psi_{-t})\circ \Delta=\Delta\circ\tau_t$, see \cite[Proposition~6.8]{KustermansVaes}.  Thus
\[\begin{split}
&\quad\;
(\rho\circ \sigma^\vp_t\otimes \id)(\ww^{\GG *})\Lvp(x)=
\Lvp((\rho\circ\sigma^\vp_t\otimes\id)\Delta(x))=
\Lvp\bigl((\rho\otimes \sigma^\psi_t)\Delta(\tau_t(x))\bigr)\\
&=
\nu^{\frac{t}{2}} \nabla_\psi^{it} \Lvp( (\rho\otimes \id)\Delta(\tau_t(x)))=
\nabla_\psi^{it} (\rho\otimes \id)(\ww^{\GG *}) P^{it}\Lvp(x),
\end{split}\]
which implies that $(\sigma^{\vp}_t\otimes \id)(\ww^{\GG *})=
(\I\otimes \nabla_{\psi}^{it})\ww^{\GG *}(\I\otimes P^{it})$; here we have also used \cite[Definition 5.1, Remark 5.2]{LCQGDaele}. Next, since $\ww^{\whG}=\chi(\ww^{\GG *})$ and $\hat{\delta}^{-it}=\nabla_\psi^{it}P^{it}$ (\cite[Theorem 5.17]{LCQGDaele}), and using that $\hat{\tau}_{-t}(y) = P^{-it}yP^{it}$ for $y\in \Linfd$ see \cite[Proposition~8.23]{KustermansVaes}, we arrive at
\begin{equation}\label{eq20}
(\id\otimes \sigma^{\vp}_t )(\ww^{\whG})=
( \nabla_\psi^{it}\otimes \I)\ww^{\whG}(P^{it}\otimes \I)=
(\hat{\delta}^{-it}\otimes \I)(\hat{\tau}_{-t}\otimes\id)(\ww^{\whG}).
\end{equation}
Using this formula and $\hat{\tau}_t(\hat{\delta})=\hat{\delta}$ (\cite[Theorem 3.11]{LCQGDaele}) we calculate
\[\begin{split}
&\quad\;
\sigma^\vp_t(a)\wh{\lambda}(\omega)=
\sigma^\vp_t\bigl( a \sigma^\vp_{-t}(\wh{\lambda}(\omega))\bigr)=
\sigma^\vp_t\bigl(a
(\omega\otimes\id)\bigl( 
(\hat{\delta}^{it}\otimes \I)(\hat{\tau}_{t}\otimes\id)(\ww^{\whG})
\bigr)\bigr)\\
&=
\sigma^\vp_t\bigl(
a \wh{\lambda}( (\omega\hat{\delta}^{it})\circ\hat{\tau}_t)
\bigr)=
\sigma^\vp_t\bigl( \wh{\lambda} ( \Theta^l(a)_*( (\omega\hat{\delta}^{it})\circ\hat{\tau}_t))\bigr)\\
&=
\wh{\lambda}\bigl(
\bigl(\bigl( \Theta^l(a)_*((\omega\hat{\delta}^{it})\circ\hat{\tau}_t ) \bigr) \hat{\delta}^{-it}\bigr)\circ \hat{\tau}_{-t}
\bigr).
\end{split}\]
The above shows that $\sigma^\vp_t(a)\in \M^l(\A(\GG))$, and for $x\in \Linfd$ we get
\[\begin{split}
&\quad\;
\la \Theta^l(\sigma^\vp_t(a))(x),\omega \ra 
=
\la x, \Theta^l(\sigma^\vp_t(a))_*(\omega) \ra =
\la x , \bigl( 
\Theta^l(a)_*( (\omega\hat{\delta}^{it})\circ\hat{\tau}_t)
\hat{\delta}^{-it}\bigr)\circ \hat{\tau}_{-t}\ra \\
&=
\la \hat{\delta}^{-it} \hat{\tau}_{-t}(x) , \Theta^l(a)_*(
(\omega\hat{\delta}^{it})\circ\hat{\tau}_t)\ra =
\la \hat{\delta}^{it}\hat{\tau}_t\circ\Theta^l(a)\bigl(\hat{\delta}^{-it} \hat{\tau}_{-t}(x) \bigr),\omega\ra.
\end{split}\]

The final claim is shown in a similar way.  We have $(\sigma^\psi_t\otimes\tau_{-t})\circ\Delta=\Delta \circ\sigma^\psi_t$ (\cite[Proposition 8.23]{KustermansVaes}) and hence for $x\in \mf{N}_{\vp}$,
\[\begin{split}
&\quad\;
(\rho\circ \sigma^\psi_t\otimes \id)(\ww^{\GG *})\Lvp(x)=
\Lvp((\rho\circ\sigma^\psi_t\otimes\id)\Delta(x))=
\Lvp\bigl((\rho\otimes \tau_t)\Delta(\sigma^\psi_t(x))\bigr)\\
&=
\nu^{-\frac{t}{2}} P^{it} \Lvp( (\rho\otimes \id)\Delta(\sigma^\psi_t(x)))=
P^{it} (\rho\otimes \id)(\ww^{\GG *}) \nabla_\psi^{it}\Lvp(x),
\end{split}\]
consequently $(\sigma^\psi_t\otimes\id)(\ww^{\GG *})=(\I\otimes P^{it})\ww^{\GG *} (\I\otimes \nabla_\psi^{it})$ and so
\[
(\id\otimes \sigma^\psi_t)(\ww^{\whG})=(P^{it}\otimes\I)\ww^{\whG}(\nabla_\psi^{it}\otimes \I)=
(\hat{\tau}_t\otimes\id)(\ww^{\whG})(\hat{\delta}^{-it}\otimes\I).
\]
Calculating as before,
\[\begin{split}
&\quad\;
\sigma^\psi_t(a)\wh{\lambda}(\omega)=
\sigma^\psi_t\bigl(a
(\omega\otimes\id)\bigl( 
(\hat{\tau}_{-t}\otimes\id)(\ww^{\whG})
(\hat{\delta}^{it}\otimes \I)
\bigr)\bigr)\\
&=
\sigma^\psi_t\bigl(
a \wh{\lambda}( ( \hat{\delta}^{it}\omega)\circ\hat{\tau}_{-t})
\bigr)=
\wh{\lambda}\bigl(
\bigl(\bigl( \hat{\delta}^{-it}\,\Theta^l(a)_*((\hat{\delta}^{it}\omega)\circ\hat{\tau}_{-t} ) \bigr) \bigr)\circ \hat{\tau}_{t}
\bigr)
\end{split}\]
and so
\[\begin{split}
&\quad\;
\la \Theta^l(\sigma^\psi_t(a))(x),\omega \ra =
\la \hat{\tau}_{-t}\circ\Theta^l(a)\bigl( \hat{\tau}_{t}(x) \hat{\delta}^{-it} \bigr) \hat{\delta}^{it},\omega\ra \qquad(x\in\Linfd)
\end{split}\]
as desired. 
\end{proof}

In the next proposition we show that for any $a\in \M^l_{cb}(\A(\GG))$ the map $\Theta^l(a)$ is bounded on the Hilbert space level; the final claim should be compared with 
Proposition~\ref{prop12}.

\begin{proposition}\label{prop2}
Let $a\in \M^l_{cb}(\A(\GG))$. Then for $b\in \mf{N}_{\hvp}$ we have $\Theta^l(a)(b)\in \mf{N}_{\hvp}$, and the densely defined operator
\[
\LdG\supseteq \Lambda_{\hvp}(\mf{N}_{\hvp})\ni \Lambda_{\hvp} (b)\mapsto 
\Lambda_{\hvp} (\Theta^l(a)(b))\in \LdG
\]
is bounded. In fact, we have 
\[
\Lhvp(\Theta^l(a)(b))=S^{-1}(a)\Lhvp(b) = S(a^*)^* \Lhvp(b)\qquad(b\in\mf{N}_{\hvp}).
\]
\end{proposition}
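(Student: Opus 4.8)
The plan is to verify the formula first on the $\ssots\times\|\cdot\|$ core $\lambda_{\GG}(\mscr{J}_0)$ of $\Lhvp$ supplied by Lemma~\ref{lemma22}, and then to extend it to all of $\mf{N}_{\hvp}$ by closedness of the GNS map. The computation on the core rests on understanding how $\Theta^l(a)$ acts on the slices $\lambda_{\GG}(\omega)=(\omega\otimes\id)\ww^{\GG}\in\Linfd$.

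The key preliminary step is the operator identity
\begin{equation*}
(\id\otimes\Theta^l(a))\ww^{\GG} = \ww^{\GG}\,(S^{-1}(a)\otimes\I), \tag{$\star$}
\end{equation*}
where $S^{-1}(a)=S(a^*)^*$ is \emph{bounded} precisely because $S(a^*)\in\M^l_{cb}(\A(\GG))$ by Proposition~\ref{prop12}. To obtain ($\star$): starting from $(\I\otimes a)\ww^{\whG}=(\Theta^l(a)\otimes\id)\ww^{\whG}$ and $\ww^{\whG}=\chi(\ww^{\GG *})$, applying $\chi$ gives $(a\otimes\I)\ww^{\GG *}=(\id\otimes\Theta^l(a))\ww^{\GG *}$; taking adjoints and using $((\id\otimes\Phi)Y)^*=(\id\otimes\Phi^{\dagger})(Y^*)$ yields $(\id\otimes\Theta^l(a)^{\dagger})\ww^{\GG}=\ww^{\GG}(a^*\otimes\I)$ for every $a\in\M^l_{cb}(\A(\GG))$. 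Since $\Theta^l(a)^{\dagger}=\Theta^l(S(a^*))$ (Proposition~\ref{prop12}), applying this last identity to $S(a^*)$ in place of $a$, and simplifying via $S((S(a^*))^*)=a$ and $(S(a^*))^*=S^{-1}(a)$, produces ($\star$). Slicing ($\star$) with $\omega\in\Lj$ and using normality of $\Theta^l(a)$ then gives $\Theta^l(a)(\lambda_{\GG}(\omega))=\lambda_{\GG}(c\,\omega)$, where I abbreviate $c=S^{-1}(a)$ and $c\,\omega=\omega(\,\cdot\,c)$.

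On the core I then fix $\omega\in\mscr{J}_0\subseteq\mscr{J}$ and note $c^*=S(a^*)\in\Linf$. Because $\mf{N}_{\vp}$ is a left ideal and $\Lvp(c^*x)=c^*\Lvp(x)$, the estimate $|(c\,\omega)(x^*)|=|\omega((c^*x)^*)|\le M_\omega\|c\|\,\|\Lvp(x)\|$ shows $c\,\omega\in\mscr{J}$, so $\Theta^l(a)(\lambda_{\GG}(\omega))=\lambda_{\GG}(c\,\omega)\in\mf{N}_{\hvp}$. Using the defining relation $\ismaa{\Lvp(x)}{\Lhvp(\lambda_{\GG}(\omega))}=\omega(x^*)$ I compute, for $x\in\mf{N}_{\vp}$,
\[
\ismaa{\Lvp(x)}{\Lhvp(\lambda_{\GG}(c\,\omega))}=\omega((c^*x)^*)=\ismaa{c^*\Lvp(x)}{\Lhvp(\lambda_{\GG}(\omega))}=\ismaa{\Lvp(x)}{c\,\Lhvp(\lambda_{\GG}(\omega))},
\]
and density of $\Lvp(\mf{N}_{\vp})$ gives $\Lhvp(\Theta^l(a)(\lambda_{\GG}(\omega)))=c\,\Lhvp(\lambda_{\GG}(\omega))=S^{-1}(a)\Lhvp(\lambda_{\GG}(\omega))$.

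Finally I extend to arbitrary $b\in\mf{N}_{\hvp}$. By Lemma~\ref{lemma22} pick a net $(\omega_j)$ in $\mscr{J}_0$ with $\lambda_{\GG}(\omega_j)\to b$ in $\ssots$ and $\Lhvp(\lambda_{\GG}(\omega_j))\to\Lhvp(b)$ in norm. As $\Theta^l(a)$ is a normal CB map it is $\ssots$-continuous (via the Wittstock factorisation used in the proof of Lemma~\ref{lemma18}), so $\Theta^l(a)(\lambda_{\GG}(\omega_j))\to\Theta^l(a)(b)$ in $\ssots$, while $c\,\Lhvp(\lambda_{\GG}(\omega_j))\to c\,\Lhvp(b)$ in norm since $c$ is bounded. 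Closedness of $\Lhvp$ for the $\ssots\times\|\cdot\|$ topology then yields $\Theta^l(a)(b)\in\mf{N}_{\hvp}$ with $\Lhvp(\Theta^l(a)(b))=c\,\Lhvp(b)=S^{-1}(a)\Lhvp(b)=S(a^*)^*\Lhvp(b)$, and boundedness of the stated operator is immediate from $\|c\,\Lhvp(b)\|\le\|c\|\,\|\Lhvp(b)\|$. The main obstacle is the derivation of ($\star$): transporting the multiplier relation through the flip and the unbounded antipode using Proposition~\ref{prop12}, together with the observation that $S^{-1}(a)=S(a^*)^*$ is in fact a bounded operator; once this is in place, the core computation and the closedness/continuity extension are routine.
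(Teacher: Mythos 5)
Your proof is correct, and it follows the paper's overall skeleton — verify the formula on the dense set supplied by Lemma~\ref{lemma22}, then extend to all of $\mf{N}_{\hvp}$ using $\ssots$-continuity of the normal CB map $\Theta^l(a)$ and closedness of $\Lhvp$ — but the core computation is carried out by a different mechanism. The paper works at the level of functionals: writing $b=(\id\otimes\omega)\ww^{\whG}=\lambda_{\GG}(\ov{\omega}^{\sharp})$, it shows $\Theta^l(a)(b)=(\id\otimes\omega a)\ww^{\whG}$ by a pairing computation, proves $\ov{\omega a}\in\LL^1_{\sharp}(\GG)$ with $\ov{\omega a}^{\sharp}=S^{-1}(a)\ov{\omega}^{\sharp}$ through a calculation inside $\Dom(S)$ (using that $\Dom(S)$ is an algebra), and then invokes Lemma~\ref{lemma7} for the $\LL^2$-implementation. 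You instead establish the single operator identity $(\id\otimes\Theta^l(a))\ww^{\GG}=\ww^{\GG}(S^{-1}(a)\otimes\I)$ — by flipping the multiplier relation, taking adjoints, and applying Proposition~\ref{prop12} twice, to $a$ and then to $S(a^*)$ — and then slice; the membership $\omega(\cdot\,S^{-1}(a))\in\mscr{J}$ and the GNS formula then follow from elementary estimates that amount to the $b=\I$ case of Lemma~\ref{lemma7}, inlined. Your route buys a cleaner intermediate statement (the action of $\Theta^l(a)$ on $\lambda_{\GG}(\omega)$ is identified for \emph{every} $\omega\in\Lj$, not only for functionals in the special dense set) and avoids the $\sharp$-calculus entirely; it is worth noting that your identity $(\star)$ is precisely the relation $\ww^{\whG *}(\I\otimes b^*)=(\Theta^l(a)\otimes\id)\ww^{\whG *}$ (with $b=S(a^*)$) that appears as an intermediate step inside the paper's proof of Proposition~\ref{prop12}, transported by the flip $\chi$, so in effect you re-extract from the \emph{statement} of Proposition~\ref{prop12} what the paper re-derives there by hand. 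Both arguments ultimately rest on the same two inputs, Proposition~\ref{prop12} and Lemma~\ref{lemma22}, and your continuity and closedness appeals in the extension step match the paper's.
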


Here and in the sequel we use the GNS implementation of map $\Theta^l(a)$ -- in the literature people have considered also different embeddings of (a subspace of) $\Linf$ into $\LdG$, c.f.~\cite[Section 2]{SkalskiViselterConvolution}.

In order to prove Proposition \ref{prop2} we start with a general lemma; recall \eqref{eq:scriptI} for the definition of $\mscr{J}$.

\begin{lemma}\label{lemma7}
Let $\omega\in \mscr{J}\subseteq \LL^1(\GG)$, let $a\in \Linf$, and let $b\in \Dom(\sigma^\vp_{-i/2}) \subseteq \Linf$. Then $a\omega b\in \mscr{J}$ with $\Lhvp(\lambda(a\omega b))=a J_{\vp} \sigma^{\vp}_{-i/2}(b )^* J_{\vp}\Lhvp(\lambda(\omega))$.
\end{lemma}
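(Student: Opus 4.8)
The plan is to produce the answer vector directly and check it against the description of $\mscr{J}$ recalled just before the statement: if $\xi\in\LdG$ is a fixed vector and $\nu\in\Lj$ satisfies $\is{\Lvp(x)}{\xi}=\nu(x^*)$ for every $x\in\mf{N}_\vp$, then Cauchy--Schwarz gives $|\nu(x^*)|\le\|\xi\|\,\|\Lvp(x)\|$, so $\nu\in\mscr{J}$, and comparing with the defining relation $\is{\Lvp(x)}{\Lhvp(\lambda(\nu))}=\nu(x^*)$ together with density of $\Lvp(\mf{N}_\vp)$ in $\LdG$ forces $\Lhvp(\lambda(\nu))=\xi$. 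Applying this with $\nu=a\omega b$ and $\xi=a\,J_{\vp}\sigma^\vp_{-i/2}(b)^* J_{\vp}\,\Lhvp(\lambda(\omega))$, the whole lemma reduces to the single identity
\[
\is{\Lvp(x)}{a\,J_{\vp}\sigma^\vp_{-i/2}(b)^* J_{\vp}\,\Lhvp(\lambda(\omega))}=(a\omega b)(x^*)=\omega(b\,x^*\,a)\qquad(x\in\mf{N}_\vp),
\]
the last step being just the definition of the $\Linf$-bimodule action on $\Lj$.

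I would prove this identity by rewriting the right-hand side so that the defining relation for $\Lhvp(\lambda(\omega))$ applies, i.e.\ by exhibiting an element of $\mf{N}_\vp$ whose adjoint is $b\,x^*\,a$. The natural candidate is $\tilde x=a^* x\,b^*$, since $\tilde x^*=b\,x^*\,a$. Here $a^* x\in\mf{N}_\vp$ because $\mf{N}_\vp$ is a left ideal, and the GNS representation gives $\Lvp(a^* x)=a^*\Lvp(x)$. The hypothesis $b\in\Dom(\sigma^\vp_{-i/2})$ is exactly what is needed to right-multiply by $b^*$: the standard modular formula for right multiplication in the GNS space (Tomita--Takesaki theory, see \cite{TakesakiII}) yields $y\,b^*\in\mf{N}_\vp$ and $\Lvp(y\,b^*)=J_{\vp}\sigma^\vp_{-i/2}(b) J_{\vp}\,\Lvp(y)$ for all $y\in\mf{N}_\vp$, so that
\[
\Lvp(\tilde x)=J_{\vp}\sigma^\vp_{-i/2}(b) J_{\vp}\,a^*\,\Lvp(x).
\]
Feeding this into the defining relation and moving the bounded factors $a^*\in\Linf$ and $J_{\vp}\sigma^\vp_{-i/2}(b) J_{\vp}\in\Linf'$ to the second slot of the pairing (using $(J_{\vp}\sigma^\vp_{-i/2}(b) J_{\vp})^*=J_{\vp}\sigma^\vp_{-i/2}(b)^* J_{\vp}$) gives
\[
\omega(b\,x^*\,a)=\is{\Lvp(\tilde x)}{\Lhvp(\lambda(\omega))}=\is{\Lvp(x)}{a\,J_{\vp}\sigma^\vp_{-i/2}(b)^* J_{\vp}\,\Lhvp(\lambda(\omega))},
\]
which is precisely the required identity; the endgame described above then delivers both $a\omega b\in\mscr{J}$ and the stated value of $\Lhvp(\lambda(a\omega b))$.

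The only delicate point is the modular bookkeeping: one must use the right-multiplication formula in exactly the form whose analyticity requirement is $b\in\Dom(\sigma^\vp_{-i/2})$ (rather than some other half-strip), and keep track of the adjoint, which is what converts $\sigma^\vp_{-i/2}(b)$ inside the pairing into $\sigma^\vp_{-i/2}(b)^*$ in the final vector. Everything else---the left-ideal property, the GNS covariance $\Lvp(a^* x)=a^*\Lvp(x)$, and the Cauchy--Schwarz/density conclusion---is routine.
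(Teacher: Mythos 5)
Your proof is correct and follows essentially the same route as the paper's: both reduce the claim to the identity $(a\omega b)(x^*)=\ismaa{\Lvp(x)}{a J_{\vp}\sigma^{\vp}_{-i/2}(b)^*J_{\vp}\Lhvp(\lambda(\omega))}$ by writing $bx^*a=(a^*xb^*)^*$, invoking the left-ideal property and the Tomita--Takesaki right-multiplication formula to get $\Lvp(a^*xb^*)$, applying the defining property of $\Lhvp(\lambda(\omega))$, and moving the bounded operators across the pairing. The only cosmetic difference is that the paper writes the modular factor as $J_{\vp}\sigma^{\vp}_{i/2}(b^*)^*J_{\vp}$, which coincides with your $J_{\vp}\sigma^{\vp}_{-i/2}(b)J_{\vp}$ since $\sigma^{\vp}_{i/2}(b^*)=\sigma^{\vp}_{-i/2}(b)^*$; your explicit Cauchy--Schwarz/density endgame is exactly what the paper leaves implicit in "which proves the claim."
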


\begin{proof}
Take $x\in \mf{N}_{\vp}$. We have
\[\begin{split}
&\quad\;
\la x^*,a\omega b\ra =\la (a^* x b^*)^* , \omega \ra =
\ismaa{\Lvp( a^* x b^*) }{\Lhvp(\lambda(\omega))}\\
&=
\ismaa{ a^* J_{\vp} \sigma^\vp_{i/2}(b^*)^*J_{\vp}
\Lvp(x)}{\Lhvp(\lambda(\omega))}=
\ismaa{\Lvp(x) }{a J_{\vp} \sigma^{\vp}_{i/2}(b^*)J_{\vp} \Lhvp(\lambda(\omega))}
\end{split}\]
which proves the claim.
\end{proof}

\begin{proof}[Proof of Proposition \ref{prop2}]
Take $b=(\id\otimes \omega ) \ww^{\whG}$ for $\omega\in \LL^1(\GG)$ such that $\ov{\omega}\in \LL^1_{\sharp}(\GG)$ and $\ov{\omega}^{\sharp}\in \mscr{J}$; that such an $\omega$ exists follows from Lemma~\ref{lemma22}, for example.  Then, for any $\wh{\omega}\in \LL^1(\whG)$,

\[\begin{split}
&\quad\;
\la \Theta^l(a)(b) , \wh{\omega} \ra=
\la (\id\otimes \omega)\ww^{\whG}, \Theta^l(a)_*(\wh{\omega})\ra=\la \wh{\lambda}(\Theta^l(a)_*(\wh{\omega})) , \omega \ra \\
&=
\la a \wh{\lambda}(\wh{\omega}) , \omega \ra =
\la (\wh{\omega}\otimes \id )\ww^{\whG} , \omega a\ra =
\la (\id\otimes \omega a)\ww^{\whG} , \wh{\omega}\ra,
\end{split}\]
which shows that
\[
\Theta^l(a)(b)=(\id\otimes\omega a)\ww^{\whG}.
\]

Observe also that
\begin{equation}\label{eq3}
b=(\omega\otimes\id) (\ww^{\GG *})=
((\ov{\omega}\otimes \id)\ww^{\GG})^*=
(\ov{\omega}^{\sharp}\otimes \id)\ww^{\GG}=
\lambda(\ov{\omega}^{\sharp}),
\end{equation}
in particular $b\in \mf{N}_{\hvp}$. Now, by Proposition~\ref{prop12} we have $a^*\in D(S)$, hence for $y\in \Dom(S)$ it holds that $S(y)^* a^* \in \Dom(S)$ (as $S(y)^*\in \Dom(S)$ and $\Dom(S)$ is closed under multiplication \cite[Proposition 5.22]{KustermansVaes}), consequently
\[\begin{split}
&\quad\;
\la S(y),  \ov{\ov{\omega a}}\ra =\la a S(y) , \omega \ra=
\ov{\la S(y)^* a^* , \ov{\omega} \ra }=
\ov{\la S(y)^* a^* , \ov{\omega}^{\sharp \sharp} \ra }\\
&=
\ov{\la S( S(y)^* a^*) , \ov{ \ov{\omega}^{\sharp}}\ra }=
\la y S(a^*)^* , \ov{\omega}^{\sharp}\ra =
\la y S^{-1}(a) , \ov{\omega}^{\sharp}\ra =
\la y, S^{-1}(a)  \ov{\omega}^{\sharp}\ra 
\end{split}\]
hence $\ov{\omega a }\in \LL^1_{\sharp}(\GG)$ and $\ov{\omega a}^{\sharp} = S^{-1} (a) \ov{\omega}^{\sharp}$. Consequently
\[
\Theta^l(a)(b)= (\id\otimes \omega a)\ww^{\whG}=
(\ov{\omega a}^{\sharp}\otimes\id) \ww^{\GG}=
( S^{-1}(a) \ov{\omega}^{\sharp}\otimes \id)\ww^{\GG}=
\lambda( S^{-1}(a) \ov{\omega}^{\sharp}).
\]
This calculation, combined with Lemma~\ref{lemma7}, shows that $\Theta^l(a)(b) \in \mf{N}_{\hvp}$ with
\[
\Lhvp( \Theta^l(a) (b) ) = 
\Lhvp( \lambda( S^{-1}(a) \ov{\omega}^{\sharp}) )
= S^{-1}(a) \Lhvp( \lambda( \ov{\omega}^{\sharp}) )
= S^{-1}(a) \Lhvp(b).
\]

Now let $b\in \mf{N}_{\hvp}$ be arbitrary. By Lemma \ref{lemma22} the space
\[\begin{split}
&\quad\;
\{(\id\otimes \omega)\ww^{\whG}\,|\, \omega\in \LL^1(\GG)\colon \ov{\omega}\in \LL^1_{\sharp}(\GG),\, \ov{\omega}^{\sharp}\in \mscr{J}\}\\
&=
\{\lambda(\ov{\omega}^{\sharp})\,|\, \omega\in \LL^1(\GG)\colon \ov{\omega}\in \LL^1_{\sharp}(\GG),\, \ov{\omega}^{\sharp}\in \mscr{J}\}
\end{split}\]
is a $\ssot\times \|\cdot\|$ core for $\Lhvp$. Hence there is a net $(\omega_i)_{i\in I}$ of suitable functionals with $b=\ssot-\lim_{i\in I} \lambda(\omega_i)$ and $\Lhvp(b)=\lim_{i\in I} \Lhvp(\lambda(\omega_i))$. By \ssot-continuity of $\Theta^l(a)$ and the previous reasoning we obtain
\[
\Theta^l(a)(\lambda(\omega_i))\xrightarrow[i\in I]{\ssot} \Theta^l(a)(b)
\]
and
\[
\Lhvp( \Theta^l(a) (\lambda(\omega_i)))=
S^{-1}(a) \Lhvp(\lambda(\omega_i))\xrightarrow[i\in I]{}S^{-1}(a) \Lhvp(b).
\]
Thus, since $\Lhvp$ is $\ssot\times \|\cdot\|$ closed, 
\[
\Theta^l(a)(b)\in \Dom(\Lhvp)=\mf{N}_{\hvp}\quad\textnormal{and}\quad
\Lhvp( \Theta^l(a)(b))=
S^{-1}(a)\Lhvp(b)
\]
as claimed. 
\end{proof}

\subsection{Left versus right} 

We have defined AP of a locally compact quantum group $\GG$ in Definition~\ref{def1} using left CB multipliers. Let us verify that we would have obtained the same notion using 
right CB multipliers.

\begin{proposition}\label{prop14}
The following conditions are equivalent:
\begin{enumerate}
\item $\GG$ has AP, i.e.~there is a net in $\A(\GG)$ which converges to $\I$ in the weak$^*$-topology of $\M^l_{cb}(\A(\GG))$,
\item there is a net in $\widehat{\rho}(\Ljd)$ which converges to $\I$ in the weak$^*$-topology of $\M^r_{cb}(\wh{\rho}(\Ljd))$.
\end{enumerate}
\end{proposition}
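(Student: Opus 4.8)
The plan is to realise the map $\wh{R}^{\sim} = J_{\vp}(\cdot)^* J_{\vp}$ of Lemma~\ref{lemma19} as a weak$^*$-homeomorphism between $\M^l_{cb}(\A(\GG))$ and $\M^r_{cb}(\wh{\rho}(\Ljd))$, and then simply transport the approximating net across it. By Lemma~\ref{lemma19} (together with Lemma~\ref{lemma18}, applied to $\whG$, for the norm statement), $\wh{R}^{\sim}$ restricts to an isometric bijection $\M^l_{cb}(\A(\GG)) \to \M^r_{cb}(\wh{\rho}(\Ljd))$. It is an involution since $(\wh{R}^{\sim})^2 = \id$, it fixes $\I$ because $J_{\vp}^2 = \id$, and it carries the Fourier algebra onto its right analogue: substituting $\omega \mapsto \omega\circ\wh{R}$ in the identity $\wh{\rho}(\omega) = \wh{R}^{\sim}(\wh{\lambda}(\omega\circ\wh{R}))$ and using $\wh{R}^2 = \id$ gives $\wh{R}^{\sim}(\wh{\lambda}(\omega)) = \wh{\rho}(\omega\circ\wh{R})$, so that $\wh{R}^{\sim}(\A(\GG)) = \wh{\rho}(\Ljd)$. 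Consequently, once $\wh{R}^{\sim}$ is shown to be a weak$^*$-homeomorphism, a net $(a_i)_{i\in I}$ in $\A(\GG)$ converges weak$^*$ to $\I$ in $\M^l_{cb}(\A(\GG))$ if and only if $(\wh{R}^{\sim}(a_i))_{i\in I}$ is a net in $\wh{\rho}(\Ljd)$ converging weak$^*$ to $\I$ in $\M^r_{cb}(\wh{\rho}(\Ljd))$, which is exactly the equivalence of (1) and (2).

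To prove weak$^*$-continuity I would apply Lemma~\ref{biduallemma} with $E = Q^l(\A(\GG))$, $F = Q^r(\wh{\rho}(\Ljd))$ and $\alpha = \wh{R}^{\sim}\colon E^* = \M^l_{cb}(\A(\GG)) \to F^* = \M^r_{cb}(\wh{\rho}(\Ljd))$. Since $Q^r(\wh{\rho}(\Ljd))$ is by definition the closure of $\alpha^r(\LL^1(\GG'))$, the subspace $D = \alpha^r(\LL^1(\GG'))$ has dense linear span in $F$, so it suffices to verify that $\alpha^*\kappa_F(D) \subseteq \kappa_E(E)$. The key observation is that $\wh{R}^{\sim}$ restricts to a \emph{normal} linear map $\Linf \to \Linf'$ (indeed $J_{\vp}\Linf J_{\vp} = \Linf'$, and $x\mapsto J_{\vp}x^* J_{\vp}$ is weak$^*$-continuous). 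Hence, for $f\in \LL^1(\GG')$ the functional $g := f\circ(\wh{R}^{\sim}\rest_{\Linf})$ is normal, i.e.\ $g\in\LL^1(\GG)$. Unwinding the definitions, $\alpha^*\kappa_F(\alpha^r(f))$ is the functional $a\mapsto \la \wh{R}^{\sim}(a), \alpha^r(f)\ra = f(\wh{R}^{\sim}(a)) = g(a)$ on $\M^l_{cb}(\A(\GG))$, which is precisely $\kappa_E(\alpha^l(g))$ with $\alpha^l(g)\in Q^l(\A(\GG)) = E$. Thus the hypothesis of Lemma~\ref{biduallemma} is met, and since $\wh{R}^{\sim}$ is moreover a bijection, the final clause of that lemma yields that $\wh{R}^{\sim}$ is a weak$^*$-homeomorphism. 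Combined with the first paragraph, this proves the proposition.

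The only delicate point is matching the two predual structures $Q^l(\A(\GG))$ and $Q^r(\wh{\rho}(\Ljd))$, whose concrete descriptions (as in Proposition~\ref{prop15}) are somewhat involved. The device that circumvents this is the remark that $\wh{R}^{\sim}$ sends $\Linf$ normally onto $\Linf'$, so that pre-composition with it maps $\LL^1(\GG')$ into $\LL^1(\GG)$; this normality is exactly what is needed to check the bidual criterion of Lemma~\ref{biduallemma} on the canonical dense sets $\alpha^r(\LL^1(\GG'))$, and it lets one avoid computing with the functionals $\Omega_{x,\omega}$ altogether.
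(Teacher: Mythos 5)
Your proposal is correct and takes essentially the same route as the paper's proof: both transport the approximating net through the involution $\wh{R}^{\sim}$ using Lemmas~\ref{lemma18} and~\ref{lemma19}, and both come down to showing that pre-composition with $\wh{R}^{\sim}$ carries the predual $Q^r(\wh{\rho}(\Ljd))$ into $Q^l(\A(\GG))$. The only difference is packaging: the paper verifies this by a direct norm-approximation in the predual (writing $\theta=\lim_{j}\alpha^r(\nu_j)$ with $\nu_j\in\LL^1(\GG')$ and using the isometry to conclude $\theta\circ\wh{R}^{\sim}=\lim_{j}\alpha^l(\nu_j\circ\wh{R}^{\sim})\in Q^l(\A(\GG))$), whereas you check the criterion of Lemma~\ref{biduallemma} on the dense subset $\alpha^r(\LL^1(\GG'))$ via normality of $\wh{R}^{\sim}\rest_{\Linf}$ --- the same two ingredients (the isometry from Lemmas~\ref{lemma18}--\ref{lemma19} and the fact that $\wh{R}^{\sim}$ maps $\Linf$ normally onto $\Linf'$) underlie both arguments.
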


\begin{proof}
Assume that $(a_i)_{i\in I}$ is a net in $\A(\GG)$ which converges to $\I$ in the weak$^*$-topology of $\M^l_{cb}(\A(\GG))$. Let $\wh{R}^{\sim}\colon \B(\LdG)\rightarrow\B(\LdG)$ be the extension of the unitary antipode of $\GG$ given by $\wh{R}^{\sim}(x)=J_{\vp}x^*J_\vp\,(x\in \B(\LdG))$.  For each $i\in I$ there is $\omega_i\in\Ljd$ with $a_i = \wh{\lambda}(\omega_i)$, and so by Lemma~\ref{lemma19}, if we define $b'_i = \wh{R}^{\sim}(a_i)$ then $b'_i = \wh\rho(\omega_i\circ\wh R) \in \wh{\rho}(\Ljd) \subseteq \M^r_{cb}(\wh\rho(\Ljd))$ with $\Theta^r(b'_i) = \wh{R}\circ \Theta^l(a_i)\circ \wh{R}\in\CB^\sigma(\Linfd)$.

Let us now argue that $b'_i\xrightarrow[i\in I]{\w^*}\I$: choose a functional $\theta\in Q^r(\wh\rho(\Ljd))$. We have to show that $\theta\circ\wh{R}^{\sim}\in Q^l(\A(\GG))$ (this makes sense by Lemma \ref{lemma19}). We can write $\theta=\lim_{j\in J} \alpha^r(\nu_j)$ for some $\nu_j\in \LL^1(\GG')$ (see Section \ref{sec:predual}). Then, as $\wh{R}^{\sim}\colon \M^l_{cb}(\Ljd)\rightarrow \M^r_{cb}(\Ljd)$ is an isometry (Lemma \ref{lemma18}) we obtain
\[\begin{split}
&\quad\;
\|\theta\circ\wh{R}^{\sim}-\alpha^l(\nu_j\circ \wh{R}^{\sim})\|=
\sup_{a\in \M^l_{cb}(\A(\GG))_1}|\langle \theta \circ \wh{R}^{\sim}-\alpha^l(\nu_j\circ\wh{R}^{\sim}), a \rangle|\\
&=
\sup_{a\in \M^l_{cb}(\A(\GG))_1}|\langle\theta - \alpha^r(\nu_j) , \wh{R}^{\sim}(a) \rangle|=
\sup_{b'\in \M^r_{cb}(\A(\GG))_1}|\langle\theta - \alpha^r(\nu_j) , b' \rangle|=
\|\theta-\alpha^r(\nu_j)\|
\xrightarrow[j\in J]{}0,
\end{split}\]
so $\theta\circ\wh{R}^{\sim}=\lim_{j\in J} \alpha^l(\nu_j\circ \wh{R}^{\sim})\in Q^l(\A(\GG)) $.
Now we can calculate
\[\begin{split}
&\quad\;
\langle  \I-b'_i,\theta\rangle=
\langle \I-\wh{R}^{\sim}(a_i),\theta\rangle=
\langle \I-a_i,\theta\circ\wh{R}^{\sim}\rangle
\xrightarrow[i\in I]{}0,
\end{split}\]
which shows that $(b'_i)_{i\in I}$ is a net giving us condition $(2)$. The converse implication is analogous.
\end{proof}

\section{Relation to other approximation properties} \label{section relation}

In this section we discuss the relation between AP and other approximation properties for locally compact quantum groups which have been studied in the literature. 
Recall the following definitions, compare \cite[Theorem~3.1]{BT_Amenability}, \cite[Section~5.2]{Brannan}.

\begin{definition} \label{defotherap}
Let $\GG$ be a locally compact quantum group.
\begin{itemize}
\item $\wh\GG$ is \emph{coamenable} if $\A(\GG)$ has a bounded approximate identity, 

\item $\GG$ is \emph{weakly amenable} if $\A(\GG)$ has a left approximate identity $(e_i)_{i\in I}$ which is bounded in $\M_{cb}^l(\A(\GG))$. In this case, the smallest number $C$ for which we can choose net $(e_i)_{i\in I}$ with $\|e_i\|_{cb}\leq C\,(i\in I)$ is the \emph{Cowling--Haagerup constant} of $\GG$, denoted $\Lambda_{cb}(\GG)$. 
\end{itemize}
\end{definition}

\begin{remark}
If $(e_i)_{i\in I}$ is a left approximate identity for $\A(\GG)$ then $(R(e_{i}))_{i\in I}$ is a right approximate identity, where $R$ is the unitary antipode on $\LL^\infty(\GG)$.  Indeed, let $\wh{R}$ be the unitary antipode on $\LL^{\infty}(\whG)$. As $R \wh\lambda = \wh\lambda \wh R_*$, each $R(e_i)$ is a member of $\A(\GG)$, and as $\wh R_*$ is anti-multiplicative, it follows that $(R(e_i))_{i\in I}$ is indeed a right approximate identity.
Thus it does not matter if we work in $\M_{cb}^l(\A(\GG))$ or in $\M_{cb}^r(\A(\GG))$, see also Lemma~\ref{lemma18}.
\end{remark}

We shall show that if $\GG$ has the AP, and the approximating net can be chosen in an appropriately bounded way, then $\GG$ will enjoy one of the stronger properties 
in Definition \ref{defotherap}. 

Let us first record some general results. For a proof of the following fact see for example \cite[Section 3]{Crann}. 

\begin{lemma}\label{lemma17}
For any locally compact quantum group $\GG$, the linear span of $\{ ab \,|\, a,b\in \A(\GG) \}$ is dense in $\A(\GG)$.
\end{lemma}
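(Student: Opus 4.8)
The plan is to transport the statement to the convolution algebra $\Ljd = \LL^1(\whG)$ and then run a Hahn--Banach annihilator argument. Recall that $\lambda_{\whG}\colon \Ljd \to \A(\GG)$ is a completely isometric (in particular topological) bijection which is multiplicative, where $\Ljd$ carries the convolution product $\omega \star \nu = (\omega \otimes \nu)\wh\Delta$. Consequently $\{ab \mid a,b \in \A(\GG)\} = \lambda_{\whG}(\{\omega \star \nu \mid \omega,\nu \in \Ljd\})$, and since $\lambda_{\whG}$ is an isometric isomorphism onto $\A(\GG)$, density of the linear span of products in $\A(\GG)$ is equivalent to density of $\lin\{\omega \star \nu \mid \omega,\nu \in \Ljd\}$ in $\Ljd$. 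So it suffices to prove the latter.

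To show this span is dense I would argue by duality: since $\lambda_{\whG}$ identifies $\A(\GG)$ isometrically with $\Ljd$, whose dual is $\Linfd$, it is enough to show that any $x \in \Linfd$ with $\la x, \omega \star \nu\ra = 0$ for all $\omega,\nu \in \Ljd$ must vanish. For such $x$ we compute
\[
0 = \la x, \omega \star \nu\ra = \la x, (\omega \otimes \nu)\wh\Delta\ra = \la \wh\Delta(x), \omega \otimes \nu\ra \qquad (\omega,\nu \in \Ljd).
\]
Because elementary tensors $\omega \otimes \nu$ have dense linear span in the predual $\Ljd \wh\otimes \Ljd$ of $\Linfd \oxx \Linfd$, this forces $\wh\Delta(x) = 0$.

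Finally I would invoke injectivity of the comultiplication to conclude $x = 0$. This is immediate from the fact that $\wh\Delta$ is implemented by the unitary Kac--Takesaki operator, $\wh\Delta(x) = \ww^{\whG *}(\I \otimes x)\ww^{\whG}$: thus $\wh\Delta(x) = 0$ gives $\I \otimes x = \ww^{\whG}\,\wh\Delta(x)\,\ww^{\whG *} = 0$, whence $x = 0$. By Hahn--Banach the span of products is then dense in $\Ljd$, hence in $\A(\GG)$, completing the proof.

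There is essentially no serious obstacle here; the only point requiring care is the passage from annihilation on all elementary tensors to $\wh\Delta(x) = 0$, which rests on the density of $\Ljd \odot \Ljd$ in the operator space projective tensor product $\Ljd \wh\otimes \Ljd$, identified as the predual of $\Linfd \oxx \Linfd$. An entirely parallel argument avoids the tensor-product predual by working one leg at a time: the identity $\la x, \omega\star\nu\ra = \la (\id\otimes\nu)\wh\Delta(x),\omega\ra$ shows that $(\id\otimes\nu)\wh\Delta(x) = 0$ for all $\nu$, and since $\Ljd$ separates the points of $\Linfd$ this again yields $\wh\Delta(x) = 0$.
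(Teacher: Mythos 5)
Your proof is correct. The paper does not actually prove this lemma itself---it defers to \cite[Section 3]{Crann}---and your argument (transport the problem to $\Ljd$ via the multiplicative isometry $\lambda_{\whG}$, then show any annihilating $x\in\Linfd$ satisfies $\la \wh\Delta(x),\omega\otimes\nu\ra=0$ for all $\omega,\nu$, hence $\wh\Delta(x)=0$ by density of elementary tensors in $\Ljd\wh\otimes\Ljd$, hence $x=0$ since $\wh\Delta(x)=\ww^{\whG *}(\I\otimes x)\ww^{\whG}$ is implemented by a unitary) is precisely the standard argument recorded in that reference, so there is nothing to add.
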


Next we recall a standard result in Banach algebra theory which follows from the Hahn--Banach Theorem and the fact that convex sets have the same norm and weak closures; see for 
example \cite[Theorem~5.1.2(e)]{PalmerBook}.

\begin{proposition}\label{prop:weak_bai_gives_bai}
Let $A$ be a Banach algebra which has a weak bounded left approximate identity, meaning that there is a bounded net $(e_i)_{i\in I}$ in $A$ such that $\mu(e_i a-a) \xrightarrow[i\in I]{} 0$ for each $a\in A, \mu\in A^*$.  Then $A$ has a bounded left approximate identity (of the same bound).
\end{proposition}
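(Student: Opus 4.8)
The plan is to convert the weak bounded left approximate identity into a norm one by the standard device of applying Mazur's theorem, exploiting that a convex set has the same weak and norm closures. Write $K = \sup_{i\in I} \|e_i\| < \infty$ for the bound of the given net.

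First I would fix a finite collection $a_1,\dots,a_n \in A$ and pass to the product Banach space $A^n = A\oplus\dots\oplus A$, whose dual is $(A^*)^n$, so that weak convergence in $A^n$ is precisely coordinatewise weak convergence in $A$. Consider the set
\[
S = \{ (ea_1,\dots,ea_n) : e\in A,\ \|e\|\le K \} \subseteq A^n.
\]
Since $e\mapsto(ea_1,\dots,ea_n)$ is linear and the closed ball of radius $K$ is convex, the set $S$ is convex.

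Next I would observe that $(a_1,\dots,a_n)$ lies in the weak closure of $S$: the net $(e_i a_1,\dots,e_i a_n)_{i\in I}$ belongs to $S$ because $\|e_i\|\le K$, and by hypothesis each coordinate $e_i a_k$ converges weakly to $a_k$, hence this net converges weakly to $(a_1,\dots,a_n)$ in $A^n$. By Mazur's theorem the weak and norm closures of the convex set $S$ coincide, so $(a_1,\dots,a_n)$ also lies in the norm closure of $S$. Concretely, for every $\eps>0$ there is $f\in A$ with $\|f\|\le K$ and $\|fa_k - a_k\| < \eps$ for $k=1,\dots,n$.

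Finally I would assemble these elements into a net indexed by pairs $(F,\eps)$ with $F\subseteq A$ finite and $\eps>0$, directed by declaring $(F,\eps)\preceq(F',\eps')$ iff $F\subseteq F'$ and $\eps'\le\eps$ (this is a directed set, as $(F_1\cup F_2,\min(\eps_1,\eps_2))$ dominates any two indices). For each index I choose, by the previous step, an element $f_{(F,\eps)}$ of norm at most $K$ satisfying $\|f_{(F,\eps)}a - a\| < \eps$ for all $a\in F$. Then $(f_{(F,\eps)})$ is a bounded left approximate identity of bound $K$, as desired. I do not expect any serious obstacle here; the only points requiring care are the convexity of $S$ and the identification of weak convergence in $A^n$ with coordinatewise weak convergence, both of which are routine.
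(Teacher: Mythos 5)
Your proof is correct and is essentially the argument the paper itself invokes: the paper gives no details but cites a standard reference and notes that the result follows from the Hahn--Banach theorem together with the coincidence of weak and norm closures of convex sets, which is precisely your Mazur-theorem strategy. Your handling of the product space $A^n$, the convexity of $S$, and the final assembly over the directed set of pairs $(F,\eps)$ are all sound, so nothing is missing.
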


Proposition \ref{prop:weak_bai_gives_bai} does not say that the weak blai (bounded left approximate identity) is itself a blai, but rather that having a weak blai implies there is a possibly different net forming a blai. 

Let us denote by $\A_{cb}^l(\GG)$ the closure of $\A(\GG)$ inside $\M_{cb}^l(\A(\GG))$ and use Proposition \ref{prop:weak_bai_gives_bai} 
to obtain the following result, compare \cite[Proposition~1]{Forrest_CB_Ideals}.

\begin{lemma}\label{lem:Acb_bai_weak_amen}
The inclusion map $\A(\GG)\rightarrow \A_{cb}^l(\GG)$ is an injective contraction. The locally compact quantum group $\GG$ is weakly amenable with Cowling--Haagerup constant at 
most $K$ if and only if $\A_{cb}^l(\GG)$ has a bounded left approximate identity of bound at most $K$.
\end{lemma}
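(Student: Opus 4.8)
The plan is to dispatch the injective contraction quickly and then prove the equivalence, where the real content lies in the ``if'' direction. Writing $a=\wh\lambda(\omega)$ with $\omega\in\Ljd$, the centraliser $\Theta^l(a)_*$ is left multiplication by $\omega$ on $\Ljd$, which is completely contractive (as $\Ljd$ is a completely contractive Banach algebra), so $\|a\|_{cb}\le\|\omega\|=\|a\|_{\A(\GG)}$ and the inclusion is a contraction; it is injective because the composite $\A(\GG)\to\A_{cb}^l(\GG)\hookrightarrow\Linf$ is the canonical embedding of the Fourier algebra. The one fact I would isolate before starting the equivalence is the multiplier inequality $\|b\,a\|_{\A(\GG)}\le\|b\|_{cb}\,\|a\|_{\A(\GG)}$ for $b\in\M^l_{cb}(\A(\GG))$, $a\in\A(\GG)$, which is immediate from $b\,\wh\lambda(\omega)=\wh\lambda(\Theta^l(b)_*(\omega))$ and $\|\Theta^l(b)_*\|\le\|b\|_{cb}$; together with $\|\cdot\|_{cb}\le\|\cdot\|_{\A(\GG)}$ on $\A(\GG)$ it controls both the action of $\A_{cb}^l(\GG)$ on $\A(\GG)$ and the multiplication of $\A_{cb}^l(\GG)$.

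For the forward direction, suppose $\GG$ is weakly amenable with a left approximate identity $(e_i)$ for $\A(\GG)$ satisfying $\|e_i\|_{cb}\le K$. Since the $\A(\GG)$-norm dominates the cb-norm, $e_i a\to a$ in cb-norm for $a\in\A(\GG)$; a routine $3\epsilon$-estimate using $\|e_i\,b\|_{cb}\le K\|b\|_{cb}$ and the cb-density of $\A(\GG)$ in $\A_{cb}^l(\GG)$ then gives $e_i b\to b$ for every $b\in\A_{cb}^l(\GG)$, so $(e_i)$ is a bounded left approximate identity for $\A_{cb}^l(\GG)$ of bound $K$.

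The converse is the crux. Given a bounded left approximate identity $(f_i)$ for $\A_{cb}^l(\GG)$ with $\|f_i\|_{cb}\le K$, the difficulty is twofold: the $f_i$ need not lie in $\A(\GG)$, and the approximate identity converges only in cb-norm, whereas weak amenability demands convergence in the stronger $\A(\GG)$-norm. To bridge the norm gap I would use that products are dense in $\A(\GG)$ (Lemma \ref{lemma17}): for $a=a_1a_2$ with $a_1,a_2\in\A(\GG)$ one has $f_i a=(f_i a_1)a_2$, with $f_i a_1\to a_1$ in cb-norm (as $a_1\in\A(\GG)\subseteq\A_{cb}^l(\GG)$) and $f_i a_1\in\A(\GG)$ (as $f_i$ is a multiplier); applying the multiplier inequality in the form $\|x\,a_2\|_{\A(\GG)}\le\|x\|_{cb}\|a_2\|_{\A(\GG)}$ to $x=f_i a_1-a_1$ yields $f_i a\to a$ in $\A(\GG)$-norm. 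Since $a\mapsto f_i a$ is bounded by $K$ on $\A(\GG)$ uniformly in $i$ (again the multiplier inequality) and products span a dense subspace, an equicontinuity argument promotes this to $f_i a\to a$ in $\A(\GG)$-norm for all $a\in\A(\GG)$.

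It remains to replace the $f_i$ by genuine Fourier-algebra elements without inflating the bound. Since $f_i$ lies in the cb-closure of $\A(\GG)$, I would pick $g\in\A(\GG)$ with $\|f_i-g\|_{cb}<\delta$, so that $\|g\,a-f_i a\|_{\A(\GG)}\le\delta\|a\|_{\A(\GG)}$ by the multiplier inequality, and then rescale to $e=\tfrac{K}{K+\delta}\,g$, which satisfies $\|e\|_{cb}\le K$ while $e\,a$ stays close to $a$. Performing this for each finite $F\subseteq\A(\GG)$ and $\epsilon>0$ produces a net in $\A(\GG)$, bounded by $K$ in cb-norm, that is a left approximate identity in $\A(\GG)$-norm, giving $\Lambda_{cb}(\GG)\le K$; equivalently, one may observe that the net so constructed is a weak left approximate identity bounded by $K$ in cb-norm and invoke the convexity packaging of Proposition \ref{prop:weak_bai_gives_bai} (with the convex set $\{e\in\A(\GG):\|e\|_{cb}\le K\}$). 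I expect the norm-gap bridging via Lemma \ref{lemma17}, and the bookkeeping needed to keep the cb-bound at exactly $K$ in this last step, to be the main obstacles.
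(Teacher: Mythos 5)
Your proof is correct and takes essentially the same route as the paper: both rest on the same three ingredients, namely the multiplier inequality $\|xy\|_{\A(\GG)}\le\|x\|_{cb}\,\|y\|_{\A(\GG)}$, the density of products in $\A(\GG)$ (Lemma~\ref{lemma17}), and cb-norm approximation of the approximate identity by Fourier-algebra elements together with a rescaling to preserve the bound $K$. The only difference is organizational: in the converse direction you first upgrade cb-convergence to $\A(\GG)$-norm convergence and then replace the $f_i$ by elements of $\A(\GG)$, whereas the paper performs these two steps in the opposite order; both orderings work.
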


\begin{proof}
Let $\omega\in \LL^1(\wh\GG)$ be an element of norm one. Then the map
\[
\LL^1(\wh\GG) \rightarrow \LL^1(\wh\GG) \wh\otimes \LL^1(\wh\GG)\colon \nu \mapsto \omega\otimes\nu
\]
is a complete isometry, and so applying $\wh\Delta_*$ shows that $\LL^1(\wh\GG) \rightarrow \LL^1(\wh\GG)\colon \nu\mapsto\omega\star\nu$ is a complete contraction.  Thus $\|\wh{\lambda}(\omega)\|_{cb} \leq \|\wh{\lambda}(\omega)\|$, here and below writing $\|\cdot\|_{cb}$ for the norm on $\A_{cb}^l(\GG)$ and $\|\cdot\|$ for the norm on $\A(\GG)$.  The diagram \eqref{eq16} shows in particular that $\A(\GG) \rightarrow \M_{cb}^l(\A(\GG))$ is injective, and hence $\A(\GG)\rightarrow \A_{cb}^l(\GG)$ is injective.

Now suppose that $\GG$ is weakly amenable and that $(e_i)_{i\in I}$ is a left approximate identity for $\A(\GG)$ with $\|e_i\|_{cb} \leq K$ for each $i$. Then $(e_i)_{i\in I}$ is a bounded net in $\A_{cb}^l(\GG)$. For $x\in \A_{cb}^l(\GG)$ and $\epsilon>0$ there is $a\in \A(\GG)$ with $\|x-a\|_{cb} <\epsilon$, and there is $i_0$ so that $\|e_i a - a\| < \epsilon$ when $i\geq i_0$.  Thus
\[
\| e_i x - x \|_{cb} \leq \| e_i x - e_i a\|_{cb} + \|e_i a-a\|_{cb} + \|a-x\|_{cb}
< K\epsilon + \epsilon + \epsilon \qquad (i\geq i_0). \]
It follows that $e_i x\xrightarrow[i\in I]{} x$ in $\A_{cb}^l(\GG)$. Consequently $(e_i)_{i\in I}$ is a blai for $\A_{cb}^l(\GG)$ of bound $\leq K$.

Conversely, suppose that $\A_{cb}^l(\GG)$ has a bounded left approximate identity of bound $K$, say $(f_i)_{i\in I}$.  For $(i,n) \in I\times\mathbb N$ pick $e_{i,n}\in \A(\GG)$ with $\|e_{i,n}\|_{cb}=\|f_{i}\|_{cb}$ and $\|e_{i,n} - f_i\|_{cb} < \tfrac{1}{n}$.  For $x\in \A_{cb}^l(\GG)$ and $\epsilon>0$ there is $i_0$ so that $\| f_i x - x \|_{cb} < \epsilon$ for $i\geq i_0$.  With $n>\tfrac{1}{\eps}$,
\[
\|e_{i,n} x - x\|_{cb} \leq \| e_{i,n} x - f_i x\|_{cb} + \| f_i x- x\|_{cb}
< \eps \|x\|_{cb} + \eps,
\]
and so $e_{i,n} x\xrightarrow[(i,n)\in I\times \NN]{} x$.  We conclude that we may assume that $(f_i)_{i\in I}$ was actually a net in $\A(\GG)$ and $\|f_i\|_{cb}\leq K$ for each $i$.  It remains to show that $(f_i)_{i\in I}$ is a left approximate identity for $\A(\GG)$.  Given $a\in \A(\GG)$ and $\epsilon>0$, by Lemma~\ref{lemma17}, 
we can find elements $ a_k, b_k \in \A(\GG) $ for $ k = 1, \dots, n $ for some $n$ such that $a_0 = \sum_{k=1}^n a_k b_k \in \A(\GG)$ is within $\epsilon$ distance of $a$. Then
\begin{align*}
\| f_i a - a \| &\leq \| f_i a - f_i a_0 \| + \| f_i a_0 - a_0 \| + \| a_0 - a \| \\
&\leq K\| a-a_0\| + \sum_{k=1}^n \| f_i a_k b_k - a_k b_k\| + \|a_0-a\| \\
&\leq (K+1)\epsilon + \sum_{k=1}^n \|f_i a_k-a_k\|_{cb} \|b_k\|.
\end{align*}
Here we used that for $x\in \A_{cb}^l(\GG)$ and $y\in \A(\GG)$ we have $\|xy\| \leq \|x\|_{cb} \|y\|$. As $(f_i)_{i\in I}$ is a left approximate identity in $\A_{cb}^l(\GG)$, if $i$ is sufficiently large then $\|f_i a_k-a_k\|_{cb} \|b_k\| < \tfrac{\epsilon}{n}$ for each $k$, and so $\| f_i a - a \| \leq (K+2)\epsilon$.  Hence $(f_i)_{i\in I}$ is a left approximate identity for $\A(\GG)$.
\end{proof}

The following is an improvement on \cite[Theorem~1.13]{HaagerupKraus} in the classical situation; Haagerup and Kraus only consider the case where 
each element of the approximating net comes from a state. 

\begin{proposition}\label{prop3}
Let $\GG$ be a locally compact quantum group.  The following are equivalent:
\begin{enumerate}
\item $\GG$ has the approximation property, and we can choose the approximating net $(e_i)_{i\in I}$ in $\A(\GG)$ to be bounded;
\item $\wh\GG$ is coamenable.
\end{enumerate} 
\end{proposition}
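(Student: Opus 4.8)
The plan is to derive both implications from the concrete description of the predual $Q^l(\A(\GG))$ obtained in Section~\ref{section multiplier}, combined with Proposition~\ref{prop:weak_bai_gives_bai}. Two preliminary observations drive everything. First, the inclusion $\A(\GG)\hookrightarrow\M^l_{cb}(\A(\GG))$ is contractive (Lemma~\ref{lem:Acb_bai_weak_amen}), so a net bounded in $\A(\GG)$ is automatically bounded in $\M^l_{cb}(\A(\GG))$. Second, a \emph{bounded} net in the dual $\M^l_{cb}(\A(\GG))=Q^l(\A(\GG))^*$ converges weak$^*$ as soon as it converges when tested against a subset with dense linear span in $Q^l(\A(\GG))$. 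Under the identification $\wh\lambda\colon\Ljd\cong\A(\GG)$ I also use $\A(\GG)^*\cong\Linfd$, and the fact that for $a=\wh\lambda(\omega)$, multiplication by a multiplier $b$ satisfies $b\,\wh\lambda(\omega)=\wh\lambda(\Theta^l(b)_*(\omega))$.

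For $(2)\Rightarrow(1)$, let $(e_i)_{i\in I}$ be a bounded approximate identity for $\A(\GG)$ with $\|e_i\|_{\A(\GG)}\le C$; this is a bounded net in $\M^l_{cb}(\A(\GG))$. I would test weak$^*$-convergence against the functionals $\alpha^l(\wh\lambda(\omega)\theta)$ for $\omega\in\Ljd,\theta\in\Lj$. Since $\A(\GG)$ is dense in $\CZG$ and $\Lj$ is an essential $\CZG$-module, the elements $\wh\lambda(\omega)\theta$ have dense linear span in $\Lj$, hence these functionals span a dense subspace of $Q^l(\A(\GG))=\overline{\alpha^l(\Lj)}$. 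For such a functional one computes $\langle e_i,\alpha^l(\wh\lambda(\omega)\theta)\rangle=\theta(e_i\,\wh\lambda(\omega))$; as $\|e_i\,\wh\lambda(\omega)-\wh\lambda(\omega)\|_{\A(\GG)}\to0$ and $\A(\GG)\hookrightarrow\Linf$ is contractive, this converges to $\theta(\wh\lambda(\omega))=\langle\I,\alpha^l(\wh\lambda(\omega)\theta)\rangle$. Boundedness upgrades this to $e_i\to\I$ weak$^*$, exhibiting AP with a net bounded in $\A(\GG)$.

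For $(1)\Rightarrow(2)$, suppose $(e_i)_{i\in I}$ is a net in $\A(\GG)$, bounded by $C$ in $\A(\GG)$, with $e_i\to\I$ weak$^*$ in $\M^l_{cb}(\A(\GG))$. The key device is to smear by a state: fix a state $f\in\Lj$ and set $e_i'=e_i\star f$, which lies in $\A(\GG)$ and is bounded by $C$ (Proposition~\ref{prop16}). Testing against the functionals $\Omega_{x,\omega,f}\in Q^l(\A(\GG))$ of Proposition~\ref{prop17} --- which, crucially, allow $x$ to range over \emph{all} of $\Linfd$ --- and using $\I\star f=\I$, I obtain for every $x\in\Linfd,\omega\in\Ljd$ that $\langle\Theta^l(e_i')(x),\omega\rangle=\langle e_i,\Omega_{x,\omega,f}\rangle\to\langle x,\omega\rangle$. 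That is, $\Theta^l(e_i')\to\id$ in the point-weak$^*$ topology on $\Linfd$. Translating through $\A(\GG)\cong\Ljd$, for $a=\wh\lambda(\omega)$ and $\mu\in\A(\GG)^*\cong\Linfd$ one has $\langle e_i'a-a,\mu\rangle=\langle\omega,\Theta^l(e_i')(\mu)-\mu\rangle\to0$, so $(e_i')$ is a weak bounded left approximate identity for $\A(\GG)$. Proposition~\ref{prop:weak_bai_gives_bai} then yields a genuine bounded left approximate identity of bound at most $C$; applying the unitary antipode $R$ as in the remark following Definition~\ref{defotherap} produces a bounded right approximate identity, and the standard Banach-algebra argument combining a bounded left and a bounded right approximate identity gives a two-sided bounded approximate identity, so $\wh\GG$ is coamenable.

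I expect the main obstacle to be exactly the passage in $(1)\Rightarrow(2)$ from weak$^*$-convergence in $\M^l_{cb}(\A(\GG))$, which a priori only controls $\Theta^l(e_i)$ against $\CZGD$ (via the functionals $\Omega_{x,\omega}$ of Proposition~\ref{prop15}), to point-weak$^*$-convergence of $\Theta^l(e_i')$ on all of $\Linfd$, which is what a weak approximate identity for $\A(\GG)\cong\Ljd$ demands. This gap is closed precisely by the smearing $e_i\mapsto e_i\star f$, which brings the richer functionals $\Omega_{x,\omega,f}$ of Proposition~\ref{prop17} into play; this is the quantum counterpart of Haagerup and Kraus's use of states in \cite[Theorem~1.13]{HaagerupKraus}.
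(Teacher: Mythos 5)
Your proof is correct, and for the hard direction $(1)\Rightarrow(2)$ it is essentially the paper's argument: the same smearing $e_i\mapsto e_i\star f$ by a state, the same use of the functionals $\Omega_{x,\omega,f}$ from Proposition~\ref{prop17} to get point-weak$^*$ convergence of $\Theta^l(e_i\star f)$ on all of $\Linfd$, and the same appeal to Proposition~\ref{prop:weak_bai_gives_bai} to pass from a weak bounded left approximate identity to a genuine one. You diverge from the paper in two places, both legitimately. First, in $(2)\Rightarrow(1)$ the paper verifies weak$^*$ convergence against the functionals $\Omega_{x,\omega}$ of Proposition~\ref{prop15} (with $x\in\CZGD\otimes\mc K(\msf H)$), whereas you bypass Proposition~\ref{prop15} entirely: you test against the norm-dense set $\alpha^l(\Lj)\subseteq Q^l(\A(\GG))$, using Cohen factorisation and density of $\A(\GG)$ in $\mrm{C}_0(\GG)$ to reduce to functionals of the form $\alpha^l(\wh\lambda(\omega)\theta)$, on which convergence follows from the norm statement $\|e_i\wh\lambda(\omega)-\wh\lambda(\omega)\|_{\A(\GG)}\to 0$; combined with boundedness in the $cb$-norm this is a cleaner verification (the paper's route and yours both need that boundedness). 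Second, at the end of $(1)\Rightarrow(2)$ the paper stops at a bounded \emph{left} approximate identity and quotes \cite[Theorem~3.1]{BT_Amenability} to conclude coamenability, whereas you make the argument self-contained: applying the unitary antipode (as in the remark after Definition~\ref{defotherap}) to get a bounded right approximate identity and invoking the classical Banach-algebra fact that a bounded left plus a bounded right approximate identity yields a bounded two-sided one. One cosmetic point: for the bound $\|e_i\star f\|_{\A(\GG)}\le\|e_i\|_{\A(\GG)}$ you cite Proposition~\ref{prop16}, which only records the $cb$-norm estimate; the $\A(\GG)$-norm bound instead comes from the factorisation $e_i\star f=\wh\lambda\bigl((f\otimes\id)(\ww^{\GG *})\,\wh\omega_i\bigr)$ (as in the paper's proof, or from the proof of Proposition~\ref{prop16}(1)), so you should point to that computation rather than the statement of Proposition~\ref{prop16}(2).
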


\begin{proof}
Suppose that $\GG$ has the approximation property with approximating net $(e_i)_{i\in I}$ which is bounded in $\A(\GG)$.  By definition, $e_i\xrightarrow[i\in I]{} \I$ weak$^*$ in $\M_{cb}^l(\A(\GG))$.  By Proposition \ref{prop17}, we consider elements of the form $\Omega_{\wh x,\wh\omega,f}\in Q^l(\A(\GG))$.  Here we will just consider $\wh x\in \LL^\infty(\wh\GG)$ and $\wh\omega\in \LL^1(\wh\GG)$, with $f\in \LL^1(\GG)$ a state. According to equation \eqref{eq17}, we get 
\[
\lim_{i\in I} \langle \Theta^l(e_i\star f)(\wh x), \wh\omega \rangle = 
\lim_{i\in I} \la e_i,\Omega_{\wh x,\wh\omega,f}\ra=
\la \I,\Omega_{\wh x,\wh \omega,f}\ra=
\langle \Theta^l(\I\star f)(\wh x), \wh\omega \rangle = \langle \wh x, \wh\omega \rangle,
\]
using $\I\star f = \I$ and $\Theta^l(\I) = \id$ in the last step.
For each $i\in I$ let $e_i$ be associated to $\wh{\omega}_i\in \LL^1(\wh\GG)$, so that $e_i = \wh{\lambda}(\wh\omega_i)$. Then
\begin{align*}
e_i\star f
&= (f\otimes\id)\Delta(e_i)
= (f\otimes\id)\Delta( (\id\otimes\wh\omega_i)(\ww^{\GG *}) )
= (f\otimes\id\otimes\wh\omega_i)(\ww_{23}^{\GG *} \ww_{13}^{\GG *}) \\
&= (\id\otimes\wh\omega_i)\bigl(\ww^{\GG *} (\I \otimes (f\otimes\id)(\ww^{\GG *})) \bigr)
= \wh\lambda(\wh\omega'_i),
\end{align*}
where $\wh\omega'_i = (f\otimes\id)(\ww^{\GG *}) \wh\omega_i \in \LL^1(\wh\GG)$.  Notice that $\|\wh\omega'_i\| \leq \|f\| \|\wh\omega_i\| = \|\wh\omega_i\|$.

As computed in Lemma \ref{lemma9}, it follows that $\Theta^l(e_i\star f)(\wh x) = (\wh\omega'_i\otimes\id)\wh\Delta(\wh x)$, so we see that
\[
\lim_{i\in I} \langle \wh x, \wh\omega'_i \star \wh\omega \rangle
= \lim_{i\in I} \langle (\wh\omega'_i\otimes\id)\wh\Delta(\wh x), \wh\omega \rangle
= \langle \wh x, \wh\omega \rangle 
\qquad (\wh x\in \LL^\infty(\wh\GG), \wh\omega\in \LL^1(\wh\GG)). \]
Thus $(\wh\omega'_i)_{i\in I}$ is a weak bounded left approximate identity for $\LL^1(\wh\GG)$.
By Proposition~\ref{prop:weak_bai_gives_bai}, we obtain that $\LL^1(\wh\GG)$ has a bounded left approximate identity. 
This already implies that $\wh\GG$ is coamenable, see \cite[Theorem~3.1]{BT_Amenability}.

Conversely, suppose that $\wh\GG$ is coamenable, so that in particular, $\A(\GG)$ has a bounded left approximate identity $(e_i)_{i\in I}$.  For each $i$, let $\wh\omega_i\in \Ljd$ with $\wh\lambda(\wh\omega_i)=e_i$, so $(\wh\omega_i)_{i\in I}$ is a bounded net, and $\wh\omega_i\star\wh\omega \xrightarrow[i\in I]{} \wh\omega$ in norm, for each $\wh\omega\in\Ljd$.  Thus, given $x\in\CZGD$ and $\wh\omega\in\Ljd$, we find that
\[ \langle \Theta^l(e_i)(x), \wh\omega \rangle
= \langle x, \wh\omega_i \star \wh\omega \rangle
\xrightarrow[i\in I]{} \langle x, \wh\omega \rangle. \]
Let $\msf H$ be a separable Hilbert space.
As the net $(\Theta^l(e_i))_{i\in I}$ is bounded, given $x\in \CZGD\otimes \mc{K}(\msf{H})$ and $\omega\in \Ljd\wh{\otimes}\B(\msf{H})_*$, we find that
\[ \lim_{i\in I} \Omega_{x,\omega}(e_i)
= \lim_{i\in I} \langle (\Theta^l(e_i)\otimes\id) x , \omega \ra
= \la x, \omega \ra = \Omega_{x,\omega}(\I). \]
By Proposition~\ref{prop15}, it follows that $a_i \xrightarrow[i\in I]{} \I$ weak$^*$ in $\M_{cb}^l(\A(\GG))$ as required.
\end{proof}

The following is \cite[Theorem~1.12]{HaagerupKraus} in the classical situation.

\begin{proposition}\label{prop7}
Let $\GG$ be a locally compact quantum group.  The following are equivalent:
\begin{enumerate}
\item $\GG$ has the approximation property, and we can choose the approximating net $(e_i)_{i\in I}$ in $\A(\GG)$ to be bounded with respect to $\|\cdot\|_{cb}$;
\item $\GG$ is weakly amenable.
\end{enumerate}
Furthermore, in this case, the Cowling--Haagerup constant is at most the bound of $(e_i)_{\in I}$, while when $\GG$ is weakly amenable, we can choose each $e_i$ with $\|e_i\|_{cb} \leq \Lambda_{cb}(\GG)$.
\end{proposition}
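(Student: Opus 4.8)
The plan is to run the argument of Proposition~\ref{prop3} with the $\A(\GG)$-norm replaced throughout by $\|\cdot\|_{cb}$, using Lemma~\ref{lem:Acb_bai_weak_amen} to translate weak amenability into the existence of a bounded left approximate identity for $\A_{cb}^l(\GG)$, and Proposition~\ref{prop:weak_bai_gives_bai} to pass from weak to genuine approximate identities.

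For $(2)\Rightarrow(1)$ I would start from the definition of weak amenability: $\A(\GG)$ has a left approximate identity $(e_i)_{i\in I}$ with $\|e_i\|_{cb}\le\Lambda_{cb}(\GG)$. Writing $e_i=\wh\lambda(\wh\omega_i)$, the left centraliser property gives $\la\Theta^l(e_i)(x),\wh\omega\ra=\la x,\wh\omega_i\star\wh\omega\ra$ for $x\in\CZGD$, $\wh\omega\in\Ljd$, and $\wh\omega_i\star\wh\omega\to\wh\omega$ in norm since $\wh\lambda$ is an isometric isomorphism carrying $(e_i)$ to a left approximate identity of $\Ljd$. As $(\Theta^l(e_i))_{i\in I}$ is bounded by $\Lambda_{cb}(\GG)$, the same matrix-amplification step as in Proposition~\ref{prop3} then yields $\Omega_{x,\omega}(e_i)\to\Omega_{x,\omega}(\I)$ for every $x\in\CZGD\otimes\mc{K}(\msf{H})$ and $\omega\in\Ljd\wh\otimes\B(\msf{H})_*$; by Proposition~\ref{prop15} this is precisely weak$^*$-convergence $e_i\to\I$ in $\M^l_{cb}(\A(\GG))$, with the net $\|\cdot\|_{cb}$-bounded by $\Lambda_{cb}(\GG)$.

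For $(1)\Rightarrow(2)$ I would assume $e_i\to\I$ weak$^*$ with $\|e_i\|_{cb}\le K$, fix a state $f\in\Lj$ and set $b_i=e_i\star f$. By Proposition~\ref{prop16} each $b_i\in\A(\GG)$ and $\|b_i\|_{cb}\le\|e_i\|_{cb}\|f\|\le K$. Pairing against the functionals $\Omega_{\wh x,\wh\omega,f}$ of Proposition~\ref{prop17} (with $\msf{H}=\CC$) and using $\I\star f=\I$, $\Theta^l(\I)=\id$, I obtain $\la\Theta^l(b_i)(\wh x),\wh\omega\ra\to\la\wh x,\wh\omega\ra$ for all $\wh x\in\Linfd$, $\wh\omega\in\Ljd$. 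Writing $b_i=\wh\lambda(\wh\omega_i')$ and invoking Lemma~\ref{lemma9}, this reads $\la\wh x,\wh\omega_i'\star\wh\omega\ra\to\la\wh x,\wh\omega\ra$, i.e.\ $b_i a\to a$ weakly in $\A(\GG)$ for each $a\in\A(\GG)$.

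The remaining, and genuinely new, step is to promote this into a weak bounded left approximate identity for the Banach algebra $\A_{cb}^l(\GG)$. The crucial observation is that any $\mu\in(\A_{cb}^l(\GG))^*$ restricts to a functional on $\A(\GG)$ which is continuous for the larger Fourier algebra norm, since $\|a\|_{cb}\le\|a\|$ for $a\in\A(\GG)$, whence $\mu(b_i a)\to\mu(a)$ for $a\in\A(\GG)$; a routine $3\epsilon$-estimate using $\|b_i\|_{cb}\le K$ and submultiplicativity of $\|\cdot\|_{cb}$ extends this to $\mu(b_i x)\to\mu(x)$ for all $x\in\A_{cb}^l(\GG)$. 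Thus $(b_i)_{i\in I}$ is a weak bounded left approximate identity for $\A_{cb}^l(\GG)$ of bound $\le K$; Proposition~\ref{prop:weak_bai_gives_bai} upgrades it to a genuine bounded left approximate identity of the same bound, and Lemma~\ref{lem:Acb_bai_weak_amen} then gives weak amenability with $\Lambda_{cb}(\GG)\le K$. The converse constant assertion is immediate, as the weakly amenable net produced in $(2)\Rightarrow(1)$ already satisfies $\|e_i\|_{cb}\le\Lambda_{cb}(\GG)$. I expect the transfer between the two distinct topologies (weak in $\A(\GG)$ versus weak in $\A_{cb}^l(\GG)$) to be the only real obstacle; everything else parallels Proposition~\ref{prop3}.
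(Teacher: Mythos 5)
Your proposal is correct and follows essentially the same route as the paper: both directions use $b_i = e_i\star f$ with the functionals $\Omega_{\wh x,\wh\omega,f}$ of Proposition~\ref{prop17} and Lemma~\ref{lemma9} to get a CB-bounded weak left approximate identity, transfer it to $\A^l_{cb}(\GG)$ via the contractive dense-range inclusion (your $3\epsilon$-estimate is exactly the paper's "dense range plus boundedness" step), and then apply Proposition~\ref{prop:weak_bai_gives_bai} and Lemma~\ref{lem:Acb_bai_weak_amen}; the converse direction likewise reruns the Proposition~\ref{prop3} argument using CB-boundedness of $(\Theta^l(e_i))_{i\in I}$.
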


\begin{proof}
We proceed as in the previous proof, starting with a net $(e_i)_{i\in I}$ in $\A(\GG)$, but now only with $\|e_i\|_{cb} \leq K$ for each $i$. We set $e_i = \wh\lambda(\wh\omega_i)$ 
and $\wh\omega'_i = (f\otimes\id)(\ww^{\GG *}) \wh\omega_i$, where $f$ is some fixed state and we are considering the natural (left) $ \LL^\infty(\wh\GG) $-module structure 
on $ \LL^1(\wh\GG) $. Then $(\wh\omega'_i)_{i\in I}$ is a weak left approximate identity for $\LL^1(\wh\GG)$.  Furthermore, $\| \wh\lambda(\wh\omega'_i) \|_{cb} = \|e_i\star f\|_{cb} \leq \|e_i\|_{cb} \|f\| \leq K$ by Proposition~\ref{prop16}.

For each $i$ let $f_i = \wh\lambda(\wh\omega'_i) \in \A(\GG)$. Let $\theta\colon \A(\GG)\rightarrow \A_{cb}^l(\GG)$ be the inclusion map, and consider the 
adjoint, $\theta^*\colon \A_{cb}^l(\GG)^*\rightarrow \A(\GG)^*$.  For $\mu\in \A_{cb}^l(\GG)^*$ and $a\in \A(\GG)$ we see that
\[ \lim_{i\in I} \langle \mu, \theta(f_i) \theta(a) \rangle
= \lim_{i\in I} \langle \theta^*(\mu), f_i a \rangle
= \langle \theta^*(\mu), a \rangle = \langle \mu, \theta(a) \rangle. \]
As $\theta$ has dense range, and $(\theta(f_i))_{i\in I}$ is bounded in $\A_{cb}^l(\GG)$, it follows that $(\theta(f_i))_{i\in I}$ is a weak bounded left approximate identity.  By Proposition~\ref{prop:weak_bai_gives_bai}, $\A_{cb}^l(\GG)$ has a bounded left approximate identity, and so Lemma~\ref{lem:Acb_bai_weak_amen} shows that $\GG$ is weakly amenable, 
with Cowling--Haagerup constant at most $K$.

Conversely, when $\GG$ is weakly amenable, we can choose a left approximate identity $(e_i)_{i\in I}$ for $\A(\GG)$ with $\|e_i\|_{cb} \leq \Lambda_{cb}(\GG)$ for each $i$.  The argument in the proof of Proposition~\ref{prop3} still works in this situation, because $(\Theta^l(e_i))_{i\in I}$ is a net bounded with respect to the CB norm.
\end{proof}

\section{Discrete quantum groups and operator algebraic approximation properties} \label{section discrete}

If the quantum group being studied is discrete, we can obtain better results.  In Proposition~\ref{prop1} we will show that the net exhibiting AP can be chosen to have additional properties.  We also relate AP to approximation properties of the associated operator algebras, in both the locally compact case, Proposition~\ref{prop5}, and the discrete case, Proposition~\ref{prop4}.

For the rest of this section $\GGamma$ stands for an arbitary discrete quantum group.  Then $\wh\GGamma$ is a compact quantum group, and we freely use the additional theory available in the compact case.  We follow \cite{NeshveyevTuset} as well as \cite{MaesVanDaele, Timmermann}, being aware that we use the ``left'' convention for multiplicative unitaries and representations.

Every irreducible unitary representation of $\wh\GGamma$ is finite-dimensional, and we denote by $\Irr(\wh{\GGamma})$ the collection of equivalence classes of irreducibles. 
We write $ \overline{\alpha} $ for the conjugate of $ \alpha \in \Irr(\wh{\GGamma}) $. For each $\alpha \in \Irr(\wh{\GGamma})$ let $U^\alpha \in \mrm{C}(\wh\GGamma) \otimes \B(\msf{H}_\alpha)$ be a unitary representation in the class of $\alpha$. 
With respect to an orthonormal basis of $\msf{H}_\alpha$ we regard $U^\alpha$ as a matrix $[U^\alpha_{i,j}]_{1\leq i,j\leq \dim(\alpha)}$.  The \emph{matrix coefficients} $U^\alpha_{i,j}$ span a dense Hopf $\star$-algebra $\Pol(\wh\GGamma) \subseteq \mrm{C}(\wh\GGamma)$.  We denote by $h$ the Haar integral on $\mrm{C}(\wh\GGamma)$ and $\LL^\infty(\wh\GGamma)$, and let $\Lambda_h\colon \mrm{C}(\wh\GGamma) \rightarrow \LL^2(\wh\GGamma)$ be the GNS map for $h$.  As as $\LL^\infty(\wh\GGamma)$ is in standard position on $\LL^2(\wh\GGamma)$, the set $\{ \omega_{\Lambda_h(a), \Lambda_h(b)} \,|\, a,b\in\Pol(\wh\GGamma) \}$ is dense in $\LL^1(\wh\GGamma)$.  As each member of $\Pol(\wh\GGamma)$ is analytic for the modular automorphism group of $h$, this agrees in fact with the set $\{ \omega_{\Lambda_h(a), \Lambda_h(\I)} \,|\, a \in \Pol(\wh\GGamma) \}$. Notice that $\omega_{\Lambda_h(a), \Lambda_h(\I)}$ is the functional $h(a^*\cdot)$.

For each $\alpha\in\Irr(\wh{\GGamma})$ there is a positive invertible operator $\uprho_\alpha$ related to the possible non-traciality of the Haar integral $h$, see \cite[Section 1.7]{NeshveyevTuset}.  We choose and fix a basis of $\msf{H}_\alpha$ such that $\uprho_\alpha$ is diagonal. We define the \emph{Woronowicz characters} $\{ f_z \,|\, z\in\mathbb C \}$ by the relation $(f_z \otimes\id)(U^\alpha) = \uprho_\alpha^z,$ valid for each $\alpha$.  The modular automorphism group is then implemented as
\[ \sigma_z^h(a) = f_{iz} \star a \star f_{iz} \quad (a\in\Pol(\wh\GGamma), z\in\mathbb C), \]
or equivalently, $(\sigma^h_z\otimes\id)(U^\alpha) = (\I\otimes \uprho_\alpha^{iz}) U^\alpha (\I\otimes \uprho_\alpha^{iz})$.  Similarly, the scaling group is implemented as
\[ \tau_z(a) = f_{-iz} \star a \star f_{iz} \quad (a\in\Pol(\wh\GGamma), z\in\mathbb C), \]
or equivalently, $(\tau_z\otimes\id)(U^\alpha) = (\I\otimes \uprho_\alpha^{iz}) U^\alpha (\I\otimes \uprho_\alpha^{-iz})$.  As we assume that $\uprho_\alpha$ is diagonal, say with entries $\uprho_{\alpha, i}\,(1\leq i\leq\dim(\alpha))$, we get 
\begin{equation}
\tau_z(U^\alpha_{i,j}) = (\uprho_{\alpha, i})^{iz} (\uprho_{\alpha,j})^{-iz} U^\alpha_{i,j}.
\label{eq:scaling_gp_cmpt}
\end{equation}

The algebra $\mrm{c}_0(\GGamma)$ is isomorphic to the direct sum of full matrix algebras $\M_{\dim(\alpha)}$ indexed by $\alpha\in\Irr(\wh\GGamma)$, and $\ell^\infty(\GGamma)$ is isomorphic to the direct product of these matrix algebras.  Given $a\in \ell^\infty(\GGamma)$ we write $a = (a^\alpha)_{\alpha\in\Irr(\wh\GGamma)}$ where $a^\alpha\in \M_{\dim(\alpha)}$, and similarly for $\mrm{c}_0(\GGamma)$.  With respect to this isomorphism,
\begin{equation}
\ww^{\wh\GGamma} = \bigoplus_{\alpha\in\Irr(\wh\GGamma)} \sum_{i,j=1}^{\dim(\alpha)} U^\alpha_{i,j} \otimes e^\alpha_{i,j} \in \M\big(\mrm{C}(\wh\GGamma) \otimes \mrm{c}_0(\GGamma) \big),
\label{eq:hatW_cmpt}
\end{equation}
where $\{e^{\alpha}_{i,j}\}_{i,j=1}^{\dim(\alpha)}$ are the matrix units of the matrix algebra $\M_{\dim(\alpha)} \subseteq \mrm{c}_0(\GGamma)$.

We start with a result expressing the action of $\Theta^l(a) \in \CB^\sigma(\LL^\infty(\wh\GGamma))$, for $a\in \M^l_{cb}(\A(\GGamma))$, on matrix elements.

\begin{lemma}\label{lemma4}
For any $a=(a^\alpha)_{\alpha\in \Irr(\wh\GGamma)}\in \M^l_{cb}(\A(\GGamma)) \subseteq \ell^\infty(\GGamma)$ with $a^\alpha=[a^\alpha_{i,j}]_{i,j=1}^{\dim(\alpha)}$ we have
\[
\Theta^l(a) (U^{\alpha}_{i,j})=
\sum_{k=1}^{\dim(\alpha)} a^\alpha_{i,k} U^{\alpha}_{k,j}\qquad(\alpha\in \Irr(\wh\GGamma), 1\le i,j\le \dim(\alpha)).
\]
\end{lemma}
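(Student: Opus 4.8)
The plan is to read off the formula directly from the defining relation for a left multiplier applied to the Kac--Takesaki operator of $\wh\GGamma$, combined with the explicit expression \eqref{eq:hatW_cmpt}. Recall from Section~\ref{section multiplier} that for $a\in\M^l_{cb}(\A(\GGamma))\subseteq\ell^\infty(\GGamma)$ the map $\Theta^l(a)\in\CB^\sigma(\LL^\infty(\wh\GGamma))$ is characterised by
\[
(\I\otimes a)\ww^{\wh\GGamma}=(\Theta^l(a)\otimes\id)(\ww^{\wh\GGamma}),
\]
where the first leg of $\ww^{\wh\GGamma}$ lies in $\mrm{C}(\wh\GGamma)$ and the second in $\mrm{c}_0(\GGamma)$. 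The idea is to substitute \eqref{eq:hatW_cmpt} into both sides and compare the coefficients sitting in front of each matrix unit $e^\alpha_{i,j}$ in the second leg.

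For the right-hand side this is immediate: applying $\Theta^l(a)\otimes\id$ to \eqref{eq:hatW_cmpt} gives $\bigoplus_\alpha\sum_{i,j}\Theta^l(a)(U^\alpha_{i,j})\otimes e^\alpha_{i,j}$, so the coefficient of $e^\alpha_{i,j}$ is precisely $\Theta^l(a)(U^\alpha_{i,j})$. For the left-hand side I would use that $a=(a^\alpha)_\alpha$ acts on the second leg by block-diagonal left matrix multiplication; writing $a^\alpha=\sum_{m,n}a^\alpha_{m,n}e^\alpha_{m,n}$ and using $e^\alpha_{m,n}e^\alpha_{i,j}=\delta_{n,i}e^\alpha_{m,j}$ gives $a\,e^\alpha_{i,j}=\sum_{m}a^\alpha_{m,i}e^\alpha_{m,j}$. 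Substituting this into $(\I\otimes a)\ww^{\wh\GGamma}$ and re-indexing, the coefficient of $e^\alpha_{i,j}$ becomes $\sum_{k}a^\alpha_{i,k}U^\alpha_{k,j}$. Equating the two coefficients yields the claimed identity.

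To make the comparison of coefficients rigorous I would slice the second leg against the normal functionals dual to the matrix units, e.g.\ $\id\otimes\Tr(e^\alpha_{j,i}\,\cdot)$, which pick out exactly the coefficient of $e^\alpha_{i,j}$ and are well defined on $\M(\mrm{C}(\wh\GGamma)\otimes\mrm{c}_0(\GGamma))$. The only point requiring a little care is the bookkeeping with the infinite direct sum/product structure of $\mrm{c}_0(\GGamma)$ and $\ell^\infty(\GGamma)$, but since each such slice involves only finitely many terms within a single block $\M_{\dim(\alpha)}$, no convergence issues arise. I do not anticipate any genuine obstacle: the entire content lies in having the explicit form of $\ww^{\wh\GGamma}$ and the characterising relation already at our disposal, and the rest is a routine unwinding of \eqref{eq:hatW_cmpt}.
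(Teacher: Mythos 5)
Your proof is correct and follows essentially the same route as the paper: both substitute the explicit form \eqref{eq:hatW_cmpt} of $\ww^{\wh\GGamma}$ into the defining relation of a left multiplier and compare matrix-unit coefficients, using the same block-diagonal computation $a\,e^\alpha_{i,j}=\sum_{m}a^\alpha_{m,i}e^\alpha_{m,j}$. The only (cosmetic) difference is that the paper slices the first leg against the dense family of functionals $h(x\,\cdot)$, $x\in\Pol(\wh\GGamma)$, and closes with a density argument, whereas you start from the equivalent operator identity $(\I\otimes a)\ww^{\wh\GGamma}=(\Theta^l(a)\otimes\id)(\ww^{\wh\GGamma})$ (already recorded in Section~\ref{section multiplier}) and slice the second leg against matrix-unit functionals, which produces the identity in $\LL^\infty(\wh\GGamma)$ directly.
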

\begin{proof}
Let $x\in \Pol(\wh\GGamma)$ and set $\omega = h(x\cdot) \in \LL^1(\wh\GGamma)$.
Recall that $a\wh{\lambda}(\omega)=\wh{\lambda}(\Theta^l(a)_*(\omega))$, equivalently, $a (\omega\otimes\id)(\ww^{\wh\GGamma}) = (\Theta^l(a)_*(\omega)\otimes\id)(\ww^{\wh\GGamma})$. 
Using the expression for $\ww^{\wh\GGamma}$ from \eqref{eq:hatW_cmpt}, it follows that
\begin{align*}
\sum_{\alpha\in\Irr(\wh\GGamma)} \sum_{i,j,k=1}^{\dim(\alpha)} \la U^\alpha_{k,j}, \omega \ra a^\alpha_{i,k} e^{\alpha}_{i,j} &= 
\sum_{\alpha\in\Irr(\wh\GGamma)} \sum_{k,j=1}^{\dim(\alpha)} \la U^\alpha_{k,j}, \omega \ra a e^{\alpha}_{k,j} \\ 
&= a (\omega\otimes\id)(\ww^{\wh\GGamma}) \\
&= (\Theta^l(a)_*(\omega)\otimes\id)(\ww^{\wh\GGamma})
= \sum_{\alpha\in\Irr(\wh\GGamma)} \sum_{i,j=1}^{\dim(\alpha)} \la \Theta^l(a)(U^\alpha_{i,j}), \omega \ra e^{\alpha}_{i,j}.
\end{align*}
By density, this holds for all $\omega$, and so we conclude $\Theta^l(a)(U^\alpha_{i,j}) = \sum_{k=1}^{\dim(\alpha)}a^\alpha_{i,k} U^\alpha_{k,j} $, as claimed.
\end{proof}

\begin{remark}\label{rem:cent_unit_pres_means}
Later, see Proposition~\ref{prop1}, we shall consider $a\in \M^l_{cb}(\A(\GGamma))$ with $\Theta^l(a)$ unit preserving.  Let $e$ denote the trivial representation of $\wh\GGamma$, so $\dim(e)=1$ and $U^e = \I\otimes 1$.  From Lemma~\ref{lemma4}, for such an $a$, we see that $\I = \Theta^l(a)(\I) = a^e_{1,1} \I$ and so $a^e_{1,1} = 1$.  Further, as the Haar integral $h$ annihilates all coefficients of all irreducible representations except $e$, and as $\Pol(\wh\GGamma)$ is dense in $\mrm{C}(\wh\GGamma)$, it follows that $h \circ \Theta^l(a) = h$.
\end{remark}

For discrete quantum groups we will also look at a central variation of AP. We denote by $\mrm{c}_{00}(\GGamma) \subseteq \mrm{c}_0(\GGamma)$ the dense subspace of elements $x = (x^\alpha)_{\alpha\in\Irr(\wh\GGamma)}$ such that $x^\alpha=0$ for all but finitely many $\alpha$. From the description of $\ww^{\wh\GGamma}$ in \eqref{eq:hatW_cmpt} it is clear that we have $\mrm{c}_{00}(\GGamma) \subseteq \A(\GGamma)$. Notice that the centre of $\ell^\infty(\GGamma)$, denoted $\mc{Z}(\ell^{\infty}(\GGamma))$, consists of families of matrices $x=(x^\alpha)_{\alpha\in\Irr(\wh\GGamma)}$ such that each $x^\alpha \in \M_{\dim(\alpha)}$ is a scalar multiple of the identity.

\begin{definition} \label{defcentralAP}
We say that a discrete quantum group $\GGamma$ has the \emph{central approximation property (central AP)} if there is a net $(a_i)_{i\in I}$ in $\mrm{c}_{00}(\GGamma)\cap \mc{Z}(\ell^{\infty}(\GGamma))$ which converges to $\I$ in the weak$^*$-topology of $\M^l_{cb}(\A(\GGamma))$.
\end{definition}

It is clear from the definitions that central AP implies AP. At first sight, it might seem more natural to use $\A(\GGamma)$ instead of $\mrm{c}_{00}(\GGamma)$ in 
Definition \ref{defcentralAP}, and indeed, this alternative definition (for other approximation properties) is taken in \cite[Definition~7.1]{Brannan}. However, in terms 
of applications, and also from the point of view of representation categories, working with $\mrm{c}_{00}(\GGamma)$ 
is in fact the most appropriate choice. We do not know if these two properties are equivalent (a naive approximation typically produces an element which is not central). Let us point out that the examples considered in \cite{Brannan} actually do end up working with $\mrm{c}_{00}(\GGamma)$.
We will discuss the relation of central AP to properties of the representation category $\Rep(\wh\GGamma)$ in Section~\ref{section categorical}. 

\begin{remark}\label{rem:fin_sup_finrank}
We shall say that $a\in \M^l_{cb}(\A(\GGamma))$ is \emph{finitely supported} if $a\in \mrm{c}_{00}(\GGamma)$.  Of course, we have $\mrm{c}_{00}(\GGamma) \subseteq \A(\GGamma) \subseteq \M^l_{cb}(\A(\GGamma))$.  For $a\in\mrm{c}_{00}(\GGamma)$, it follows from Lemma~\ref{lemma4} that $\Theta^l(a)(U^\alpha_{i,j}) = 0$ for all but finitely many $\alpha$.  Hence $\Theta^l(a)$ restricted to $\Pol(\wh\GGamma)$ is a finite-rank map, and so by continuity, $\Theta^l(a)$ restricted to $\mrm{C}(\wh\GGamma)$ is finite-rank, and hence by normality, $\Theta^l(a)$ is also finite-rank.
\end{remark}

In the next result we show that whenever a discrete quantum group has AP, then this is implemented by a net of elements with convenient properties.

\begin{proposition}\label{prop1}
Assume that $\GGamma$ is a discrete quantum group with AP. Then there is a net $(a_i)_{i\in I}$ of elements in $\mrm{c}_{00}(\GGamma)$ such that 
\begin{itemize} 
\item $a_i\xrightarrow[i\in I]{}\I$ in $(\M^l_{cb}(\A(\GGamma)) ,w^*)$,
\item every $a_i$ is invariant under the scaling group of $\GGamma$ and modular automorphism groups of the left/right Haar integrals,
\item every $\Theta^l(a_i)$ is star and unit preserving.
\end{itemize}
If $\GGamma$ has central AP then we can additionally assume that $a_i\in \mrm{c}_{00}(\GGamma)\cap \mc{Z}(\ell^{\infty}(\GGamma))$.
\end{proposition}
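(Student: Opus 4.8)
The plan is to start from an arbitrary net witnessing AP and successively \emph{clean it up}, applying a sequence of operations that each preserve weak$^*$-convergence to $\I$ while installing one of the required features without destroying the ones already obtained. Concretely, I would first pass to a net in $\mrm{c}_{00}(\GGamma)$, then average over the scaling group to enforce invariance, then symmetrise to make $\Theta^l$ star-preserving, and finally rescale by scalars to make $\Theta^l$ unit-preserving. Throughout I would use Theorem~\ref{thm6} freely, so that ``convergence'' may be read either as weak$^*$-convergence in $\M^l_{cb}(\A(\GGamma))$ or as convergence of $\Theta^l(a_i)\to\id$ in the stable point-weak$^*$-topology.

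\emph{Finite support and invariance.} Starting from a net $(b_j)$ in $\A(\GGamma)$ with $\Theta^l(b_j)\to\id$, I would truncate: for finite $F\subseteq\Irr(\wh\GGamma)$ let $P_F\in\mrm{c}_{00}(\GGamma)$ be the corresponding central projection, so $a\mapsto P_F a$ is left multiplication by an element of cb-norm $1$, hence a complete contraction of the completely contractive Banach algebra $\M^l_{cb}(\A(\GGamma))$. By Lemma~\ref{lemma4}, $\Theta^l(P_F a)$ is the Fourier truncation of $\Theta^l(a)$ to the blocks in $F$, so $\Theta^l(P_F b_j)\to\Theta^l(b_j)$ as $F\uparrow\Irr(\wh\GGamma)$; re-indexing over the directed set of pairs $(j,F)$ and invoking Theorem~\ref{thm6} again yields an AP net inside $\mrm{c}_{00}(\GGamma)$. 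For invariance I use that, for discrete $\GGamma$, the scaling group and the modular groups $\sigma^\vp,\sigma^\psi$ act on each block $\M_{\dim(\alpha)}\subseteq\ell^\infty(\GGamma)$ by conjugation with one-parameter groups generated by $\uprho_\alpha$, so that an element is fixed by all three precisely when each $a^\alpha$ commutes with $\uprho_\alpha$. I would let $E$ be the blockwise compression to the eigenspaces of $\uprho_\alpha$, the normal unital conditional expectation onto this common fixed-point algebra, which blockwise is a Ces\`aro average of $t\mapsto\tau^\GGamma_t(\cdot)$. By Lemma~\ref{lemma6} each $\tau^\GGamma_t(a)$ lies in $\M^l_{cb}(\A(\GGamma))$ with $\|\tau^\GGamma_t(a)\|_{cb}=\|a\|_{cb}$, whence $E$ restricts to a complete contraction preserving $\mrm{c}_{00}(\GGamma)$ with $E(\I)=\I$; applying it keeps the net in $\mrm{c}_{00}(\GGamma)$ and makes every element invariant under all three groups.

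\emph{Star and unit.} To obtain star-preserving maps I would symmetrise via $a\mapsto\tfrac12\bigl(a+S(a^*)\bigr)$: by Proposition~\ref{prop12} this sends $\Theta^l(a)$ to $\tfrac12(\Theta^l(a)+\Theta^l(a)^\dagger)$, which is self-$\dagger$, and by Corollary~\ref{cor2} this is exactly star-preservation. Since $S$ sends the block $\alpha$ to $\bar\alpha$ and intertwines the relevant groups ($S\tau_t=\tau_t S$ and $S\sigma^\vp_t=\sigma^\psi_{-t}S$), this operation keeps the net in $\mrm{c}_{00}(\GGamma)$ and preserves invariance, while convergence survives because $T\mapsto T^\dagger$ is stable point-weak$^*$-continuous by Lemma~\ref{lemma18}. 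Finally Lemma~\ref{lem:cent_unit_preserving} gives $\Theta^l(a)(\I)=\beta\I$, with $\beta\in\RR$ since $\Theta^l(a)$ is now star-preserving, and $\beta\to1$ along the net by evaluating the convergence $\Theta^l(a_i)\to\id$ at $\I$ (cf.\ Remark~\ref{rem:cent_unit_pres_means}). Discarding the finitely many indices with $\beta=0$ and rescaling $a\mapsto\beta^{-1}a$ produces unit-preserving maps; multiplication by a real scalar tending to $1$ preserves finite support, invariance, star-preservation, and weak$^*$-convergence to $\I$.

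\emph{Central case and main obstacle.} If $\GGamma$ has central AP I may start with a net in $\mrm{c}_{00}(\GGamma)\cap\mc Z(\ell^\infty(\GGamma))$. Central elements have each $a^\alpha$ scalar, hence automatically commute with $\uprho_\alpha$, so they are already invariant and the averaging step $E$ is redundant; one checks that $S$ preserves $\mc Z(\ell^\infty(\GGamma))$ (acting as $\alpha\mapsto\bar\alpha$, with $S(\I_\alpha)=\I_{\bar\alpha}$) and that scalar rescaling preserves centrality, so the symmetrisation and rescaling steps keep the net inside $\mrm{c}_{00}(\GGamma)\cap\mc Z(\ell^\infty(\GGamma))$. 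I expect the main obstacle to be the verification that $E$ (and, underlying it, the conjugations by the automorphism groups) is weak$^*$-continuous on $\M^l_{cb}(\A(\GGamma))$, together with the structural identification of the joint fixed-point algebra with the blockwise commutant of $\uprho_\alpha$; the continuity can be checked either directly on the predual $Q^l(\A(\GGamma))$ using the description of its functionals in Proposition~\ref{prop15} and the formulas of Lemma~\ref{lemma6}, or by transporting to the stable point-weak$^*$-topology via Theorem~\ref{thm6} and arguing exactly as in Lemma~\ref{lemma18}.
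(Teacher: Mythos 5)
Your proposal fails at its first step, and everything afterwards depends on it. You claim that the truncation $P_F$ (the central projection onto a finite set $F\subseteq\Irr(\wh\GGamma)$ of blocks) has cb-multiplier norm $1$, and that $\Theta^l(P_F b_j)\to\Theta^l(b_j)$ as $F\uparrow\Irr(\wh\GGamma)$. Both claims are false, already for the classical discrete quantum group $\GGamma=\ZZ$. Since $\ZZ$ is amenable, $\M^l_{cb}(\A(\ZZ))$ is isometrically the Fourier--Stieltjes algebra $B(\ZZ)\cong M(\TT)$, and under this identification $P_F$ corresponds to the kernel $D_F(\theta)=\sum_{n\in F}e^{in\theta}$, whose multiplier norm is $\|D_F\|_{\LL^1(\TT)}$; for $F=\{-N,\dots,N\}$ this is the Dirichlet kernel, of norm growing like $\log N$, so the maps $a\mapsto P_F a$ are not complete contractions and are not even uniformly bounded (consistently with Host's idempotent theorem, only indicators of cosets of subgroups are norm-one idempotents). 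Worse, the convergence itself fails: if $P_F\wh\lambda(\omega)\to\wh\lambda(\omega)$ weak$^*$ in $\M^l_{cb}(\A(\ZZ))$ for every $\omega$, then restricting to the cofinal subnet $F_N=\{-N,\dots,N\}$ would give, for every $f\in\LL^1(\TT)$, a weak$^*$ convergent sequence $(S_N f)_N$ of partial Fourier sums; weak$^*$ convergent sequences are norm bounded (the predual $Q^l$ is a Banach space, so the uniform boundedness principle applies), whence $\sup_N\|S_N f\|_{\LL^1(\TT)}<\infty$ for every $f$, and Banach--Steinhaus forces $\sup_N\|S_N\|_{\LL^1\to\LL^1}<\infty$, contradicting $\|S_N\|_{\LL^1\to\LL^1}=\|D_N\|_{\LL^1(\TT)}\to\infty$. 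So truncation does not preserve weak$^*$ convergence even for a single fixed element of $\A(\GGamma)$; in addition, your product-order re-indexing over pairs $(j,F)$ would require a choice of $F$ working uniformly in $j$, which you never establish. This is exactly why the paper does not truncate: it writes $b_j=\wh\lambda(\omega_{\xi_j,\eta_j})$ and approximates the vectors $\xi_j,\eta_j\in\LL^2(\wh\GGamma)$ by vectors from $\Lambda_h(\Pol(\wh\GGamma))$, producing finitely supported elements $a_{j,n}$ with $\|b_j-a_{j,n}\|_{\A(\GGamma)}\le \tfrac1n$; since the $\A(\GGamma)$-norm dominates the cb-norm, the perturbation is uniformly small in the dual norm of $\M^l_{cb}(\A(\GGamma))$, and joint weak$^*$ convergence over $(j,n)$ follows.

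The later steps (averaging over the scaling group, symmetrising via $a\mapsto\tfrac12(a+S(a^*))$, scalar rescaling for the unit) do track the paper's actual proof, but the point you defer as ``the main obstacle'' is the real technical content, and your proposed fixes do not reach it: a mean on $\LL^\infty(\RR)$ is not a normal functional, so the averaging operation is not weak$^*$ continuous in any obvious sense, and Lemma~\ref{lemma18} (continuity of $T\mapsto T^\dagger$ and $T\mapsto R\circ T\circ R$) says nothing about means. The paper's mechanism is dual: it moves the mean onto the test functional and proves (Lemma~\ref{lemma8}, via Lemma~\ref{lemma16}) that $\Omega^\tau_{x,\rho}$ lies in $Q^l(\A(\GGamma))$, using that for matrix coefficients the function $t\mapsto\hat\tau_{-t}(x)\otimes\rho\circ(\hat\tau_t\otimes\id)$ ranges in a finite-dimensional subspace; then $\la b_i,\Omega_{x,\rho}\ra=\la a_i,\Omega^\tau_{x,\rho}\ra$ transfers the convergence. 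You would need to supply this (or an equivalent) argument; without it, and without a correct replacement for the truncation step, the proof does not go through.
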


For the proof of Proposition \ref{prop1} we shall need two lemmas. For any operator space $X$, we denote by $\kappa\colon X^*\wh\otimes X\rightarrow \CC$ the canonical completely contractive map $\omega\otimes x\mapsto \la \omega,x\ra $.

\begin{lemma}\label{lemma16}
Let $\msf{H}$ be a Hilbert space, let $\M,\N$ be von Neumann algebras, and let
\[
v\in \bigl(\M\bar{\otimes}\LL^{\infty}(\wh\GGamma)\bar{\otimes}\N\bar{\otimes}\B(\msf{H})\bigr)\wh\otimes \bigl( \M_*\wh\otimes \LL^1(\wh\GGamma)\wh\otimes\N_*\wh\otimes \B(\msf{H})_*\bigr).
\]
The bounded linear functional
\[
\Omega_v\colon \M^l_{cb}(\A(\GGamma))\ni a \mapsto 
\kappa\big( ((\id\otimes \Theta^l(a)\otimes \id^{\otimes 2}) \otimes \id^{\otimes 4})v \big)\in \CC
\]
belongs to $Q^l(\A(\GGamma))$, and we have $\|\Omega_v\|\le \|v\|$.
\end{lemma}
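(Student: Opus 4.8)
The plan is to realise $\Omega_v$ as a norm-limit of functionals already known to lie in $Q^l(\A(\GGamma))$ via Proposition~\ref{prop15}. Write $Y=\M\bar\otimes\LL^\infty(\wh\GGamma)\bar\otimes\N\bar\otimes\B(\msf H)$, so that $Y_*=\M_*\wh\otimes\LL^1(\wh\GGamma)\wh\otimes\N_*\wh\otimes\B(\msf H)_*$ is its predual and $v\in Y\wh\otimes Y_*=(Y_*)^*\wh\otimes Y_*$. First I would record that $v\mapsto\Omega_v$ is a norm contraction into $\M^l_{cb}(\A(\GGamma))^*$: for fixed $a$ the amplification $\id_\M\otimes\Theta^l(a)\otimes\id_\N\otimes\id_{\B(\msf H)}$ is a normal CB map on $Y$ of cb-norm $\le\|a\|_{cb}$, tensoring with $\id_{Y_*}$ preserves this bound by functoriality of $\wh\otimes$, and $\kappa$ is completely contractive (as just recalled). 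Hence $|\Omega_v(a)|\le\|a\|_{cb}\|v\|$, giving both that $\Omega_v\in\M^l_{cb}(\A(\GGamma))^*$ is well defined and that $\|\Omega_v\|\le\|v\|$. Since $Q^l(\A(\GGamma))$ is norm-closed and $v\mapsto\Omega_v$ is linear and norm-continuous, it suffices to prove $\Omega_v\in Q^l(\A(\GGamma))$ for $v$ ranging over a set with dense linear span in $Y\wh\otimes Y_*$.

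For the dense set I would take elementary tensors $v=(m\otimes\wh x\otimes n\otimes S)\otimes(\mu\otimes\wh\omega\otimes\nu\otimes t)$ whose $\LL^1(\wh\GGamma)$-leg is of the special form $\wh\omega=h(b^*\cdot)$ with $b\in\Pol(\wh\GGamma)$; such tensors have dense span because $\{h(b^*\cdot):b\in\Pol(\wh\GGamma)\}$ is norm-dense in $\LL^1(\wh\GGamma)$ (as recalled in the preliminaries) and the projective tensor norm is a cross norm. For such a $v$, contracting the $\M$, $\N$ and $\B(\msf H)$ legs against their preduals produces the scalar $c=\la m,\mu\ra\la n,\nu\ra\la S,t\ra$, so that $\Omega_v(a)=c\,\la\Theta^l(a)(\wh x),h(b^*\cdot)\ra$ for every $a\in\M^l_{cb}(\A(\GGamma))$.

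The main obstacle is that $\wh x$ lies only in $\LL^\infty(\wh\GGamma)$, not in $\mrm{C}(\wh\GGamma)$, so Proposition~\ref{prop15} cannot be invoked directly; this is exactly where discreteness enters. Let $G=\supp(b)$, a finite subset of $\Irr(\wh\GGamma)$, and let $E_G$ be the normal finite-rank Peter--Weyl projection of $\LL^\infty(\wh\GGamma)$ onto $\lin\{U^\alpha_{i,j}:\alpha\in G\}\subseteq\Pol(\wh\GGamma)$ (see \cite{NeshveyevTuset}). By orthogonality of matrix coefficients of inequivalent irreducibles one has $h(b^*\,\cdot\,)=h(b^*\,\cdot\,)\circ E_G$, while Lemma~\ref{lemma4} shows that each $\Theta^l(a)$ preserves every Peter--Weyl block, so $E_G\circ\Theta^l(a)=\Theta^l(a)\circ E_G$ (they agree on $\Pol(\wh\GGamma)$ and are both normal). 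Consequently
\[
\la\Theta^l(a)(\wh x),h(b^*\cdot)\ra=\la E_G\Theta^l(a)(\wh x),h(b^*\cdot)\ra=\la\Theta^l(a)(E_G\wh x),h(b^*\cdot)\ra,
\]
and $\wh x_G:=E_G\wh x\in\Pol(\wh\GGamma)\subseteq\mrm{C}(\wh\GGamma)$. Thus $\Omega_v=c\,\Omega_{\wh x_G,\,h(b^*\cdot)}$, which belongs to $Q^l(\A(\GGamma))$ by Proposition~\ref{prop15} applied with the one-dimensional Hilbert space. Finite linear combinations of such functionals remain in the linear subspace $Q^l(\A(\GGamma))$, and the density together with the norm-continuity from the first paragraph then yields $\Omega_v\in Q^l(\A(\GGamma))$ for all $v$, with $\|\Omega_v\|\le\|v\|$.
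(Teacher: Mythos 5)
There is a genuine gap at your density step. You reduce to vectors $v$ whose first leg is an elementary tensor $m\otimes\wh x\otimes n\otimes S$, claiming that such $v$ have dense linear span. Writing $Y=\M\bar{\otimes}\LL^{\infty}(\wh\GGamma)\bar{\otimes}\N\bar{\otimes}\B(\msf{H})$, this is false: the norm-closed span of elementary tensors inside $Y$ is only the minimal $\mathrm{C}^*$-tensor product of the four algebras, which is a proper norm-closed subspace $X_0\subsetneq Y$ whenever the factors are infinite dimensional, because the von Neumann tensor product is a weak$^*$ closure, not a norm closure, of the algebraic tensor product. Consequently the span of your special $v$'s lies in the closure of $X_0\odot Y_*$ inside $Y\wh\otimes Y_*$, and this closure is proper: choose $f\in Y^*$ nonzero with $f|_{X_0}=0$ (Hahn--Banach) and $0\neq g\in Y=(Y_*)^*$; then $x\otimes\omega\mapsto f(x)\,\la g,\omega\ra$ corresponds to a rank-one (hence completely bounded) map, so defines a nonzero element of $(Y\wh\otimes Y_*)^*$ annihilating $X_0\odot Y_*$. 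Your norm-continuity of $v\mapsto\Omega_v$ (which is correct, as is the bound $\|\Omega_v\|\le\|v\|$) therefore only yields the conclusion for $v$ in a proper closed subspace; and since $Q^l(\A(\GGamma))$ is norm-closed but not weak$^*$-closed in $\M^l_{cb}(\A(\GGamma))^*$, you cannot bridge the remaining distance by weak approximation either.

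The repair is to keep the first leg a general element $x\in Y$ --- elementary tensors $x\otimes(\mu\otimes\wh\omega\otimes\nu\otimes t)$ with $x\in Y$ \emph{arbitrary} do have dense span, since $Y_*$ is an operator space projective tensor product --- and then contract the $\M$ and $\N$ legs by slice maps, setting $y=(\mu\otimes\id\otimes\nu\otimes\id)x\in\LL^{\infty}(\wh\GGamma)\bar{\otimes}\B(\msf{H})$, so that $\Omega_v(a)=\la(\Theta^l(a)\otimes\id)y,\wh\omega\otimes t\ra$. This is exactly what the paper does; it then finishes by noting that, $\GGamma$ being discrete, the counit $\eps$ lies in $\ell^1(\GGamma)$ and $a\star\eps=a$, whence $\Omega_v=\Omega_{y,\wh\omega\otimes t,\eps}$ belongs to $Q^l(\A(\GGamma))$ by Proposition~\ref{prop17}. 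Alternatively, after this correction your Peter--Weyl idea still works and gives a genuinely different ending: with $\wh\omega=h(b^*\cdot)$ and $G=\supp(b)$ finite, the block projection $E_G\otimes\id$ is normal, finite rank, and commutes with $\Theta^l(a)\otimes\id$ by Lemma~\ref{lemma4}, so $y$ may be replaced by $(E_G\otimes\id)y\in\Pol(\wh\GGamma)\odot\B(\msf{H})$, a finite sum of elementary tensors; then $\Omega_v$ is a finite linear combination of functionals $a\mapsto\la\Theta^l(a)(U^{\alpha}_{i,j}),\wh\omega\ra$ covered by Proposition~\ref{prop15}, avoiding Proposition~\ref{prop17} and the counit entirely. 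But as written, the proof is not correct.
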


\begin{proof}
Since $Q^l(\A(\GGamma))$ is closed in $\M^l_{cb}(\A(\GGamma))^*$, it is enough to consider $v=x\otimes (\omega_1\otimes \omega_2\otimes \omega_3\otimes \omega_4)$  for $x\in \M\bar\otimes \LL^{\infty}(\wh\GGamma)\bar\otimes \N\bar\otimes \B(\msf{H}), \omega_1\in \M_*,\omega_2\in \LL^1(\wh\GGamma),\omega_3\in \N_*,\omega_4\in  \B(\msf{H})_*$. Let $\eps\in \ell^1(\GGamma)$ be the counit of $ \ell^\infty(\GGamma)$. Define $y=(\omega_1\otimes \id\otimes\omega_3\otimes \id)x\in \LL^{\infty}(\wh\GGamma)\bar\otimes \B(\msf{H})$. For $a\in \M^l_{cb}(\A(\GGamma))$ we have
\[\begin{split}
&\quad\;
\la \Omega_v,a\ra=
\kappa\bigl(
(\id\otimes \Theta^l(a)\otimes\id^{\otimes 2})x \otimes (\omega_1\otimes \omega_2\otimes \omega_3\otimes \omega_4)\bigr)\\
&=
\la (\id\otimes \Theta^l(a)\otimes \id^{\otimes 2})x,\omega_1\otimes \omega_2\otimes \omega_3\otimes \omega_4\ra
=\la 
(\Theta^l(a)\otimes \id)y ,\omega_2\otimes \omega_4\ra \\
&=
\la (\Theta^l(a\star\eps)\otimes \id)y,\omega_2\otimes \omega_4\ra=
\la a,\Omega_{y,\omega_2\otimes \omega_4,\eps}\ra
\end{split}\]
by the definition of $\Omega_{y,\omega_2\otimes \omega_4,\eps}$ (see \eqref{eq17}) hence $\Omega_v=\Omega_{y,\omega_2\otimes \omega_4,\eps}$ and the claim follows from Proposition \ref{prop17}.
\end{proof}

In the following, recall that a \emph{mean} on $\mathbb R$ is a state $m_{\mathbb R}$ on $\LL^\infty(\mathbb R)$ which is invariant under the translation action of $\mathbb R$. Such a state exists as the group $\mathbb R$ is abelian and hence amenable.

\begin{lemma}\label{lemma8}
Let $m_{\RR}$ be a mean on $\RR$, let $\msf{H}$ be a Hilbert space, and let $x\in \mrm{C}(\wh\GGamma)\otimes\mc{K}(\msf{H}), \rho\in \LL^1(\wh\GGamma)\wh{\otimes} \B(\msf{H})_*$.
Then 
\[
\Omega^{\tau}_{x,\rho}\colon \M^l_{cb}(\A(\GGamma))\ni a \mapsto
m_{\RR}\bigl(t\mapsto 
\la (\Theta^l(a)\otimes \id) (\hat\tau_{-t}\otimes\id)(x) , \rho\circ (\hat\tau_t\otimes\id)\ra \bigr)\in \CC
\] 
defines a bounded functional on $\M^l_{cb}(\A(\GGamma))$. 
Furthermore, $\Omega^{\tau}_{x,\rho}\in Q^l(\A(\GGamma))$ and $\|\Omega^{\tau}_{x,\rho}\|\le \|x\|\|\rho\|$.
\end{lemma}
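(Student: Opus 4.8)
The plan is to recognise $\Omega^{\tau}_{x,\rho}$ as a mean-average of functionals already known to lie in $Q^l(\A(\GGamma))$, and then to push this average back into $Q^l(\A(\GGamma))$ by a compactness argument. First I would simplify the integrand. Fix $a$, and for $t\in\RR$ set $x_t=(\hat\tau_{-t}\otimes\id)(x)\in\mrm{C}(\wh\GGamma)\otimes\mc{K}(\msf H)$ and $\rho_t=\rho\circ(\hat\tau_t\otimes\id)\in\LL^1(\wh\GGamma)\wh\otimes\B(\msf H)_*$. The quantity inside the mean is then exactly
\[
\phi_t(a):=\la (\Theta^l(a)\otimes\id)(x_t),\rho_t\ra=\Omega_{x_t,\rho_t}(a),
\]
so by Proposition~\ref{prop15} each $\phi_t$ is the functional $\Omega_{x_t,\rho_t}\in Q^l(\A(\GGamma))$, with $\|\phi_t\|\le\|x_t\|\,\|\rho_t\|=\|x\|\,\|\rho\|$ since $\hat\tau_t$ acts isometrically. (Equivalently, by Lemma~\ref{lemma6} one has $\phi_t(a)=\Omega_{x,\rho}(\tau_t(a))$.) The scaling group restricts to a point-norm continuous one-parameter group on $\mrm{C}(\wh\GGamma)$ and, by pre-adjunction, on $\LL^1(\wh\GGamma)$, so $t\mapsto x_t$ and $t\mapsto\rho_t$ are norm-continuous; hence $t\mapsto\phi_t(a)$ is bounded and continuous, the mean applies, and $\|\Omega^{\tau}_{x,\rho}\|\le\|x\|\,\|\rho\|$ follows from $\|m_{\RR}\|=1$. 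Writing $\Psi(t):=\phi_t\in Q^l(\A(\GGamma))$, it remains to show that the functional $\Omega^{\tau}_{x,\rho}\in\M^l_{cb}(\A(\GGamma))^*$, namely $a\mapsto m_{\RR}\bigl(t\mapsto\Psi(t)(a)\bigr)$, lies in the (norm-closed, but a priori not weak$^*$-closed) subspace $Q^l(\A(\GGamma))$.

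The crux is to prove that the orbit $\{\Psi(t):t\in\RR\}$ is relatively norm-compact in $Q^l(\A(\GGamma))$. The structural input is \eqref{eq:scaling_gp_cmpt}: the scaling group multiplies each matrix coefficient $U^\alpha_{i,j}$ by a phase of modulus one. Thus for $x$ in the algebraic tensor product $\Pol(\wh\GGamma)\odot\mc{K}(\msf H)$ only finitely many coefficients occur, so $\{(\hat\tau_{-t}\otimes\id)(x):t\in\RR\}$ is the continuous image of a compact torus inside a finite-dimensional subspace, hence relatively compact; for general $x\in\mrm{C}(\wh\GGamma)\otimes\mc{K}(\msf H)$, norm-approximation together with the isometry of each $\hat\tau_{-t}\otimes\id$ makes the orbit totally bounded, so relatively compact. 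The same applies to $\{\rho_t\}$: on the dense set of functionals $\omega_a=h(a^*\,\cdot)$ one computes $\omega_a\circ\hat\tau_t=\omega_{\hat\tau_{-t}(a)}$, and $a\mapsto\omega_a$ is norm-continuous, so these orbits are relatively compact, whence isometry and density extend the conclusion to all of $\LL^1(\wh\GGamma)\wh\otimes\B(\msf H)_*$. Finally the bilinear map $(x',\rho')\mapsto\Omega_{x',\rho'}$ is bounded by Proposition~\ref{prop15}, hence jointly continuous, so it carries the relatively compact product of the two orbits onto a relatively compact set; in particular $\{\Psi(t)\}$ is relatively norm-compact.

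With this in hand I would conclude by Hahn--Banach separation inside $\M^l_{cb}(\A(\GGamma))^*$. Let $C=\ov{\mathrm{conv}}\{\Psi(t):t\in\RR\}$, the norm-closed convex hull, which is norm-compact by Mazur's theorem; since the norm topology is finer than the weak$^*$-topology, $C$ is a weak$^*$-compact convex subset of $\M^l_{cb}(\A(\GGamma))^*$ contained in $Q^l(\A(\GGamma))$. Suppose $\Omega^{\tau}_{x,\rho}\notin C$. As the weak$^*$-continuous functionals on $\M^l_{cb}(\A(\GGamma))^*$ are precisely the elements $a\in\M^l_{cb}(\A(\GGamma))$, separation provides such an $a$ and a real $\gamma$ with $\mathrm{Re}\,\Omega^{\tau}_{x,\rho}(a)>\gamma\ge\mathrm{Re}\,c(a)$ for all $c\in C$. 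Taking $c=\Psi(t)$ gives $\mathrm{Re}\,\Psi(t)(a)\le\gamma$ for every $t$; since $m_{\RR}$ is a state, positivity forces $\mathrm{Re}\,\Omega^{\tau}_{x,\rho}(a)=m_{\RR}\bigl(t\mapsto\mathrm{Re}\,\Psi(t)(a)\bigr)\le\gamma$, a contradiction. Hence $\Omega^{\tau}_{x,\rho}\in C\subseteq Q^l(\A(\GGamma))$. The main obstacle is the relative compactness of the orbits in the second paragraph, which is exactly where the almost-periodicity of the scaling group on a compact quantum group (coming from \eqref{eq:scaling_gp_cmpt}) is essential: without it a mean, as opposed to a genuine integral, cannot in general be pushed back into the norm-closed predual $Q^l(\A(\GGamma))$.
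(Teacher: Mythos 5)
Your proof is correct, but it closes the argument by a genuinely different mechanism than the paper's. Both proofs rest on the same structural input, namely \eqref{eq:scaling_gp_cmpt}: the scaling group multiplies matrix coefficients by unimodular scalars, so for $x$ and $\rho$ built from $\Pol(\wh\GGamma)$ and $h(\Pol(\wh\GGamma)\,\cdot\,)$ the orbits under $(\hat\tau_{-t}\otimes\id)$ and under precomposition with $(\hat\tau_t\otimes\id)$ sit in bounded subsets of finite-dimensional subspaces. The paper exploits this by applying the mean \emph{coordinate-wise inside} that finite-dimensional space, producing a concrete element $v\in\bigl(\mrm{C}(\wh\GGamma)\odot\mc{K}(\msf{H})\bigr)\odot\bigl(\LL^1(\wh\GGamma)\odot\B(\msf{H})_*\bigr)$ with $\Omega^{\tau}_{x,\rho}=\Omega_v$, then invokes Lemma~\ref{lemma16} and finishes by norm approximation for general $x,\rho$. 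You instead keep the mean \emph{outside}: you identify $\phi_t=\Omega_{x_t,\rho_t}\in Q^l(\A(\GGamma))$ for each $t$ via Proposition~\ref{prop15}, upgrade the finite-dimensionality to relative norm-compactness of the whole orbit $\{\phi_t\}$ (total boundedness plus isometry of the scaling action handles general $x,\rho$), and then use Mazur's theorem and Hahn--Banach separation against weak$^*$-continuous functionals on $\M^l_{cb}(\A(\GGamma))^*$ (which are exactly the elements of $\M^l_{cb}(\A(\GGamma))$) to place the mean-average in the norm-closed convex hull of the orbit, hence in $Q^l(\A(\GGamma))$. Your route is more abstract and isolates a reusable principle: a mean-average of a relatively norm-compact family inside a norm-closed subspace of a dual space remains in that subspace; this makes explicit why almost periodicity of the scaling group is the essential point, and it bypasses Lemma~\ref{lemma16} entirely. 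The paper's route is more constructive and keeps the bookkeeping inside the tensor-product formalism it needs anyway (Lemma~\ref{lemma16} is reused, e.g., in Proposition~\ref{prop19}). The steps you leave implicit --- point-norm continuity of the scaling group on the predual, the identity $\omega_a\circ\hat\tau_t=\omega_{\hat\tau_{-t}(a)}$ coming from invariance of $h$, and the bound $\|\Omega_{x',\rho'}\|\le\|x'\|\,\|\rho'\|$ --- are all standard and hold as you assert.
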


\begin{proof}
As $t\mapsto \rho\circ (\hat\tau_t\otimes\id)$ is norm continuous, it follows that the function $t \mapsto \la (\Theta^l(a)\otimes \id) (\hat\tau_{-t}\otimes\id)(x) , \rho\circ (\hat\tau_t\otimes\id)\ra$ is continuous, and bounded, and so we can indeed apply $m_{\mathbb R}$ to it.  The only nontrivial claim is that $\Omega^{\tau}_{x,\rho}$ is a normal functional. 

In order to verify this take $t\in \RR$. Using the canonical complete contraction $\kappa$ we can write
\[\begin{split}
\la (\Theta^l(a)\otimes \id) (\hat\tau_{-t}\otimes\id)(x) , \rho\circ (\hat\tau_t\otimes\id)\ra =
\kappa (\Theta^l(a)\otimes\id\otimes\id\otimes\id)\bigl(
(\hat\tau_{-t}\otimes\id)(x) \otimes  \rho\circ (\hat\tau_t\otimes\id)\bigr).
\end{split}\]
Let $x = U^\alpha_{i,j}\in\Pol(\wh\GGamma), y = U^{\beta}_{k,l}\in\Pol(\wh\GGamma)$ and set $\rho = h(y\cdot) \in \LL^1(\wh\GGamma)$.  As $h$ is $(\hat\tau_t)_{t\in\RR}$ invariant, it follows that $(\rho\circ\hat\tau_t)(z) = h(y \hat\tau_t(z))
= h(\hat\tau_t( \hat\tau_{-t}(y) z)) = h(\hat\tau_{-t}(y) z)$ for each $z\in \mrm{C}(\wh\GGamma)$, and so $\rho\circ\hat\tau_t = h( \hat\tau_{-t}(y) \cdot )$.  From \eqref{eq:scaling_gp_cmpt} we hence see that
\[ \hat\tau_{-t}(x) \otimes \rho\circ\hat\tau_t
= (\uprho_{\alpha,i})^{-it} (\uprho_{\alpha,j})^{it} (\uprho_{\beta,k})^{-it} (\uprho_{\beta,l})^{it} x \otimes \rho. \]
In particular, function $\RR\ni t\mapsto \hat\tau_{-t}(x) \otimes \rho\circ\hat\tau_t$ takes values in a finite dimensional subspace of $\Pol(\wh\GGamma)\odot \LL^1(\wh\GGamma)$. For such a function we can define its mean $m_{\mathbb R}\bigl( t\mapsto \hat\tau_{-t}(x) \otimes \rho\circ\hat\tau_t \bigr) \in \Pol(\wh\GGamma) \odot \LL^1(\wh\GGamma)$ by choosing a basis in the image, and taking mean of $t$-dependent coefficients (it will not depend on the choice of basis). By linearity, we obtain $m_{\RR}\bigl( t\mapsto \hat\tau_{-t}(x) \otimes \rho\circ\hat\tau_t \bigr) \in \Pol(\wh\GGamma) \odot \LL^1(\wh\GGamma) $ for any $x,y\in\Pol(\wh\GGamma)$. By linearity again, given $x\in \Pol(\wh\GGamma)\odot \mc{K}(\msf{H})$ and $\rho\in h(\Pol(\wh\GGamma) \,\cdot\,)\odot \B(\msf{H})_*$, it follows that
\[
v=m_{\RR}\bigl(t\mapsto
(\hat \tau_{-t}\otimes\id)(x) \otimes  \rho\circ (\hat \tau_t\otimes\id)\bigr)\in 
\bigl(\mrm{C}(\wh\GGamma)\odot\mc{K}(\msf{H})\bigr) \odot
\bigl(\LL^1(\wh\GGamma)\odot\B(\msf{H})_*\bigr)
\]
and we have
\[
\Omega^{\tau}_{x,\rho}(a)= \kappa \bigl((\Theta^l(a)\otimes\id\otimes\id\otimes\id)v\bigr)=\Omega_v(a).
\]
Consequently we have $\Omega^{\tau}_{x,\rho}\in Q^l(\A(\GGamma))$ by Lemma \ref{lemma16}. Furthermore,
\[
\|\Omega^{\tau}_{x,\rho}\|\le 
\|v\|\le \|x\|\|\rho\|.
\]
General elements $x\in \mrm{C}(\wh\GGamma)\otimes \mc{K}(\msf{H}),\rho\in \LL^1(\wh\GGamma)\wh{\otimes}\B(\msf{H})_*$ can be approximated in norm by $x,\rho$ as above, hence $\Omega^{\tau}_{x,\rho}$ is a normal functional, using again that $Q^{l}(\A(\GGamma))\subseteq \M^l_{cb}(\A(\GGamma))^*$ is a closed subspace.
\end{proof}

\begin{proof}[Proof of Proposition \ref{prop1}]
By assumption, there is a net $(a_i)_{i\in I}$ in $\A(\GGamma)$ which converges to $\I$ in the weak$^*$-topology of $\M^l_{cb}(\A(\GGamma))$.  For each $i\in I$ there is $\omega_i\in \LL^1(\wh\GGamma)$ with $a_i = \wh\lambda(\omega_i)$, and there are $\xi_i, \eta_i\in \LL^2(\wh\GGamma)$ with $\omega_i = \omega_{\xi_i, \eta_i}$.  Given $n\in\NN$ we may choose $\xi_{i,n}, \eta_{i,n} \in \Lambda_h(\Pol(\wh\GGamma))$ with $\|\xi_i-\xi_{i,n}\|\leq\epsilon_1$ and $\| \eta_i - \eta_{i,n} \| \leq \epsilon_2$, where
\[ \epsilon_1 = \tfrac{1}{1+2 n \|\eta_i\|}, \quad
\epsilon_2 = \tfrac{1}{2n(\epsilon_1 + \|\xi_i\| )}.
 \]
Set $a_{i,n} = \wh\lambda(\omega_{\xi_{i,n}, \eta_{i,n}})$, so that $a_{i,n} \in \mrm{c}_{00}(\GGamma)$, and
\begin{align*} \|a_i - a_{i,n}\|_{\A(\GGamma)} &= \| \omega_i - \omega_{\xi_{i,n}, \eta_{i,n}} \|
\leq \| \omega_{\xi_i, \eta_i} - \omega_{\xi_{i,n}, \eta_i} \| + \| \omega_{\xi_{i,n}, \eta_i} - \omega_{\xi_{i,n}, \eta_{i,n}} \| \\
&\leq \epsilon_1 \|\eta_i\| + \epsilon_2 \|\xi_{i,n}\|
\leq \epsilon_1 \|\eta_i\| + \epsilon_2 \big( \epsilon_1 + \|\xi_i\| \big) \le \tfrac{1}{n}.
\end{align*}
Equipping $I\times\mathbb N$ with the product order, it follows that $a_{i,n}\xrightarrow[(i,n)\in I\times \NN]{}\I$ in $(\M^l_{cb}(\A(\GGamma)),w^*)$.

Fix $(i,n)\in I\times \NN$ and choose $m_{\RR}\in \LL^\infty(\RR)^*$, a mean on $\RR$. Let $b_{i,n}$ be the unique element of $\ell^{\infty}(\GGamma)$ with
\[
\la b_{i,n},\omega \ra =
m_{\RR}( t\mapsto \la \tau_t(a_{i,n}) , \omega \ra )\qquad(\omega\in \ell^{1}(\GGamma)).
\]
As each $\tau_t$ leaves each matrix block $\M_{\dim(\alpha)} \subseteq \ell^\infty(\GGamma)$ invariant, it follows that $b_{i,n}\in \mrm{c}_{00}(\GGamma)$, because $a_{i,n}\in \mrm{c}_{00}(\GGamma)$.  Next, we set
\[
c_{i,n}=\tfrac{1}{2}(b_{i,n}+R(b_{i,n})^*)\in \mrm{c}_{00}(\GGamma).
\]
Clearly each $b_{i,n}$, and consequently each $c_{i,n}$, is invariant under $(\tau_t)_{t\in\RR}$.  Thus $c_{i,n}$ is analytic for $(\tau_t)_{t\in\RR}$, and so $c_{i,n} \in D(S^{-1})$ and $S^{-1}(c_{i,n}) = R(c_{i,n})$.  Then as $\nabla_\vp^{it}=\nabla_\psi^{-it}=P^{it}$, see \cite[Lemma 6.2]{KrajczokTypeI}, each $c_{i,n}$ is also invariant under the modular automorphism group. 
Since
\[ c_{i,n}^*=
\tfrac{1}{2}(b_{i,n}^*+R(b_{i,n}))=
R(c_{i,n})=S^{-1}(c_{i,n}), \]
the operator $\Theta^l(c_{i,n})$ is star preserving by Corollary~\ref{cor2}.

We now show that $(c_{i,n})_{(i,n)\in I\times \NN}$ converges to $\I$ weak$^*$ in $\M^l_{cb}(\A(\GGamma))$.  We will show this first for $(b_{i,n})_{(i,n)\in I\times \NN}$.  Choose $\rho\in\ell^1(\GGamma),\omega\in\LL^1(\wh\GGamma)$ and set $y=(\id\otimes\rho)\ww^{\wh\GGamma}$.  We have
\begin{equation}\begin{split}\label{eq1}
&\quad\;
\la \Theta^l(b_{i,n}) (y),\omega\ra =
\la (\id\otimes \rho)  (\ww^{\wh\GGamma}) ,\Theta^l(b_{i,n})_*(\omega)\ra=
\la b_{i,n} (\omega\otimes\id)(\ww^{\wh\GGamma}) , \rho\ra\\
&=
m_{\RR}\bigl(t\mapsto 
\la \tau_t(a_{i,n}) (\omega\otimes\id)(\ww^{\wh\GGamma}) , \rho\ra\bigr)=
m_{\RR}\bigl(t\mapsto 
\la \Theta^l(\tau_t(a_{i,n})) y ,\omega\ra 
\bigr).
\end{split}\end{equation}
By continuity, the above equation holds for each $y\in \mrm{C}(\wh\GGamma)$.

Let $\msf{H}$ be a separable Hilbert space, and consider $x\in \mrm{C}(\wh\GGamma)\odot \mc{K}(\msf{H})$ and $\rho\in \LL^1(\wh\GGamma)\odot \B(\msf{H})_*$.  Then using linearity and \eqref{eq1}, it follows that
\[
\la b_{i,n}, \Omega_{x,\rho}\ra =
\la (\Theta^l(b_{i,n})\otimes \id)x , \rho \ra =
m_{\RR}\bigl(t\mapsto 
\la (\Theta^l(\tau_t(a_{i,n}))\otimes \id)x , \rho \ra \bigr).
\]
Lemmas~\ref{lemma6} and~\ref{lemma8} imply
\[
\la b_{i,n},\Omega_{x,\rho}\ra=
m_{\RR}\bigl(t\mapsto
\la (\Theta^l(a_{i,n})\otimes \id) (\hat{\tau}_{-t}\otimes\id)(x) , \rho\circ (\hat{\tau}_t\otimes\id)\ra 
\bigr)=
\la a_{i,n},\Omega^{\tau}_{x,\rho}\ra .
\]
Both sides of the above equation are continuous with respect to $x,\rho$, hence it holds also for $x\in \mrm{C}(\wh\GGamma)\otimes \mc{K}(\msf{H})$ and $\rho\in \LL^1(\wh\GGamma)\wh{\otimes} \B(\msf{H})_*$. Since $\Omega^{\tau}_{x,\rho}$ is a normal functional, it follows that
\begin{align*}
\lim_{(i,n)\in I\times \NN} \la b_{i,n},\Omega_{x,\rho}\ra
&= \lim_{(i,n)\in I\times \NN} \la a_{i,n},\Omega^{\tau}_{x,\rho}\ra
= \la \I,\Omega^{\tau}_{x,\rho}\ra  \\
&= m_{\mathbb R}\big( t\mapsto \la (\hat\tau_{-t}\otimes\id)(x), \rho\circ(\hat \tau_t\otimes\id) \ra \big)
= \la x, \rho \ra = \la \I, \Omega_{x,\rho} \ra
\end{align*}
and so $b_{i,n}\xrightarrow[(i,n)\in I\times \NN]{w^*}\I$, as the functionals $\Omega_{x,\rho}$ give all of $Q^l(\A(\GGamma))$, by Proposition~\ref{prop15}.  Then using Proposition~\ref{prop12},
\[\begin{split}
&\quad\;
\la R(b_{i,n})^* , \Omega_{x,\rho} \ra=
\la S(b_{i,n}^*) , \Omega_{x,\rho} \ra=
\la ( \Theta^l(b_{i,n})^{\dagger} \otimes \id)x , \rho \ra \\
&=
\la (\Theta^l(b_{i,n})\otimes \id)(x^*)^* , \rho \ra=
\ov{\la (\Theta^l(b_{i,n})\otimes \id)(x^*) , \ov{\rho} \ra}
\end{split}\]
which converges to $\ov{\la x^*,\ov{\rho}\ra }=\la x,\rho\ra=\la \I,\Omega_{x,\rho}\ra $. We can conclude that $c_{i,n}=\tfrac{1}{2}(b_{i,n}+R(b_{i,n})^*)$ also converges weak$^*$ to $\I$. 

We have now shown all the properties required of the net $(c_{i,n})_{(i,n)\in I\times\mathbb N}$ except that each $\Theta^l(c_{i,n})$ is unit preserving. By Lemma~\ref{lem:cent_unit_preserving}, we know that there is a family of scalars $(\alpha_{i,n})_{(i,n)\in I\times\mathbb N}$ with $\Theta^l(c_{i,n})(\I) = \alpha_{i,n}\I$ for each $(i,n)$.  As $\Theta^l(c_{i,n})$ is star-preserving, each $\alpha_{i,n}\in\mathbb R$.  As $c_{i,n} \xrightarrow[(i,n)\in I\times \NN]{} \I$ weak$^*$, $\Theta^l(c_{i,n})(\I) = \alpha_{i,n}\I \xrightarrow[(i,n)\in I\times \NN]{} \I$ weak$^*$ and so $\alpha_{i,n} \xrightarrow[(i,n)\in I\times \NN]{} 1$.  We may hence replace $c_{i,n}$ by $\alpha_{i,n}^{-1} c_{i,n}$.

Finally, when $\GGamma$ has central AP, then we can skip the first step (as $a_i\in \mrm{c}_{00}(\GGamma)\cap \mc{Z}(\ell^{\infty}(\GGamma))$ by assumption), and proceed as above to form $b_i$. It follows from \eqref{eq:scaling_gp_cmpt} that $\hat\tau_t(U^{\alpha}_{i,i}) = U^{\alpha}_{i,i}$, and the equality $(\hat\tau_t\otimes \tau_t)\ww^{\wh\GGamma}=\ww^{\wh\GGamma}$, together with \eqref{eq:hatW_cmpt}, shows $\tau_t(a_i) = a_i$ for each $t,i$.  Thus actually $b_i = a_i$, and the final step of forming $c_i$, and rescaling, will also give central elements.
\end{proof}

In the unimodular case there is no difference between AP and central AP. 

\begin{proposition} \label{prop19}
Let $\GG$ be a unimodular discrete quantum group. Then $\GG$ has AP if and only if it has central AP. 
\end{proposition}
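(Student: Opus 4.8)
The implication ``central AP $\Rightarrow$ AP'' is immediate from the definitions, so the only content is the converse, and unimodularity will be used solely there. The plan is to start from the net provided by Proposition~\ref{prop1} and to push it into the centre by means of the canonical conditional expectation $E$ onto $\mc{Z}(\ell^{\infty}(\GGamma))$, after checking that $E$ interacts well with the CB-multiplier structure.

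Concretely, I would assume $\GGamma$ has AP and fix a net $(a_i)_{i\in I}$ in $\mrm{c}_{00}(\GGamma)$ as in Proposition~\ref{prop1}, so that $a_i\xrightarrow[i\in I]{}\I$ weak$^*$ in $\M^l_{cb}(\A(\GGamma))$. I then define $E\colon \ell^{\infty}(\GGamma)\to \ell^{\infty}(\GGamma)$ blockwise by
\[ E(a)^\alpha = \tfrac{\Tr(a^\alpha)}{\dim(\alpha)}\,\I \qquad (a=(a^\alpha)_{\alpha\in\Irr(\wh\GGamma)}). \]
This is the canonical normal conditional expectation onto the centre $\mc{Z}(\ell^{\infty}(\GGamma))$; in the unimodular case it coincides with $(\id\otimes h)\circ\ad$, where $\ad$ is the adjoint action of the compact quantum group $\wh\GGamma$ on $\ell^{\infty}(\GGamma)$, the identification following from the Kac-type orthogonality relations $h\big((U^\alpha_{k,p})^*U^\alpha_{l,q}\big)=\tfrac{1}{\dim(\alpha)}\delta_{k,l}\delta_{p,q}$ (this is exactly the point at which $\uprho_\alpha=\I$, i.e.\ unimodularity, is used). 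It is then clear that $E(\I)=\I$, that $E$ maps $\mrm{c}_{00}(\GGamma)$ into $\mrm{c}_{00}(\GGamma)\cap\mc{Z}(\ell^{\infty}(\GGamma))$, and that every $E(a)$ is central.

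The crux, which I expect to be the main obstacle, is to show that $E$ restricts to a weak$^*$-continuous complete contraction on $\M^l_{cb}(\A(\GGamma))$. The structural point I would exploit is that, although $\wh\GGamma$ has no classical points to integrate over, it is \emph{compact}, so its Haar state $h$ plays the role of an invariant mean for the adjoint action. I would therefore reproduce the averaging argument of Proposition~\ref{prop1}, with the compact quantum group $\wh\GGamma$ and its Haar state $h$ in place of $\RR$ and the mean $m_{\RR}$. Since the adjoint action is implemented spatially by the multiplicative unitary $\ww^{\wh\GGamma}$, conjugation by its legs should define, exactly as $\hat\tau_t$ does in Lemma~\ref{lemma6}, a field of complete isometries of $\M^l_{cb}(\A(\GGamma))$; integrating this field against the (normal, tracial) state $h$ then yields a complete contraction whose value on $a$ is $E(a)$. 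For weak$^*$-continuity I would argue as in Lemmas~\ref{lemma16} and~\ref{lemma8}: for $x\in \mrm{C}(\wh\GGamma)\otimes\mc{K}(\msf{H})$ and $\rho\in \LL^1(\wh\GGamma)\wh\otimes\B(\msf{H})_*$ one shows that the functional $a\mapsto \la(\Theta^l(E(a))\otimes\id)x,\rho\ra$ equals an averaged functional of the form covered by Lemma~\ref{lemma16}, hence lies in the norm-closed predual $Q^l(\A(\GGamma))$; as these exhaust $Q^l(\A(\GGamma))$ by Proposition~\ref{prop15}, this gives weak$^*$-continuity together with $\|E\|_{cb}\le 1$. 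Controlling the CB-multiplier norm of the averaged map and confirming normality is the delicate step, and it is precisely here that unimodularity is indispensable: in the non-Kac case $(\id\otimes h)\circ\ad$ no longer lands in $\mc{Z}(\ell^{\infty}(\GGamma))$ and the naive trace-average fails to respect the cb-structure.

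With $E$ in hand the argument would conclude quickly: the net $\big(E(a_i)\big)_{i\in I}$ lies in $\mrm{c}_{00}(\GGamma)\cap\mc{Z}(\ell^{\infty}(\GGamma))$, and by weak$^*$-continuity of $E$ we have $E(a_i)\xrightarrow[i\in I]{}E(\I)=\I$ weak$^*$ in $\M^l_{cb}(\A(\GGamma))$. Hence $\GGamma$ has central AP, as required.
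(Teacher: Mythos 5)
Your proposal follows the same route as the paper's proof: reduce to the nontrivial direction, take the net $(a_i)_{i\in I}$ in $\mrm{c}_{00}(\GGamma)$ from Proposition~\ref{prop1}, push it into the centre with the block-normalised trace map (your $E$, the paper's $A$), and get $E(a_i)\to\I$ weak$^*$ by showing that the functionals $\Omega_{x,\omega}\circ E$ lie in $Q^l(\A(\GGamma))$ via Lemma~\ref{lemma16}. Your identification of where unimodularity enters is also correct.

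The gap is at the step you yourself call the crux, and it is genuine: the argument you propose there cannot be carried out as stated. There is no ``field of complete isometries of $\M^l_{cb}(\A(\GGamma))$'' indexed by the legs of $\ww^{\wh\GGamma}$ to integrate against $h$: the adjoint action is a coaction $\ad\colon\ell^\infty(\GGamma)\to\LL^\infty(\wh\GGamma)\bar{\otimes}\ell^\infty(\GGamma)$, not a family of automorphisms parametrised by points, so the mechanism of Proposition~\ref{prop1} --- which integrates the honest one-parameter family of isometries from Lemma~\ref{lemma6} against an invariant mean on $\RR$ --- has no direct analogue here. What slicing does give you, rigorously, is that $E=(h\otimes\id)\circ\ad$ is a normal unital complete contraction on $\ell^\infty(\GGamma)$; but that controls the wrong norm, and says nothing about whether $E(a)$ is again a CB multiplier, nor about $\|E(a)\|_{cb}$. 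What is needed is the analogue of Lemma~\ref{lemma6} for this averaging, namely an identity expressing $\Theta^l(E(a))$ through $\Theta^l(a)$; without it you cannot even write $\Omega_{x,\omega}\circ E$ in the form to which Lemma~\ref{lemma16} applies. This is exactly what the paper supplies: traciality of $h$ (unimodularity) yields a state-preserving normal conditional expectation $\EE\colon\LL^\infty(\wh\GGamma)\bar{\otimes}\LL^\infty(\wh\GGamma)\to\wh\Delta(\LL^\infty(\wh\GGamma))$, one sets $\wh\Delta^\sharp=\wh\Delta^{-1}\EE$ and $\Psi(a)=\wh\Delta^\sharp(\id\otimes\Theta^l(a))\wh\Delta$, which is visibly normal and CB with $\|\Psi(a)\|_{cb}\le\|a\|_{cb}$, and then one checks on matrix coefficients (using Lemma~\ref{lemma4} and $\EE(U^\alpha_{i,k}\otimes U^\alpha_{l,j})=\tfrac{\delta_{kl}}{\dim(\alpha)}\wh\Delta(U^\alpha_{i,j})$) that $\lambda_{\wh\GGamma}(\Psi(a)_*(\omega))=E(a)\lambda_{\wh\GGamma}(\omega)$, i.e.\ $E(a)\in\mc{Z}\M^l_{cb}(\A(\GGamma))$ with $\Theta^l(E(a))=\Psi(a)$. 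Once this identity is in hand, your concluding step works exactly as you wrote it, since then $\la E(a_i),\Omega_{x,\omega}\ra=\la(\id\otimes\Theta^l(a_i)\otimes\id)(\wh\Delta\otimes\id)x,\,\omega\circ(\wh\Delta^\sharp\otimes\id)\ra$ is of the form covered by Lemma~\ref{lemma16}.
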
 

\begin{proof} 
Since the Haar integral $ h\in \LL^1(\whG) $ is a trace there exists a unique state-preserving normal faithful conditional 
expectation $ E\colon \LL^\infty(\whG) \bar{\otimes} \LL^\infty(\whG) \rightarrow \wh{\Delta}(\LL^\infty(\whG)) \subseteq \LL^\infty(\whG) \bar{\otimes} \LL^\infty(\whG) $.
Explicitly, we have 
\[
E(U^{\alpha}_{i,j} \otimes U^{\beta}_{k,l}) = \frac{\delta_{\alpha\beta} \delta_{jk}}{\dim(\alpha)} \wh{\Delta}(U^{\alpha}_{i,l}),
\]
compare \cite[Section 6.3.2]{Brannan}. 
Set $ \wh{\Delta}^{\sharp} = \wh{\Delta}^{-1} E: \LL^\infty(\whG) \bar{\otimes} \LL^\infty(\whG) \rightarrow \LL^\infty(\whG) $.  Given $ a \in \M_{cb}^l(\A(\GG)) $ define
\[ \Psi(a) = \wh{\Delta}^{\sharp}(\id \otimes \Theta^l(a)) \wh{\Delta} \in \CB^\sigma( \Linfd ). \]
That $\Psi(a)$ is normal and completely bounded is clear, and note that we have $\|\Psi(a)\|_{cb} \leq \|a\|_{cb} $.   Moreover, if $a$ is finitely supported then $\Psi(a)$ has finite-rank, see Remark~\ref{rem:fin_sup_finrank}.

Define also $ A:\ell^\infty(\GG) \rightarrow \mc{Z}\ell^\infty(\GG) $ by 
$$
A(f) = \sum_{\alpha \in \Irr(\whG)} \frac{\Tr_\alpha(f)}{\dim(\alpha)} p_\alpha,
$$
where $p_\alpha\in \ell^{\infty}(\GG)$ is the central projection corresponding to $\B(\msf{H}_\alpha)\subseteq \ell^{\infty}(\GG)$ and $\Tr_\alpha\in\ell^1(\GG)$ is the projection onto $\B(\msf{H}_\alpha)$ composed with the (non-normalised) trace on $\B(\msf{H}_\alpha)$.  Then $ A $ is a contractive linear map.  Given $ a \in \M_{cb}^l(\A(\GG)) $ and $\omega\in h(\Pol(\wh\GG)\cdot)$, we compute
\[\begin{split}
&\quad\;
\lambda_{\wh\GG}( \Psi(a)_*(\omega) )
= \sum_{\alpha\in \Irr(\wh\GG)}\sum_{i,j=1}^{\dim(\alpha)}
\la \Psi(a)(U^{\alpha}_{i,j}) , \omega \ra 
e^{\alpha}_{i,j}\\
&=
\sum_{\alpha\in \Irr(\wh\GG)}\sum_{i,j,k=1}^{\dim(\alpha)}
\la \wh{\Delta}^{\sharp} ( U^{\alpha}_{i,k}\otimes \Theta^l(a)(U^{\alpha}_{k,j})) , \omega \ra 
e^{\alpha}_{i,j}=
\sum_{\alpha\in \Irr(\wh\GG)}\sum_{i,j,k,l=1}^{\dim(\alpha)}
a^{\alpha}_{k,l}
\la \wh{\Delta}^{\sharp} ( U^{\alpha}_{i,k}\otimes U^{\alpha}_{l,j}) , \omega \ra 
e^{\alpha}_{i,j}
\\
&=
\sum_{\alpha\in \Irr(\wh\GG)}\sum_{i,j,k=1}^{\dim(\alpha)}
\tfrac{a^{\alpha}_{k,k}}{\dim(\alpha)}
\la U^{\alpha}_{i,j} , \omega \ra 
e^{\alpha}_{i,j}
= \sum_{\alpha\in \Irr(\wh\GG)}\sum_{i,j=1}^{\dim(\alpha)}
\la U^{\alpha}_{i,j} , \omega \ra \tfrac{\Tr_\alpha(a)}{\dim(\alpha)} e^{\alpha}_{i,j}
=A(a) \lambda_{\wh\GG}(\omega).
\end{split}\]
Here we used Lemma~\ref{lemma4} to compute the action of $\Theta^l(a)$, and notice that by the choice of $\omega$, all the sums involved are finite.  As such $\omega$ are dense in $\LL^1(\wh\GGamma)$, it follows that $ A(a) $ is a left CB multiplier and $ \Theta^l(A(a)) = \Psi(a)$.  In particular, $ A(a) \in \mc{Z}\M^l_{cb}(\A(\GG)) $ and $ \|A(a)\|_{cb} \leq \|a\|_{cb} $. 

Now assume that $ \GG $ has AP. Using Proposition \ref{prop1} we find a net let $ (f_i)_{i \in I}$ in $ \mrm{c}_{00}(\GG) $ which converges weak$^*$ to $ \I $ in $ \M_{cb}^l(\A(\GG)) $.  In order to prove that $ \GG $ has central AP, we shall show that $ A(f_i) \xrightarrow[i\in I]{}\I $ weak$^*$ in $ \M^l_{cb}(\A(\GG)) $. 
Take $ x \in \mrm{C}(\whG) \otimes \mc{K}(\msf{H}), \omega \in \LL^1(\whG) \wh{\otimes} \B(\msf{H})_* $ for a separable Hilbert space $\msf{H}$. Then we have
\begin{align*}
\la A(f_i), \Omega_{x,\omega} \ra  &= \la (\Psi(f_i) \otimes \id)x, \omega \ra  \\
&= \la (\wh{\Delta}^\sharp \otimes \id)(\id\otimes \Theta^l(f_i) \otimes\id)(\wh{\Delta} \otimes \id)x, \omega\ra  \\
&= \la (\id\otimes \Theta^l(f_i) \otimes \id)(\wh{\Delta} \otimes \id)x, \omega \circ (\wh{\Delta}^\sharp \otimes \id)\ra .
\end{align*}
By applying Lemma~\ref{lemma16}, with $\M=\LL^\infty(\wh{\GGamma}), \N = \mathbb C$, it follows that
\[ \lim_{i\in I} \la A(f_i), \Omega_{x,\omega} \ra
= \la (\id\otimes \id \otimes \id)(\wh{\Delta} \otimes \id)x, \omega \circ (\wh{\Delta}^\sharp \otimes \id)\ra
= \la x, \omega \ra
= \la \I, \Omega_{x,\omega} \ra, \]
hence showing that $ A(f_i) \xrightarrow[i\in I]{}\I $ weak$^*$, as required.
\end{proof} 

In the remainder of this section we relate AP to approximation properties of associated operator algebras. Let us start by recalling the appropriate von Neumann algebraic approximation property, see \cite[Section 2]{HaagerupKraus}. 

\begin{definition}
Let $\M$ be a von Neumann algebra. Then $\M$ has the \emph{weak$^*$ operator approximation property (W$^*$OAP)} if there exists a net $(\Theta_i)_{i\in I}$ of finite rank normal CB maps on $\M$ which converges to the identity in the stable point-weak$^*$-topology, i.e.~$(\Theta_i\otimes\id)x\xrightarrow[i\in I]{w^*} x$ for all separable Hilbert spaces $\msf{H}$ and $x\in \M\bar{\otimes} \B(\msf{H})$.
\end{definition}

The following result was established by Kraus and Ruan for Kac algebras, \cite[Theorem 4.15]{KrausRuan}, using the formally stronger definition of AP (which by Theorem~\ref{thm6} 
is equivalent to the definition taken in this paper). A result of this type was also obtained in \cite[Proposition 4.7]{Crann}, again with a formally stronger definition of AP, and under the assumption that $\GG$ is strongly inner amenable. We record a proof for the convenience of a reader.

\begin{proposition}\label{prop5}
Let $\GG$ be a locally compact quantum group. If $\GG$ has AP, then $\Linfd$ has W$^*$OAP.
\end{proposition}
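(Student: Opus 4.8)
The plan is to start from the reformulation of AP given by Theorem~\ref{thm6}: choosing the approximating net to come from the Fourier algebra, we obtain $a_i=\wh\lambda(\omega_i)\in\A(\GG)$ (with $\omega_i\in\Ljd$) such that $\Theta^l(a_i)\to\id$ in the stable point-weak$^*$-topology of $\CB^\sigma(\Linfd)$. Recall that $\Linfd$ has W$^*$OAP precisely when $\id$ lies in the stable point-weak$^*$-closure of the set $F$ of finite-rank normal CB maps on $\Linfd$. Since the closure of a closure is again closed, the following reduction is available: \emph{it suffices to show that every multiplier $\Theta^l(a)$ with $a\in\A(\GG)$ lies in the stable point-weak$^*$-closure of $F$.} Indeed, if $\{\Theta^l(a):a\in\A(\GG)\}\subseteq\overline{F}$, then $\overline{\{\Theta^l(a_i)\}}\subseteq\overline{F}$, and as $\id$ is a stable point-weak$^*$ limit of the $\Theta^l(a_i)$ by Theorem~\ref{thm6}, we get $\id\in\overline{F}$, which is exactly W$^*$OAP.

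The heart of the argument is therefore to approximate a single Fourier multiplier by finite-rank normal CB maps. Here I would use the explicit form of the (bimodule) extension from Lemma~\ref{lemma9}, namely $\Theta^l(a)(x)=(\omega\otimes\id)\bigl(\ww^{\whG *}(\I\otimes x)\ww^{\whG}\bigr)$ for $x\in\Linfd$ and $a=\wh\lambda(\omega)$, reducing by density and the norm bound to the case where $\omega=\omega_{\xi,\eta}$ is a vector functional. The key resource is that $a\in\A(\GG)\subseteq\CZGD=\mrm{C}_0(\whG)$ carries genuine $\mrm C_0$-decay, together with the density of $\A(\whG)$ in $\CZGD$; one also has the freedom, via Proposition~\ref{prop15}, to test convergence only against the functionals $\Omega_{x,\omega}$ with $x\in\CZGD\otimes\mc{K}(\msf H)$ and $\omega\in\Ljd\wh\otimes\B(\msf H)_*$, since these exhaust $Q^l(\A(\GG))$. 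The Hilbert-space implementation $\Lhvp(\Theta^l(a)(b))=S^{-1}(a)\Lhvp(b)$ from Proposition~\ref{prop2} should also help to control the candidate finite-rank maps while keeping them valued in $\Linfd$, the left $\Ljd$-module structure of $\Theta^l(a)$ being what forces the approximants back into $\Linfd$ rather than merely into $\B(\LdG)$.

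The main obstacle is precisely the production of \emph{finite-rank} (as opposed to merely elementary) normal CB maps that still take values in $\Linfd$. Naively truncating the slice $(\omega_{\xi,\eta}\otimes\id)$ against a finite-rank projection on $\LdG$ yields an elementary operator $x\mapsto\sum_{k}c_k^* x\,d_k$, which in general is neither finite rank nor $\Linfd$-valued; and, as stressed in the introduction, we cannot resort to the classical device of discretising an integral over a compact support by points. Converting the $\mrm C_0$-decay of $a$ into an honest finite-rank approximation is thus the substantive step. I would handle it by passing to the slice-map (Fubini-product) characterisation of W$^*$OAP, following the strategy of Kraus--Ruan, and combining it with the density of $\A(\whG)$ in $\CZGD$ to build the required finite-rank maps at the $\mrm C_0$-level before extending them normally; this is where the real work, and the departure from the classical proof, is concentrated.
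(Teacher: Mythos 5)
Your opening reduction is sound but it only restates the problem: since the stable point-weak$^*$ topology is a genuine locally convex topology, closure is idempotent, so it indeed suffices to place each $\Theta^l(a)$, $a\in\A(\GG)$, in the stable point-weak$^*$-closure of the finite-rank normal CB maps on $\Linfd$. However, that single step is the entire content of the proposition, and your proposal never carries it out: your final paragraph correctly identifies the obstacle (approximants which are finite-rank, normal, \emph{and} $\Linfd$-valued) and then defers its resolution to an unspecified ``slice-map characterisation \dots\ at the $\mrm{C}_0$-level''. That is a plan, not an argument, so there is a genuine gap. Moreover, the ingredients you gesture at point in an unhelpful direction: the actual proof uses neither the $\mrm{C}_0$-decay of $a$, nor the density of $\A(\whG)$ in $\CZGD$, nor the $\LL^2$-implementation of Proposition~\ref{prop2}; no truncation or discretisation is needed at all.

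The missing idea is a composition trick, and the extension you chose is the wrong one for it. The map $\Phi(a)$ of Lemma~\ref{lemma9} is an $\Linf'$-bimodule map on $\B(\LdG)$: for general $x\in\B(\LdG)$ the value $(\omega\otimes\id)(\ww^{\whG *}(\I\otimes x)\ww^{\whG})$ lies only in $\B(\LdG)$ (the second leg of $\ww^{\whG}$ sits in $\Linf$), so composing $\Phi(a)$ with finite-rank maps on $\B(\LdG)$ does not yield $\Linfd$-valued maps. What closes the gap is the following (and it is the paper's proof): extend $\omega_i\in\Ljd$ by Hahn--Banach to $\tilde\omega_i\in\B(\LdG)_*$ and set $\Psi_i(T)=(\tilde\omega_i\otimes\id)\bigl(\vv^{\whG}(T\otimes\I)\vv^{\whG *}\bigr)$ for $T\in\B(\LdG)$. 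Because the right regular representation satisfies $\vv^{\whG}\in\Linf'\bar\otimes\Linfd$, the element $\vv^{\whG}(T\otimes\I)\vv^{\whG *}$ lies in $\B(\LdG)\bar\otimes\Linfd$, so $\Psi_i$ is a normal CB map whose range on \emph{all} of $\B(\LdG)$ is contained in $\Linfd$, and $\Psi_i|_{\Linfd}=\Theta^l(\wh\lambda(\omega_i))$ since $\vv^{\whG}(x\otimes\I)\vv^{\whG *}=\wh\Delta(x)$ for $x\in\Linfd$. Now compose with a net $(\Upsilon_\lambda)_{\lambda\in\Lambda}$ of finite-rank normal unital CP maps on $\B(\LdG)$, which exists since $\B(\LdG)$ is semidiscrete: each $\Psi_i\circ\Upsilon_\lambda|_{\Linfd}$ is finite-rank (because $\Upsilon_\lambda$ is), normal, CB, and $\Linfd$-valued (because $\Psi_i$ is --- this is precisely the point of the extension). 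A two-index $\eps/2$-argument, using normality of $\Psi_i$ for the limit in $\lambda$ and the stable point-weak$^*$ convergence $\Theta^l(\wh\lambda(\omega_i))\xrightarrow[i\in I]{}\id$ from Theorem~\ref{thm6} for the limit in $i$, then produces a net witnessing W$^*$OAP of $\Linfd$.
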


\begin{proof}
Assume that $\GG$ has AP. By Theorem~\ref{thm6} there is a net $(\wh{\lambda}(\omega_i))_{i\in I}$ in $\A(\GG)$ such that $\Theta^l(\wh{\lambda}(\omega_i))\xrightarrow[i\in I]{}\id$ in the stable point-weak$^*$-topology of $\CB^\sigma(\Linfd)$. Extend $\omega_i\in \Ljd$ to $\tilde{\omega}_i\in \B(\LdG)_*$ with the same norm and define
\[\begin{split}
\Psi_i\colon \B(\LdG)\ni T\mapsto
&
( \tilde{\omega}_i\otimes \id)\bigl(\vv^{\whG }(T\otimes \I)\vv^{\whG *} \bigr)\in \B(\LdG).
\end{split}\]
Clearly $\Psi_i$ is a normal CB map on $\B(\LdG)$, and as $\vv^{\whG}\in \LL^{\infty}(\GG)'\bar{\otimes}\Linfd$ the image of $\Psi_i$ lies in $\Linfd$. Note that if $x\in \Linfd$ then
\[\begin{split}
\Psi_i(x)=
(\tilde{\omega}_i\otimes \id)\bigl(\vv^{\whG }(x\otimes \I)\vv^{\whG *} \bigr)=
(\tilde{\omega}_i\otimes\id)\wh\Delta(x)= \Theta^l(\wh{\lambda}(\omega_i))(x).
\end{split}\]
Thus $\Psi_i$ is an extension of $\Theta^l(\wh{\lambda}(\omega_i))$ to all of $\B(\LdG)$.
As $\B(\LdG)$ has W$^*$CPAP, see \cite[Propositions 2.1.4, 2.2.7]{BrownOzawa}, there is a net $(\Upsilon_\lambda)_{\lambda\in \Lambda}$ of finite rank normal unital CP maps on $\B(\LdG)$ which converges to the identity in the point-weak$^*$-topology.  Consider now the maps
\[
\Psi_{i,\lambda}=\Psi_{i}\circ \Upsilon_\lambda|_{\Linfd}\colon \Linfd\rightarrow \Linfd.
\]
These are normal and CB, and, because $\Upsilon_\lambda$ has finite rank, also each $\Psi_{i,\lambda}$ is finite-rank.  Fix a separable Hilbert space $\msf{H}$ and finite sets $F\subseteq  \Linfd\bar{\otimes}\B(\msf{H}),\;G\subseteq \Ljd\wh{\otimes}\B(\msf{H})_*$ and $0<\eps<1$.  Given $x\in F, \rho\in G$, we have
\[
\la (\Psi_i\otimes \id)x,\rho \ra =
\la (\Theta^l(\wh{\lambda}(\omega_i))\otimes \id)\,x,\rho \ra \xrightarrow[i\in I]{}
 \la x,\rho\ra.
\]
Thus there is $i(F,G,\eps)\in I$ such that
\[
|\la (\Psi_{i( F,G,\eps)}\otimes\id)x - x,\rho \ra |\le \tfrac{\eps}{2}\qquad(x\in F,\rho\in G).
\]
Next, since $\Psi_{i( F,G,\eps)}$ is normal, we have
\[
|\la (\Psi_{i( F,G,\eps)}\circ \Upsilon_\lambda\otimes \id)x  - (\Psi_{i( F,G,\eps)}\otimes \id)x,\rho \ra |\xrightarrow[\lambda\in \Lambda]{}0\qquad(x\in F,\rho\in G),
\] 
hence there is $\lambda( F,G,\eps)\in \Lambda$ so that 
\[
|\la (\Psi_{i( F,G,\eps)}\circ \Upsilon_{\lambda( F,G,\eps)}\otimes\id)x - (\Psi_{i(  F,G,\eps)}\otimes \id)x,\rho \ra |\le \tfrac{\eps}{2}\qquad(x\in F,\rho\in G),
\]
and by triangle inequality
\[
|\la (\Psi_{i(F,G,\eps)}\circ \Upsilon_{\lambda(F,G,\eps)}\otimes\id)x-x , \rho \ra |\le \eps\qquad (x\in F,\rho\in G).
\]
Consequently, the net $(\Psi_{i(F,G,\eps),\lambda(F,G,\eps)})_{(F,G,\eps)}$ (indexed by finite subsets of $\Linfd\bar{\otimes}\B(\msf{H})$,\\ $\Ljd\wh{\otimes}\B(\msf{H})_*$ and $\left]0,1\right[$) shows that $\Linfd$ has the W$^*$OAP.
\end{proof}

\begin{remark}
An analogous argument shows that if $\whG$ is coamenable then $\Linfd$ has W$^*$CPAP. In fact, a formally stronger result holds: W$^*$CPAP of $\Linfd$ follows from amenability of $\GG$ by \cite[Theorem 3.3]{BT_Amenability}.
\end{remark}

When the quantum group $\GGamma$ is discrete and has AP, we obtain approximation properties also for the associated \cst-algebra. If $\GGamma$ is furthermore unimodular, the converse implications hold. These results are already known (see e.g.~\cite[Theorem 5.13]{KrausRuan}), hence we skip the proof.
For the definition of OAP and strong OAP, see \cite[Page 204]{EffrosRuan} or \cite[Section~12.4]{BrownOzawa}, for example.

\begin{proposition}\label{prop4}
Let $\GGamma$ be a discrete quantum group. Consider the following conditions:
\begin{enumerate}
\item $\GGamma$ has AP,
\item $\mrm{C}(\wh\GGamma)$ has strong OAP,
\item $\mrm{C}(\wh\GGamma)$ has OAP,
\item $\LL^{\infty}(\wh\GGamma)$ has W$^*$OAP.
\end{enumerate}
Then $(1)\Rightarrow (2)\Rightarrow (3)$ and $(1)\Rightarrow (4)$. If $\GGamma$ is unimodular then all the above conditions are equivalent.
\end{proposition}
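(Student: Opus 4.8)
The plan is to prove the three forward implications in general and, when $\GGamma$ is unimodular, to close the cycle $(1)\Rightarrow(2)\Rightarrow(3)\Rightarrow(4)\Rightarrow(1)$. First, $(1)\Rightarrow(4)$ is immediate from Proposition~\ref{prop5} applied with $\GG=\GGamma$, which gives W$^*$OAP of $\LL^\infty(\wh\GGamma)$. Next, $(2)\Rightarrow(3)$ is the general operator-space implication ``strong OAP $\Rightarrow$ OAP'', obtained by restricting the defining point-norm convergence from $\B(\ell^2)\,\check\otimes\,\mrm{C}(\wh\GGamma)$ to the subspace $\mc{K}(\ell^2)\,\check\otimes\,\mrm{C}(\wh\GGamma)$ (see \cite{EffrosRuan}). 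I would also record the fact, established in this setting in \cite{KrausRuan}, that $(3)\Rightarrow(4)$: OAP of a C$^*$-algebra passes to W$^*$OAP of its bidual, and $\LL^\infty(\wh\GGamma)$ is the central summand of $\mrm{C}(\wh\GGamma)^{**}$ corresponding to the GNS representation for the Haar state, so it inherits W$^*$OAP upon compressing the approximating maps.

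For $(1)\Rightarrow(2)$, Theorem~\ref{thm6}, Proposition~\ref{prop1} and Remark~\ref{rem:fin_sup_finrank} together furnish a net $(a_i)_{i\in I}$ in $\mrm{c}_{00}(\GGamma)$ such that each $T_i:=\Theta^l(a_i)|_{\mrm{C}(\wh\GGamma)}$ is a finite-rank CB map and $\Theta^l(a_i)\to\id$ in the stable point-weak$^*$-topology. The finite-rank CB maps form a convex set $\mc{F}\subseteq\CB(\mrm{C}(\wh\GGamma))$, and strong OAP asks precisely that $\id$ lie in the closure of $\mc{F}$ for the locally convex stable point-norm topology. Since for a convex set the closure in a locally convex topology coincides with the closure in the associated weak topology (Hahn--Banach), it suffices to check that $T_i\to\id$ in this weak topology; convex combinations of the $T_i$, which are again finite-rank, then supply the required point-norm net. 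The weak topology is generated by functionals of the form $S\mapsto\la(\id\otimes S)(u),\phi\ra$, and the point is that, pulled back along $a\mapsto\Theta^l(a)$, these should be matched, up to the tensor flip, with the functionals on $\M^l_{cb}(\A(\GGamma))$ analysed in Propositions~\ref{prop15} and~\ref{prop17}, which lie in $Q^l(\A(\GGamma))$; convergence $a_i\to\I$ weak$^*$ then yields $T_i\to\id$ weakly.

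To close the cycle under unimodularity it remains to prove $(4)\Rightarrow(1)$, and here the Haar state $h$ is a trace. Starting from finite-rank normal CB maps $\Phi_i\to\id$ witnessing W$^*$OAP of $\LL^\infty(\wh\GGamma)$, I would average each $\Phi_i$ into a multiplier exactly as in the proof of Proposition~\ref{prop19}: pairing $\Phi_i$ with the matrix coefficients $U^\alpha_{k,l}$ as in Lemma~\ref{lemma4}, the trace property produces an element $a_i\in\ell^\infty(\GGamma)$ which, by finite-rankness of $\Phi_i$, lies in $\mrm{c}_{00}(\GGamma)$ and is a CB multiplier, while $\Phi_i\to\id$ forces $a_i\to\I$ weak$^*$ in $\M^l_{cb}(\A(\GGamma))$. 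This gives AP and completes the cycle, whence all four conditions are equivalent in the unimodular case.

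The main obstacle is the weak$^*$-to-norm upgrade in $(1)\Rightarrow(2)$: one must verify that the stable point-norm test functionals for strong OAP---which a priori involve the full algebra $\B(\ell^2)$ and possibly non-normal functionals---can indeed be identified with the normal, $\mc{K}(\ell^2)$-based functionals of $Q^l(\A(\GGamma))$ in Proposition~\ref{prop15}, so that the Hahn--Banach convexity step applies; this is where most of the care is needed and where a separability or density reduction may be required. The second delicate point is the averaging in $(4)\Rightarrow(1)$, in which unimodularity is indispensable, since without the trace the natural averaging of $\Phi_i$ need not return a completely bounded multiplier.
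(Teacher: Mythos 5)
The paper itself offers no proof of this proposition (it simply cites \cite[Theorem 5.13]{KrausRuan}), so there is no internal argument to compare against; the relevant benchmark is the Haagerup--Kraus/Kraus--Ruan averaging method that the citation stands for. Your easy steps are correct: $(1)\Rightarrow(4)$ is indeed immediate from Proposition~\ref{prop5}, and $(2)\Rightarrow(3)$ follows by restricting from $\B(\ell^2)\check\otimes \mrm{C}(\wh\GGamma)$ to $\mc{K}(\ell^2)\otimes\mrm{C}(\wh\GGamma)$. But the step carrying the real content of $(1)\Rightarrow(2)$ is exactly the one you leave open: in the Hahn--Banach/convexity scheme you must show that for every $u\in\B(\ell^2)\check\otimes\mrm{C}(\wh\GGamma)$ and every (possibly singular) $\phi\in(\B(\ell^2)\check\otimes\mrm{C}(\wh\GGamma))^*$ the functional $a\mapsto\phi\bigl((\id\otimes\Theta^l(a))u\bigr)$ lies in $Q^l(\A(\GGamma))$, and Propositions~\ref{prop15} and~\ref{prop17} do not give this (they concern $\mc{K}$-legs and normal functionals). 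The gap is fillable in the discrete case, and this is where the proof actually lives: approximate $u$ in norm by $u'\in\B(\ell^2)\odot\Pol(\wh\GGamma)$; by Lemma~\ref{lemma4}, $a\mapsto\phi\bigl((\id\otimes\Theta^l(a))u'\bigr)$ is a finite linear combination of matrix entries of $a$, hence of the form $\alpha^l(\omega)$ with $\omega\in\ell^1(\GGamma)$, so it lies in $Q^l(\A(\GGamma))$; then $|\phi((\id\otimes\Theta^l(a))(u-u'))|\le\|\phi\|\,\|a\|_{cb}\,\|u-u'\|$ shows these functionals converge in norm in $\M^l_{cb}(\A(\GGamma))^*$, and norm-closedness of $Q^l(\A(\GGamma))$ finishes. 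As written, without this, $(1)\Rightarrow(2)$ is not proved.

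The unimodular cycle contains two genuine errors. First, your $(3)\Rightarrow(4)$ rests on the claim that OAP of a $\mathrm{C}^*$-algebra passes to W$^*$OAP of its bidual. This is not an available general fact: the bidual statements in the literature concern the \emph{strong} OAP, and whether OAP implies strong OAP is open for general $\mathrm{C}^*$-algebras; Kraus--Ruan do not prove $(3)\Rightarrow(4)$ by a bidual argument but by averaging back to AP (and if you invoke their Theorem 5.13 wholesale, the entire unimodular statement follows and nothing remains to prove). The robust route is to prove $(3)\Rightarrow(1)$ and $(4)\Rightarrow(1)$ directly by trace-averaging. Second, in your $(4)\Rightarrow(1)$ the assertion that finite-rankness of $\Phi_i$ forces the averaged element $a_i$ to lie in $\mrm{c}_{00}(\GGamma)$ is false: already for a rank-one normal map $\Phi=\omega(\cdot)\,y$ with $\omega\in\LL^1(\wh\GGamma)$ and $y\in\LL^\infty(\wh\GGamma)$, the averaged multiplier has $\alpha$-block entries built from the Fourier coefficients of $y$, which are non-zero in infinitely many blocks in general. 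What the averaging actually yields (and what Haagerup--Kraus and Kraus--Ruan prove) is that $a_i\in\A(\GGamma)$ --- which is what AP requires --- but that membership needs a separate coefficient (Fell-absorption type) argument; you would also need to verify that $\wh{\Delta}^{\sharp}(\Phi_i\otimes\id)\wh{\Delta}$ is of the form $\Theta^l(a_i)$ for a \emph{general} finite-rank normal CB map $\Phi_i$, since Proposition~\ref{prop19} establishes this only when the map being averaged already comes from a multiplier. Both points are repairable, but they are precisely the substance of the converse implications, and they are missing.
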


\begin{remark}
Combining Proposition~\ref{prop4} and Proposition~\ref{prop19}, we see that when $\GGamma$ is unimodular  and $\LL^{\infty}(\wh\GGamma)$ has W$^*$OAP, then $\GGamma$ has the central AP.
\end{remark}

Following \cite[Definition 1.27]{boundary}, we say that a discrete quantum group $\GGamma$ is exact when the \emph{reduced crossed product functor} $\GGamma\ltimes_r - $, 
preserves short exact sequences.
 
\begin{corollary}
Let $\GGamma$ be a discrete quantum group. If $\GGamma$ has AP then it is exact.
\end{corollary}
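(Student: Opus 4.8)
The plan is to read off exactness from the \cst-algebraic approximation properties already recorded in Proposition~\ref{prop4}, combined with two facts from the general theory: that the strong OAP forces a \cst-algebra to be exact, and that exactness of a discrete quantum group is detected by its reduced \cst-algebra $\mrm{C}(\wh\GGamma)$.

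First I would apply the implication $(1)\Rightarrow(2)$ of Proposition~\ref{prop4}: since $\GGamma$ has AP, the \cst-algebra $\mrm{C}(\wh\GGamma)$ has the strong OAP. Next I would invoke the standard operator space result that a \cst-algebra with the strong OAP is exact. This is part of the theory connecting the (strong) OAP with the slice map property and nuclear embeddability: the strong OAP implies the slice map property for the minimal tensor product, which is Kirchberg's characterisation of exactness (see \cite[Chapter~12]{BrownOzawa} and \cite[Chapter~11]{EffrosRuan}). Hence $\mrm{C}(\wh\GGamma)$ is an exact \cst-algebra.

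Finally I would use that $\mrm{C}(\wh\GGamma)$ is precisely the reduced \cst-algebra $\mrm{C}^*_r(\GGamma)$ of $\GGamma$, together with the characterisation that a discrete quantum group is exact --- in the sense that $\GGamma\ltimes_r -$ preserves short exact sequences --- exactly when $\mrm{C}^*_r(\GGamma)$ is exact as a \cst-algebra, the quantum counterpart of the Kirchberg--Ozawa theorem for discrete groups, available in the framework of \cite{boundary}. Chaining the three steps gives that $\GGamma$ is exact.

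The hard part is the final step: passing from \cst-exactness of the single algebra $\mrm{C}(\wh\GGamma)$ to exactness of the whole functor $\GGamma\ltimes_r-$. Concretely one must show that for every short exact sequence $0\to J\to B\to B/J\to 0$ of $\GGamma$-\cst-algebras the induced sequence of reduced crossed products remains exact; the standard route is to realise $\GGamma\ltimes_r B$ inside a minimal tensor product (or amplification) built from $\mrm{C}(\wh\GGamma)$ and $B$, so that exactness of $\mrm{C}(\wh\GGamma)$ --- which is inherited by minimal tensor products and by the relevant sub- and quotient structures --- propagates to the crossed products. This is precisely the content that is borrowed from \cite{boundary}, so that the corollary reduces to the two approximation-theoretic inputs above.
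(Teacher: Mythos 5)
Your proposal is correct and follows essentially the same route as the paper: Proposition~\ref{prop4} gives strong OAP of $\mrm{C}(\wh\GGamma)$, strong OAP implies exactness of that \cst-algebra (the paper cites \cite[Corollary~11.3.2]{EffrosRuan} together with \cite[Theorem~1.1]{Kirchberg}, matching your slice-map argument), and exactness of $\GGamma$ as a quantum group then follows from \cite[Proposition~1.28]{boundary}, which is exactly the reduction you describe in your final step.
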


\begin{proof}
By Proposition \ref{prop4}, the \cst-algebra $\mrm{C}(\wh\GGamma)$ has strong OAP. It is then exact by a combination of \cite[Corollary~11.3.2]{EffrosRuan} and \cite[Theorem~1.1]{Kirchberg}. The result now follows from \cite[Proposition~1.28]{boundary}.
\end{proof}

We end this section with a result which shows that for a discrete quantum group $\GGamma$, AP is equivalent to a strengthening of W$^*$OAP of $\LL^{\infty}(\wh{\GGamma})$ which takes into 
consideration $\ell^{\infty}(\GGamma)$. Let us introduce this strengthening in a general setting, compare \cite[Definition 6.9]{KrajczokPhD}.

\begin{definition}
Let $(\M,\theta)$ be a von Neumann algebra with a n.s.f.~weight.
\begin{itemize}
\item Let $\Phi\in \CB^{\sigma}(\M)$ be a normal CB map satisfying $\Phi(\mf{N}_{\theta})\subseteq \mf{N}_{\theta}$. We say that $\Phi$ has an $\LL^2$-implementation if there is $T\in \B(\msf{H}_\theta)$ such that $\Lambda_\theta(\Phi(x))=T\Lambda_\theta(x)$ for $x\in \mf{N}_\theta$.
\item Let $\N\subseteq \B(\msf{H}_\theta)$ be a von Neumann algebra. We say that $(\M,\theta)$ has W$^*$OAP relative to $\N$ if there is a net $(\Phi_i)_{i\in I}$ such that:
\begin{itemize} 
\item each $\Phi_i$ is a normal, CB, finite rank map on $\M$,
\item each $\Phi_i$ satisfies 
$\Phi_i(\mf{N}_\theta)\subseteq \mf{N}_\theta$ and has an $\LL^2$-implementation $T_i\in \N$,
\item the net $(\Phi_i)_{i\in I}$ converges to the identity in the stable point-weak$^*$-topology.
\end{itemize}
\end{itemize}
\end{definition}

Note that an $\LL^2$-implementation is unique. If it is clear from the context which weight on $\M$ we choose, we will simply say that $\M$ has W$^*$OAP relative to $\N$. 

\begin{theorem}\label{thm4}
Let $\GGamma$ be a discrete quantum group. Consider the following conditions:
\begin{enumerate}[label=(\arabic*)]
\item\label{thm4:1} $\GGamma$ has AP.
\item\label{thm4:2} $\LL^{\infty}(\wh{\GGamma})$ has W$^*$OAP relative to $\ell^{\infty}(\GGamma)$.
\item\label{thm4:3} $\LL^{\infty}(\wh{\GGamma})$ has W$^*$OAP relative to $\ell^{\infty}(\GGamma)'$.
\item\label{thm4:4} $\GGamma$ has central AP.
\item\label{thm4:5} $\LL^{\infty}(\wh{\GGamma})$ has W$^*$OAP relative to $\mc{Z}(\ell^{\infty}(\GGamma))$. 
\end{enumerate}
Then \ref{thm4:1} $\Leftrightarrow$ \ref{thm4:2} $\Leftrightarrow$ \ref{thm4:3} $\Leftarrow$ \ref{thm4:4} $\Leftrightarrow$ \ref{thm4:5}.
\end{theorem}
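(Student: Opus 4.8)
The plan is to reduce every clause to the structure of multiplier maps through their $\LL^2$-implementations. The crucial input is Proposition~\ref{prop2}: for $a\in\M^l_{cb}(\A(\GGamma))$ the map $\Theta^l(a)$ preserves $\mf{N}_h=\LL^\infty(\wh\GGamma)$ (recall $h$ is a state, so $\mf{N}_h$ is everything) and has $\LL^2$-implementation equal to left multiplication by $S^{-1}(a)=S(a^*)^*\in\ell^\infty(\GGamma)$. Thus multiplier maps always have their implementation inside $\ell^\infty(\GGamma)$, and if $a\in\mrm{c}_{00}(\GGamma)$ then $\Theta^l(a)$ is finite rank by Remark~\ref{rem:fin_sup_finrank}. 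Conversely, I will use repeatedly that a finite-rank element $T=(T^\alpha)_\alpha\in\ell^\infty(\GGamma)=\prod_\alpha\M_{\dim(\alpha)}$ must be finitely supported: each block $\M_{\dim(\alpha)}$ acts on $\LL^2(\wh\GGamma)$ with multiplicity $\dim(\alpha)\ge 1$, so if $T^\alpha\neq 0$ the $\alpha$-summand contributes rank at least $\dim(\alpha)$, and finiteness of the total rank forces $T\in\mrm{c}_{00}(\GGamma)$.

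For $(1)\Rightarrow(2)$ I would take, using Proposition~\ref{prop1}, a net $(a_i)$ in $\mrm{c}_{00}(\GGamma)$ with $a_i\to\I$ weak$^*$ in $\M^l_{cb}(\A(\GGamma))$ and put $\Phi_i=\Theta^l(a_i)$; these are finite-rank normal CB maps preserving $\mf{N}_h$ with implementation $S^{-1}(a_i)\in\ell^\infty(\GGamma)$. To upgrade weak$^*$ convergence to stable point-weak$^*$ convergence of $(\Phi_i)$, I test against the functionals $\Omega_{x,\omega,\eps}\in Q^l(\A(\GGamma))$ of Proposition~\ref{prop17}, where $\eps$ is the (normal, state) counit of $\ell^\infty(\GGamma)$: since $a\star\eps=a$, formula~\eqref{eq17} gives $\langle a_i,\Omega_{x,\omega,\eps}\rangle=\langle(\Theta^l(a_i)\otimes\id)x,\omega\rangle$ for all $x\in\LL^\infty(\wh\GGamma)\bar{\otimes}\B(\msf{H})$, so $\Phi_i\to\id$ stably, which is $(2)$. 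For $(2)\Rightarrow(1)$, given a witnessing net $(\Phi_i,T_i)$ with $T_i\in\ell^\infty(\GGamma)$, finiteness of the rank of $\Phi_i$ makes $T_i\Lambda_h(\LL^\infty(\wh\GGamma))=\Lambda_h(\Phi_i(\LL^\infty(\wh\GGamma)))$ finite dimensional, so $T_i$ is a finite-rank operator and hence $T_i\in\mrm{c}_{00}(\GGamma)$ by the previous paragraph. Setting $a_i=S(T_i)\in\mrm{c}_{00}(\GGamma)\subseteq\A(\GGamma)$, Proposition~\ref{prop2} gives that $\Theta^l(a_i)$ has implementation $S^{-1}(a_i)=T_i$; since $\Lambda_h$ is injective and implementations are unique, $\Phi_i=\Theta^l(a_i)$. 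Thus $\Theta^l(a_i)\to\id$ stably with $a_i\in\A(\GGamma)$, and Theorem~\ref{thm6} yields AP.

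For $(2)\Leftrightarrow(3)$ I would avoid computing how the unitary antipode acts on $\LL^2(\wh\GGamma)$ and instead pass to the commutant quantum group $\GGamma'$. Because $\wh{\GGamma'}=\wh\GGamma^{\oon{op}}$ shares its von Neumann algebra, Haar state and GNS space with $\wh\GGamma$, while $\ell^\infty(\GGamma')=\LL^\infty(\GGamma)'=\ell^\infty(\GGamma)'$, clause $(2)$ for $\GGamma'$ is verbatim clause $(3)$ for $\GGamma$. As $\GGamma'$ is again discrete and AP is commutant-invariant by Proposition~\ref{prop11}, applying the equivalence $(1)\Leftrightarrow(2)$ to $\GGamma'$ gives $(3)[\GGamma]\Leftrightarrow(2)[\GGamma']\Leftrightarrow(1)[\GGamma']\Leftrightarrow(1)[\GGamma]$. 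The central equivalence $(4)\Leftrightarrow(5)$ is the same argument as $(1)\Leftrightarrow(2)$ with centrality tracked throughout: since $(\tau_t)$ acts trivially on $\mc{Z}(\ell^\infty(\GGamma))$ one has $S^{-1}(a)=S(a)=R(a)$ for central finitely-supported $a$, and $R$ preserves both $\mc{Z}(\ell^\infty(\GGamma))$ and finite support; so the central net of Proposition~\ref{prop1} gives $(4)\Rightarrow(5)$, and for $(5)\Rightarrow(4)$ the finite-rank argument produces $T_i\in\mrm{c}_{00}(\GGamma)\cap\mc{Z}(\ell^\infty(\GGamma))$ and $a_i=R(T_i)$ central and finitely supported with $\Theta^l(a_i)=\Phi_i\to\id$ stably, forcing $a_i\to\I$ weak$^*$ (now test against the $\Omega_{x,\omega}$ of Proposition~\ref{prop15}), i.e.\ central AP. Finally $(4)\Rightarrow(1)$ is immediate, a central AP net being an AP net, which with $(1)\Leftrightarrow(3)$ supplies the arrow $(3)\Leftarrow(4)$.

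The conceptual heart, and the step I expect to require the most care, is $(2)\Rightarrow(1)$ together with its central counterpart: the hypothesis that the $\LL^2$-implementation lies in $\ell^\infty(\GGamma)$ is exactly what turns an abstract finite-rank approximant into a genuine finitely-supported Fourier multiplier $\Theta^l(S(T_i))$. Once the block picture of $\ell^\infty(\GGamma)$ is in place the supporting facts (finite rank forces finite support; $S$ and $R$ preserve finite support and centrality) are routine, so the only delicate points are the uniqueness and injectivity used to identify $\Phi_i$ with $\Theta^l(a_i)$, and checking that the commutant passage in $(2)\Leftrightarrow(3)$ really matches the defining triple $(\M,\text{weight},\N)$ on the nose, including the embedding of $\ell^\infty(\GGamma)'$ into $\B(\LL^2(\wh\GGamma))$.
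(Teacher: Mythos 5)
Your proof is correct, but it takes a genuinely different route from the paper's at the two substantive points. For $(2)\Rightarrow(1)$ (and $(5)\Rightarrow(4)$) the paper never reconstructs $a_i$ from $T_i$: it applies Proposition~\ref{prop10}~(1) to $\Phi_i^\dagger$ to conclude that $\Phi_{i,*}$ is a left centraliser, so that $\Phi_i=\Theta^l(a_i)$ for some $a_i\in\M^l_{cb}(\A(\GGamma))$, and only afterwards deduces finite support of $a_i$ from finite rank of $\Phi_i$ via Lemma~\ref{lemma4}. You instead observe that finite rank of $\Phi_i$ forces the implementation $T_i$ to be a finite-rank operator, hence finitely supported (each nonzero block of $\ell^\infty(\GGamma)$ acts faithfully on its own Peter--Weyl block of $\LL^2(\wh{\GGamma})$, so contributes rank at least one), then set $a_i=S(T_i)\in\mrm{c}_{00}(\GGamma)\subseteq\M^l_{cb}(\A(\GGamma))$ and identify $\Phi_i=\Theta^l(a_i)$ because both maps have implementation $T_i$ and $\Lambda_h$ is injective. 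This bypasses Proposition~\ref{prop10} --- the most technical input of the paper's argument, proved using cores and analytic functionals --- at the cost of being tied to the finite-rank hypothesis, which here is harmless since the definition of W$^*$OAP builds it in; Proposition~\ref{prop10} remains the more robust tool, as it characterises implementations with no rank assumption. For $(2)\Leftrightarrow(3)$ the paper computes directly: it sets $\Psi_i=\wh{R}\circ\Phi_i^\dagger\circ\wh{R}$ and shows its implementation is $J_\vp T_i J_\vp\in\ell^\infty(\GGamma)'$ using $J_\vp\Lambda_h(x)=\Lambda_h(\wh{R}(x)^*)$, whereas you pass to the commutant quantum group $\GGamma'$ and invoke Proposition~\ref{prop11} together with the already-established equivalence $(1)\Leftrightarrow(2)$ for $\GGamma'$. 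Your reduction is valid, but it rests on the identifications $\wh{\GGamma'}=\wh{\GGamma}^{\,\oon{op}}$ and $\ell^\infty(\GGamma')=\ell^\infty(\GGamma)'$, and on the fact that $\wh{\GGamma}^{\,\oon{op}}$ carries the same Haar state, GNS space and GNS map as $\wh{\GGamma}$ (true because a compact quantum group has $\vp=\psi=h$ and trivial modular element, see \cite[Section~4]{KVVN}), so that clause $(2)$ for $\GGamma'$ is clause $(3)$ for $\GGamma$ on the nose; you correctly flag this as the delicate point, and the paper's computation sidesteps it entirely by exhibiting the intertwiner $J_\vp$ explicitly. The remaining implications --- $(1)\Rightarrow(2)$ and $(4)\Rightarrow(5)$ via Propositions~\ref{prop1}, \ref{prop2} and \ref{prop17} applied with the counit, and $(4)\Rightarrow(1)$ trivially --- coincide with the paper's proof.
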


\begin{remark}
This result is an analogue of Theorem 6.11 in \cite{KrajczokPhD}, for (co)amenability and relative W$^*$CPAP.  Furthermore, it is similar in spirit to \cite[Theorem 3]{SoltanViselter} which is concerned with amenability and injectivity.
\end{remark}

We start with an auxiliary result (compare \cite[Proposition 6.12]{KrajczokPhD} and \cite[Proposition 2.12]{SkalskiViselterConvolution}). Recall that for a normal CB map $\Phi$ on a von Neumann algebra, we denote by $\Phi^\dagger$ the normal CB map given by $x\mapsto \Phi(x^*)^*$.

\begin{proposition}\label{prop10}
Let $\GG$ be a locally compact quantum group with left Haar integral $\vp$ and let $\Phi\in \CB^\sigma(\Linfd)$ be a normal CB map. Assume that $\Phi^\dagger$ satisfies $\Phi^\dagger(\mf{N}_{\hvp})\subseteq \mf{N}_{\hvp}$ and has an $\LL^2$-implementation $T\colon \LdG\rightarrow\LdG$. We have:
\begin{enumerate}[label=(\arabic*)]
\item\label{prop10:1} $T\in \Linf$ if and only if $\Phi_*(\omega\star\nu)=\Phi_*(\omega)\star \nu$ for all $\omega,\nu\in\Ljd$,
\item\label{prop10:2} $T\in \Linf'$ if and only if $\Phi_*(\omega\star\nu)=\omega\star \Phi_*(\nu)$ for all $\omega,\nu\in\Ljd$,
\item\label{prop10:3} $T\in \mc{Z}(\Linf)$ if and only if $\Phi_*(\omega\star\nu)=\Phi_*(\omega)\star \nu=\omega\star\Phi_*(\nu)$ for all $\omega,\nu\in\Ljd$.
\end{enumerate}
\end{proposition}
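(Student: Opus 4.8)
The plan is to reduce the statement to two module--map conditions and then convert each into a commutation relation for the implementing operator $T$. Since $\mc{Z}(\Linf)=\Linf\cap\Linf'$, part~\ref{prop10:3} is just the conjunction of parts~\ref{prop10:1} and~\ref{prop10:2}, so I only treat the latter two. Dualising the convolution $\omega\star\nu=(\omega\otimes\nu)\wh\Delta$, one checks directly that $\Phi_*(\omega\star\nu)=\Phi_*(\omega)\star\nu$ for all $\omega,\nu\in\Ljd$ is equivalent to the comodule identity $\wh\Delta\,\Phi=(\Phi\otimes\id)\wh\Delta$, while $\Phi_*(\omega\star\nu)=\omega\star\Phi_*(\nu)$ is equivalent to $\wh\Delta\,\Phi=(\id\otimes\Phi)\wh\Delta$. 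Because $\wh\Delta$ is a $\star$-homomorphism, each of these identities holds for $\Phi$ if and only if it holds for $\Phi^\dagger$ (which is again in $\CB^\sigma(\Linfd)$ by Lemma~\ref{lemma18}). As it is $\Phi^\dagger$ that carries the $\LL^2$-implementation $T$, I set $\Psi=\Phi^\dagger$ and work throughout with the relation $\Lambda_{\hvp}(\Psi(x))=T\Lambda_{\hvp}(x)$ for $x\in\mf{N}_{\hvp}$.

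The two key tools are the module formulas coming from the regular representations of $\whG$. From the $\whG$-analogue of \eqref{eq23}, for $\theta\in\Ljd$ the element $m_\theta=(\theta\otimes\id)\ww^{\whG *}\in\Linf$ satisfies, for $x\in\mf{N}_{\hvp}$, that $(\theta\otimes\id)\wh\Delta(x)\in\mf{N}_{\hvp}$ with $\Lambda_{\hvp}((\theta\otimes\id)\wh\Delta(x))=m_\theta\,\Lambda_{\hvp}(x)$; moreover the $m_\theta$ generate $\Linf$. Dually, via the right regular representation $\vv^{\whG}$ there are operators $n_{\theta'}\in\Linf'$, $\theta'\in\Ljd$, generating $\Linf'$, with $(\id\otimes\theta')\wh\Delta(x)\in\mf{N}_{\hvp}$ and $\Lambda_{\hvp}((\id\otimes\theta')\wh\Delta(x))=n_{\theta'}\,\Lambda_{\hvp}(x)$.

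With these in hand both equivalences follow from the same slicing computation; I describe part~\ref{prop10:2}, part~\ref{prop10:1} being identical with the roles of the two legs and of $\Linf,\Linf'$ interchanged. Slicing the first leg of $\wh\Delta\,\Psi=(\id\otimes\Psi)\wh\Delta$ with an arbitrary $\theta$ shows this identity is equivalent to $(\theta\otimes\id)\wh\Delta(\Psi(x))=\Psi\big((\theta\otimes\id)\wh\Delta(x)\big)$ for all $x\in\mf{N}_{\hvp}$ (extending to $\Linfd$ by normality and weak$^*$-density of $\mf{N}_{\hvp}$). Applying $\Lambda_{\hvp}$ and using the first module formula together with $\Lambda_{\hvp}\Psi=T\Lambda_{\hvp}$ turns the left side into $m_\theta T\Lambda_{\hvp}(x)$ and the right side into $Tm_\theta\Lambda_{\hvp}(x)$. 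Since $\Lambda_{\hvp}$ is injective with dense range, the sliced identity holds for all $\theta,x$ precisely when $m_\theta T=Tm_\theta$ for every $\theta$, i.e.\ precisely when $T$ lies in the commutant of the $m_\theta$, which is $\Linf'$. This proves part~\ref{prop10:2}; running the argument with $n_{\theta'}$ and the second module formula gives $T\in(\Linf')'=\Linf$ and hence part~\ref{prop10:1}. (For the forward implication of part~\ref{prop10:1} one may argue instead that if $\Psi_*$ is a left centraliser then $\Psi=\Theta^l(d)$ for some $d\in\M^l_{cb}(\A(\GG))$, whence Proposition~\ref{prop2} identifies the implementation as $S^{-1}(d)\in\Linf$, which by uniqueness equals $T$.)

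The main obstacle is the second module formula: producing the operators $n_{\theta'}\in\Linf'$ that implement the second-leg slices of $\wh\Delta$ relative to the \emph{left} weight $\hvp$. The defining GNS relation for the right regular representation naturally involves the right Haar weight $\hpsi$, and passing to $\hvp$ introduces the modular element $\hat\delta$ through $\Lambda_{\hvp}(z)=\Lambda_{\hpsi}(z\hat\delta^{1/2})$; the technical heart is to verify that, after this bookkeeping, the implementing operators still lie in and generate $\Linf'$. This can be handled using the explicit expression $\vv^{\whG}=(J_\vp\otimes J_\vp)\chi(\ww^{\whG})^*(J_\vp\otimes J_\vp)$ and the compatibility of $\Lambda_{\hvp}$ with $J_\vp$. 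Conceptually, part~\ref{prop10:1} should also follow from part~\ref{prop10:2} via the unitary--antipode conjugation $\Psi\mapsto\wh R\,\Psi\,\wh R$, which swaps the two centraliser conditions (using $\wh\Delta\,\wh R=\chi(\wh R\otimes\wh R)\wh\Delta$) and, since $\wh R$ is implemented by $J_\vp$ with $J_\vp\Linf J_\vp=\Linf'$, sends $T$ into $\Linf'$ exactly when $T\in\Linf$; here too the left/right-weight discrepancy is the point requiring care.
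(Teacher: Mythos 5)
You have the right global structure, and much of the proposal is sound. The reduction of \ref{prop10:3} to \ref{prop10:1} and \ref{prop10:2}, the translation of the two centraliser identities into the comodule identities $\wh\Delta\Phi=(\Phi\otimes\id)\wh\Delta$, respectively $\wh\Delta\Phi=(\id\otimes\Phi)\wh\Delta$, and their invariance under $\Phi\mapsto\Phi^\dagger$ are all correct. Part \ref{prop10:2} is essentially complete: your ``first module formula'' is nothing but the defining relation \eqref{eq23} written for $\whG$, the operators $m_\theta=(\theta\otimes\id)(\ww^{\whG *})$ span a weak$^*$-dense subspace of $\Linf$, and since the commutant of a family of operators agrees with the commutant of its weak$^*$-closed linear span, commutation of $T$ with all $m_\theta$ is indeed equivalent to $T\in\Linf'$. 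Likewise the implication ``left centraliser $\Rightarrow T\in\Linf$'' in \ref{prop10:1}, obtained from \cite[Corollary~4.4]{JungeNeufangRuan} together with Proposition~\ref{prop2} and uniqueness of $\LL^2$-implementations, is a legitimate shortcut. Note that this is a genuinely different route from the paper's, which works in the $\Lambda_\vp$-picture: there one shows that $T^*$ implements $\Phi_*$ on those $\wh{\lambda}(\omega)$ which are square-integrable for $\vp$, and that convolution becomes left multiplication by $\wh{\lambda}(\omega)$, after which membership in $\Linf$ or $\Linf'$ is tested against left and right multiplication operators, the latter via $\sigma^\vp_{-i/2}$-analytic elements and conjugation by $J_\vp$.

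There is, however, a genuine gap in the remaining implication of \ref{prop10:1}: $T\in\Linf$ $\Rightarrow$ left centraliser identity. Your argument for it rests entirely on the ``second module formula'': operators $n_{\theta'}\in\Linf'$, for $\theta'\in\Ljd$, generating $\Linf'$, with $(\id\otimes\theta')\wh\Delta(x)\in\mf{N}_{\hvp}$ and $\Lambda_{\hvp}\bigl((\id\otimes\theta')\wh\Delta(x)\bigr)=n_{\theta'}\Lambda_{\hvp}(x)$. As stated — for \emph{every} $\theta'\in\Ljd$ — this is false whenever $\whG$ is not unimodular. Already for $\GG=\whH$ with $H$ a classical non-unimodular group (so $\Linfd=\LL^\infty(H)$, $\hvp$ the left Haar integral, and $\Linf'$ the right group von Neumann algebra), writing $\rho_t$ for the right translation unitaries and $\delta_H$ for the modular function, one computes
\[
\Lambda_{\hvp}\bigl((\id\otimes\theta')\wh\Delta(x)\bigr)=\Bigl(\int_H \theta'(t)\,\delta_H(t)^{-1/2}\rho_t \md t\Bigr)\Lambda_{\hvp}(x),
\]
and this operator is unbounded for suitable $\theta'\in\LL^1(H)$; it lies in $\Linf'$ only when $\theta'\delta_H^{-1/2}\in\LL^1(H)$, a proper dense subspace. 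This is not minor bookkeeping: second-leg slices of $\wh\Delta$ are adapted to the \emph{right} Haar weight $\hpsi$, not to $\hvp$, and making them act boundedly on $\Lambda_{\hvp}$-vectors forces the modular element $\hat\delta$ into the picture. The corrected, dense-subspace version of the formula (say for functionals of the type supplied by Lemma~\ref{lemma22}, with the resulting $n_{\theta'}$ still generating $\Linf'$) is exactly the analytic heart of the proposition — it is the same difficulty that the paper resolves with its analytic-element computation — and your proposal explicitly defers it (``the technical heart''), both in the main route and in the alternative $\wh R$-conjugation route, where the same left/right-weight discrepancy reappears. Until that lemma is actually proved, part \ref{prop10:1} is only half established, and with it part \ref{prop10:3}.
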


\begin{proof}
Using the biduality $\GG=\widehat{\whG}$ and \cite[Definition 4.6]{LCQGDaele} (see also Section \ref{section preliminaries}), we deduce that the subspace
\[
\mc{N}=	\{\widehat{\lambda}(\omega)\,|\, \omega\in \Ljd\colon \exists_{\xi\in \LdG} \forall_{x\in \mf{N}_{\hvp}} \ismaa{\Lhvp(x)}{\xi}=\omega(x^*)\} \subseteq \Linf
\]
is a core for $\Lvp$, and that for $\widehat{\lambda}(\omega)\in \mc{N}$ we have $\Lvp(\widehat{\lambda}(\omega))=\xi$. Before we proceed with the main proof, let us establish some preliminary results:
\begin{itemize}
\item 
For $\widehat{\lambda}(\omega)\in\mc{N}$ we have $\widehat{\lambda}(\Phi_*(\omega))\in \mc{N}$ and
\begin{equation}\label{eq11}
T^*\Lvp(\widehat{\lambda}(\omega))=\Lvp(\widehat{\lambda}(\Phi_*(\omega))).
\end{equation}
\end{itemize}
Indeed, for $x\in \mf{N}_{\hvp}$,
\begin{align*}
\ismaa{\Lhvp(x)}{T^* \Lvp(\widehat{\lambda}(\omega))}
&= \ismaa{T\Lhvp(x)}{ \Lvp(\widehat{\lambda}(\omega))}
= \ismaa{\Lhvp(\Phi^{\dagger}(x))}{\Lvp(\widehat{\lambda}(\omega))} \\
&= \omega( \Phi^{\dagger}(x)^*)
= \omega(\Phi(x^*))
= \Phi_*(\omega)(x^*),
\end{align*}
which proves that $\widehat{\lambda}(\Phi_*(\omega))\in \mc{N}$ and that equation \eqref{eq11} holds.
\begin{itemize}
\item For $\omega\in \Ljd,\widehat{\lambda}(\nu)\in \mc{N}$ we have $\widehat{\lambda}(\omega\star\nu)\in \mc{N}$ and
\begin{equation}\label{eq13}
\Lambda_{\vp}(\widehat{\lambda}(\omega\star\nu))=
\widehat{\lambda}(\omega)\Lambda_{\vp}(\widehat{\lambda}(\nu)).
\end{equation}
\end{itemize}
Indeed, take $x\in \mf{N}_{\hvp}$. Using the definition of $\ww^{\whG}$ (equation \eqref{eq23}) we obtain
\[\begin{split}
&\quad\;
(\omega\star\nu)(x^*)=
\nu( (\omega\otimes\id)\Delta(x^*))=
\nu((\ov{\omega}\otimes\id)\Delta(x)^*)=
\ismaa{\Lhvp( (\ov{\omega}\otimes\id)\Delta(x))}{\Lambda_{\vp}(\widehat{\lambda}(\nu))}\\
&=
\ismaa{ (\ov{\omega}\otimes\id)({\ww^{\whG *}}) \Lhvp(x)}{\Lambda_{\vp}(\widehat{\lambda}(\nu))}
= \ismaa{\Lhvp(x)}{\widehat{\lambda}(\omega)\Lambda_{\vp}(\widehat{\lambda}(\nu))},
\end{split}\]
which proves the claim.

Let us now prove \ref{prop10:1}. If $T\in\Linf$ then \eqref{eq11} implies that $\Lvp(\widehat{\lambda}(\Phi_*(\omega))) = T^*\Lvp(\widehat{\lambda}(\omega)) = \Lvp(T^* \widehat{\lambda}(\omega))$ and so $T^*\widehat{\lambda}(\omega)=\widehat{\lambda}(\Phi_*(\omega))$, for each $\omega\in \Ljd$ such that $\widehat{\lambda}(\omega)\in\mc{N}$, and by density of such $\omega$ (see Lemma~\ref{lemma22}) this equation holds for all $\omega\in \Ljd$.  Consequently
\[
\widehat{\lambda}(\Phi_*(\omega\star \nu))=
T^*\widehat{\lambda}(\omega\star\nu)=T^*\widehat{\lambda}(\omega)\widehat{\lambda}(\nu)=
\widehat{\lambda}(\Phi_*(\omega))\widehat{\lambda}(\nu)=\widehat{\lambda}(\Phi_*(\omega)\star\nu),
\]
and so $\Phi_*(\omega\star\nu)=\Phi_*(\omega)\star\nu$ for all $\omega,\nu\in \Ljd$.

For the converse, assume that $\Phi_*(\omega\star\nu)=\Phi_*(\omega)\star\nu$ for all $\omega,\nu\in \Ljd$.  Let us take $\widehat{\lambda}(\omega),\widehat{\lambda}(\nu)\in \mc{N}$ and assume that the map $\RR\ni t\mapsto (\omega\hat{\delta}^{-it})\circ \hat{\tau}_{-t}\in \Ljd$ extends to an entire map, and denoting by $\rho$ the value of this map at $t=-\tfrac{i}{2}$, we have furthermore $\widehat{\lambda}(\rho)\in \mc{N}$.  We now use ${\ww}^{\whG}=\chi(\ww^{\GG})^*$, together with $(\sigma^{\vp}_t\otimes\id)(\ww^{\GG}) = (\tau_t\otimes\id)(\ww^{\GG}) ( \I\otimes \hat{\delta}^{it})$, see \cite[Proposition 5.15]{LCQGDaele}, and $(\tau_t \otimes \hat\tau_t)(\ww^{\GG}) = \ww^{\GG}$, see \cite[Proposition~8.23]{KustermansVaes}.  It follows that for each $t\in\mathbb R$,
\[\begin{split}
&\quad\;
\sigma^{\vp}_t(\widehat{\lambda}(\omega))=(\omega\otimes\id)((\id\otimes \sigma^{\vp}_{t}){\ww}^{\whG})=
(\id\otimes \ov{\omega}) ( (\sigma^{\vp}_t\otimes\id)(\ww^{\GG}))^*\\
&=
(\id\otimes\ov{\omega})( (\tau_t\otimes\id)(\ww^{\GG}) ( \I\otimes \hat{\delta}^{it}))^*=
(\id\otimes\ov{\omega})( (\id\otimes \hat{\tau}_{-t})(\ww^{\GG}) ( \I\otimes \hat{\delta}^{it}))^*\\
&=
(\id\otimes\omega)( (\I\otimes\hat{\delta}^{-it}) (\id\otimes \hat{\tau}_{-t}) (\ww^{\GG})^*)=
(\omega\otimes\id) ( (\hat{\delta}^{-it}\otimes\I) (\hat{\tau}_{-t}\otimes\id) (\ww^{\whG}))\\
&=
\wh{\lambda}( ( \omega\hat{\delta}^{-it})\circ \hat{\tau}_{-t}).
\end{split}\]
Consequently we obtain $\widehat{\lambda}(\omega)\in \oon{Dom}(\sigma^\vp_{-i/2})$ and $\sigma^\vp_{-i/2}(\widehat{\lambda}(\omega))=\widehat{\lambda}(\rho)$.  From \eqref{eq13}, we know that $\widehat{\lambda}(\nu\star\rho), \widehat{\lambda}(\Phi_*(\nu)\star\rho)\in \mc{N}$. By assumption, $\Phi_*(\nu\star\rho)=\Phi_*(\nu)\star\rho$ hence for $x\in \mf{N}_{\hvp}$ we have
\[\begin{split}
&\quad\;
\ismaa{\Lhvp(x)}{T^* J_\vp \widehat{\lambda}(\omega)^* J_\vp \Lvp(\widehat{\lambda}(\nu))}=
\ismaa{T \Lhvp(x)}{ \Lvp( \widehat{\lambda}(\nu) \sigma^{\vp}_{-i/2}( \widehat{\lambda}(\omega)))}\\
&=
\ismaa{\Lhvp(\Phi^\dagger(x))}{\Lvp( \widehat{\lambda}(\nu \star\rho))}=
(\nu\star\rho)(\Phi^\dagger(x)^*)=\Phi_*(\nu\star\rho)(x^*)=(\Phi_*(\nu)\star\rho)(x^*)\\
&=
\ismaa{\Lhvp(x)}{\Lvp(\widehat{\lambda}(\Phi_*(\nu)\star \rho))}=
\ismaa{\Lhvp(x)}{\Lvp\bigl( \widehat{\lambda}(\Phi_*(\nu)) \sigma^{\vp}_{-i/2}(\widehat{\lambda}(\omega))\bigr)}\\
&=
\ismaa{\Lhvp(x)}{J_{\vp} \widehat{\lambda}(\omega)^* J_{\vp}  T^* \Lvp(\widehat{\lambda}(\nu))}.
\end{split}\]
By Lemma~\ref{lemma22} we know that the collection of functionals $\omega$ with $\wh{\lambda}(\omega)\in\mc{N}$ is dense in $\Ljd$, hence the corresponding collection of operators $J_\vp \widehat{\lambda}(\omega)^* J_\vp$ is weak$^*$-dense in $\Linf'$.  Thus $T^*\in \Linf''=\Linf$, as required.

Next we consider \ref{prop10:2}. Suppose that $T\in \Linf'$. By equations \eqref{eq11} and \eqref{eq13}, given $\widehat{\lambda}(\omega),\widehat{\lambda}(\nu) \in \mc{N}$, we have
\[
\Lvp(\widehat{\lambda}(\Phi_*(\omega\star \nu)))=T^* \Lvp(\widehat{\lambda}(\omega\star\nu))=
T^* \wh{\lambda}(\omega) \Lvp(\wh{\lambda}(\nu))=
\widehat{\lambda}(\omega)T^* \Lvp(\widehat{\lambda}(\nu))=
\Lvp(\widehat{\lambda}(\omega\star\Phi_*(\nu))),
\]
hence $\Phi_*(\omega\star\nu)=\omega\star\Phi_*(\nu)$. As the set of functionals $\omega$ with $\wh{\lambda}(\omega)\in \mc{N}$ is dense in $\Lj$ by Lemma~\ref{lemma22}, the claim follows.

Conversely, suppose that $\Phi_*(\omega\star\nu)=\omega\star\Phi_*(\nu)$ for all $\omega,\nu\in \Ljd$. For $\widehat{\lambda}(\omega),\widehat{\lambda}(\nu) \in \mc{N}$ we then have
\[
\widehat{\lambda}(\omega) T^*\Lvp(\widehat{\lambda}(\nu))=
\Lvp(\widehat{\lambda}(\omega\star\Phi_*(\nu)))=
\Lvp(\widehat{\lambda}(\Phi_*(\omega\star\nu)))=
T^* \widehat{\lambda}(\omega)\Lvp(\widehat{\lambda}(\nu)).
\]
Again by density, it follows that $T^*\in \Linf'$, as required. 

Finally, \ref{prop10:3} follows by combining \ref{prop10:1} and \ref{prop10:2}.
\end{proof}

\begin{proof}[Proof of Theorem \ref{thm4}]
If $\GGamma$ has AP then by Proposition \ref{prop1} we have a net $(a_i)_{i\in I}$ in $\mrm{c}_{00}(\GGamma)$ which converges in $(\M^l_{cb}(\A(\GGamma)),w^*)$ to $\I$, and the associated maps $\Theta^l(a_i)$ satisfy $\Theta^l(a_i)^\dagger=\Theta^l(a_i)$. Proposition \ref{prop2} shows that each $\Theta^l(a_i)$ has an $\LL^2$-implementation equal to $S^{-1}(a_i)\in \ell^{\infty}(\GGamma)$. Proposition \ref{prop17}, for $f=\eps\in \ell^1(\GGamma)$ being the counit of $\GGamma$, shows that $(\Theta^l(a_i))_{i\in I}$ converges to the identity in the stable point-weak$^*$-topology. Indeed, for any Hilbert space $\msf{H}$ and $x\in \LL^{\infty}(\wh\GGamma)\bar{\otimes}\B(\msf{H}), \omega\in \LL^1(\wh\GGamma)\wh{\otimes}\B(\msf{H})_*$ we have 
\[
\la (\Theta^l(a_i)\otimes \id)x,\omega \ra =
\la (\Theta^l(a_i\star \eps)\otimes \id)x,\omega \ra =
\la a_i,\Omega_{x,\omega,\eps}\ra\xrightarrow[i\in I]{}\la \I,\Omega_{x,\omega,\eps}\ra =\la x,\omega\ra.
\]
We conclude that $\LL^{\infty}(\wh\GGamma)$ has W$^*$OAP relative to $\ell^{\infty}(\GGamma)$. This shows \ref{thm4:1} $\Rightarrow$ \ref{thm4:2}. 

If $\GGamma$ has central AP then additionally $S^{-1}(a_i)\in \mc{Z}(\ell^{\infty}(\GGamma))$ and so $\LL^{\infty}(\wh\GGamma)$ has W$^*$OAP relative to $\mc{Z}(\ell^{\infty}(\wh\GGamma))$.  Thus \ref{thm4:4} $\Rightarrow$ \ref{thm4:5}.

Let us now show the equivalence of \ref{thm4:2} and \ref{thm4:3}.  Assume \ref{thm4:2} and let $(\Phi_i)_{i\in I}$ be a net giving W$^*$OAP of $\LL^{\infty}(\wh\GGamma)$ relative to $\ell^{\infty}(\GGamma)$. Define a net $(\Psi_i)_{i\in I}$ by $\Psi_i=\wh{R}\circ \Phi_i^{\dagger}\circ\wh{R}$. Lemma \ref{lemma18} shows that each $\Psi_i$ is a normal, finite rank CB map on $\LL^{\infty}(\wh\GGamma)$ and $\Psi_i\xrightarrow[i\in I]{}\id$ in the stable point-weak$^*$-topology. Let $T_i\in \ell^{\infty}(\GGamma)$ be the $\LL^2$-implementation of $\Phi_i$.  By \cite[Proposition~2.11]{KVVN} we know that $J_\vp \Lambda_h(x)=\Lambda_h(\wh{R}(x)^*)$ for each $x\in \LL^{\infty}(\wh{\GGamma})$, and so
\[\begin{split}
&\quad\;
\Lambda_h(\Psi_i(x))=
\Lambda_h\bigl(\wh{R}\bigl(\Phi_i^{\dagger}(\wh{R}(x))\bigr)\bigr)=
J_\vp\Lambda_h( \Phi_i^\dagger(\wh{R}(x))^*)\\
&=
J_{\vp}\Lambda_h( \Phi_i(\wh{R}(x)^*))=
J_{\vp}T_i\Lambda_h( \wh{R}(x)^*)=
J_\vp T_i J_\vp \Lambda_h(x). 
\end{split}\]
Hence $\Psi_i$ has $\LL^2$-implementation $J_\vp T_i J_\vp \in \ell^{\infty}(\GGamma)'$, showing \ref{thm4:3}.  The converse is analogous.

Now assume \ref{thm4:2}, i.e.~that $\LL^{\infty}(\wh\GGamma)$ has W$^*$OAP relative to $\ell^{\infty}(\GGamma)$. As before, let $(\Phi_i)_{i\in I}$ be the corresponding net of normal, finite rank CB maps with $\LL^2$-implementations $T_i\in \ell^{\infty}(\GGamma)$.  For $i\in I$, let $\Psi = \Phi_i^\dagger$, so that $\Psi$ is a normal CB map such that $\Psi^\dagger = \Phi_i$ has an $\LL^2$-implementation $T_i \in \ell^{\infty}(\GGamma)$.  By Proposition~\ref{prop10}\ref{prop10:1}, $\Psi_*$ is a left centraliser.  Hence also $\Phi_{i,*}$ is a left centraliser, compare with the proof of Proposition~\ref{prop12}, and so there is $a_i \in \M^l_{cb}(\A(\GGamma))$ with $\Theta^l(a_i) = \Phi_i$.  By Lemma~\ref{lemma4}, as $\Phi_i$ is finite-rank, it must be that $a_i \in \mrm{c}_{00}(\GGamma)$.  By definition, $(\Phi_i)_{i\in I} = (\Theta^l(a_i))_{i\in I}$ converges to the identity in the stable point-weak$^*$-topology, and so by Proposition~\ref{prop15}, $a_i\xrightarrow[i\in I]{}\I$ in $(\M^l_{cb}(\A(\GGamma)), w^*)$ and consequently $\GGamma$ has AP. 
Therefore \ref{thm4:1} and \ref{thm4:2} are equivalent.

Finally, suppose that $\LL^{\infty}(\wh\GGamma)$ has W$^*$OAP relative to $\mc{Z}(\ell^{\infty}(\GGamma))$, and proceed as above.  By definition, we have that $T_i \Lambda_h(x) = \Lambda_h(\Phi_i(x)) = \Lambda_h(\Theta^l(a_i)(x))$ for each $x\in \mrm{C}(\wh\GGamma)$.  By Proposition~\ref{prop2}, it follows that $T_i = S(a_i^*)^*$.  As $T_i \in \mc{Z}(\ell^{\infty}(\GGamma))$, it follows that $a_i \in \mc{Z}(\ell^{\infty}(\GGamma))\cap\mrm{c}_{00}(\GGamma)$, which shows that $\GGamma$ has central AP.  This establishes that \ref{thm4:4} and \ref{thm4:5} are equivalent. The implication \ref{thm4:4} $\Rightarrow$ \ref{thm4:1} is trivial.
\end{proof}

\section{Permanence properties} \label{section permanence}

\subsection{Quantum subgroups}

For classical locally compact groups, AP passes to closed subgroups, see \cite[Proposition 1.14]{HaagerupKraus}. We shall show that an analogous property holds also in the quantum case. 

Let us start by recalling the notion of a closed quantum subgroup of a locally compact quantum group, see \cite{DKSS_ClosedSub}. In what follows we will use the universal \cst-algebra $\mrm{C}^u_0(\GG)$ and the reducing map $\Lambda_{\GG}: \mrm{C}^u_0(\GG) \rightarrow \mrm{C}_0(\GG)$, see \cite{KustermansUniversal}, along with the semi-universal and universal multiplicative unitaries, compare \cite[Section~1.2]{DKSS_ClosedSub}. We will also use the notion of a quantum homomorphism as explored in \cite{MeyerRoyWoronowicz}, see also \cite[Section~2.1]{DawsCat}.  

Let $\HH,\GG$ be locally compact quantum groups. Assume that there is a homomorphism $\HH\rightarrow \GG$ exhibited by a \emph{strong quantum homomorphism} in the sense 
of \cite[Section~1.3]{DKSS_ClosedSub}, i.e.~a non-degenerate $\star$-homomorphism $\pi: \mrm{C}_0^u(\GG) \rightarrow \M(\mrm{C}_0^u(\HH))$ such that $\Delta_{\HH}^u\circ \pi=(\pi\otimes\pi)\circ \Delta_{\GG}^u$. 
Then there is a dual strong quantum homomorphism $\widehat{\pi}: \mrm{C}_0^u(\whH) \rightarrow \M(\mrm{C}_0^u(\whG))$ which is related to $\pi$ via $(\pi\otimes \id)\WW^{\GG}=(\id\otimes \widehat{\pi})\WW^{\HH}$, see \cite[Section 1.3]{DKSS_ClosedSub}. In this situation we say that $\HH$ is a \emph{closed quantum subgroup} of $\GG$ in the sense of Vaes if there is a normal unital injective $\star$-homomorphism $\gamma\colon \LL^{\infty}(\whH)\rightarrow\LL^{\infty}(\whG)$ such that $\gamma(\Lambda_{\whH}(x))=\Lambda_{\whG}(\widehat{\pi}(x))$ for all $x\in \mrm{C}_0^u(\whH)$, see \cite[Definition 2.5]{VaesInduction}, \cite[Definition 3.1]{DKSS_ClosedSub}.  Notice that this condition implies that $\Delta_{\whG} \circ \gamma = (\gamma\otimes\gamma)\circ \Delta_{\whH}$.

\begin{theorem}\label{thm2}
Let $\HH,\GG$ be locally compact quantum groups and assume that $\HH$ is a closed quantum subgroup of $\GG$ in the sense of Vaes. If $\GG$ has AP then so does $\HH$.
\end{theorem}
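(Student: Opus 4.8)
The plan is to transport the approximating net for $\GG$ to $\HH$ through the embedding $\gamma$, using the equivalent description of AP from Theorem~\ref{thm6}. Since $\gamma\colon\LL^\infty(\whH)\to\LL^\infty(\whG)$ is normal, it has a preadjoint $\gamma_*\colon\LL^1(\whG)\to\LL^1(\whH)$ characterised by $\langle\gamma_*(\omega),y\rangle=\langle\omega,\gamma(y)\rangle$. Assuming $\GG$ has AP, Theorem~\ref{thm6} provides a net $a_i=\lambda_{\whG}(\omega_i)\in\A(\GG)$, with $\omega_i\in\LL^1(\whG)$, such that $\Theta^l(a_i)\to\id$ in the stable point-weak$^*$ topology of $\CB^\sigma(\Linfd)$. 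I would then set $b_i=\lambda_{\whH}(\gamma_*(\omega_i))\in\A(\HH)$ and aim to show that $\Theta^l(b_i)\to\id$ in the stable point-weak$^*$ topology of $\CB^\sigma(\LL^\infty(\whH))$; by Theorem~\ref{thm6} applied to $\HH$ this yields AP for $\HH$. Note that $b_i$ genuinely lies in $\A(\HH)$ since $\gamma_*(\omega_i)\in\LL^1(\whH)$.

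The crux is the intertwining identity $\gamma\circ\Theta^l(b_i)=\Theta^l(a_i)\circ\gamma$. To verify it, recall that for Fourier algebra elements the associated multiplier has the explicit form $\Theta^l(\lambda_{\whG}(\omega))=(\omega\otimes\id)\Delta_{\whG}$, and likewise $\Theta^l(b_i)=(\gamma_*(\omega_i)\otimes\id)\Delta_{\whH}$. For $z\in\LL^\infty(\whH)$, normality of $\gamma$ lets one pull it through a slice, so that $\gamma\bigl((\gamma_*(\omega_i)\otimes\id)\Delta_{\whH}(z)\bigr)=(\gamma_*(\omega_i)\otimes\id)\bigl((\id\otimes\gamma)\Delta_{\whH}(z)\bigr)$. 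Writing $\gamma_*(\omega_i)=\omega_i\circ\gamma$ and invoking the defining relation $\Delta_{\whG}\circ\gamma=(\gamma\otimes\gamma)\circ\Delta_{\whH}$, this equals $(\omega_i\otimes\id)(\gamma\otimes\gamma)\Delta_{\whH}(z)=(\omega_i\otimes\id)\Delta_{\whG}(\gamma(z))=\Theta^l(a_i)(\gamma(z))$. Hence $\gamma\circ\Theta^l(b_i)=\Theta^l(a_i)\circ\gamma$; in particular $\Theta^l(a_i)$ preserves the von Neumann subalgebra $\gamma(\LL^\infty(\whH))$, which is what makes the transfer possible.

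Finally I would transfer the convergence. As $\gamma$ is a normal, unital, injective $\star$-homomorphism, it is an isomorphism onto the weak$^*$-closed subalgebra $\gamma(\LL^\infty(\whH))$, hence a weak$^*$-homeomorphism onto its image, and the same holds for $\gamma\otimes\id$ on $\LL^\infty(\whH)\bar\otimes\B(\msf H)$ for any separable Hilbert space $\msf H$. Fixing such an $\msf H$ and $x\in\LL^\infty(\whH)\bar\otimes\B(\msf H)$, tensoring the previous identity with $\id_{\B(\msf H)}$ gives $(\gamma\otimes\id)(\Theta^l(b_i)\otimes\id)(x)=(\Theta^l(a_i)\otimes\id)\bigl((\gamma\otimes\id)(x)\bigr)$. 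Since $\GG$ has AP, the right-hand side converges weak$^*$ to $(\gamma\otimes\id)(x)$; because $\gamma\otimes\id$ is a weak$^*$-homeomorphism onto its range and all terms involved lie in $\LL^\infty(\whH)\bar\otimes\B(\msf H)$, it follows that $(\Theta^l(b_i)\otimes\id)(x)\to x$ weak$^*$. Thus $\Theta^l(b_i)\to\id$ in the stable point-weak$^*$ topology, and $\HH$ has AP.

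The entire substance of the argument sits in the intertwining identity of the second paragraph, which reduces the statement to the mere compatibility of $\gamma$ with the coproducts. I expect the only genuinely delicate point to be confirming that it is the \emph{stable} point-weak$^*$ convergence (rather than pointwise convergence of the $\Theta^l$ on single elements) that survives the transfer; this is precisely why it is convenient to phrase everything through $\gamma\otimes\id$ and appeal to Theorem~\ref{thm6} at both ends, rather than trying to restrict the multipliers $a_i\in\M^l_{cb}(\A(\GG))\subseteq\M(\mrm{C}_0(\GG))$ directly on the function side.
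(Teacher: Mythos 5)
Your proposal is correct and follows essentially the same route as the paper: both obtain the net from Theorem~\ref{thm6}, transport it via $b_i=\lambda_{\whH}(\gamma_*(\omega_i))\in\A(\HH)$, and exploit that $\gamma$ intertwines the coproducts. The only divergence is the mechanism for transferring weak$^*$ convergence — the paper lifts functionals through the complete quotient map $\gamma_*\otimes\id$ (via \cite[Proposition 7.1.7]{EffrosRuan}), whereas you push elements forward through $\gamma\otimes\id$ and invoke that a normal unital injective $\star$-homomorphism is a weak$^*$-homeomorphism onto its weak$^*$-closed image — and these are dual formulations of the same underlying fact, namely that normal functionals on a von Neumann subalgebra extend to the ambient algebra.
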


\begin{proof}
Since $\HH$ is a closed quantum subgroup of $\GG$ we obtain maps $\pi,\widehat{\pi},\gamma$ as discussed above. If $\GG$ has AP, then according to Theorem \ref{thm6} we can choose a net $(a_i)_{i\in I}$ in $\A(\GG)\subseteq \mrm{C}_0(\GG)$ such that $(\Theta^l(a_i))_{i\in I}$ converges to the identity in the stable point-weak$^*$-topology of $\CB^\sigma(\Linfd)$.  For each $i\in I$ let $\omega_i\in \Ljd$ be such that $a_i=\lambda_{\whG}(\omega_i)$, and define $b_i=\lambda_{\whH}(\gamma_*(\omega_i))$. As $\gamma_*(\omega_i) \in \LL^1(\whH)$, we see that $b_i\in \A(\HH)$.
Let $\msf{H}$ be a separable Hilbert space and take $x\in \mrm{C}_0(\whH)\otimes \mc{K}(\msf{H}), \omega\in \LL^1(\whH)\wh{\otimes} \B(\msf{H})_*$. We have
\[
\la b_i,\Omega_{x,\omega}\ra =
\la (\Theta^l(b_i)\otimes \id)x,\omega \ra =
\la \bigl((\gamma_*(\omega_i)\otimes\id)\Delta_{\whH}\otimes \id\bigr)x,\omega\ra .
\]
Since $\gamma$ is a complete isometry, $\gamma_*$ is a complete quotient map (\cite[Corollary 4.1.9]{EffrosRuan}). By \cite[Proposition 7.1.7]{EffrosRuan}, $\gamma_*\otimes \id\colon \Ljd\wh{\otimes}\B(\msf{H})_*\rightarrow \LL^1(\whH)\wh{\otimes}\B(\msf{H})_*$ is also a complete quotient map, hence we can find $\omega'\in \LL^1(\whG)\wh{\otimes}\B(\msf{H})_*$ such that $\omega=(\gamma_*\otimes \id)\omega'$.

Since $\gamma$ intertwines the coproducts,
\begin{align*}
\la b_i,\Omega_{x,\omega}\ra
&= \la (\omega_i\otimes\id\otimes\id) 
\bigl((\gamma\otimes \gamma)\Delta_{\whH}\otimes \id\bigr)x,\omega'\ra \\
&= \la ((\omega_i\otimes\id)\Delta_{\whG} \otimes\id) (\gamma\otimes\id) x, \omega' \ra
= \la 
(\Theta^l(a_i)\otimes\id) (\gamma\otimes\id)x , \omega'\ra.
\end{align*}
Using stable point-weak$^*$-convergence we obtain
\[
\la b_i,\Omega_{x,\omega}\ra \xrightarrow[i\in I]{}
\la (\gamma\otimes \id)x,\omega'\ra =\la x, \omega\ra =
\la \I,\Omega_{x,\omega}\ra.
\]
Hence $b_i\xrightarrow[i\in I]{w^*}\I$ showing that $(b_i)_{i\in I}$ witnesses that $\HH$ has the AP.
\end{proof}

This argument also yields the analogous claim for weak amenability, which we record for completeness.

\begin{proposition}
Let $\HH,\GG$ be locally compact quantum groups and assume that $\HH$ is a closed quantum subgroup of $\GG$ in the sense of Vaes. If $\GG$ is weakly amenable, then so is $\HH$, and $\Lambda_{cb}(\HH) \leq \Lambda_{cb}(\GG)$.
\end{proposition}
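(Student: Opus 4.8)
The plan is to transcribe the proof of Theorem~\ref{thm2}, replacing the AP-net by a left approximate identity and keeping track of completely bounded norms throughout. Since $\GG$ is weakly amenable, I would first fix a left approximate identity $(e_i)_{i\in I}$ for $\A(\GG)$ with $\|e_i\|_{cb}\le\Lambda_{cb}(\GG)$ for each $i$ (such a net exists by the definition of the Cowling--Haagerup constant). Writing $e_i=\lambda_{\whG}(\omega_i)$ with $\omega_i\in\LL^1(\whG)$, I would set $f_i=\lambda_{\whH}(\gamma_*(\omega_i))\in\A(\HH)$, exactly as in Theorem~\ref{thm2}. It then suffices to prove that $(f_i)_{i\in I}$ is a left approximate identity for $\A(\HH)$ which is bounded by $\Lambda_{cb}(\GG)$ in $\M^l_{cb}(\A(\HH))$.

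The completely bounded bound is the step I expect to be the crux. The idea is to promote the coproduct intertwining $\Delta_{\whG}\circ\gamma=(\gamma\otimes\gamma)\circ\Delta_{\whH}$ to an intertwining of the multiplier maps. Starting from $\Theta^l(f_i)=(\gamma_*(\omega_i)\otimes\id)\Delta_{\whH}$ (computed as in Theorem~\ref{thm2}) and applying the normal $\star$-homomorphism $\gamma$ in the surviving leg, I would verify
\[
\gamma\bigl(\Theta^l(f_i)(x)\bigr)=(\omega_i\otimes\id)(\gamma\otimes\gamma)\Delta_{\whH}(x)=(\omega_i\otimes\id)\Delta_{\whG}(\gamma(x))=\Theta^l(e_i)\bigl(\gamma(x)\bigr)
\]
for $x\in\LL^\infty(\whH)$, that is, $\gamma\circ\Theta^l(f_i)=\Theta^l(e_i)\circ\gamma$. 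Since $\gamma$ is a normal unital injective $\star$-homomorphism it is a complete isometry onto its image and completely contractive, so this identity gives
\[
\|f_i\|_{cb}=\|\Theta^l(f_i)\|_{cb}=\|\gamma\circ\Theta^l(f_i)\|_{cb}=\|\Theta^l(e_i)\circ\gamma\|_{cb}\le\|\Theta^l(e_i)\|_{cb}=\|e_i\|_{cb}\le\Lambda_{cb}(\GG).
\]

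It remains to check the approximate identity property, which is essentially formal. Dualising the coproduct intertwining shows that $\gamma_*\colon\LL^1(\whG)\to\LL^1(\whH)$ is an algebra homomorphism for the convolution products, $\gamma_*(\omega\star\nu)=\gamma_*(\omega)\star\gamma_*(\nu)$, and (as recorded in Theorem~\ref{thm2}) $\gamma_*$ is a complete quotient map, hence surjective. Given $b=\lambda_{\whH}(\mu)\in\A(\HH)$, I would choose $\nu\in\LL^1(\whG)$ with $\gamma_*(\nu)=\mu$; then $\omega_i\star\nu\to\nu$ in $\LL^1(\whG)$ because $(e_i)$ is a left approximate identity for $\A(\GG)$, and continuity and multiplicativity of $\gamma_*$ give $f_i b=\lambda_{\whH}(\gamma_*(\omega_i\star\nu))\to\lambda_{\whH}(\gamma_*(\nu))=b$. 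Hence $(f_i)_{i\in I}$ is a left approximate identity for $\A(\HH)$ of bound at most $\Lambda_{cb}(\GG)$, proving that $\HH$ is weakly amenable with $\Lambda_{cb}(\HH)\le\Lambda_{cb}(\GG)$. The only genuinely new point beyond the AP argument is the norm estimate, and there the essential observation is that the intertwining identity together with the complete isometry property of $\gamma$ forces the multiplier norm not to increase.
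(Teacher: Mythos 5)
Your proof is correct, and it reaches the conclusion by a genuinely different route than the paper. The paper stays inside its AP framework: by Proposition~\ref{prop7}, weak amenability of $\GG$ gives a net $(a_i)_{i\in I}$ in $\A(\GG)$ with $\|a_i\|_{cb}\le\Lambda_{cb}(\GG)$ such that $(\Theta^l(a_i))_{i\in I}$ converges to the identity in the stable point-weak$^*$-topology; the pushed-forward net $b_i=\lambda_{\whH}(\gamma_*(\omega_i))$ then converges by the proof of Theorem~\ref{thm2}, the bound $\|b_i\|_{cb}\le\|a_i\|_{cb}$ is obtained by lifting functionals (since $\gamma_*\otimes\id$ is a complete quotient map, each $\omega$ lifts to $\omega'$ with $\|\omega'\|\le\|\omega\|+\eps$) and pairing, and Proposition~\ref{prop7} is invoked once more to convert the resulting cb-bounded net back into weak amenability of $\HH$. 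You bypass Proposition~\ref{prop7} altogether and argue directly with left approximate identities. Your key step, the intertwining $\gamma\circ\Theta^l(f_i)=\Theta^l(e_i)\circ\gamma$ combined with the fact that the normal injective unital $\star$-homomorphism $\gamma$ is a complete isometry, is in effect the predual-free adjoint form of the paper's lifting computation, and is cleaner (no $\eps$-lifting is needed); the price is that you must verify the approximate identity property by hand, which you do correctly using multiplicativity of $\gamma_*$ (from the coproduct intertwining) and its surjectivity (it is a complete quotient map, as recorded in the proof of Theorem~\ref{thm2}). Your route is more elementary and self-contained, avoiding the predual machinery ($Q^l(\A(\GG))$, Propositions~\ref{prop15}--\ref{prop17}) on which Proposition~\ref{prop7} and Theorem~\ref{thm2} rest; the paper's route is shorter given that this machinery is already established, and it keeps the argument uniform with the AP permanence result it follows.
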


\begin{proof}
We follow the same proof strategy, using also Proposition~\ref{prop7}.  So, let $(a_i)_{i\in I}$ be a net in $\A(\GG)$ with $\|a_i\|_{cb} \leq \Lambda_{cb}(\GG)$ for each $i$, such that $(\Theta^l(a_i))_{i\in I}$ converges to the identity in the stable point-weak$^*$-topology of $\CB^\sigma(\Linfd)$.  Again let $a_i=\lambda_{\whG}(\omega_i)$, and define $b_i=\lambda_{\whH}(\gamma_*(\omega_i))$.  By the previous proof, and using Proposition~\ref{prop7}, it suffices to show that $\|b_i\|_{cb}  \leq \Lambda_{cb}(\GG)$ for each $i\in I$.

Let $\msf H$ be a Hilbert space, and let $x \in \LL^\infty(\wh\HH) \bar\otimes \B(\msf H)$ and $\omega\in \LL^1(\wh\HH) \wh\otimes \B(\msf H)_*$.  Given $\eps>0$, we can find $\omega'\in \LL^1(\whG)\wh{\otimes}\B(\msf{H})_*$ with $\omega=(\gamma_*\otimes \id)\omega'$ and with $\|\omega'\| \le \|\omega\|+\eps$.  Then, as before,
\begin{align*}
\la (\Theta^l(b_i) \otimes \id)x, \omega \ra
&= \la ((\gamma_*(\omega_i)\otimes\id)\Delta_{\wh\HH}\otimes\id)x, \omega \ra
= \la ((\omega_i\otimes\id)\Delta_{\wh\GG}\otimes\id)(\gamma\otimes\id)x, \omega' \ra \\
&= \la (\Theta^l(a_i)\otimes\id)(\gamma\otimes\id)x, \omega' \ra.
\end{align*}
It follows that
\[ \big| \la (\Theta^l(b_i) \otimes \id)x, \omega \ra \big|
\leq \|a_i\|_{cb} \|x\| \|\omega'\|. \]
As $\eps>0$ was arbitrary, and taking the supremum over functionals $\omega$ with $\|\omega\|=1$, we obtain that $\|(\Theta^l(b_i) \otimes \id)x\| \leq \|a_i\|_{cb} \|x\|$.  As $x, \msf H$ were arbitrary, this shows that $\|b_i\|_{cb} \leq \|a_i\|_{cb} \leq \Lambda_{cb}(\GG)$, as required.
\end{proof}

\subsection{Direct limits} 

In this section we show that AP is preserved by taking direct limits of discrete quantum groups obtained from directed systems with injective connecting maps. 
The corresponding fact for classical groups is certainly known, but we were not able to locate a reference. 

Let us first recall some facts about quantum subgroups of discrete quantum groups, using the notation and terminology from Section~\ref{section discrete}. 
In the discrete setting there is no difference between closed quantum subgroups in the sense of Vaes, closed quantum subgroups in the sense of 
Woronowicz \cite[Theorem 6.2]{DKSS_ClosedSub}, and open quantum subgroups in the sense of \cite{KKS_open}. 
We will therefore simply speak of quantum subgroups of discrete quantum groups in the sequel. 

Let $ \GGamma, \LLambda $ be discrete quantum groups and assume that $ \LLambda $ is a quantum subgroup of $\GGamma $. 
Then one can identify $\Irr(\wh\LLambda)$ with a subset of $\Irr(\wh\GGamma)$, and one 
obtains a corresponding identification of $\LL^2(\wh\LLambda)$ with a subspace of $\LL^2(\wh\GGamma)$. 
Let $p\in \ell^{\infty}(\GGamma)\subseteq \B(\LL^2(\wh\GGamma))$ be the projection onto $\LL^2(\wh\LLambda)$. Then $ p $ is a group-like 
projection (i.e.~$p$ is a central projection in $\ell^{\infty}(\bbGamma)$ satisfying $\Delta_{\GGamma}(p)(\I\otimes p)=p\otimes p$, see \cite[Definition 4.1]{KKS_open}) and the strong quantum homomorphism $\pi\colon \mrm{c}_0(\GGamma)\rightarrow \mrm{c}_{0}(\LLambda)$ associated with the inclusion of $ \LLambda$ into $\GGamma$
is given by $ \pi(f) = fp $. Dually, we have an injective, normal, 
unital $\star$-homomorphism $\iota \colon \LL^{\infty}(\wh\LLambda)\rightarrow \LL^{\infty}(\wh\GGamma)$ which respects the coproducts. 
The map $\iota$ restricts to injective $\star$-homomorphisms $\mrm{C}(\wh\LLambda)\rightarrow\mrm{C}(\wh\GGamma)$ and $\Pol(\wh\LLambda)\rightarrow\Pol(\wh\GGamma)$. 

The following fact is well-known, compare for instance \cite[Section 2]{Vergnioux_kamenability}.  Using $\iota$ we can view $\LL^{\infty}(\wh\LLambda)$ as a subalgebra of $\LL^{\infty}(\wh\GGamma)$, and so in the following, make sense of $\EE$ being a conditional expectation in the usual sense of a contractive projection onto a subalgebra.

\begin{lemma}\label{lem:cond_exp_discrete_subgp}
The formula $ \LL^{\infty}(\wh\GGamma) \ni x \mapsto p x p \in \B(p \LL^2(\wh\GGamma)) = \B(\LL^2(\wh\LLambda)) $ defines a normal conditional 
expectation $\EE\colon \LL^{\infty}(\wh\GGamma)\rightarrow \LL^{\infty}(\wh\LLambda)$ 
satisfying $\EE(U^{\alpha}_{i,j})=0$ for $\alpha\in \Irr(\wh\GGamma)\setminus \Irr(\wh\LLambda),\, 1\le i,j\le \dim(\alpha)$. 
Furthermore, $\EE$ restricts to a conditional expectation $\mrm{C}(\wh\GGamma)\rightarrow \mrm{C}(\wh\LLambda)$.
\end{lemma}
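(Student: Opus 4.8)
The plan is to realise $p$ concretely as the orthogonal projection of $\LL^2(\wh\GGamma)$ onto the closed span of the GNS vectors $\{\Lambda_h(U^\alpha_{i,j}) : \alpha\in\Irr(\wh\LLambda),\ 1\le i,j\le\dim(\alpha)\}$, and then to exploit the fact that $\Pol(\wh\LLambda)$ is a Hopf $\star$-subalgebra of $\Pol(\wh\GGamma)$, so that $\Irr(\wh\LLambda)$ is closed under conjugation and under the fusion rules. Throughout I would use the Peter--Weyl orthogonality relations for the compact quantum group $\wh\GGamma$ (see \cite{NeshveyevTuset}): the vectors $\Lambda_h(U^\alpha_{i,j})$ are pairwise orthogonal for distinct $\alpha\in\Irr(\wh\GGamma)$ and total in $\LL^2(\wh\GGamma)$, together with the compatibility $h\circ\iota = h_{\wh\LLambda}$ of Haar states, which is exactly what identifies $p\LL^2(\wh\GGamma)=\LL^2(\wh\LLambda)$ with the indicated closed span.

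First I would record that $\iota(\LL^\infty(\wh\LLambda))$ commutes with $p$. For $\alpha,\beta\in\Irr(\wh\LLambda)$ the vector $\iota(U^\beta_{k,l})\Lambda_h(U^\alpha_{i,j})=\Lambda_h(U^\beta_{k,l}U^\alpha_{i,j})$ is the $\Lambda_h$-image of a matrix coefficient of $\beta\otimes\alpha$, all of whose irreducible constituents lie in $\Irr(\wh\LLambda)$; hence $\iota(\Pol(\wh\LLambda))$ leaves $\LL^2(\wh\LLambda)$ invariant, and by weak$^*$-density and normality so does $\iota(\LL^\infty(\wh\LLambda))$. As this algebra is $\star$-closed, both it and its adjoints preserve $\LL^2(\wh\LLambda)$, so $[p,\iota(a)]=0$ for all $a\in\LL^\infty(\wh\LLambda)$.

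The key step is the vanishing $\EE(U^\alpha_{i,j})=pU^\alpha_{i,j}p=0$ for $\alpha\in\Irr(\wh\GGamma)\setminus\Irr(\wh\LLambda)$. For $\beta\in\Irr(\wh\LLambda)$ the vector $U^\alpha_{i,j}\Lambda_h(U^\beta_{m,n})=\Lambda_h(U^\alpha_{i,j}U^\beta_{m,n})$ is a $\Lambda_h$-image of a matrix coefficient of $\alpha\otimes\beta$. If some irreducible constituent $\gamma$ of $\alpha\otimes\beta$ were in $\Irr(\wh\LLambda)$, then $\alpha$ would be a constituent of $\gamma\otimes\overline{\beta}$, whose constituents all lie in $\Irr(\wh\LLambda)$ (closure under conjugation and fusion), forcing $\alpha\in\Irr(\wh\LLambda)$, a contradiction. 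Hence every constituent of $\alpha\otimes\beta$ avoids $\Irr(\wh\LLambda)$, so by orthogonality $U^\alpha_{i,j}\Lambda_h(U^\beta_{m,n})\perp\LL^2(\wh\LLambda)$, i.e. $pU^\alpha_{i,j}\Lambda_h(U^\beta_{m,n})=0$; density of $\Lambda_h(\Pol(\wh\LLambda))$ in $p\LL^2(\wh\GGamma)$ then yields $pU^\alpha_{i,j}p=0$. For $\alpha\in\Irr(\wh\LLambda)$, commutation gives $\EE(U^\alpha_{i,j})=U^\alpha_{i,j}p$, which is the element $U^\alpha_{i,j}\in\LL^\infty(\wh\LLambda)$ acting on $p\LL^2(\wh\GGamma)=\LL^2(\wh\LLambda)$. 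Thus $\EE(\Pol(\wh\GGamma))\subseteq\Pol(\wh\LLambda)$ and $\EE$ fixes $\LL^\infty(\wh\LLambda)$ pointwise.

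Finally I would assemble the pieces. The map $\EE(x)=pxp$ is normal (compression by a fixed projection is weak$^*$-continuous), unital (as $p$ is the unit of $\B(\LL^2(\wh\LLambda))$), completely positive and contractive. Normality together with $\EE(\Pol(\wh\GGamma))\subseteq\LL^\infty(\wh\LLambda)$ and weak$^*$-density of $\Pol(\wh\GGamma)$ forces $\EE(\LL^\infty(\wh\GGamma))\subseteq\LL^\infty(\wh\LLambda)$; with $\EE|_{\LL^\infty(\wh\LLambda)}=\id$ this makes $\EE$ an idempotent onto $\LL^\infty(\wh\LLambda)$. The bimodule property is immediate from $[p,\iota(a)]=0$: for $a,b\in\LL^\infty(\wh\LLambda)$ one gets $\EE(axb)=p\,axb\,p=a\,pxp\,b=a\,\EE(x)\,b$, so $\EE$ is a normal conditional expectation, and the vanishing on the $U^\alpha_{i,j}$ with $\alpha\notin\Irr(\wh\LLambda)$ is already proved. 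For the $\mrm{C}^*$-level claim, norm-contractivity and $\EE(\Pol(\wh\GGamma))\subseteq\Pol(\wh\LLambda)$ give, on passing to norm closures, $\EE(\mrm{C}(\wh\GGamma))\subseteq\overline{\Pol(\wh\LLambda)}=\mrm{C}(\wh\LLambda)$. The genuine content, and the only nonroutine point, is the fusion-rule argument for the vanishing; the remaining verifications (normality, positivity, the bimodule identity, and the $\mrm{C}^*$-restriction) are all formal once that is in hand.
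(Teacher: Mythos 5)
The paper does not actually prove this lemma: it is stated as well-known, with a pointer to \cite[Section 2]{Vergnioux_kamenability}, so there is no in-paper argument to compare yours against. Your proof is correct, and it is essentially the standard argument that such citations refer to. The identification of $p$ with the orthogonal projection onto $\overline{\lin}\{\Lambda_h(U^\alpha_{i,j}) : \alpha\in\Irr(\wh\LLambda)\}$ via Peter--Weyl and $h\circ\iota=h_{\wh\LLambda}$, the commutation $[p,\iota(a)]=0$ from fusion-closure of $\Irr(\wh\LLambda)$, and the Frobenius-reciprocity argument for the key vanishing $pU^\alpha_{i,j}p=0$ when $\alpha\notin\Irr(\wh\LLambda)$ are all sound, and the remaining verifications (normality of compression by a fixed projection, weak$^*$-density of $\Pol(\wh\GGamma)$, the bimodule identity from commutation, passage to norm closures for the $\mathrm{C}^*$-statement) are indeed routine. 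Two points are worth making explicit if this were written out in full: (i) the identification $\Irr(\wh\LLambda)\subseteq\Irr(\wh\GGamma)$ preserves intertwiner spaces, because the intertwining relations for corepresentations with coefficients in $\Pol(\wh\LLambda)$ are the same equations whether read in $\Pol(\wh\LLambda)$ or in $\Pol(\wh\GGamma)$; this is what lets you pass freely between ``constituents as $\wh\LLambda$-representations'' and ``constituents as $\wh\GGamma$-representations'' in the fusion argument; (ii) the equality $h\circ\iota=h_{\wh\LLambda}$ follows from invariance of $h\circ\iota$ on $\Pol(\wh\LLambda)$ together with uniqueness of the Haar state, and it is this equality that makes the embedding $\LL^2(\wh\LLambda)\cong p\LL^2(\wh\GGamma)$ intertwine the left actions of $\Pol(\wh\LLambda)$, which you use when identifying $\EE(U^\alpha_{i,j})=U^\alpha_{i,j}p$ with the standard-form action of $U^\alpha_{i,j}$ for $\alpha\in\Irr(\wh\LLambda)$.
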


We shall be interested in directed systems of discrete quantum groups in the following sense. 

\begin{definition}\label{def2}
Let $ I $ be a directed set. A \emph{directed system of discrete quantum groups with injective connecting maps} is a family of discrete quantum groups $ (\GGamma_i)_{i \in I} $ together 
with injective unital normal $\star$-homomorphisms 
\[
\iota_{j,i}\colon \LL^{\infty}(\wh{\GGamma}_i)\rightarrow\LL^{\infty}(\wh{\GGamma}_j)\qquad(i,j\in I\colon i\le j),
\]
compatible with coproducts, such that 
\begin{itemize}
\item $ \iota_{i,i} = \id $ for $i\in I$,
\item $ \iota_{k,j} \iota_{j,i} = \iota_{k,i} $ for all $i,j,k\in I$ satisfying $ i \leq j \leq k $. 
\end{itemize}
\end{definition}

If $ (\GGamma_i)_{i \in I} $ is a directed system of discrete quantum groups with injective connecting maps then $\GGamma_i$ is a quantum subgroup of $\GGamma_j$ for $i\leq j$, and we have injective maps $\Pol(\wh\GGamma_i)\rightarrow\Pol(\wh\GGamma_j)$.
The algebraic direct limit $ \varinjlim_{i\in I} \Pol(\wh{\GGamma}_i) $ becomes naturally a unital Hopf $ * $-algebra, equipped with an invariant faithful state 
induced by the Haar integrals of $ \wh{\GGamma}_i $. 
We therefore have $ \varinjlim_{i \in I} \Pol(\wh{\GGamma}_i) = \Pol(\wh{\GGamma}) $ for a uniquely determined discrete quantum group $ \GGamma $, see for example \cite[Chapter 11, Theorem 27]{KS}.  We denote $ \GGamma = \varinjlim_{i\in I} \GGamma_i $ and call this the \emph{direct limit} of the directed system $ (\GGamma_i)_{i \in I} $. 

\begin{proposition}\label{prop8}
Let $ (\GGamma_i)_{i \in I} $ be a directed system of discrete quantum groups with injective connecting maps and let $ \GGamma $ be its associated direct limit. 
If $\GGamma_i $ has (central) AP for all $ i \in I $, then $ \GGamma $ has (central) AP. 
\end{proposition}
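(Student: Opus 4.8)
The plan is to witness AP for $\GGamma$ by transporting, through extension by zero, the finitely supported multipliers that witness AP for the $\GGamma_i$. The decisive feature of the discrete setting is that each $\GGamma_i$ is an \emph{open} quantum subgroup of $\GGamma$: by Lemma~\ref{lem:cond_exp_discrete_subgp} there is a normal conditional expectation $\EE_i\colon \LL^{\infty}(\wh\GGamma)\rightarrow \LL^{\infty}(\wh\GGamma_i)$ with $\EE_i(U^{\alpha}_{m,n})=U^{\alpha}_{m,n}$ for $\alpha\in \Irr(\wh\GGamma_i)$ and $\EE_i(U^{\alpha}_{m,n})=0$ otherwise, and it restricts to $\mrm{C}(\wh\GGamma)\rightarrow \mrm{C}(\wh\GGamma_i)$. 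Write $\iota_i\colon \LL^{\infty}(\wh\GGamma_i)\rightarrow \LL^{\infty}(\wh\GGamma)$ for the coproduct-intertwining inclusion and $\iota_{i,*}\colon \LL^{1}(\wh\GGamma)\rightarrow \LL^{1}(\wh\GGamma_i)$ for its normal preadjoint.

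\textbf{Push-forward.} First I would show that each $a\in \mrm{c}_{00}(\GGamma_i)$ extends to $\tilde a\in \mrm{c}_{00}(\GGamma)$ with $\|\tilde a\|_{cb}\le \|a\|_{cb}$. Set $\tilde\Theta=\iota_i\circ \Theta^l_{\GGamma_i}(a)\circ \EE_i\in \CB^{\sigma}(\LL^{\infty}(\wh\GGamma))$; as $\iota_i$ and $\EE_i$ are completely contractive, $\|\tilde\Theta\|_{cb}\le \|a\|_{cb}$. Using Lemma~\ref{lemma4} and $\wh\Delta(U^{\alpha}_{m,n})=\sum_k U^{\alpha}_{m,k}\otimes U^{\alpha}_{k,n}$, one verifies on matrix coefficients (hence, by normality, everywhere) that $\wh\Delta\circ \tilde\Theta=(\tilde\Theta\otimes \id)\wh\Delta$, so that $\tilde\Theta$ is a normal left $\LL^{1}(\wh\GGamma)$-module map. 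By the correspondence between left centralisers and CB multipliers, there is a unique $\tilde a\in \M^l_{cb}(\A(\GGamma))$ with $\Theta^l_{\GGamma}(\tilde a)=\tilde\Theta$ and $\|\tilde a\|_{cb}\le \|a\|_{cb}$, and Lemma~\ref{lemma4} identifies $\tilde a$ with the extension of $a$ by zero; in particular $\tilde a\in \mrm{c}_{00}(\GGamma)$. If furthermore $a\in \mc{Z}(\ell^{\infty}(\GGamma_i))$, then every block of $\tilde a$ is scalar, so $\tilde a\in \mrm{c}_{00}(\GGamma)\cap \mc{Z}(\ell^{\infty}(\GGamma))$.

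\textbf{Iterated convergence.} For each $i$ fix, using Proposition~\ref{prop1}, a net $(a^{(i)}_{\lambda})_{\lambda\in \Lambda_i}$ in $\mrm{c}_{00}(\GGamma_i)$ (in $\mrm{c}_{00}(\GGamma_i)\cap \mc{Z}(\ell^{\infty}(\GGamma_i))$ in the central case) converging weak$^*$ to $\I$ in $\M^l_{cb}(\A(\GGamma_i))$. For a functional $\Omega_{x,\omega}\in Q^l(\A(\GGamma))$ as in Proposition~\ref{prop15} (with $\msf{H}$ separable), the factorisation $\Theta^l_{\GGamma}(\tilde a^{(i)}_{\lambda})=\iota_i\circ \Theta^l_{\GGamma_i}(a^{(i)}_{\lambda})\circ \EE_i$ yields
\[
\langle \tilde a^{(i)}_{\lambda},\Omega_{x,\omega}\rangle
=\bigl\langle a^{(i)}_{\lambda},\,\Omega^{\GGamma_i}_{(\EE_i\otimes \id)x,\,(\iota_{i,*}\otimes \id)\omega}\bigr\rangle
\xrightarrow[\lambda]{}\bigl\langle (\iota_i\EE_i\otimes \id)x,\omega\bigr\rangle ,
\]
the limit being the value of the pairing at $\I\in \M^l_{cb}(\A(\GGamma_i))$. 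Since $\Irr(\wh\GGamma)=\bigcup_i \Irr(\wh\GGamma_i)$ is an increasing union and $\|\iota_i\EE_i\|_{cb}=1$, the net $(\iota_i\EE_i)_i$ converges to $\id$ in the stable point-weak$^*$-topology, so $\langle (\iota_i\EE_i\otimes \id)x,\omega\rangle\xrightarrow[i]{}\langle x,\omega\rangle=\langle \I,\Omega_{x,\omega}\rangle$.

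\textbf{Conclusion.} Given $\Omega_{x_1,\omega_1},\dots,\Omega_{x_m,\omega_m}\in Q^l(\A(\GGamma))$ and $\eps>0$, the outer limit provides $i$ with $|\langle (\iota_i\EE_i\otimes \id)x_k-x_k,\omega_k\rangle|<\eps/2$ for all $k$, and then the inner limit provides $\lambda$ with $|\langle \tilde a^{(i)}_{\lambda},\Omega_{x_k,\omega_k}\rangle-\langle (\iota_i\EE_i\otimes \id)x_k,\omega_k\rangle|<\eps/2$. Hence $\I$ lies in the weak$^*$-closure of $\{\tilde a^{(i)}_{\lambda}\}\subseteq \mrm{c}_{00}(\GGamma)$ (contained in $\mc{Z}(\ell^{\infty}(\GGamma))$ in the central case), so there is a net in $\mrm{c}_{00}(\GGamma)$ converging weak$^*$ to $\I$, and $\GGamma$ has (central) AP. I expect the push-forward step to be the crux: extension by zero yields a genuine CB multiplier precisely because of the conditional expectations $\EE_i$ afforded by openness of discrete subgroups, a mechanism with no analogue for general closed subgroups.
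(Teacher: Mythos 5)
Your proposal is correct and follows essentially the same route as the paper's proof: extension by zero $a\mapsto\tilde a$ with $\Theta^l(\tilde a)=\iota_i\circ\Theta^l(a)\circ\EE_i$ (using the conditional expectations of Lemma~\ref{lem:cond_exp_discrete_subgp}), pull-back of the functionals $\Omega_{x,\omega}$ to $Q^l(\A(\GGamma_i))$, and the observation that the unit-preserving maps $\iota_i\EE_i=\Theta^l(p_i)$ converge to the identity against $Q^l(\A(\GGamma))$ because $\Irr(\wh\GGamma)=\bigcup_i\Irr(\wh\GGamma_i)$ and the maps are uniformly bounded. The only cosmetic difference is packaging: the paper phrases the first two steps as weak$^*$-weak$^*$-continuity of the induced map $\rho_i\colon\M^l_{cb}(\A(\GGamma_i))\rightarrow\M^l_{cb}(\A(\GGamma))$ and then takes iterated weak$^*$-closures of $\{p_i=\rho_i(\I)\}$, whereas you unwind this into an explicit $\eps/2$ diagonal argument on finitely many functionals.
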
 

\begin{proof} 
By construction each $\GGamma_i$ is a quantum subgroup of $\GGamma$.  Consequently we obtain injective normal $\star$-homomorphisms $ \iota_i: \LL^\infty(\wh{\GGamma}_i) \rightarrow \LL^\infty(\wh{\GGamma}) $, and normal conditional expectations $\EE_i\colon \LL^{\infty}(\wh\GGamma)\rightarrow \LL^{\infty}(\wh\GGamma_i)$ for all $i\in I$.

Identifying $\Irr(\wh{\GGamma}_i)$ with a subset of $\Irr(\wh\GGamma)$ gives us the extension by zero $*$-homomorphism map $\rho_i\colon\ell^{\infty}(\GGamma_i)\rightarrow\ell^{\infty}(\GGamma)$.  For $\omega\in \LL^1(\wh\GGamma)$ we have $\omega\circ\iota_i \in \LL^1(\wh\GGamma_i)$ and so $\lambda_{\wh\GGamma_i}(\omega\circ\iota_i) \in \ell^\infty(\GGamma_i)$.  We see that $\rho_i\lambda_{\wh\GGamma_i}(\omega\circ\iota_i)$ agrees with $\lambda_{\wh\GGamma}(\omega) \in \ell^\infty(\GGamma)$ restricted to $\ell^\infty(\GGamma_i)$ and set to zero in the remaining matrix blocks.  Similarly, for $\omega\in \LL^1(\wh\GGamma_i)$, by normality, $\omega\circ \EE_i \in \LL^1(\wh\GGamma)$, and as $\EE_i(U^\alpha_{i,j})=0$ for $\alpha\not\in\Irr(\wh\GGamma_i)$, see Lemma~\ref{lem:cond_exp_discrete_subgp}, it follows that $\lambda_{\wh\GGamma}(\omega\circ \EE_i) = \rho_i \lambda_{\wh\GGamma_i}(\omega)$.

We claim that $ \rho_i $ restricts to a contraction $ \M_{cb}^l(\A(\GGamma_i))\rightarrow \M_{cb}^l(\A(\GGamma)) $. Indeed, take $a\in \M^l_{cb}(\A(\GGamma_i))$ and $\omega\in \LL^1(\wh\GGamma)$.  Using the observations from the previous paragraph,
\begin{align*}
\rho_i(a) \lambda_{\wh\GGamma}(\omega)
&= \rho_i \bigl( a \lambda_{\wh\GGamma_i}( \omega\circ \iota_i )\bigr) \\
&= \rho_i \bigl(  \lambda_{\wh\GGamma_i}( \Theta^l(a)_*(\omega\circ \iota_i ))\bigr)
= \rho_i \bigl(  \lambda_{\wh\GGamma_i}( \omega\circ \iota_i\circ \Theta^l(a) )\bigr) \\
&= \lambda_{\wh\GGamma}\bigl(\omega\circ\iota_i \circ \Theta^l(a)\circ \EE_i\bigr).
\end{align*}
It follows that $\rho_i(a)\in \M^l_{cb}(\A(\GGamma))$ and
\[
\Theta^l(\rho_i(a))=\iota_i\circ \Theta^l(a)\circ \EE_i\in \CB^\sigma(\LL^{\infty}(\wh\GGamma)), 
\]
which yields the claim. 
By the definition of $\rho_i$ it is clear that $\rho_i^*(\ell^1(\GGamma)) \subseteq \ell^1(\GGamma_i) \subseteq Q^l(\A(\GGamma_i))$, which shows that the induced map $\rho_i: \M_{cb}^l(\A(\GGamma_i))\rightarrow \M_{cb}^l(\A(\GGamma))$ is weak$^*$-weak$^*$-continuous.

If $ \GGamma_i $ has AP then the identity element $ \I \in \M_{cb}^l(\A(\GGamma_i)) $ is in the weak$^*$-closure of $ \mrm{c}_{00}(\GGamma_i)$ inside $\M_{cb}^l(\A(\GGamma_i))$.  As $\rho_i$ is weak$^*$-weak$^*$-continuous, it follows that $\rho_i(\I)$ is contained in the weak$^*$-closure of $\rho_i( \mrm{c}_{00}(\GGamma_i) ) \subseteq \mrm{c}_{00}(\GGamma)$.  So $ p_i = \rho_i(\I) $, the projection corresponding to $ \Irr(\wh\GGamma_i) \subseteq \Irr(\wh\GGamma) $, is contained in the weak$^*$-closure of $ \mrm{c}_{00}(\GGamma) $ inside $\M_{cb}^l(\A(\GGamma))$. 
Clearly we have $ \la p_i,\omega\ra  \xrightarrow[i\in I]{} \la \I,\omega \ra  $ for all $ \omega \in \ell^1(\GGamma) $.  
Moreover we have $ \|p_i\|_{cb} = 1 $ since $p_i\in \M^l_{cb}(\A(\GGamma))$ with $\Theta^l(p_i)= \iota_i \circ \EE_i$. 
Hence if all $\GGamma_i$ have AP we see that $ \I \in \M_{cb}^l(\A(\GGamma)) $ is contained in the weak$^*$-closure of $ \mrm{c}_{00}(\GGamma) $. This means that $\GGamma$ has AP.

If all $\GGamma_i$ have central AP then we additionally know that $\I\in \M^l_{cb}(\A(\GGamma_i))$ is in the weak$^*$-closure of $\mc{Z}(\ell^{\infty}(\GGamma_i))\cap \mrm{c}_{00}(\GGamma_i)$, hence each $p_i$ is in the weak$^*$-closure of $\mc{Z}(\ell^{\infty}(\GGamma))\cap \mrm{c}_{00}(\GGamma)$. It follows that $\GGamma$ has central AP.
\end{proof}

\subsection{Free products}

In this section we show that AP is preserved by the free product construction for discrete quantum groups. For classical groups this fact is probably known to experts, but 
we could not find a proof in the literature. Our proof is based on results of Ricard and Xu from \cite{RicardXu} which we recall first.

\subsubsection{Ricard-Xu results}\label{sec:RX}
Let $(A_i,\phi_i)_{i\in I}$ be a family of unital $\cst$-algebras with faithful states indexed by some set $I$.  Denote by $\msf{H}_i$ the GNS Hilbert space for $\phi_i$, and by $\msf{H}_i^{\oon{op}}$ the Hilbert space obtained from $A_i$ by completion with respect to the norm given by $a\mapsto \phi_i(aa^*)^{1/2}$. Then $a\mapsto a^*$ extends to an antilinear isometry $\msf{H}_i\rightarrow \msf{H}_i^{\oon{op}}$.

We write $\mc{A}=\star_{i\in I} (A_i,\phi_i)$ for the \emph{reduced unital free product} of the family $(A_i,\phi_i)_{i\in I}$, and $A\subseteq \mc{A}$ for its canonical dense 
unital $\star$-subalgebra (the algebraic unital free product), compare \cite{Avitzour}. Next, for $d\ge 0$, denote by $\Sigma_d\subseteq A$ the subspace of length-$d$ elements. 
That is, we have $\Sigma_0=\CC\I$, and if $\mathring{A}_i$ denotes the set of all $a\in A_i$ with $\phi_i(a)=0$, 
then $\Sigma_d$ for $d\ge 1$ is the subspace of $A$ spanned by all elements of the form $a_{1}\cdots a_{d}$ where $a_j\in \mathring{A}_{i_j}$ for each $j$, and with $i_j\neq i_{j+1}$ for $1\leq j<d$.
Moreover we let $\mc{A}_d \subseteq \mc A$ be the norm closure of $\Sigma_d$.

In the sequel we shall use two results from \cite{RicardXu}, the first one being the following.

\begin{lemma}{\cite[Corollary 3.3]{RicardXu}} \label{lemma1}
For $d\ge 0$, the natural projection $A\rightarrow \Sigma_d$ onto length-$d$ elements extends to a CB map $\mc{P}_d\colon \mc{A}\rightarrow \mc{A}_d$ with $\|\mc{P}_d\|_{cb}\le \max(4d,1)$.
\end{lemma}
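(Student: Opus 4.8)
The plan is to follow Ricard--Xu by computing completely bounded norms inside the faithful representation of $\mc{A}$ on the reduced free product (Fock) Hilbert space. Let $\Omega_i\in\msf{H}_i$ be the cyclic GNS vector of $\phi_i$, put $\mathring{\msf{H}}_i=\msf{H}_i\ominus\CC\Omega_i$, and form the free product Hilbert space
\[
\msf{H}=\CC\Omega\oplus\bigoplus_{d\ge 1}\ \bigoplus_{i_1\ne i_2\ne\cdots\ne i_d} \mathring{\msf{H}}_{i_1}\otimes\cdots\otimes\mathring{\msf{H}}_{i_d},
\]
on which $\mc{A}$ acts faithfully. Write $\msf{H}_{(k)}$ for the length-$k$ summand, $P_{(k)}$ for the orthogonal projection of $\msf{H}$ onto $\msf{H}_{(k)}$, and $Q_k=\sum_{m\le k}P_{(m)}$. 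Since algebraically $A=\bigoplus_{e\ge 0}\Sigma_e$, the projection $\mc{P}_d\colon A\to\Sigma_d$ is well defined; the goal is to bound its completely bounded norm so that it extends to $\mc{A}\to\mc{A}_d$. The degenerate case is immediate: $\mc{P}_0(x)=\phi(x)\I$, a state times the identity, so $\|\mc{P}_0\|_{cb}=1=\max(0,1)$.

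The engine of the argument is that a length-$e$ reduced word changes word length by at most $e$ under left multiplication: for $x\in\Sigma_e$ one has $P_{(k)}\,x\,P_{(l)}=0$ whenever $|k-l|>e$, since the output lengths of $x\,\msf{H}_{(l)}$ lie in $[\,|l-e|,\,l+e\,]$, the extreme corner $k-l=e$ recording the ``leading'' (pure creation) part of $x$. Each corner map $x\mapsto P_{(k)}\,x\,P_{(l)}$ is completely contractive, being $P_{(k)}\,\pi(\cdot)\,P_{(l)}$ for the faithful representation $\pi$ pre- and post-composed with contractions (and $x\mapsto Q_k x Q_k$ is moreover completely positive). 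The plan is therefore to express $\mc{P}_d$ as an explicit finite combination of such compressions: first a band/support lemma pinning down exactly which corners $(k,l)$ a length-$e$ element can occupy and how the length-$d$ component is situated inside the band, and then a telescoping identity in the cutoffs $Q_k$ that solves for the length-$d$ piece. Organised correctly, this exhibits $\mc{P}_d$ as a difference of only $O(d)$ complete contractions with unimodular coefficients, yielding $\|\mc{P}_d\|_{cb}\le\max(4d,1)$. (An alternative, more Haagerup-style route would adjoin a free copy carrying a shift that spatialises the word-length grading, turning $\mc{P}_d$ into a single corner of $x$ in an enlarged algebra; but the telescoping route keeps the constant transparent.)

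The main obstacle is two-fold, and both difficulties reflect the fact that word length is intrinsically two-sided whereas Hilbert-space compressions are not. First, one must verify that the chosen combination of corner maps genuinely maps $\mc{A}$ back into $\mc{A}_d$ and reproduces the algebraic projection \emph{exactly}, not merely approximately: the shift-$s$ corner $P_{(k)}\,x\,P_{(k-s)}$ mixes contributions from all lengths $e\ge|s|$, so isolating the length-$d$ summand requires the precise band lemma together with a careful bookkeeping of how the left action interacts with the grading. Second, the sharp constant $4d$ comes from the counting: the naive band contains $O(d)$ diagonals but quadratically many corners, and the telescoping must be arranged so that only linearly many complete contractions survive. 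The complete contractivity of the individual compressions is routine; the genuine content lies in this exact reconstruction and the sharp count, which is exactly what \cite[Corollary~3.3]{RicardXu} provides.
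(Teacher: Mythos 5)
This lemma is not proved in the paper at all: it is quoted verbatim from \cite[Corollary 3.3]{RicardXu}, so your attempt has to be measured against Ricard--Xu's actual argument. Your preparatory observations are fine (the Fock-space picture, the band relation $P_{(k)}xP_{(l)}=0$ for $|k-l|>e$ when $x\in\Sigma_e$, complete contractivity of the corner maps, the case $d=0$), but the engine of your proof --- that a telescoping identity in the cutoffs $Q_k$ exhibits $\mc{P}_d$ as a finite combination of $O(d)$ corner compressions --- is not merely left unproved; it is impossible. Suppose $\mc{P}_d(x)=\sum_{i=1}^n c_i A_i x B_i$ for all $x\in A$, where each $A_i,B_i$ is a finite sum of the projections $P_{(k)}$ or the complement of such a sum (this covers every map your scheme can produce). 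Since the $A_i,B_i$ commute with the $P_{(k)}$, compressing both sides gives $P_{(k)}\mc{P}_d(x)P_{(l)}=\sigma(k,l)\,P_{(k)}xP_{(l)}$, where $\sigma(k,l)=\sum_i c_i\alpha_i(k)\beta_i(l)$ and $\alpha_i(k),\beta_i(l)\in\{0,1\}$ record whether $P_{(k)}\le A_i$, $P_{(l)}\le B_i$. Now take a nonzero reduced word $x=a_1\cdots a_e\in\Sigma_e$ and a nonzero alternating tensor $\xi$ of length $l$ whose first letter lies in an algebra different from that of $a_e$ (such $\xi$ exist for every $l$ as soon as at least two factors are nontrivial; otherwise the lemma is trivial). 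Then the top component of $x\xi$ is $a_1\otimes\cdots\otimes a_e\otimes\xi\neq 0$, so the extreme corner $P_{(l+e)}xP_{(l)}$ is nonzero for every $l$. The identities $\mc{P}_d|_{\Sigma_d}=\mathrm{id}$ and $\mc{P}_d|_{\Sigma_e}=0$ for $e\neq d$ therefore force $\sigma(l+d,l)=1$ and $\sigma(l+e,l)=0$ for all $l$ and all $e\neq d$. But each $\alpha_i$ and each $\beta_i$ is eventually constant, so $\lim_{l\to\infty}\sigma(l+e,l)$ exists and does not depend on $e$ --- a contradiction. No finite combination of compressions, however telescoped, can reproduce $\mc{P}_d$: its ``symbol'' would have to distinguish the diagonal $k-l=d$ from the diagonals $k-l=d\pm1$ arbitrarily far out, which such combinations cannot do.

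This failure is exactly why Ricard--Xu need more than band structure. The grading-compatible maps are the diagonal extractions $D_s\colon x\mapsto \sum_l P_{(l+s)}xP_{(l)}$, which are complete contractions into $\B(\msf{H})$ (averages of $x$ over the gauge unitaries $\sum_k z^kP_{(k)}$, $|z|=1$), not finite combinations of corners; but even these do not isolate $\Sigma_d$, since every $\Sigma_e$ with $e\ge|s|$ contributes to the $s$-th diagonal through the parts of a word that annihilate some letters, possibly merge one, and create the rest. The proof of \cite[Corollary 3.3]{RicardXu} instead passes through the free Khintchine-type inequality of their Section 3: for homogeneous $x\in\Sigma_d$, the norm $\|x\|$ is equivalent, up to a constant linear in $d$, to the maximum of the norms of its creation/annihilation parts, each of which is read off from a general element by grading-compatible complete contractions, and the factor $4d$ arises from reassembling these parts --- not from counting compressions. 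Your write-up in effect concedes this: its closing sentence defers ``the exact reconstruction and the sharp count'' to the very corollary being proved, so even setting aside the impossibility above, the proposal is circular at the decisive step.
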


The second fact which we need is a minor extension of \cite[Lemma 4.10]{RicardXu}.

\begin{lemma}\label{lemma2}
Fix $d\geq 1$.  For $i\in I$ and $1\le k \le d$, let $T_{i,k}\in \CB(A_i)$ be linear maps which satisfy $\phi_i\circ T_{i,k}=\lambda_{i,k} \phi_i$ for some $\lambda_{i,k}\in \CC$, and which extend to bounded maps $\msf{H}_i\rightarrow \msf{H}_i$ and $\msf{H}_i^{\oon{op}}\rightarrow \msf{H}_i^{\oon{op}}$. If
\[
K = (2d+1)\prod_{k=1}^{d}
\sup_i  \max\bigl(
\|T_{i,k}\|_{cb},\|T_{i,k}\|_{\B(\msf{H}_i)},\|T_{i,k}\|_{\B(\msf{H}_i^{\oon{op}})} 
\bigr) < \infty
\]
then the natural map $\Pi_k T_{i,k}\colon \Sigma_d\rightarrow \Sigma_d$ given by
\[
a_1\cdots a_d\mapsto 
T_{i_1,1}(a_1)\cdots T_{i_d,d}(a_d)\qquad(a_j\in \mathring{A}_{i_j},\; i_j\neq i_{j+1})
\]
extends to a CB map $\mc{A}_d\rightarrow \mc{A}_d$ with CB norm bounded above by $ K $. 
\end{lemma}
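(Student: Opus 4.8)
The plan is to follow the proof of \cite[Lemma~4.10]{RicardXu} essentially verbatim, the whole point being that that argument already treats each letter of a reduced word independently, so nothing is lost by allowing the maps to depend on the position $k$. First I would recall the Fock-space model of the reduced free product: the GNS space of $\mc{A}$ decomposes as $\msf{H} = \CC\Omega \oplus \bigoplus_{d\ge1}\bigoplus_{i_1\ne\cdots\ne i_d}\mathring{\msf{H}}_{i_1}\otimes\cdots\otimes\mathring{\msf{H}}_{i_d}$, where $\mathring{\msf{H}}_i = \msf{H}_i\ominus\CC\Omega$, and recall how left multiplication by an element of $\mathring{A}_i$ acts on a reduced word: if its index differs from the top index it \emph{creates} (length $+1$), while if it agrees with the top index it splits into a \emph{diagonal} part (length $0$, replacing the top letter) and an \emph{annihilation} part (length $-1$). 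Consequently multiplication by a length-$d$ element $a_1\cdots a_d$ splits, by collecting terms according to the net change of reduced-word length, into a sum of $2d+1$ operators, each homogeneous of a fixed degree in $\{-d,-d+1,\ldots,d\}$, since each of the $d$ letters independently contributes $+1$, $0$, or $-1$.

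Second, I would recall the completely bounded factorization that \cite{RicardXu} attach to each such homogeneous piece. For a fixed degree, the mode (create, diagonal, or annihilate) of each letter is determined by the geometry of the word, and in the factorization the $k$-th letter contributes through its creating action (controlled by $\|T_{i,k}\|_{\B(\msf{H}_i)}$), its annihilating action (controlled by $\|T_{i,k}\|_{\B(\msf{H}_i^{\oon{op}})}$), or its full two-sided action (controlled by $\|T_{i,k}\|_{cb}$). The hypothesis $\phi_i\circ T_{i,k}=\lambda_{i,k}\phi_i$ is exactly what guarantees $T_{i,k}(\mathring{A}_i)\subseteq\mathring{A}_i$, so that the letter-wise maps respect the reduced-word decomposition and both the map $\Pi_k T_{i,k}$ on $\Sigma_d$ and the factorization make sense letter by letter.

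The crucial observation, and the reason the extension is routine, is that this factorization is \emph{local in the position}: the contribution of the $k$-th tensor slot depends only on $T_{i,k}$ and never couples to the maps acting at other positions. Reading off the bound for each of the $2d+1$ homogeneous pieces therefore produces the product $\prod_{k=1}^d\max\bigl(\|T_{i,k}\|_{cb},\|T_{i,k}\|_{\B(\msf{H}_i)},\|T_{i,k}\|_{\B(\msf{H}_i^{\oon{op}})}\bigr)$ after taking the supremum over $i$ (the $\max$ being forced because, as the degree varies, each slot may occur in any of the three modes), and summing the $2d+1$ pieces by the triangle inequality yields the stated constant $K$. I expect the main obstacle to be purely bookkeeping: one must check that the factorization genuinely decouples the positions, that for each fixed degree the correct one of the three norms is attached to each slot, and that the extended map is well-defined on the norm closure $\mc{A}_d$ rather than merely on the algebraic $\Sigma_d$ (here the completely bounded projection $\mc{P}_d$ of Lemma~\ref{lemma1} together with orthogonality of distinct reduced words is used). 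No new analytic idea beyond \cite{RicardXu} should be required.
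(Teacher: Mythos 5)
Your proposal is correct and takes essentially the same route as the paper: both reduce to \cite[Lemma~4.10]{RicardXu} and observe that the only role of the stronger hypothesis $\phi_i\circ T_{i,k}=\phi_i$ there is to guarantee $T_{i,k}(\mathring{A}_{i})\subseteq\mathring{A}_{i}$, which the weaker scaling hypothesis $\phi_i\circ T_{i,k}=\lambda_{i,k}\phi_i$ still ensures. The paper's proof consists of exactly this one observation, with no re-derivation of the Fock-space/factorization internals that you sketch.
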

\begin{proof}
The only difference between this claim and \cite[Lemma 4.10]{RicardXu} is that \cite[Lemma 4.10]{RicardXu} has the stronger hypothesis that $\phi_i\circ T_{i,k} = \phi_i$ for each $i,k$.  A close examination of the proof of \cite[Lemma 4.10]{RicardXu} shows that this hypothesis is only used to ensure that the map $\Pi_k T_{i,k}$ is well-defined, because each $T_{i,k}$ maps $\mathring{A}_{i}$ to itself.  This condition remains true under our weaker hypothesis, and the rest of the proof of \cite[Lemma 4.10]{RicardXu} carries over without change.
\end{proof}

\subsubsection{AP for free products}

Let $\GGamma_1,\GGamma_2$ be discrete quantum groups and let $\GGamma=\GGamma_1\star\GGamma_2$ be their \emph{free product}. Recall from \cite{SWang} that this means in particular 
that $ \mc{A}=\mrm{C}(\wh\GGamma)$ is the unital reduced free product $\mrm{C}(\wh\GGamma_1)\star\mrm{C}(\wh\GGamma_2)$ with respect to Haar integrals, 
and $h_{\wh\GGamma}$ is the free product state $h_{\wh\GGamma_1}\star h_{\wh\GGamma_2}$. Moreover $\Pol(\wh\GGamma)$ is the algebraic unital free product 
of $\Pol(\wh\GGamma_1)$ and $\Pol(\wh\GGamma_2)$. The irreducible representations of $\wh\GGamma$ are given as follows, see \cite[Theorem~3.10]{SWang}.  Each $\alpha\in \Irr(\wh\GGamma)$ has a well-defined length $\oon{len}(\alpha)\in \ZZ_+$. The trivial representation is the only representation of length $0$, and for $n\ge 1$ we have
\[
\{\alpha\in \Irr(\wh\GGamma)\,|\,\oon{len}(\alpha)=n\}=
\{ \alpha_{i_1}\boxtimes \cdots\boxtimes \alpha_{i_n}\,|\,
\forall_{1\le j\le n}\,\alpha_{i_j}\in \Irr(\wh\GGamma_{i_j})\setminus\{e\},\;
\forall_{1\le j<n}\,i_{j}\neq i_{j+1}\}.
\]
Again, here we denote by $e$ the trivial representation of a compact quantum group.
More explicitly, given $\alpha\in \Irr(\wh\GGamma_k)$ associated to the representation matrix $U^\alpha = [U^\alpha_{i,j}]_{i,j=1}^{\dim(\alpha)} \in \M_{\dim(\alpha)}(\mrm{C}(\wh\GGamma_k))$, by regarding $\mrm{C}(\wh\GGamma_k)$ as a subalgebra of $\mrm{C}(\wh\GGamma)$, we may regard $U^\alpha$ as a representation of $(\mrm{C}(\wh\GGamma),\Delta_{\wh\GGamma})$.  Then $\boxtimes$ is just the usual tensor product of representations.

To ease notation, we will write $h=h_{\wh\GGamma}$ in the sequel. 

\begin{theorem}\label{thm1}
Let $\GGamma_1,\GGamma_2$ be discrete quantum groups and let $\GGamma=\GGamma_1\star\GGamma_2$ be their free product. If $\GGamma_1,\GGamma_2$ have (central) AP, then $\GGamma$ has (central) AP.
\end{theorem}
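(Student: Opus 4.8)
The plan is to adapt the Ricard--Xu machinery (Lemmas~\ref{lemma1} and~\ref{lemma2}) to the weak$^*$ setting of the approximation property. First I would invoke Proposition~\ref{prop1} to replace the hypothesised AP nets of $\GGamma_1$ and $\GGamma_2$ by nets $(a^{(k)}_{i})_{i}$ in $\mrm{c}_{00}(\GGamma_k)$ (for $k=1,2$) converging weak$^*$ to $\I$ such that each $T^{(k)}_{i}:=\Theta^l(a^{(k)}_{i})$ is star and unit preserving; in the central case these may moreover be taken central. Writing $\phi_k=h_{\wh\GGamma_k}$, I would then verify the hypotheses of Lemma~\ref{lemma2}. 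Unit preservation together with Remark~\ref{rem:cent_unit_pres_means} gives $\phi_k\circ T^{(k)}_{i}=\phi_k$, so $T^{(k)}_{i}$ maps $\mathring{A}_k$ to itself and $\lambda_{i,k}=1$; boundedness of $T^{(k)}_{i}$ on $\msf{H}_k=\LL^2(\wh\GGamma_k)$ is Proposition~\ref{prop2}; and boundedness on $\msf{H}_k^{\oon{op}}$ follows from this together with star preservation, since the antilinear isometry $\msf{H}_k\to\msf{H}_k^{\oon{op}}$, $x\mapsto x^*$, conjugates the action of $T^{(k)}_i$ into that of $(T^{(k)}_i)^{\dagger}=T^{(k)}_i$.

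For a fixed pair $(a^{(1)},a^{(2)})=(a^{(1)}_{i_1},a^{(2)}_{i_2})$, a cutoff $N$ and a radial weight $r\in(0,1)$, I would assemble the associated free-product multiplier. Using the description of $\Irr(\wh\GGamma)$ as alternating tensor products $\beta=\alpha_1\boxtimes\cdots\boxtimes\alpha_n$ and the formula~\eqref{eq:hatW_cmpt} for $\ww^{\wh\GGamma}$, define $a_r\in\ell^\infty(\GGamma)$ block-wise by
\[
a_r^{\beta}=r^{\oon{len}(\beta)}\,(a^{(i_1)})^{\alpha_1}\otimes\cdots\otimes(a^{(i_n)})^{\alpha_n}
\quad\text{on}\quad \msf{H}_\beta=\msf{H}_{\alpha_1}\otimes\cdots\otimes\msf{H}_{\alpha_n}.
\]
By Lemma~\ref{lemma4}, $\Theta^l(a_r)$ acts on $\Pol(\wh\GGamma)$ as the weighted Ricard--Xu free-product map, sending a length-$d$ reduced word $x_1\cdots x_d$ to $r^d\,T^{(i_1)}(x_1)\cdots T^{(i_d)}(x_d)$. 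Letting $q_N\in\mc{Z}(\ell^\infty(\GGamma))$ be the central projection onto irreducibles of length $\le N$, finite support of each $a^{(k)}$ yields $c_{i_1,i_2,r,N}:=q_N a_r\in\mrm{c}_{00}(\GGamma)$, whose multiplier is $\sum_{d=0}^{N}r^d(\Pi T)_d\circ\mc{P}_d$, completely bounded by Lemmas~\ref{lemma1} and~\ref{lemma2} with norm at most $\sum_{d\le N}r^d\max(4d,1)(2d+1)C_{i_1,i_2}^{\,d}$. In the central case each $(a^{(i_j)})^{\alpha_j}$ is scalar, so $a_r^\beta$, hence $c_{i_1,i_2,r,N}$, lies in $\mc{Z}(\ell^\infty(\GGamma))$.

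It then remains to show that these finitely supported elements accumulate weak$^*$ at $\I$ in $\M^l_{cb}(\A(\GGamma))$; by Definition~\ref{def1} this gives (central) AP. By Proposition~\ref{prop15} it suffices to show $\langle c_{i_1,i_2,r,N},\Omega_{x,\omega}\rangle\to\langle\I,\Omega_{x,\omega}\rangle=\langle x,\omega\rangle$ for every $\Omega_{x,\omega}\in Q^l(\A(\GGamma))$, with $x\in\mrm{C}(\wh\GGamma)\otimes\mc{K}(\msf{H})$ and $\omega\in\LL^1(\wh\GGamma)\wh\otimes\B(\msf{H})_*$. Decomposing $x$ by free-product length through the $\mc{P}_d$, the contribution of lengths $\le D$ is handled by the fact that on the finite-dimensional space of matrix coefficients of a fixed irreducible one has $T^{(k)}_{i_k}\to\id$ (stable point-weak$^*$ convergence of the factor nets, Theorem~\ref{thm6}), that $q_N$ is eventually the identity there, and that $r^d\to1$; so the head converges to $\langle(\mc{P}_{\le D}\otimes\id)x,\omega\rangle$.

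The main obstacle is the control of the high-length tail $\sum_{d>D}$. Because AP, unlike weak amenability, does \emph{not} provide factor nets bounded in the completely bounded norm, the constants $C_{i_1,i_2}$ are unbounded along the net, so the family $(c_{i_1,i_2,r,N})$ is not weak$^*$-bounded and weak$^*$-convergence cannot be deduced from convergence on a dense subspace. The crux is therefore a quantitative tail estimate built from the Ricard--Xu bounds $\|\mc{P}_d\|_{cb}\le\max(4d,1)$ and $\|(\Pi T)_d\|_{cb}\le(2d+1)C_{i_1,i_2}^{\,d}$, in which the radial weight $r<1$ is coupled to the factor index (so that $rC_{i_1,i_2}<1$ keeps $\sum_d r^d(2d+1)\max(4d,1)C_{i_1,i_2}^{\,d}$ convergent) while still allowing $r\to1$. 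Organising the iterated limits $N\to\infty$, $r\to1$, $(i_1,i_2)\to\infty$ so that this tail contributes negligibly to the pairing with each of the finitely many functionals defining a given weak$^*$-neighbourhood of $\I$ -- all the while keeping every $c_{i_1,i_2,r,N}$ finitely supported and, in the central case, central -- is the technical heart of the argument, and yields $\I\in\overline{\mrm{c}_{00}(\GGamma)}^{\,w^*}$.
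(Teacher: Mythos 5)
Your preparatory steps are sound (Proposition~\ref{prop1}, the verification of the hypotheses of Lemma~\ref{lemma2} via unit/star preservation and Proposition~\ref{prop2}, and the identification of $\Theta^l(q_N a_r)$ with $\sum_{d\le N} r^d (\Pi T)_d\circ\mc{P}_d$), but the step you yourself flag as ``the technical heart'' is a genuine gap, and in the form you propose it cannot be closed. The tail estimate forces $rC_{i_1,i_2}<1$ while the head forces $r\to 1$, and these requirements act on the same parameter in opposite directions. If $\GGamma_1$ has AP but is not weakly amenable (such examples exist, see Section~\ref{section ap}), then \emph{every} net witnessing AP satisfies $\|a^{(1)}_{i_1}\|_{cb}\to\infty$: a CB-bounded cofinal subnet would still converge weak$^*$ to $\I$ and would witness weak amenability by Proposition~\ref{prop7}. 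Hence indices far enough along the net to make the head small force $r<1/C_{i_1,i_2}\to 0$, which destroys the head. There is a second gap inside the head itself: a fixed \emph{length} is not a fixed \emph{irreducible}. The element $(\mc{P}_d\otimes\id)x$ is spread over infinitely many blocks $\alpha_1\boxtimes\cdots\boxtimes\alpha_d$, so entrywise (scalar) convergence on each block does not yield convergence of $\la ((\Pi T)_d\mc{P}_d\otimes\id)x,\omega\ra$ for non-polynomial $x$ without, once again, a uniform CB bound on $(\Pi T)_d$ along the net, which is exactly what $\|(\Pi T)_d\|_{cb}\le (2d+1)C_{i_1,i_2}^d$ fails to give. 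The root cause of both problems is that your $c_{i_1,i_2,r,N}$ places the \emph{same} factor multipliers in every tensor position, so you are taking a diagonal (joint) limit of products; joint weak$^*$ continuity of multiplication is precisely what fails for unbounded nets, and norm-density of polynomial $x$ is useless without weak$^*$-boundedness of the approximating family.

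The paper's proof escapes this by decoupling the two limits, and this decoupling is the idea your proposal is missing. It works with the map $\Psi_d$ whose arguments are $d$-tuples of multipliers with \emph{independent} entries in each tensor position, shows that $\Psi_d$ lands in $\M^l_{cb}(\A(\GGamma))$ (Lemma~\ref{lemma3}) and is \emph{separately} weak$^*$-weak$^*$-continuous (Lemma~\ref{lemma5}), and then replaces the entries by $\I$ one position at a time. At each such step all other entries are held fixed, so their CB norms enter the estimates only as finite constants; no joint limit and no uniform bound is ever required. This produces $p_d\in\ov{\mrm{c}_{00}(\GGamma)}^{w^*}$ for every $d$, with all factor multipliers and all damping removed from the picture. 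Only \emph{then} are the radial sums $T_n=\sum_{d=0}^n(1-\tfrac{1}{\sqrt{n}})^d p_d$ formed: by \cite[Proposition~3.5]{RicardXu} these are uniformly CB-bounded and converge pointwise to the identity on $\mrm{C}(\wh\GGamma)$, hence weak$^*$ to $\I$ by Proposition~\ref{prop15}, and since $\ov{\mrm{c}_{00}(\GGamma)}^{w^*}$ is a weak$^*$-closed subspace containing every $T_n$, it contains $\I$. If you restructure your argument along these lines---independent entries and iterated one-variable limits to obtain each $p_d$, radial means last---your remaining ingredients, including the centrality statement, go through essentially verbatim.
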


Before we can prove Theorem \ref{thm1} we need to establish some auxiliary results. For $d\in\NN$ let us define the (non-linear) map
\begin{equation}\label{eq5}
\tilde{\Psi}_d \colon \bigoplus_{k=1}^{d} \ell^{\infty}(\GGamma_1)\oplus_\infty \ell^{\infty}(\GGamma_2)\rightarrow \ell^{\infty}(\GGamma)
\end{equation}
(here $\oplus$ is the $\ell^{\infty}$-direct sum) via
\[\begin{split}
\tilde{\Psi}_d(
(g_{1,k},g_{2,k})_{k=1}^{d} )&=
\bigl(
\tilde{\Psi}_d(
(g_{1,k},g_{2,k})_{k=1}^{d} )_\alpha\bigr)_{\alpha\in\Irr(\wh\GGamma)}, \qquad\text{where} \\
\tilde{\Psi}_d((g_{1,k},g_{2,k})_{k=1}^{d})_\alpha&=
\begin{cases}
0,
& \textnormal{len}(\alpha)\neq d,\\
g_{i_1,1,\alpha_{1}}\otimes\cdots\otimes 
g_{i_d,d,\alpha_{d}},
& \alpha=\alpha_{1}\boxtimes\cdots\boxtimes\alpha_{d}\colon \alpha_{j}\in \Irr(\wh\GGamma_{i_j}).
\end{cases}
\end{split}\]
We consider
\[
\mc{V}= \bigoplus_{k=1}^{d} \M^l_{cb}(\A(\GGamma_1))\oplus _\infty
\M^l_{cb}(\A(\GGamma_2)),
\]
and write $\Psi_d$ for the restriction of $\tilde{\Psi}_d$ to $\mc{V}$. Recall that $\mc{P}_d: \mc{A}\rightarrow \mc{A}_d $ is induced by the projection onto elements of length $d$. 

\begin{lemma}\label{lemma3}
The image of $\Psi_d$ is a subset of $\M^l_{cb}(\A(\GGamma))$, that is, we can regard $\Psi_d$ as a map $\mc{V}\rightarrow \M^l_{cb}(\A(\GGamma))$.
Furthermore, $\mc{P}_d$ extends to a weak$^*$-weak$^*$-continuous CB map $\LL^{\infty}(\wh\GGamma)\rightarrow \ov{\mc{A}_d}^{w^*}$.
\end{lemma}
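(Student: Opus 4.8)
The plan is to recognise, for $b=\Psi_d((a_{i,k})_{i,k})$, that the operator $\Theta^l(b)$ which $b$ \emph{ought} to induce on $\LL^\infty(\wh\GGamma)$ restricts on $\mrm C(\wh\GGamma)$ to a Ricard--Xu free-product map composed with the length-$d$ projection $\mc P_d$, and then to certify $b$ as a genuine CB multiplier via the characterisation of Proposition~\ref{prop13}. The second assertion will come out as the special case in which all entries are $\I$.

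First I would set $T_{i,k}=\Theta^l(a_{i,k})\restriction_{\mrm C(\wh\GGamma_i)}\in\CB(\mrm C(\wh\GGamma_i))$ (these restrictions are CB by Proposition~\ref{prop13} applied to $\GGamma_i$), and verify the hypotheses of Lemma~\ref{lemma2} with $A_i=\mrm C(\wh\GGamma_i)$ and $\phi_i=h_{\wh\GGamma_i}$. Lemma~\ref{lemma4} gives $\Theta^l(a_{i,k})(U^\alpha_{p,q})=\sum_r (a_{i,k})^\alpha_{p,r}U^\alpha_{r,q}$; since $h$ annihilates the coefficients of every nontrivial irreducible, this yields $\phi_i\circ T_{i,k}=\lambda_{i,k}\phi_i$ with $\lambda_{i,k}=(a_{i,k})^e_{1,1}$, as required. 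The main obstacle is checking that each $T_{i,k}$ extends boundedly to $\msf H_i=\LL^2(\wh\GGamma_i)$ and to $\msf H_i^{\oon{op}}$. Boundedness on $\msf H_i$ is exactly Proposition~\ref{prop2}: $\Theta^l(a_{i,k})$ is implemented on $\LL^2(\wh\GGamma_i)$ by left multiplication by $S(a_{i,k}^*)^*\in\ell^\infty(\GGamma_i)$, so $\|T_{i,k}\|_{\B(\msf H_i)}=\|S(a_{i,k}^*)\|<\infty$. For $\msf H_i^{\oon{op}}$ I would use that the antilinear isometry $x\mapsto x^*$ conjugates the action of $T_{i,k}$ on $\msf H_i^{\oon{op}}$ into that of $T_{i,k}^\dagger$ on $\msf H_i$, whence $\|T_{i,k}\|_{\B(\msf H_i^{\oon{op}})}=\|T_{i,k}^\dagger\|_{\B(\msf H_i)}$; by Proposition~\ref{prop12} we have $T_{i,k}^\dagger=\Theta^l(S(a_{i,k}^*))\restriction_{\mrm C(\wh\GGamma_i)}$, and Proposition~\ref{prop2} again bounds this, by $\|a_{i,k}\|$. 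As all three norms are finite and there are only finitely many pairs $(i,k)$, the constant $K$ of Lemma~\ref{lemma2} is finite, so Lemma~\ref{lemma2} produces a CB map $\Pi_k T_{i,k}\colon\mc A_d\to\mc A_d$. Composing with the CB projection $\mc P_d\colon\mc A\to\mc A_d$ from Lemma~\ref{lemma1} and the inclusion $\mc A_d\hookrightarrow\mc A$ gives a map $\Xi\in\CB(\mrm C(\wh\GGamma))$.

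Next I would identify $\Xi$ with the restriction of a multiplier. By Lemma~\ref{lemma4} and the fact that the coefficients of a length-$d$ irreducible $\alpha=\alpha_1\boxtimes\cdots\boxtimes\alpha_d$ are precisely the length-$d$ reduced words $U^{\alpha_1}_{\cdot,\cdot}\cdots U^{\alpha_d}_{\cdot,\cdot}$, a direct computation shows that $\Xi$ acts on the coefficients of $\alpha$ exactly as the block $b^\alpha$ prescribes through the formula of Lemma~\ref{lemma4}, and annihilates the coefficients of every irreducible of length $\ne d$. Using $\wh\Delta(U^\alpha_{p,q})=\sum_r U^\alpha_{p,r}\otimes U^\alpha_{r,q}$, one then checks on the dense subspace $\Pol(\wh\GGamma)\odot\Pol(\wh\GGamma)$, and hence by norm continuity on all of $\mrm C(\wh\GGamma)\otimes\mrm C(\wh\GGamma)$, that $(\Xi\otimes\id)(\wh\Delta(x)(\I\otimes y))=\wh\Delta(\Xi(x))(\I\otimes y)$; that is, $\Xi$ satisfies condition~(\ref{prop:cent_to_cz:two}) of Proposition~\ref{prop13}. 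Therefore $\Xi=\Theta^l(\tilde b)\restriction_{\mrm C(\wh\GGamma)}$ for a unique $\tilde b\in\M^l_{cb}(\A(\GGamma))$, and comparing matrix blocks via Lemma~\ref{lemma4} forces $\tilde b=b$. Hence $b=\Psi_d((a_{i,k})_{i,k})\in\M^l_{cb}(\A(\GGamma))$, which proves the first assertion.

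Finally, the statement about $\mc P_d$ is the instance of this construction in which $a_{i,k}=\I$ for all $i,k$. Here $\mc P_d\in\CB(\mrm C(\wh\GGamma))$ is already given by Lemma~\ref{lemma1} (so the hard $\LL^2$-bounds are not even needed), and the coefficient computation of the previous paragraph, now with every $T_{i,k}=\id$, shows that $\mc P_d$ satisfies condition~(\ref{prop:cent_to_cz:two}) of Proposition~\ref{prop13}. Thus $\mc P_d=\Theta^l(q_d)\restriction_{\mrm C(\wh\GGamma)}$ for a multiplier $q_d\in\M^l_{cb}(\A(\GGamma))$, which by Lemma~\ref{lemma4} is exactly the central projection $\Psi_d(\I,\dots,\I)=\sum_{\oon{len}(\alpha)=d}p_\alpha$ onto the length-$d$ part. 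Since $q_d\in\M^l_{cb}(\A(\GGamma))$, the map $\Theta^l(q_d)\in\CB^\sigma(\LL^\infty(\wh\GGamma))$ is normal, i.e.\ weak$^*$-weak$^*$-continuous, and it restricts to $\mc P_d$ on $\mrm C(\wh\GGamma)$; as $\mrm C(\wh\GGamma)$ is weak$^*$-dense and $\Theta^l(q_d)(\mrm C(\wh\GGamma))=\mc A_d$, weak$^*$-continuity forces $\Theta^l(q_d)(\LL^\infty(\wh\GGamma))\subseteq\ov{\mc A_d}^{w^*}$. Hence $\Theta^l(q_d)$ is the desired weak$^*$-weak$^*$-continuous CB extension of $\mc P_d$.
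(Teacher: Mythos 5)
Your proof is correct, and its skeleton coincides with the paper's: both verify the hypotheses of Lemma~\ref{lemma2} in the same way (Proposition~\ref{prop2} for boundedness on $\msf{H}_i=\LL^2(\wh\GGamma_i)$; Propositions~\ref{prop12} and~\ref{prop2} combined through the $\dagger$-operation for boundedness on $\msf{H}_i^{\oon{op}}$; and $h_{\wh\GGamma_i}\circ T_{i,k}=(a_{i,k})^e_{1,1}\,h_{\wh\GGamma_i}$ for the trace-scaling hypothesis), and then invoke Lemmas~\ref{lemma1} and~\ref{lemma2} to get a CB map on $\mrm{C}(\wh\GGamma)$. Where you genuinely diverge is the final step of certifying this map as $\Theta^l(b)$ for $b=\Psi_d(\cdots)$. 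The paper does this by hand: it first shows the Ricard--Xu map $\Upsilon$ admits a normal extension to $\LL^{\infty}(\wh\GGamma)$, by exhibiting an $\LL^2$-extension $\Upsilon_2$ (using the free-product decomposition of $\LL^2(\wh\GGamma)$) and deducing that $\Upsilon^*$ preserves $\LL^1(\wh\GGamma)$, and then verifies the multiplier identity $\Psi_d(\cdots)\lambda_{\wh\GGamma}(\omega)=\lambda_{\wh\GGamma}(\Upsilon_*(\omega))$ on the dense set of functionals $h(\Pol(\wh\GGamma)\,\cdot\,)$ by a coefficient computation. You instead verify condition~(\ref{prop:cent_to_cz:two}) of Proposition~\ref{prop13} on matrix coefficients and let that proposition (i.e.\ Lemma~\ref{lemma20} behind it) manufacture the multiplier, pinning it down as $b$ by comparing blocks via Lemma~\ref{lemma4}; the coefficient computations in the two arguments are essentially the same. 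Your route delegates the normality/extension issue to the machinery of Section~\ref{section multiplier}, which is arguably cleaner, and it yields the second assertion about $\mc{P}_d$ (normality of $\Theta^l(p_d)$ plus Kaplansky density) without needing the $\LL^2$-bounds at all. Note that the CB-norm bound \eqref{eq4} recorded inside the paper's proof is quoted later (in the proof of Lemma~\ref{lemma5}); your argument recovers it as well, since the restriction map in Proposition~\ref{prop13} is a complete isometry, so nothing downstream is lost.
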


\begin{proof}
Fix $(g_{1,k},g_{2,k})_{k=1}^{d}\in \mc{V}$. 
Proposition \ref{prop2} shows that each $\Theta^l(g_{i,k})$ extends to a bounded linear map on $\LL^2(\wh\GGamma_i)$ with norm $\|S_{\GGamma_i}^{-1}(g_{i,k})\|$.
Next, using Proposition \ref{prop12} and Proposition \ref{prop2}, for $x\in \LL^{\infty}(\wh\GGamma)$, we have
\[\begin{split}
&\quad\;
\| \Theta^l(g_{i,k})(x)\|_{\LL^2(\wh\GGamma_i)^{\oon{op}}}=
\| \Theta^l(g_{i,k})(x)^*\|_2=
\| \Theta^l(g_{i,k})^{\dagger}(x^*)\|_2=
\|\Theta^l( S_{\GGamma_i}(g_{i,k}^*) )(x^*)\|_2\\
&\le 
\|S_{\GGamma_i}^{-1} (S_{\GGamma_i}(g_{i,k}^*))\|
\|x^*\|_2=
\|g_{i,k}^*\| \,\|x\|_{\LL^2(\wh\GGamma_i)^{\oon{op}}}
=
\|g_{i,k}\| \,\|x\|_{\LL^2(\wh\GGamma_i)^{\oon{op}}}.
\end{split}\]
Hence $\Theta^l(g_{i,k})$ extends to a bounded linear map on $\LL^2(\wh\GGamma_i)^{\oon{op}}$ with norm bounded by $\|g_{i,k}\|$.

Let us consider $T_{i,k}=\Theta^l(g_{i,k}) \in \CB(\mrm{C}(\wh\GGamma_i))$ for $1\le k\le d$. We have $h_{\wh\GGamma_i} \circ T_{i,k}=g_{i,k,e} h_{\wh\GGamma_i}$. Then, according to Lemma \ref{lemma1} and Lemma \ref{lemma2}, we obtain a $\CB$ map $\Upsilon$ on $\mrm{C}(\wh{\GGamma})$ acting by $0$ on elements of length $d'\neq d$, and on elements of length $d$ by
\[
a_{1}\cdots a_{d}\mapsto 
\Theta^l(g_{i_1,1})(a_{1})\cdots  
\Theta^l(g_{i_d,d})(a_{d}),
\]
where $a_{j}\in \mrm{C}(\wh\GGamma_{i_j})$, and the $\CB$ norm of $\Upsilon$ is bounded above by
\[
4d (2d+1) \prod_{k=1}^{d} \max_{i\in\{1,2\}}\max(\|g_{i,k}\|_{cb}, \|S^{-1}_{\GGamma_i}(g_{i,k})\|, \|g_{i,k}\|).
\]
Since $\|\cdot\|\le \|\cdot\|_{cb}$ on $\M^l_{cb}(\A(\GGamma_i))$ we have, using Lemma \ref{lemma18} and Proposition \ref{prop12}, 
\[
\|S_{\GGamma_i}^{-1}(g_{i,k})\|=
\|S_{\GGamma_i}^{-1}(g_{i,k})^*\|\le 
\|S_{\GGamma_i}^{-1}(g_{i,k})^*\|_{cb}=
\|g_{i,k}\|_{cb},
\]
and hence we get in fact 
\begin{equation}\label{eq4}
\|\Upsilon\|_{cb}\le 4d(2d+1) \prod_{k=1}^{d} \max_{i\in \{1,2\}} 
\|g_{i,k}\|_{cb}.
\end{equation}

We claim that $\Upsilon$ extends to a normal map on $\LL^{\infty}(\wh\GGamma)$. For this it suffices to show that $\Upsilon^*$ preserves $\LL^1(\wh\GGamma)\subseteq \mrm{C}(\wh\GGamma)^*$. Indeed, if this is the case, then the extension may be defined as $(\Upsilon^*|_{\LL^1(\wh\GGamma)})^*\in \CB^{\sigma}(\LL^{\infty}(\wh\GGamma))$. 

Thus take $\rho\in \LL^1(\wh\GGamma)$. Since $\Upsilon$ is bounded and $\LL^1(\wh\GGamma)\subseteq \mrm{C}(\wh\GGamma)^*$ is norm-closed, it is enough to consider $\rho=h ( a\cdot )$ for $a\in \LL^{\infty}(\wh\GGamma)$. Take $b\in\LL^{\infty}(\wh\GGamma)$ and denote by $\Upsilon_2$ the extension of $\Upsilon$ to a bounded linear map on $\LL^2(\wh\GGamma)$. 
Note that this extension exists since the GNS Hilbert space for $h $ is
\[
\LL^2(\wh\GGamma)=\CC \Omega\oplus \bigoplus_{d=1}^{\infty} \bigoplus_{i_1\neq \cdots \neq i_d} \LL^2(\wh\GGamma_{i_1})^{\circ}\otimes\cdots\otimes \LL^2(\wh\GGamma_{i_d})^{\circ},
\]
where $\LL^2(\wh\GGamma_i)^{\circ}$ is the subspace of $\LL^2(\wh\GGamma_i)$ orthogonal to $\Lambda_{h_i}(\I)$ (see \cite[Section 2]{Avitzour}), 
and $\Theta^l(g_{i,k})$ has bounded extension to $\LL^2(\wh\GGamma_i)$ by Proposition \ref{prop2}. We have
\[
\Upsilon^*(\rho)(b)=h(a \Upsilon(b) )= \ismaa{\Lambda_h(a^*) }{\Lambda_h(\Upsilon(b))}=
\ismaa{\Lambda_h(a^*)}{\Upsilon_2\Lambda_h(b)}=\ismaa{\Upsilon_2^* \Lambda_h(a^*)}{\Lambda_h(b)}. \]
Hence $\Upsilon^*(\rho) = \omega_{\Upsilon_2^* \Lambda_h(a^*), \Lambda_h(\I)} \in \LL^1(\wh\GGamma)$.
Let us denote the resulting normal extension of $\Upsilon$ to $\LL^{\infty}(\wh\GGamma)$ with the same symbol.

In particular, taking $ g_{i,k} = \I $ for all $1\leq k\leq d$ in the above discussion shows that the projection $\mc{P}_d: \mrm{C}(\wh\GGamma) \rightarrow \mc{A}_d$ extends to a normal CB map $\LL^{\infty}(\wh\GGamma)\rightarrow \ov{\mc{A}_d}^{w^*}$.

We finally identify $\Upsilon$ with the adjoint of a centraliser, namely we claim that $\Psi_d((g_{1,k},g_{2,k})_{k=1}^{d})\in \M^l_{cb}(\A(\GGamma))$ and $\Upsilon=\Theta^l( \Psi_d((g_{1,k},g_{2,k})_{k=1}^{d}) )$.  For $\alpha=\alpha_{1}\boxtimes\cdots\boxtimes \alpha_{d}\in \Irr(\wh\GGamma)$ let us write $i_j\in \{1,2\}$ for indices such that $\alpha_{j}\in \Irr(\wh\GGamma_{i_j})\setminus\{e\},\, i_j\neq i_{j+1}$. Furthermore, write each matrix block as $g_{i,k,\alpha}=[g_{i,k,\alpha,m,n}]_{m,n=1}^{\dim(\alpha)}=\sum_{m,n=1}^{\dim(\alpha)} g_{i,k,\alpha,m,n}e^{\alpha}_{m,n}$, where $\{e^{\alpha}_{m,n}\}_{m,n=1}^{\dim(\alpha)}$ are the matrix units in $\B(\msf{H}_\alpha)$. Choose arbitrary $\omega\in h(\Pol(\wh\GGamma)\cdot)\subseteq \LL^1(\wh\GGamma)$. We can calculate $\Psi_d( (g_{1,k},g_{2,k})_{k=1}^{d}) \lambda_{\wh\GGamma}(\omega)$ as follows:

\begin{equation*}\begin{split}\label{eq21}
&\quad\;
\Psi_d( (g_{1,k},g_{2,k})_{k=1}^{d}) \lambda_{\wh\GGamma}(\omega)\\
&=
\sum_{d'=0}^{\infty}
\sum_{\alpha=\alpha_1\boxtimes\cdots\boxtimes\alpha_{d'}}
\sum_{m_1,n_1=1}^{\dim(\alpha_1)}
\!\!\!\cdots\!\!\!
\sum_{m_{d'},n_{d'}=1}^{\dim(\alpha_{d'})}\!\!
\la U^{\alpha}_{(m_1,\dotsc,m_{d'}),(n_1,\dotsc,n_{d'})},\omega\ra \\
&\qquad\qquad\qquad\qquad
\qquad\qquad\qquad\qquad
\qquad\qquad
\Psi_d( (g_{1,k},g_{2,k})_{k=1}^{d})
(e^{\alpha_1}_{m_1,n_1}\otimes\cdots\otimes 
e^{\alpha_{d'}}_{m_{d'},n_{d'}})\\
&=
\sum_{\alpha=\alpha_1\boxtimes\cdots\boxtimes\alpha_{d}}
\sum_{m_1,n_1=1}^{\dim(\alpha_1)}
\!\!\!\cdots\!\!\!
\sum_{m_{d},n_{d}=1}^{\dim(\alpha_{d})}\!\!
\la U^{\alpha}_{(m_1,\dotsc,m_{d}),(n_1,\dotsc,n_{d})},\omega\ra 
(g_{i_1,1,\alpha_1}e^{\alpha_1}_{m_1,n_1}\otimes\cdots\otimes 
g_{i_d,d,\alpha_d}e^{\alpha_{d}}_{m_{d},n_{d}})\\
&=
\sum_{\alpha=\alpha_1\boxtimes\cdots\boxtimes\alpha_{d}}
\sum_{m_1,k_1,n_1=1}^{\dim(\alpha_1)}
\!\!\!\cdots\!\!\!
\sum_{m_{d},k_d,n_{d}=1}^{\dim(\alpha_{d})}\!\!
\la U^{\alpha_1}_{m_1,n_1}\cdots U^{\alpha_d}_{m_d,n_d},\omega\ra \\
&\qquad\qquad\qquad\qquad
\qquad\qquad\qquad\qquad
\qquad\qquad
(g_{i_1,1,\alpha_1,k_1,m_1}e^{\alpha_1}_{k_1,n_1}\otimes\cdots\otimes 
g_{i_d,d,\alpha_d,k_d,m_d}e^{\alpha_{d}}_{k_{d},n_{d}}),
\end{split}\end{equation*}
note that as $\omega\in h(\Pol(\wh\GGamma)\cdot)$, the sums above are finite. On the other hand, using Lemma \ref{lemma4}, we have
\begin{equation*}\begin{split}\label{eq22}
&\quad\;\lambda_{\wh\GGamma}(\Upsilon_*(\omega))\\
&=
\sum_{d'=0}^{\infty}
\sum_{\alpha=\alpha_1\boxtimes\cdots\boxtimes\alpha_{d'}}
\sum_{m_1,n_1=1}^{\dim(\alpha_1)}
\!\!\!\cdots\!\!\!
\sum_{m_{d'},n_{d'}=1}^{\dim(\alpha_{d'})}\!\!
\la U^{\alpha}_{(m_1,\dotsc,m_{d'}),(n_1,\dotsc,n_{d'})},\Upsilon_*(\omega)\ra 
(e^{\alpha_1}_{m_1,n_1}\otimes\cdots\otimes 
e^{\alpha_{d'}}_{m_{d'},n_{d'}})\\
&=
\sum_{\alpha=\alpha_1\boxtimes\cdots\boxtimes\alpha_{d}}
\sum_{m_1,n_1=1}^{\dim(\alpha_1)}
\!\!\!\cdots\!\!\!
\sum_{m_{d},n_{d}=1}^{\dim(\alpha_{d})}\!\!
\la \Theta^l(g_{i_1,1})( U^{\alpha_1}_{m_1,n_1})\cdots \Theta^l(g_{i_d,d})(U^{\alpha_d}_{m_d,n_d}), \omega\ra 
(e^{\alpha_1}_{m_1,n_1}\otimes\cdots\otimes 
e^{\alpha_{d}}_{m_{d},n_{d}})\\
&=\!\!
\sum_{\alpha=\alpha_1\boxtimes\cdots\boxtimes\alpha_{d}}
\sum_{m_1,k_1,n_1=1}^{\dim(\alpha_1)}
\!\!\!\cdots\!\!\!
\sum_{m_{d},k_d,n_{d}=1}^{\dim(\alpha_{d})}\!\!
\la ( g_{i_1,1,m_1,k_1} U^{\alpha_1}_{k_1,n_1})\!\cdots \!(g_{i_d,d,m_d,k_d} U^{\alpha_d}_{k_d,n_d}), \omega\ra 
(e^{\alpha_1}_{m_1,n_1}\otimes\!\cdots\!\otimes 
e^{\alpha_{d}}_{m_{d},n_{d}})
\end{split}\end{equation*}
These two computations show that $\Psi_d((g_{1,k},g_{2,k})_{k=1}^{d})\lambda_{\wh\GGamma}(\omega)=\lambda_{\wh\GGamma}(\Upsilon_*(\omega))$. As the space of functionals $h(\Pol(\wh\GGamma)\cdot)$ is dense in $\LL^1(\wh\GGamma)$, this proves the claim.
\end{proof}

Define
\[
\mc{V}_0= \M^l_{cb}(\A(\GGamma_1))\oplus_\infty
\M^l_{cb}(\A(\GGamma_2))
\]
so that $\mc{V}=\bigoplus_{k=1}^{d}\mc{V}_0$.

Note that $\mc{V}=\bigoplus_{k=1}^{d} \M^l_{cb}(\A(\GGamma_1))\oplus_\infty \M^l_{cb}(\A(\GGamma_2))$ is a dual Banach space with predual given by 
the $\ell^1$-direct sum $\bigoplus_{k=1}^{d} Q^l(\A(\GGamma_1)) \oplus_1 Q^l(\A(\GGamma_2))$. Similarly, $\mc V_0$ is a dual Banach space with predual $Q^l(\A(\GGamma_1)) \oplus_1 Q^l(\A(\GGamma_2))$.

\begin{lemma}\label{lemma5}
Map $\Psi_d$ is separately weak$^*$-weak$^*$-continuous, i.e.~for any $1\le k \le d$ and fixed elements $(g_{1,k'},g_{2,k'})\in \mc{V}_{0}$ for $1\le k' \le d$ and $k'\neq k$, the map
\[
\mc{V}_0\ni (g_{1,k} , g_{2,k})\mapsto
\Psi_d( (g_{1,k'}, g_{2,k'})_{k'=1}^{d})\in \M^l_{cb}(\A(\GGamma))
\]
is weak$^*$-weak$^*$-continuous.
\end{lemma}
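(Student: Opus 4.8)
The plan is to run exactly the same machinery that proved Theorem~\ref{thm5}, namely the bidual criterion of Lemma~\ref{biduallemma}. Fix $k$ and elements $(g_{1,k'},g_{2,k'})\in\mc{V}_0$ for $k'\neq k$, and write $L\colon\mc{V}_0\to\M^l_{cb}(\A(\GGamma))$ for the map obtained by letting only the $k$-th slot vary. From the definition \eqref{eq5} of $\tilde\Psi_d$, for each $\alpha=\alpha_1\boxtimes\cdots\boxtimes\alpha_d$ the block $\Psi_d(\dots)_\alpha$ is a tensor product in which only the factor $g_{i_k,k,\alpha_k}$ depends on the varying slot; since a tensor product is linear in each factor, $L$ is a genuine bounded linear map (boundedness is the estimate already recorded in Lemma~\ref{lemma3}). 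Setting $E=Q^l(\A(\GGamma_1))\oplus_1 Q^l(\A(\GGamma_2))$ and $F=Q^l(\A(\GGamma))$, so that $L\colon E^*\to F^*$, it then suffices to produce a subset $D\subseteq F$ with dense linear span such that $L^*\kappa_F(D)\subseteq\kappa_E(E)$.

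For $D$ I would take the functionals $\alpha^l(\rho)$ where $\rho\in\ell^1(\GGamma)$ is supported in a single matrix block $\M_{\dim(\alpha)}$, $\alpha\in\Irr(\wh\GGamma)$. Such $\rho$ have dense linear span in $\ell^1(\GGamma)$, and $\alpha^l(\ell^1(\GGamma))$ is by definition dense in $Q^l(\A(\GGamma))$, so $D$ has dense span in $F$. For such $\rho$ one computes $\la L(g_{1,k},g_{2,k}),\alpha^l(\rho)\ra=\la\Psi_d(\dots),\rho\ra_{\ell^\infty(\GGamma),\ell^1(\GGamma)}$, which vanishes unless $\oon{len}(\alpha)=d$, in which case, writing $\alpha=\alpha_1\boxtimes\cdots\boxtimes\alpha_d$ with $\alpha_j\in\Irr(\wh\GGamma_{i_j})$, it equals $\la g_{i_1,1,\alpha_1}\otimes\cdots\otimes g_{i_d,d,\alpha_d},\rho\ra$.

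The key observation is that this last expression depends on $(g_{1,k},g_{2,k})$ only through the single finite matrix block $g_{i_k,k,\alpha_k}$, and depends on it linearly. Block evaluation is weak$^*$-continuous: each entry satisfies $g^{\alpha_k}_{m,n}=\la g_{i_k,k},\alpha^l(e^{\alpha_k}_{n,m})\ra$, the pairing against a fixed element $\alpha^l(e^{\alpha_k}_{n,m})$ of the predual $Q^l(\A(\GGamma_{i_k}))$, and the coordinate projection $\mc{V}_0\to\M^l_{cb}(\A(\GGamma_{i_k}))$ is weak$^*$-continuous as the adjoint of a predual inclusion. Hence $(g_{1,k},g_{2,k})\mapsto\la L(g_{1,k},g_{2,k}),\alpha^l(\rho)\ra$ is a weak$^*$-continuous functional on $\mc{V}_0$, i.e.~is given by an element of $E$; this says precisely that $L^*\kappa_F(\alpha^l(\rho))\in\kappa_E(E)$. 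Lemma~\ref{biduallemma} then yields that $L$ is weak$^*$-weak$^*$-continuous, which is the assertion.

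The calculations themselves — identifying $\la\Psi_d(\dots),\rho\ra$ for single-block $\rho$ and checking weak$^*$-continuity of block evaluation — are routine, so the only point requiring care is organizational: one must be sure that restricting $\Psi_d$ to a single slot makes it linear and, crucially, that it touches only finite-dimensional matrix data. The main obstacle is therefore choosing the right dense test set, and the single-block functionals $\rho$ are what makes the argument transparent, since they collapse the multilinear tensor-product structure of $\Psi_d$ into a linear dependence on one matrix block, for which weak$^*$-continuity is essentially automatic.
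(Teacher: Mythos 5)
Your proof is correct and follows essentially the same route as the paper's: both arguments reduce to showing that precomposition with the one-slot map carries a norm-dense set of functionals in $Q^l(\A(\GGamma))$ into the predual $Q^l(\A(\GGamma_1))\oplus_1 Q^l(\A(\GGamma_2))$, using the block structure \eqref{eq5} of $\Psi_d$ for the dense set and the CB bound \eqref{eq4} to extend by density. The only difference is organisational: the paper inlines the density argument (approximating a general $\Omega\in Q^l(\A(\GGamma))$ by $\ell^1(\GGamma)$-functionals and using weak$^*$-continuity of the slot map at the $\ell^\infty$ level), whereas you delegate it to Lemma~\ref{biduallemma} tested on single-block functionals, which is a repackaging of the same idea.
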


\begin{proof}
Fix $1\le k \le d$ and $(g_{1,k'},g_{2,k'})\in \mc{V}_{0}$ for $1\le k' \le d$ and $k'\neq k$. Assume that 
\[
(g_{1,k}^\lambda , g_{2,k}^\lambda)\xrightarrow[\lambda\in \Lambda]{w^*}
(g_{1,k},g_{2,k})\quad\textnormal{ in }\quad\mc{V}_0=\M^l_{cb}(\A(\GGamma_1))\oplus_\infty \M^l_{cb}(\A(\GGamma_2)),
\]
in particular
 \[
(g_{1,k}^\lambda , g_{2,k}^\lambda)\xrightarrow[\lambda\in \Lambda]{w^*}
(g_{1,k},g_{2,k})\quad\textnormal{ in }\quad \ell^{\infty}(\GGamma_1)\oplus_\infty\ell^{\infty}(\GGamma_2).
\]
Take $\Omega\in Q^l(\A(\GGamma))$.
Since $Q^l(\A(\GGamma))$ is the norm closure of $\ell^1(\GGamma)$ in $\M^l_{cb}(\A(\GGamma))^*$, we can find a sequence $(\Omega_n)_{n\in\NN}$ in $\ell^1(\GGamma)\subseteq Q^l(\A(\GGamma))$ which converges in norm to $\Omega$. By the description \eqref{eq5} of $\tilde{\Psi}_d$, we see that the map
  \[
\ell^{\infty}(\GGamma_1)\oplus_\infty\ell^{\infty}(\GGamma_2)\ni 
(g_{1,k}',g_{2,k}')\mapsto
\tilde{\Psi}_d \bigl(
(g_{1,k'},g_{2,k'})_{k'=1}^{k-1} , (g'_{1,k}, g_{2,k}'),
(g_{1,k'},g_{2,k'})_{k'=k+1}^{d}\bigr) \in \ell^{\infty}(\GGamma)  
  \]
is weak$^*$-weak$^*$-continuous, hence the linear functional
\[
 \Omega_n\circ 
\tilde{\Psi}_d \bigl(
(g_{1,k'},g_{2,k'})_{k'=1}^{k-1} ,\; \cdot\; ,
(g_{1,k'},g_{2,k'})_{k'=k+1}^{d}\bigr)
 \]
is contained in $\ell^1(\GGamma_1)\oplus_1 \ell^1(\GGamma_2)$ for each $n\in\NN$. Consider the difference
 \[\begin{split}
 \Omega_n&\circ 
\Psi_d \bigl(
(g_{1,k'},g_{2,k'})_{k'=1}^{k-1} ,\; \cdot\; ,
(g_{1,k'},g_{2,k'})_{k'=k+1}^{d}\bigr)\\
-
 \Omega&\circ 
\Psi_d \bigl(
(g_{1,k'},g_{2,k'})_{k'=1}^{k-1} ,\; \cdot\; ,
(g_{1,k'},g_{2,k'})_{k'=k+1}^{d}\bigr),
 \end{split}\]
living in $\M^l_{cb}(\A(\GGamma_1))^*\oplus_1 \M^l_{cb}(\A(\GGamma_2))^*$. Because of the bound \eqref{eq4} we can estimate 
 \[\begin{split}
&\quad\; \bigl\|
 (\Omega-\Omega_n)\circ \Psi_d \bigl(
(g_{1,k'},g_{2,k'})_{k'=1}^{k-1} ,\; \cdot\; ,
(g_{1,k'},g_{2,k'})_{k'=k+1}^{d}\bigr)
\bigr\|\\
&=
\sup_{(g'_{1,k},g'_{2,k})\in (\M^l_{cb}(\A(\GGamma_1))\oplus_\infty \M^l_{cb}(\A(\GGamma_2)))_1}\\
&\quad\quad\quad
\bigl| (\Omega-\Omega_n)\circ \Psi_d \bigl(
(g_{1,k'},g_{2,k'})_{k'=1}^{k-1} ,
(g'_{1,k},g'_{2,k}),
(g_{1,k'},g_{2,k'})_{k'=k+1}^{d}\bigr) \bigr|\\
&\le 
\|\Omega-\Omega_n\|\;
4d (2d+1) \prod_{k'=1, k'\neq k}^{d} \max_{i\in\{1,2\}}
\|g_{i,k'}\|_{cb}\xrightarrow[n\to\infty]{}0.
 \end{split}\]
This shows 
\[
 \Omega \circ \Psi_d \bigl(
(g_{1,k'},g_{2,k'})_{k'=1}^{k-1} ,\; \cdot\; ,
(g_{1,k'},g_{2,k'})_{k'=k+1}^{d}\bigr)\in Q^l(\A(\GGamma_1)) \oplus_1 Q^l(\A(\GGamma_2)),
\]
and hence
\[
\Omega\circ \Psi_d\bigl((g_{1,k'},g_{2,k'})_{k'=1}^{k-1} , (g_{1,k}^\lambda,g_{2,k}^\lambda), 
(g_{1,k'},g_{2,k'})_{k'=k+1}^{d}\bigr)\xrightarrow[\lambda\in \Lambda]{}
\Omega\circ \Psi_d\bigl((g_{1,k'},g_{2,k'})_{k'=1}^{d}\bigr)
\]
as desired. This proves that $\Psi_d$ is separately weak$^*$-weak$^*$-continuous.
\end{proof}

For $d\ge 1$ consider $p_d\in \ell^{\infty}(\GGamma)$ defined via $p_d=(p_{d,\alpha})_{\alpha\in\Irr(\wh\GGamma)}$ where
\[
p_{d,\alpha}=
\begin{cases}
0 ,& \textnormal{length of }\alpha \neq d,\\
\I, &
\textnormal{length of }\alpha = d.
\end{cases}
\]

\begin{lemma}
Projection $p_d$ belongs to $ \M^l_{cb}(\A(\GGamma))$ and $\Theta^l(p_d)=\mc{P}_d$. 
\end{lemma}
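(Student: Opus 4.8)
The plan is to recognise $p_d$ as the value of the map $\Psi_d$ from Lemma~\ref{lemma3} at the constant tuple consisting of units, and then read off both assertions directly from that lemma. First I would observe that the unit $\I$ is a completely bounded left multiplier of $\A(\GGamma_i)$ for each $i\in\{1,2\}$, with $\Theta^l(\I)=\id$ and $\|\I\|_{cb}=1$; hence the constant tuple $(\I,\I)_{k=1}^{d}$ is a legitimate element of $\mc{V}=\bigoplus_{k=1}^{d}\mc{V}_0$.

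Next I would compute $\Psi_d$ on this tuple directly from the defining formula of $\tilde{\Psi}_d$ in \eqref{eq5}. Since each matrix block of $\I\in\ell^{\infty}(\GGamma_i)$ is an identity matrix $\I_{\dim(\alpha_j)}$, the tensor product $g_{i_1,1,\alpha_1}\otimes\cdots\otimes g_{i_d,d,\alpha_d}$ equals the identity on $\msf{H}_\alpha$ whenever $\oon{len}(\alpha)=d$, while the formula returns $0$ for all other lengths. Comparing this block by block with the definition of $p_d$, we conclude $\Psi_d((\I,\I)_{k=1}^{d})=p_d$. By Lemma~\ref{lemma3} this already shows $p_d\in\M^l_{cb}(\A(\GGamma))$.

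Finally, to identify $\Theta^l(p_d)$, I would specialise the map $\Upsilon$ produced in the proof of Lemma~\ref{lemma3}, which there satisfies $\Upsilon=\Theta^l(\Psi_d(\cdots))$. With every $g_{i,k}=\I$ we have $\Theta^l(g_{i,k})=\id$, so $\Upsilon$ annihilates elements of length $\neq d$ and acts as $a_1\cdots a_d\mapsto a_1\cdots a_d$ on elements of length $d$; that is, $\Upsilon$ is exactly the projection onto length-$d$ elements, which is $\mc{P}_d$ by Lemma~\ref{lemma1}. The proof of Lemma~\ref{lemma3} already records that, in this case $g_{i,k}=\I$, the map $\mc{P}_d$ extends to a normal CB map $\LL^{\infty}(\wh\GGamma)\to\ov{\mc{A}_d}^{w^*}$; since $\Theta^l(p_d)$ and this extension are both normal and agree on the weak$^*$-dense subalgebra $\mrm{C}(\wh\GGamma)$, they coincide on all of $\LL^{\infty}(\wh\GGamma)$, giving $\Theta^l(p_d)=\mc{P}_d$.

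There is essentially no obstacle here: the substantive content — boundedness on $\LL^2$ and its opposite, complete boundedness, normality, and the centraliser identity — was carried out in Lemma~\ref{lemma3}, and the present statement is simply its diagonal case $g_{i,k}=\I$. The only points needing a moment's care are the block-by-block verification that $\Psi_d((\I,\I)_{k=1}^{d})$ really equals $p_d$, and the remark that the two normal maps agree because they agree on the weak$^*$-dense $\mrm{C}(\wh\GGamma)$.
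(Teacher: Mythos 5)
Your proposal is correct. Both claims do follow, as you say, from the diagonal specialisation $g_{i,k}=\I$: the tuple $(\I,\I)_{k=1}^{d}$ lies in $\mc{V}$ since $\Theta^l(\I)=\id$, the blockwise computation $\Psi_d((\I,\I)_{k=1}^{d})=p_d$ is immediate from \eqref{eq5}, membership $p_d\in\M^l_{cb}(\A(\GGamma))$ is then the statement of Lemma~\ref{lemma3}, and the identification $\Theta^l(p_d)=\mc{P}_d$ is the identity $\Upsilon=\Theta^l(\Psi_d(\cdots))$ established inside the proof of Lemma~\ref{lemma3}, evaluated at $g_{i,k}=\I$.

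The route differs slightly from the paper's. The paper uses only the \emph{statement} of Lemma~\ref{lemma3} (namely that $\mc{P}_d$ extends to a normal CB map on $\LL^{\infty}(\wh\GGamma)$) and then re-verifies the centraliser relation $(\I\otimes p_d)\ww^{\wh\GGamma}=(\mc{P}_d\otimes\id)\ww^{\wh\GGamma}$ by a direct matrix-coefficient computation against functionals $\omega\in h(\Pol(\wh\GGamma)\,\cdot\,)$. You instead appeal to the internals of Lemma~\ref{lemma3}'s proof, where exactly this computation was already carried out for general $g_{i,k}$, so your argument avoids duplicating it. What you lose is a small amount of robustness: the identity $\Upsilon=\Theta^l(\Psi_d(\cdots))$ is not part of the lemma's stated conclusion, so your proof depends on how that lemma was proved rather than on what it asserts; the paper's version is self-contained modulo the stated conclusions of earlier results. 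Both are sound, and your closing observation — that two normal maps agreeing on the weak$^*$-dense subalgebra $\mrm{C}(\wh\GGamma)$ coincide — correctly disposes of the only remaining identification.
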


\begin{proof}
We already know that $\mc{P}_d$ is a weak$^*$-continuous map on $\LL^{\infty}(\wh\GGamma)$. Take a linear functional $\omega\in h(\Pol(\wh\GGamma)\,\cdot )\subseteq \LL^1(\wh\GGamma)$. Then 
we get 
\[\begin{split}
&\quad\;
(\omega\otimes\id)\bigl( (\I\otimes p_d)\ww^{\wh\GGamma}\bigr)=
\sum_{d'=1}^{\infty} \sum_{\alpha\in \Irr(\wh\GGamma):\; \oon{len}(\alpha)=d'}
\sum_{i,j=1}^{\dim(\alpha)}
(\omega\otimes\id)\bigl(
U^{\alpha}_{i,j}\otimes p_d\, e^{\alpha}_{i,j}\bigr)\\
&=
\sum_{\alpha\in \Irr(\wh\GGamma):\; \oon{len}(\alpha)=d}
\sum_{i,j=1}^{\dim(\alpha)}
(\omega\otimes\id)\bigl(
U^{\alpha}_{i,j}\otimes e^{\alpha}_{i,j}\bigr)\\
&=
\sum_{d'=1}^{\infty} \sum_{\alpha\in \Irr(\wh\GGamma):\; \oon{len}(\alpha)=d'}
\sum_{i,j=1}^{\dim(\alpha)}
(\omega\otimes\id)\bigl(
\mc{P}_d(U^{\alpha}_{i,j})\otimes e^{\alpha}_{i,j}\bigr)=
(\omega\circ \mc{P}_d\otimes\id) \ww^{\wh\GGamma}, 
\end{split}\]
noting that all sums in this calculation are finite, because of the form of $\omega$.  As such $\omega$ are dense, this yields $(\I\otimes p_d)\ww^{\wh\GGamma}=(\mc{P}_d\otimes\id) \ww^{\wh\GGamma}$ as required. 
\end{proof}

Finally, we are ready to prove that AP is preserved by taking free products of discrete quantum groups.

\begin{proof}[Proof of Theorem \ref{thm1}]
Assume that $\GGamma_1, \GGamma_2$ have AP and for $i\in\{1,2\}$ choose families $(f_{i,\lambda})_{\lambda\in \Lambda_i}$ in $\mrm{c}_{00}(\GGamma_i)$ converging to $\I$ in $(\M^l_{cb}(\A(\GGamma_i)),w^*)$. 
Due to Proposition \ref{prop1} we may assume without loss of generality that each $\Theta^l(f_{i,\lambda})$ is unit preserving.  As in Remark~\ref{rem:cent_unit_pres_means}, it then follows that $\Theta^l(f_{i,\lambda})$ preserves the Haar integral on $\wh\GGamma_i$, and that $f_{i,\lambda,e}=1$ for all $i\in \{1,2\}, \lambda\in \Lambda_i$.

Fix $d\in \NN$.  We shall first show that $p_d \in \ov{\mrm{c}_{00}(\GGamma)}^{w^*}\subseteq \M^l_{cb}(\A(\GGamma))$.  To do this, we will consider a net of the form
\[
( f_{1,\lambda_{1,k}} , f_{2,\lambda_{2,k}})_{k=1}^{d}\in \mc{V}.
\]
where each $\lambda_{i,k}\in \Lambda_i$ for $i\in\{1,2\}$ and $1\le k\le d$.
Lemma \ref{lemma5} gives us
\[
\Psi_d\bigl( 
( f_{1,\lambda_{1,k}} , f_{2,\lambda_{2,k}})_{k=1}^{d}\bigr)\in \M^l_{cb}(\A(\GGamma)).
\]
In fact, using the definition of $\tilde\Psi_d$, we see that these multipliers are in $\mrm{c}_{00}(\GGamma)$ since all of the $f_{i,\lambda}$ are finitely supported.

We first consider the case when we keep $\lambda_{i,k}$ fixed, for $k\ge 2$.  Since $\Psi_d$ is separately weak$^*$-weak$^*$-continuous, by Lemma~\ref{lemma5}, we have
\[
\Psi_d\bigl( (f_{1,\lambda_{1,1} } , f_{2, \lambda_{2,1}}) , 
( f_{1,\lambda_{1,k}} , f_{2,\lambda_{2,k}})_{k=2}^{d}\bigr)\xrightarrow[(\lambda_{1,1} , \lambda_{2,1} ) \in \Lambda_1\times \Lambda_2]{w^*}
\Psi_d\bigl( (\I,\I) , 
( f_{1,\lambda_{1,k}} , f_{2,\lambda_{2,k}})_{k=2}^{d}\bigr).
\]
We now repeat this argument in the second variable, and so forth, and using that $(\ov{ \mrm{c}_{00}(\GGamma)}^{w^*})^{- w^*}=\ov{\mrm{c}_{00}(\GGamma)}^{w^*}$, we obtain
\[
p_d=\Psi_d ( (\I,\I)_{k=1}^{d})\in \ov{\mrm{c}_{00}(\GGamma)}^{w^*}\subseteq \M^l_{cb}(\A(\GGamma)).
\]
Clearly we also have $p_0=(\delta_{\alpha,e}\I)_{\alpha\in\Irr(\wh\GGamma)}\in \mrm{c}_{00}(\GGamma) \subseteq \ov{ \mrm{c}_{00}(\GGamma)}^{w^*}$.

We now show that $\I\in \ov{\mrm{c}_{00}(\GGamma)}^{w^*}$.  Consider
\[
T_n=\sum_{d=0}^{n} (1-\tfrac{1}{\sqrt{n}})^d p_d\in 
\ov{ \mrm{c}_{00}(\GGamma)}^{w^*}\subseteq \M^l_{cb}(\A(\GGamma))\qquad(n\in\NN).
\] 
According to \cite[Proposition 3.5]{RicardXu}, $\lim_{n\to\infty} \|T_n\|_{cb}=1$ and $(\Theta^l(T_n))_{n\in\NN}$ converges pointwise to the identity on $\mrm{C}(\wh\GGamma)$. Take $x\in \mrm{C}(\wh\GGamma)\odot \mc{K}(\msf{H}), \omega \in \LL^1(\wh\GGamma)\odot \B(\msf{H})_*$ for a separable Hilbert space $\msf{H}$. Then we obtain
\[
\la T_n-\I , \Omega_{x,\omega} \ra =
\la 
\bigl((\Theta^l(T_n)-\id)\otimes \id\bigr)x,\omega\ra \xrightarrow[n\to\infty]{} 0,
\]
and since $(T_n)_{n\in\NN}$ is uniformly bounded in CB norm, the same holds for general $x\in \mrm{C}(\wh\GGamma)\otimes \mc{K}(\msf{H}), \omega \in \LL^1(\wh\GGamma)\wh\otimes \B(\msf{H})_*$.  By Proposition~\ref{prop15}, this shows that $T_n \xrightarrow[n\to\infty]{} \I$ weak$^*$ in $\M^l_{cb}(\A(\GGamma))$.  As each $p_d$ is in the weak$^*$-closure of $\mrm{c}_{00}(\GGamma)$, the same is true of each $T_n$, and hence we conclude that $\I$ is in the weak$^*$-closure of $\mrm{c}_{00}(\GGamma)$, showing that $\GGamma$ has AP.

If $f_{i,\lambda}\in \mc{Z}(\ell^{\infty}(\GGamma_i))\cap \mrm{c}_{00}(\GGamma_i)$ for each $i,\lambda$, then $\Psi_d( (f_{1,\lambda_{1,k}} , f_{2,\lambda_{2,k}} )_{k=1}^{d})$ is also central.  Consequently, if $\GGamma_1,\GGamma_2$ have central AP then so does $\GGamma=\GGamma_1\star\GGamma_2$.
\end{proof}

\begin{corollary}
Let $(\GGamma_i)_{i\in I}$ be a family of discrete quantum groups with (central) AP. Then the free product $\GGamma=\star_{i\in I}\GGamma_i$ has (central) AP.
\end{corollary}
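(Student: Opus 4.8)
The plan is to reduce the general (possibly infinite) case to the binary case already established in Theorem~\ref{thm1}, by writing an arbitrary free product as a direct limit of finite free products. First I would recall the standard fact that for a family $(\GGamma_i)_{i\in I}$ of discrete quantum groups, the free product $\GGamma = \star_{i\in I}\GGamma_i$ can be expressed as a direct limit. Concretely, for each finite subset $F\subseteq I$ one forms the finite free product $\GGamma_F = \star_{i\in F}\GGamma_i$, and these assemble into a directed system indexed by the finite subsets of $I$ ordered by inclusion. Whenever $F\subseteq F'$, there is a natural inclusion $\GGamma_F \hookrightarrow \GGamma_{F'}$ realising $\GGamma_F$ as a quantum subgroup of $\GGamma_{F'}$, exhibited at the level of the compact duals by the inclusion $\Pol(\wh\GGamma_F)\hookrightarrow\Pol(\wh\GGamma_{F'})$ coming from the description of the free product on matrix coefficients. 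One checks these connecting maps are injective, unital, normal $\star$-homomorphisms compatible with the coproducts, so that $(\GGamma_F)_{F}$ is a directed system of discrete quantum groups with injective connecting maps in the sense of Definition~\ref{def2}, and that its direct limit is precisely $\GGamma$, since $\varinjlim_F \Pol(\wh\GGamma_F) = \Pol(\wh\GGamma)$.

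Granting this, the proof proceeds in two short steps. First, by finite induction using Theorem~\ref{thm1}, each finite free product $\GGamma_F$ has (central) AP: a two-fold free product is handled directly by Theorem~\ref{thm1}, and for larger finite $F$ one writes $\GGamma_F = \GGamma_{F\setminus\{i_0\}} \star \GGamma_{i_0}$ and applies Theorem~\ref{thm1} again, the induction hypothesis giving (central) AP for the smaller free product. Second, since $\GGamma = \varinjlim_F \GGamma_F$ and every $\GGamma_F$ has (central) AP, Proposition~\ref{prop8} immediately yields that $\GGamma$ has (central) AP. This is exactly the shape of argument the paper sets up: Theorem~\ref{thm1} handles the binary case, Proposition~\ref{prop8} handles direct limits, and the corollary glues them.

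The main obstacle I anticipate is purely bookkeeping rather than conceptual: verifying carefully that the finite free products do form a directed system with \emph{injective} connecting maps satisfying the coherence conditions $\iota_{i,i}=\id$ and $\iota_{k,j}\iota_{j,i}=\iota_{k,i}$, and that the resulting direct limit is genuinely $\GGamma$. The injectivity and coproduct-compatibility follow from the structure of the reduced free product (the canonical embeddings $\mrm{C}(\wh\GGamma_F)\hookrightarrow\mrm{C}(\wh\GGamma_{F'})$ are injective $\star$-homomorphisms intertwining the comultiplications, by the very construction recalled in Section~\ref{sec:RX} and the description of $\Irr(\wh\GGamma)$ via reduced words), and the compatibility with Haar states ensures the limit Hopf $\star$-algebra is again $\Pol(\wh\GGamma)$ with its canonical invariant state. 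Since the corollary is stated without an explicit proof in the excerpt, a clean writeup need only record these reductions; no new estimates beyond those already in Theorem~\ref{thm1} and Proposition~\ref{prop8} are required.

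\begin{proof}
For a finite subset $F\subseteq I$ write $\GGamma_F=\star_{i\in F}\GGamma_i$ for the free product of the finite subfamily $(\GGamma_i)_{i\in F}$. By the description of the free product recalled above, for $F\subseteq F'$ the inclusion $\Pol(\wh\GGamma_F)\hookrightarrow\Pol(\wh\GGamma_{F'})$ extends to an injective, unital, normal $\star$-homomorphism $\iota_{F',F}\colon\LL^\infty(\wh\GGamma_F)\to\LL^\infty(\wh\GGamma_{F'})$ which intertwines the comultiplications, and these maps satisfy $\iota_{F,F}=\id$ and $\iota_{F'',F'}\iota_{F',F}=\iota_{F'',F}$ whenever $F\subseteq F'\subseteq F''$. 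Hence $(\GGamma_F)_F$, indexed by the finite subsets of $I$ ordered by inclusion, is a directed system of discrete quantum groups with injective connecting maps in the sense of Definition~\ref{def2}. Since the free product of the whole family satisfies $\Pol(\wh\GGamma)=\varinjlim_F\Pol(\wh\GGamma_F)$, with the invariant state induced by the Haar integrals, we have $\GGamma=\varinjlim_F\GGamma_F$.

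We next show that each $\GGamma_F$ has (central) AP by induction on $|F|$. The cases $|F|\le 1$ are trivial, and $|F|=2$ is Theorem~\ref{thm1}. If $|F|\ge 3$, pick $i_0\in F$ and write $\GGamma_F=\GGamma_{F\setminus\{i_0\}}\star\GGamma_{i_0}$. By the inductive hypothesis $\GGamma_{F\setminus\{i_0\}}$ has (central) AP, and $\GGamma_{i_0}$ has (central) AP by assumption, so Theorem~\ref{thm1} shows that $\GGamma_F$ has (central) AP.

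Finally, as $\GGamma=\varinjlim_F\GGamma_F$ and every $\GGamma_F$ has (central) AP, Proposition~\ref{prop8} implies that $\GGamma$ has (central) AP.
\end{proof}
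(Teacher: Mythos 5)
Your proposal is correct and follows exactly the paper's own argument: finite free products are handled by induction via Theorem~\ref{thm1}, and the general case is realised as the direct limit $\GGamma=\varinjlim_F \star_{i\in F}\GGamma_i$ over finite subsets $F\subseteq I$, to which Proposition~\ref{prop8} applies. The extra bookkeeping you record (injectivity and coproduct-compatibility of the connecting maps, identification of the limit Hopf $\star$-algebra) is precisely what the paper's terser proof leaves implicit.
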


\begin{proof}
If $I$ is finite the claim follows from Theorem \ref{thm1} by induction. In the general case, for any finite (nonempty) set $F\subseteq I$, the free product $\star_{i\in F}\GGamma_i$ is a quantum subgroup of $\star_{i\in I}\GGamma_i$ in a natural way. Moreover $(\star_{i\in F}\GGamma_i)_{F\subseteq I}$ forms a directed system of discrete quantum groups with injective connecting maps over the directed set of finite subsets of $I$, compare Definition \ref{def2}, and $\star_{i\in I}\GGamma_i=
\varinjlim_{F\subseteq I} \star_{i\in F}\GGamma_i$. Since $\star_{i\in F}\GGamma_i$ has (central) AP, the claim follows from Proposition \ref{prop8}.
\end{proof}

\subsection{Double crossed products}

In this section we study how the approximation property behaves with respect to the double crossed product construction. This contains the Drinfeld double of 
a locally compact quantum group as a special case. 

\subsubsection{Preliminaries}\label{sec:dbl_cp:prelim}
We start by recalling some definitions, following the conventions in \cite{Doublecrossed}.

A \emph{matching} between two locally compact quantum groups $\GG_1,\GG_2$ is a faithful normal $\star$-homomorphism $\oon{m}\colon \LL^{\infty}(\GG_1)\bar{\otimes}\LL^{\infty}(\GG_2)\rightarrow \LL^{\infty}(\GG_1)\bar{\otimes}\LL^{\infty}(\GG_2)$ satisfying
\[
(\Delta_1\otimes\id)\oon{m}=\oon{m}_{23} \oon{m}_{13} (\Delta_1\otimes\id)\quad\textnormal{ and }\quad
(\id\otimes\Delta_2)\oon{m}=\oon{m}_{13}\oon{m}_{12}(\id\otimes\Delta_2).
\]
Given this data one defines the \emph{double crossed product} $\GG_{\oon{m}}$ of $\GG_1, \GG_2$ as follows. The Hilbert space of square integrable functions on $\GG_{\oon{m}}$, the von Neumann algebra of functions on $\GG_{\oon{m}}$ and its comultiplication are given by
\[
\LL^2(\GG_{\oon{m}})=\LL^2(\GG_1)\otimes \LL^2(\GG_2),\;\, \LL^{\infty}(\GG_{\oon{m}})=\LL^{\infty}(\GG_1)\bar{\otimes}\LL^{\infty}(\GG_2),\;\,
\Delta_{\oon{m}}=(\id\otimes \chi\oon{m}\otimes\id)(\Delta_1^{\oon{op}}\otimes\Delta_2).
\]
To ease notation, we will decorate objects related to $\GG_1$ (resp. $\GG_2,\GG_{\oon{m}})$ with $1$ (resp. $2,\oon{m}$) in the sequel, e.g.~$\ww_1=\ww^{\GG_1}$. We will also denote the unit in $\B(\LL^{2}(\GG_{\oon{m}}))$ by $\I_{\oon{m}}$.

Let $J$ (resp.~$\hat{J}$) be the modular conjugation of the left Haar integral on the bicrossed product of $\GG_1,\GG_2$ (resp.~its dual), see \cite[Section 2.4]{Doublecrossed}. 
Define a unitary
\[
Z=J \hat{J}(\hat{J}_1 J_1\otimes \hat{J}_2 J_2).
\]
It implements $\oon{m}$ in the sense that $\oon{m}(z)=Z z Z^*$ for all $z\in \LL^{\infty}(\GG_1)\bar{\otimes} \LL^{\infty}(\GG_2)$. The Kac-Takesaki operator of $\GG_{\oon{m}}$ is given by
\[
\ww_{\oon{m}}=(\Sigma \vv_1^* \Sigma)_{13} Z_{34}^* \ww_{2,24} Z_{34}.
\]
One can describe structure of $\GG_{\oon{m}}$ and its dual, see in particular \cite[Theorem 5.3]{Doublecrossed}. For example, we have
\[\begin{split}
\LL^{\infty}(\wh{\GG_{\oon{m}}})&=\bigl((\LL^{\infty}(\wh{\GG_1})'\otimes\I)\cup Z^* (\I\otimes \LL^{\infty}(\wh{\GG_2}))Z\bigr)'',\\
\LL^{\infty}(\wh{\GG_{\oon{m}}})'&=\bigl(Z^* (\LL^{\infty}(\wh{\GG_1})\otimes\I)Z \cup (\I\otimes \LL^{\infty}(\wh{\GG_2})')\bigr)''.
\end{split}\]

A special case of this construction is the (generalised) Drinfeld double. Let $\GG_1,\GG_2$ be locally compact quantum groups 
and assume that $\mc{Z}\in \LL^{\infty}(\GG_1)\bar{\otimes} \LL^{\infty}(\GG_2)$ is a bicharacter. That is, $\mc{Z} $ is a unitary satisfying
\[
(\Delta_1\otimes\id)\mc{Z}=\mc{Z}_{23} \mc{Z}_{13} \quad 
\textnormal{and}\quad
(\id\otimes\Delta_2)\mc{Z}=\mc{Z}_{13}\mc{Z}_{12}.
\]
Then one obtains an inner $\star$-automorphism
\[
\oon{m}\colon \LL^{\infty}(\GG_1)\bar{\otimes}\LL^{\infty}(\GG_2)\ni x \mapsto \mc{Z}x \mc{Z}^*\in
\LL^{\infty}(\GG_1)\bar{\otimes}\LL^{\infty}(\GG_2),
\]
and it is easy to check that this defines a matching between $\GG_1$ and $\GG_2$. 
Consequently, one can form the double crossed product $\GG_{\oon{m}}$, and this is called the \emph{generalised Drinfeld double} of $\GG_1,\GG_2$ with respect to $\mc{Z}$. 

In particular, if $\HH$ is a locally compact quantum group then we can consider $\GG_1=\HH^{\oon{op}}, \GG_2=\whH$ together with the bicharacter $\mc{Z}=\ww^{\HH}$. 
The corresponding double crossed product $\GG_{\oon{m}}$ is called the \emph{Drinfeld double} of $\HH$.

\subsubsection{$\GG_1^{\oon{op}},\GG_2$ are quantum subgroups of $\GG_{\oon{m}}$}

Let us return to the general situation of locally compact quantum groups $\GG_1,\GG_2$ with a matching $\oon{m}$. It is stated in \cite[Theorem~5.3]{Doublecrossed}, see also the introduction to \cite[Section~6]{Doublecrossed}, that $\GG_1^{\oon{op}}$ and $\GG_2$ are closed quantum subgroups of $\GG_{\oon{m}}$. We give a quick argument for the convenience of the reader. 

\begin{lemma}\label{lemma11}
$\GG_1^{\oon{op}}$ and $\GG_2$ are closed quantum subgroups of $\GG_{\oon{m}}$ in the sense of Vaes.
\end{lemma}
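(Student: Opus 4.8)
The plan is to exhibit, for each of $\GG_2$ and $\GG_1^{\oon{op}}$, the strong quantum homomorphism and the dual embedding $\gamma$ into $\LL^\infty(\wh{\GG_{\oon{m}}})$ required by the definition of closed quantum subgroup in the sense of Vaes recalled just before Theorem~\ref{thm2}. The guiding observation is the explicit description of $\LL^\infty(\wh{\GG_{\oon{m}}})$ quoted from \cite[Theorem~5.3]{Doublecrossed}, namely
\[
\LL^{\infty}(\wh{\GG_{\oon{m}}})=\bigl((\LL^{\infty}(\wh{\GG_1})'\otimes\I)\cup Z^* (\I\otimes \LL^{\infty}(\wh{\GG_2}))Z\bigr)''.
\]
This already singles out two candidate embeddings: the map $\gamma_2\colon \LL^\infty(\wh{\GG_2})\ni y\mapsto Z^*(\I\otimes y)Z$, and a map built from $\LL^\infty(\wh{\GG_1})'\otimes\I$. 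The first step is therefore to verify that each of these is a normal, unital, injective $\star$-homomorphism whose image lies inside $\LL^\infty(\wh{\GG_{\oon{m}}})$; normality, unitality and injectivity are immediate since $\gamma_2$ is (spatial conjugation by the unitary $Z$ composed with) an amplification, and the containment is exactly what the displayed formula asserts.

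For $\GG_2$ I would take the strong quantum homomorphism $\pi_2\colon \mrm{C}_0^u(\GG_{\oon{m}})\to\M(\mrm{C}_0^u(\GG_2))$ to be (the universal lift of) the slice $x\mapsto (\epsilon_1\otimes\id)(x)$ coming from the counit of $\GG_1$, reflecting that $\GG_{\oon{m}}$ has $\LL^\infty(\GG_1)\bar\otimes\LL^\infty(\GG_2)$ as its algebra of functions with $\GG_2$ sitting as the ``second leg''. The key compatibility to check is that the dual homomorphism $\wh{\pi_2}$, characterised by $(\pi_2\otimes\id)\WW^{\GG_{\oon{m}}}=(\id\otimes\wh{\pi_2})\WW^{\GG_2}$, reduces to the embedding $\gamma_2$, i.e.\ that $\gamma_2(\Lambda_{\wh{\GG_2}}(x))=\Lambda_{\wh{\GG_{\oon{m}}}}(\wh{\pi_2}(x))$. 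This is where the explicit form of the Kac--Takesaki operator $\ww_{\oon{m}}=(\Sigma \vv_1^* \Sigma)_{13} Z_{34}^* \ww_{2,24} Z_{34}$ is used: slicing $\ww_{\oon{m}}$ appropriately against functionals of $\GG_1$ (using invariance of the counit) isolates the factor $Z^*\ww_2 Z$, and comparing legs identifies the dual map with conjugation by $Z$ as claimed. Finally one records that $\Delta_{\wh{\GG_{\oon{m}}}}\circ\gamma_2=(\gamma_2\otimes\gamma_2)\circ\Delta_{\wh{\GG_2}}$, which follows from the coproduct formula $\Delta_{\oon{m}}=(\id\otimes \chi\oon{m}\otimes\id)(\Delta_1^{\oon{op}}\otimes\Delta_2)$ dualised, or more cheaply is automatic from the Vaes criterion as noted in the text.

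For $\GG_1^{\oon{op}}$ the argument is parallel but uses the \emph{first} leg and the opposite structure: the strong quantum homomorphism is the slice by the counit of $\GG_2$, and the relevant embedding is $\gamma_1\colon\LL^\infty(\wh{\GG_1^{\oon{op}}})\to\LL^\infty(\wh{\GG_{\oon{m}}})$ landing in the summand $\LL^\infty(\wh{\GG_1})'\otimes\I$. Here one must be careful that $\wh{\GG_1^{\oon{op}}}=\wh{\GG_1}'$ (as recorded in the proof of Proposition~\ref{prop11}), so that the commutant $\LL^\infty(\wh{\GG_1})'$ appearing in the structure formula is precisely $\LL^\infty(\wh{\GG_1^{\oon{op}}})$ and no spatial conjugation by $Z$ is needed; the factor $(\Sigma\vv_1^*\Sigma)_{13}$ in $\ww_{\oon{m}}$, together with the expression $\vv^{\GG}=(J_{\hvp}\otimes J_{\hvp})\chi(\ww^{\GG})^*(J_{\hvp}\otimes J_{\hvp})$ for the right regular representation, is what produces this commutant embedding.

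The main obstacle I expect is the bookkeeping in identifying the dual homomorphisms with the two concrete embeddings through the leg-numbering of $\ww_{\oon{m}}$: one has to track the four-fold tensor legs, the placement of $Z$, and the flip $\Sigma$ carefully, and correctly match the universal $\cst$-algebraic maps $\pi_i,\wh{\pi_i}$ with their von Neumann algebraic reductions via the reducing maps $\Lambda_{\GG}$. Since Theorem~\ref{thm2} only requires the existence of the data $(\pi,\wh\pi,\gamma)$ with $\gamma$ a normal unital injective coproduct-intertwining $\star$-homomorphism, and the structure formula hands us $\gamma$ directly, the cleanest route is to take $\gamma_1,\gamma_2$ as the definitions and verify the intertwining relation $(\id\otimes\wh{\pi_i})\WW=(\pi_i\otimes\id)\WW$ at the reduced level, invoking \cite[Theorem~5.3]{Doublecrossed} for everything structural; I would keep the slicing computations at the level of ``slice $\ww_{\oon{m}}$ by the counit and read off the surviving leg'' rather than expanding all four legs.
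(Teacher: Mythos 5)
Your identification of the two embeddings $\gamma_1\colon \wh x'\mapsto \wh x'\otimes\I$ and $\gamma_2\colon \wh x\mapsto Z^*(\I\otimes\wh x)Z$ matches the paper, but the route you propose for upgrading them to Vaes closed quantum subgroups has a genuine gap. You want to build the strong quantum homomorphism $\pi_2$ as (a universal lift of) the slice $(\epsilon_1\otimes\id)$ by the counit of $\GG_1$. At the reduced or von Neumann level this map does not exist in general: the counit of $\mrm{C}_0(\GG_1)$ is a densely defined, generally unbounded character, bounded precisely when $\GG_1$ is coamenable, so $(\epsilon_1\otimes\id)$ is not a well-defined map on $\mrm{C}_0(\GG_{\oon m})$ or $\LL^\infty(\GG_{\oon m})$. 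To work at the universal level instead, you would first need to know that $\mrm{C}_0^u(\GG_{\oon m})$ admits a nondegenerate $\star$-homomorphism to $\M\bigl(\mrm{C}_0^u(\GG_1)\otimes\mrm{C}_0^u(\GG_2)\bigr)$ compatible with the twisted coproduct $\Delta_{\oon m}$ — a structural fact about universal completions of double crossed products that is neither proved nor cited, and is not obvious. Moreover, even granting such a map, intertwining $\Delta_{\oon m}$ with $\Delta_2$ forces the counit-compatibility $(\epsilon_1\otimes\id)\circ\oon m=\epsilon_1\otimes\id$ of the matching, which is again an unproven claim (and cannot even be formulated at the von Neumann level where $\oon m$ lives). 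So the construction of $\pi_2$ (and likewise $\pi_1$), which your whole argument rests on, is not available.

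The missing idea that collapses all of this is \cite[Theorem~3.3]{DKSS_ClosedSub}: if $\gamma\colon\LL^\infty(\whH)\rightarrow\LL^\infty(\whG)$ is a normal, unital, injective $\star$-homomorphism satisfying $\Delta_{\whG}\circ\gamma=(\gamma\otimes\gamma)\circ\Delta_{\whH}$, then $\HH$ is automatically a closed quantum subgroup of $\GG$ in the sense of Vaes — there is no need to construct $\pi$ or $\wh\pi$ at all. This is exactly what the paper invokes: it defines $\gamma_1,\gamma_2$ as you do, cites this theorem, and then the only remaining work is the coproduct-intertwining computation, carried out via $\wh\Delta_{\oon m}(y)=\Sigma\,\ww_{\oon m}(y\otimes\I_{\oon m})\ww_{\oon m}^*\,\Sigma$ together with the explicit form $\ww_{\oon m}=(\Sigma\vv_1^*\Sigma)_{13}Z_{34}^*\ww_{2,24}Z_{34}$, the identity $\Delta_{\wh{\GG_1}'}(\wh x')=\vv_1^*(\I\otimes\wh x')\vv_1$ for the $\gamma_1$ case, and the computation of \cite[Proposition~3.5]{Doublecrossed} for the $\gamma_2$ case. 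Note also that your logic runs in the opposite direction from the efficient one: you treat the intertwining $\Delta_{\wh{\GG_{\oon m}}}\circ\gamma_2=(\gamma_2\otimes\gamma_2)\circ\Delta_{\wh{\GG_2}}$ as a free consequence of the Vaes property, whereas in the paper's argument it is precisely the thing one must verify, with everything else coming for free.
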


\begin{proof}
Note first that $\wh{\GG_1^{\oon{op}}}=\wh{\GG_1}'$, compare \cite[Proposition 5.4]{KVVN}. We have natural normal, injective $\star$-homomorphisms
\[
\begin{split}
\gamma_1&\colon \LL^{\infty}(\wh{\GG_1})'\rightarrow\LL^{\infty}(\wh{\GG_{\oon{m}}})\colon \wh x'\mapsto \wh{x}'\otimes \I,\\
\gamma_2&\colon \LL^{\infty}(\wh{\GG_2})\rightarrow\LL^{\infty}(\wh{\GG_{\oon{m}}})\colon \wh x\mapsto Z^*(\I\otimes \wh x)Z,
\end{split}\]
hence by \cite[Theorem 3.3]{DKSS_ClosedSub} it is enough to show that both maps respect coproducts.

First, take $\wh x' \in \LL^{\infty}(\wh{\GG_1})'$. Then $\wh x'\otimes \I \in \LL^{\infty}(\wh{\GG_{\oon{m}}})$ and 
\[
\wh\Delta_{\oon{m}}(\wh x'\otimes \I) = \Sigma  \ww_{\oon{m}} ((\wh x'\otimes \I)\otimes \I_{\oon{m}}) \ww_{\oon{m}}^* \Sigma
= \Sigma (\Sigma \vv_1^* \Sigma)_{13} (\wh x'\otimes \I\otimes \I\otimes \I) (\Sigma \vv_1 \Sigma)_{13} \Sigma,
\]
noting that all the other parts of $\ww_{\oon m}$ cancel out. By slight abuse of notation, we write here $\Sigma$ both for the swap map on $\LL^2(\GG_{\oon{m}}) \otimes \LL^2(\GG_{\oon{m}})$, 
which is identified with $\Sigma_{13} \Sigma_{24}$, and for the swap map on $\LL^2(\GG_1)\otimes \LL^2(\GG_1)$. 
From the proof of \cite[Proposition~4.2]{KVVN} we find that the coproduct on $\LL^{\infty}(\wh{\GG_1})'$ is given by $\Delta_{\wh{\GG_1}'}(\wh x') = \vv_1^*(\I\otimes\wh x')\vv_1$, and so
\[
\wh\Delta_{\oon{m}}(\wh x'\otimes \I) = 
\Sigma ( \Sigma \vv_1^* (\I\otimes \wh x') \vv_1 \Sigma )_{13} \Sigma
= 
\Sigma_{24} \Delta_{\wh{\GG_1}'}(\wh x')_{13} \Sigma_{24} = \Delta_{\wh{\GG_1}'}(\wh x')_{13}.
\]
Since the inclusion $\LL^{\infty}(\wh{\GG_1})' \bar\otimes\LL^{\infty}(\wh{\GG_1})' \rightarrow 
\LL^{\infty}(\wh{\GG_{\oon{m}}}) \bar\otimes \LL^{\infty}(\wh{\GG_{\oon{m}}})$ is given by $a\mapsto a_{13}$, this concludes the proof that $\GG_1^{\oon{op}}$ is a 
closed quantum subgroup of $\GG_{\oon{m}}$.

Take now $\wh x \in \LL^{\infty}(\wh{\GG_2})$ so that $Z^*(\I\otimes \wh x)Z \in \LL^{\infty}(\wh{\GG_{\oon{m}}})$.  Then, following exactly the proof of \cite[Proposition~3.5]{Doublecrossed},
\begin{align*}
\wh\Delta_{\oon{m}}(Z^*(\I\otimes \wh x)Z)
&= \Sigma  \ww_{\oon{m}}( Z^*(\I\otimes \wh x)Z\otimes \I_{\oon{m}}) \ww_{\oon{m}}^* \Sigma = (Z^*\otimes Z^*)\wh\Delta_2(\wh x) _{24}(Z\otimes Z),
\end{align*}
which is exactly the embedding of $\Delta_{\wh{\GG_2}}(\wh x) \in \LL^{\infty}(\wh{\GG_2}) \bar\otimes \LL^{\infty}(\wh{\GG_2})$ into $\LL^{\infty}(\wh{\GG_{\oon{m}}}) \bar\otimes \LL^{\infty}(\wh{\GG_{\oon{m}}})$.
\end{proof}

As a consequence of Theorem \ref{thm2} and Proposition \ref{prop11} we therefore obtain the following fact. 

\begin{corollary}\label{cor1}
Suppose that $\GG_{\oon{m}}$ has AP. Then both $\GG_1$ and $\GG_2$ have AP.
\end{corollary}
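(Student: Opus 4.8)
The statement to prove is Corollary~\ref{cor1}: if the double crossed product $\GG_{\oon{m}}$ has AP, then both $\GG_1$ and $\GG_2$ have AP.

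The plan is to combine the two previously established results cited in the sentence immediately preceding the corollary, namely Lemma~\ref{lemma11} (that $\GG_1^{\oon{op}}$ and $\GG_2$ are closed quantum subgroups of $\GG_{\oon{m}}$ in the sense of Vaes), Theorem~\ref{thm2} (AP passes to closed quantum subgroups in the sense of Vaes), and Proposition~\ref{prop11} (AP is invariant under passing to the opposite quantum group). Since all the heavy lifting has already been done, the proof is essentially a short assembly of these ingredients.

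First I would apply Lemma~\ref{lemma11}, which tells us that both $\GG_1^{\oon{op}}$ and $\GG_2$ are closed quantum subgroups of $\GG_{\oon{m}}$ in the sense of Vaes. Then, assuming $\GG_{\oon{m}}$ has AP, Theorem~\ref{thm2} applies directly to each of these two subgroups, yielding that $\GG_1^{\oon{op}}$ has AP and that $\GG_2$ has AP. For the second conclusion we are already done. For the first, I would invoke Proposition~\ref{prop11}, which shows that $\GG_1^{\oon{op}}$ has AP if and only if $\GG_1$ has AP (using that $(\GG_1^{\oon{op}})^{\oon{op}} = \GG_1$); hence $\GG_1$ has AP as well.

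There is essentially no obstacle here, as this is a formal corollary. The only point requiring a moment's care is the bookkeeping of the opposite: Lemma~\ref{lemma11} identifies $\GG_1^{\oon{op}}$ (not $\GG_1$ itself) as a closed quantum subgroup, so one must remember to feed the output of Theorem~\ref{thm2} through Proposition~\ref{prop11} to transfer AP back from $\GG_1^{\oon{op}}$ to $\GG_1$. For $\GG_2$ no such adjustment is needed. The proof can therefore be written in just a few lines, as follows.

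\begin{proof}
By Lemma~\ref{lemma11}, both $\GG_1^{\oon{op}}$ and $\GG_2$ are closed quantum subgroups of $\GG_{\oon{m}}$ in the sense of Vaes. Assume that $\GG_{\oon{m}}$ has AP. Applying Theorem~\ref{thm2} to the closed quantum subgroup $\GG_2$ shows that $\GG_2$ has AP. Applying Theorem~\ref{thm2} to the closed quantum subgroup $\GG_1^{\oon{op}}$ shows that $\GG_1^{\oon{op}}$ has AP, and then Proposition~\ref{prop11}, together with $(\GG_1^{\oon{op}})^{\oon{op}}=\GG_1$, yields that $\GG_1$ has AP.
\end{proof}
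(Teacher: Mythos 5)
Your proof is correct and follows exactly the paper's argument: the paper derives Corollary~\ref{cor1} as an immediate consequence of Lemma~\ref{lemma11}, Theorem~\ref{thm2}, and Proposition~\ref{prop11}, which is precisely the assembly you spell out (including the necessary passage from $\GG_1^{\oon{op}}$ back to $\GG_1$ via the opposite-invariance of AP).
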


\begin{remark}\label{rem:suq3}
In view of the close analogy between Drinfeld doubles of $q$-deformations of compact semisimple Lie groups and the corresponding complex Lie groups \cite{Aranospherical}, \cite{VoigtYuncken}, 
it is natural to speculate that the converse to Corollary \ref{cor1} does not hold, see also Remark 7.4 in \cite{ARANO_DELAAT_WAHL_fourier}. 
Specifically, the Drinfeld double of $\SU_q(3)$ might be an example of a locally compact quantum group which does not have AP.
\end{remark}

\subsubsection{AP for $\wh{\GG_{\oon{m}}}$}

We now aim to prove the following result.

\begin{theorem}\label{thm3}
Let $\GG_1,\GG_2$ be locally compact quantum groups with a matching $\oon{m}$. 
\begin{itemize}
\item If $\wh{\GG_1}$ and $\wh{\GG_2}$ have AP then so does $\wh{\GG_{\oon{m}}}$.
\item If $\wh{\GG_1}$ and $\wh{\GG_2}$ are weakly amenable with Cowling--Haagerup constants $\Lambda_{cb}(\wh{\GG_1}), \Lambda_{cb}(\wh{\GG_2})$ then $\wh{\GG_{\oon{m}}}$ is weakly amenable with $\Lambda_{cb}(\wh{\GG_{\oon{m}}})\le \Lambda_{cb}(\wh{\GG_1})\, \Lambda_{cb}(\wh{\GG_2})$.
\item If $\GG_1,\GG_2$ are coamenable then so is $\GG_{\oon{m}}$.
\end{itemize}
\end{theorem}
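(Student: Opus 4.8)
The plan is to reduce all three assertions to a single construction that manufactures multipliers of $\wh{\GG_{\oon m}}$ out of multipliers of $\wh{\GG_1}$ and $\wh{\GG_2}$, and then to feed in the appropriate approximating nets. By Theorem~\ref{thm6} (equivalently Definition~\ref{def1}) it suffices, for the AP, to exhibit a net in $\A(\wh{\GG_{\oon m}})$ converging weak$^*$ to $\I$; by Proposition~\ref{prop7} the weak amenability statement follows if this net can be taken bounded in $\|\cdot\|_{cb}$ by $\Lambda_{cb}(\wh{\GG_1})\Lambda_{cb}(\wh{\GG_2})$; and by Proposition~\ref{prop3} the coamenability statement follows if the net can be taken bounded in the norm of $\A(\wh{\GG_{\oon m}})$. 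Since $\gamma_1$ below involves $\wh{\GG_1^{\oon{op}}}=\wh{\GG_1}'$, I would first use Proposition~\ref{prop11} (and its evident analogues for weak amenability and coamenability, via Lemma~\ref{lemma18}) to replace $\wh{\GG_1}$ by $\wh{\GG_1^{\oon{op}}}$, so that $\wh{\GG_1^{\oon{op}}}$ and $\wh{\GG_2}$ carry approximating nets of the relevant kind.

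Next I would record the basic factorisation. Writing the Kac--Takesaki operator of $\GG_{\oon m}$ in the factored form $\ww_{\oon m}=(\Sigma\vv_1^*\Sigma)_{13}Z_{34}^*\ww_{2,24}Z_{34}$ and slicing its first leg by an elementary functional $\omega_1\otimes\omega_2\in\LL^1(\GG_1)\wh\otimes\LL^1(\GG_2)=\LL^1(\GG_{\oon m})$, a direct computation (using $\ww^{\GG_1^{\oon{op}}}=\chi(\vv_1^*)$) gives
\[
\lambda_{\GG_{\oon m}}(\omega_1\otimes\omega_2)=\gamma_1\bigl(\lambda_{\GG_1^{\oon{op}}}(\omega_1)\bigr)\,\gamma_2\bigl(\lambda_{\GG_2}(\omega_2)\bigr),
\]
where $\gamma_1(\wh x')=\wh x'\otimes\I$ and $\gamma_2(\wh x)=Z^*(\I\otimes\wh x)Z$ are the closed-quantum-subgroup embeddings of Lemma~\ref{lemma11}. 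Thus a Fourier element of $\wh{\GG_{\oon m}}$ arising from a product functional is the product of the $\gamma_i$-images of Fourier elements of $\wh{\GG_1^{\oon{op}}}$ and $\wh{\GG_2}$.

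The technical heart --- and the step I expect to be the main obstacle --- is to promote the embeddings to the multiplier level: I would show that $\gamma_1$ and $\gamma_2$ restrict to unital, completely contractive, weak$^*$--weak$^*$-continuous homomorphisms $\M^l_{cb}(\A(\wh{\GG_1^{\oon{op}}}))\to\M^l_{cb}(\A(\wh{\GG_{\oon m}}))$ and $\M^l_{cb}(\A(\wh{\GG_2}))\to\M^l_{cb}(\A(\wh{\GG_{\oon m}}))$. Since $\gamma_i$ intertwines the dual comultiplications (Lemma~\ref{lemma11}), applying $\gamma_i$ to the characterising identity $(\I\otimes b)\ww_i=(\Theta^l(b)\otimes\id)\ww_i$ of a multiplier should produce a map $\Theta^l(\gamma_i(b))\in\CB^\sigma(\LL^\infty(\GG_{\oon m}))$ satisfying the analogous identity for $\ww_{\oon m}$, whence $\gamma_i(b)\in\M^l_{cb}(\A(\wh{\GG_{\oon m}}))$ with $\|\gamma_i(b)\|_{cb}\le\|b\|_{cb}$; weak$^*$-continuity and the cb-bound can then be checked against the predual description of Theorem~\ref{thm5} and Proposition~\ref{prop15}. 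The delicate point is precisely this verification: in $\ww_{\oon m}$ the matching enters through the conjugations by $Z_{34}$, which do \emph{not} commute with the $\GG_1$-regular-representation factor $(\Sigma\vv_1^*\Sigma)_{13}$, so one must move them across using the defining relations of the matching $\oon m$ rather than naive commutation. Granting this, because $\M^l_{cb}(\A(\wh{\GG_{\oon m}}))$ is a subalgebra of $\LL^\infty(\wh{\GG_{\oon m}})$ with submultiplicative cb-norm and separately weak$^*$-continuous product (Proposition~\ref{prop18}), the assignment
\[
D(a_1,a_2)=\gamma_1(a_1)\,\gamma_2(a_2)\qquad\bigl(a_1\in\M^l_{cb}(\A(\wh{\GG_1^{\oon{op}}})),\ a_2\in\M^l_{cb}(\A(\wh{\GG_2}))\bigr)
\]
is separately weak$^*$-continuous, satisfies $D(\I,\I)=\I$ and $\|D(a_1,a_2)\|_{cb}\le\|a_1\|_{cb}\|a_2\|_{cb}$, and by the factorisation above agrees on Fourier elements with $\lambda_{\GG_{\oon m}}(\omega_1\otimes\omega_2)$; in particular $\|D(a_1,a_2)\|_{\A(\wh{\GG_{\oon m}})}=\|\omega_1\otimes\omega_2\|=\|a_1\|_{\A}\|a_2\|_{\A}$ when the $a_i$ are Fourier elements.

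Finally I would conclude exactly as in the proof of Theorem~\ref{thm1}. Taking approximating nets $a_{1,s}\to\I$ and $a_{2,t}\to\I$ for $\wh{\GG_1^{\oon{op}}}$ and $\wh{\GG_2}$ and forming $D(a_{1,s},a_{2,t})\in\A(\wh{\GG_{\oon m}})$, separate weak$^*$-continuity lets me take iterated limits: $\lim_s D(a_{1,s},a_{2,t})=D(\I,a_{2,t})$ and then $\lim_t D(\I,a_{2,t})=D(\I,\I)=\I$, all inside the weak$^*$-closure of $\A(\wh{\GG_{\oon m}})$, giving the AP. Running the same argument with the nets chosen bounded in $\|\cdot\|_{cb}$ (Proposition~\ref{prop7}) yields $\|D(a_{1,s},a_{2,t})\|_{cb}\le\Lambda_{cb}(\wh{\GG_1})\Lambda_{cb}(\wh{\GG_2})$ and hence weak amenability with the stated constant, while choosing them bounded in $\A$-norm (Proposition~\ref{prop3}, from coamenability of $\GG_1,\GG_2$) yields an $\A(\wh{\GG_{\oon m}})$-bounded net and hence, again by Proposition~\ref{prop3}, coamenability of $\GG_{\oon m}$.
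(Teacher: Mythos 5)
Your skeleton is the same as the paper's: reduce to $\wh{\GG_1^{\oon{op}}}$ via Proposition~\ref{prop11}, factorise $\lambda_{\oon m}(\omega_1\otimes\omega_2)=\gamma_1(\lambda_1^{\oon{op}}(\omega_1))\gamma_2(\lambda_2(\omega_2))$ as in equation~\eqref{eq10}, push the two approximating nets through $\gamma_1,\gamma_2$, take iterated weak$^*$ limits using the dual Banach algebra structure (Proposition~\ref{prop18}), and deduce the bounded variants from Propositions~\ref{prop7} and~\ref{prop3}; that concluding part of your argument is correct. The genuine gap is precisely the step you call the ``technical heart'' and then grant yourself. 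Your proposed mechanism --- transporting the characterising identity $(\I\otimes b)\ww_2=(\Theta^l(b)\otimes\id)(\ww_2)$ through $\gamma_i$ --- does work for $\gamma_1$: since $\gamma_1(b)=b\otimes\I$ meets $\ww_{\oon m}=\ww^{\oon{op}}_{1,13}Z^*_{34}\ww_{2,24}Z_{34}$ only through the first factor, one gets $\Theta^l(\gamma_1(b))=\Theta^l(b)\otimes\id$, as in Lemma~\ref{lemma10}. But it breaks down for $\gamma_2$: $\gamma_2(b)=Z^*(\I\otimes b)Z$ shares a leg with $\ww^{\oon{op}}_{1,13}$, the images of $\gamma_1$ and $\gamma_2$ do not commute, and the associated map is \emph{not} the obvious amplification $\id\otimes\Theta^l(b)$ --- by Lemma~\ref{lemma12} it is $\Theta^l(\gamma_2(b))=\oon{m}^{-1}(\id\otimes\Theta^l(b))\oon{m}$, with the matching entering essentially. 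The paper never performs the $\ww_{\oon m}$-manipulation you sketch; it proves the multiplier property by a genuinely different device: right multiplication by $\gamma_2(\lambda_2(\omega_0))$ acts simply on Fourier elements (immediate from the factorisation), and this right-multiplier statement is converted into a left one by conjugating with the unitary antipode $R_{\oon m}$, using that $\gamma_2$, $\lambda_{\oon m}$, $\lambda_2$ intertwine unitary antipodes together with Lemma~\ref{lemma18}. ``Moving $Z$ across using the defining relations of $\oon m$'' is not a proof, and nothing in your proposal substitutes for this argument (note also that the paper only needs all of this for Fourier-algebra elements, which suffices for the limit argument; your stronger claim for all CB multipliers is harder still).

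There is a second, related gap in your continuity claims. You propose to verify weak$^*$-weak$^*$-continuity of $\gamma_1,\gamma_2$ against the preduals of Theorem~\ref{thm5} and Proposition~\ref{prop15}. For $\gamma_1$ this can be pushed through (using that $\mrm{C}_0(\GG_{\oon m})$ and $\LL^1(\GG_{\oon m})$ factorise as $\mrm{C}_0(\GG_1)\otimes\mrm{C}_0(\GG_2)$ and $\LL^1(\GG_1)\wh\otimes\LL^1(\GG_2)$, plus Lemma~\ref{biduallemma}), but for $\gamma_2$ the pairing against $Q^l(\A(\wh{\GG_{\oon m}}))$ again picks up conjugation by $\oon m$, and one needs $\mrm{C}^*$-level compatibilities of the matching that you have not addressed. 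The paper's route avoids both problems: via the weak$^*$-homeomorphism~\eqref{eq15} it works with the extensions $\Phi$ on $\B(\LL^2(\GG_{\oon m}))$, where $\Phi(\gamma_1(a))=\Phi(a)\otimes\id$ and $\Phi(\gamma_2(b))(x)=Z^*(\id\otimes\Phi(b))(ZxZ^*)Z$; there the predual $\mc{K}(\LL^2(\GG_{\oon m}))\wh\otimes\B(\LL^2(\GG_{\oon m}))_*$ tensor-factorises and conjugation by $Z$ is a complete isometry, so the convergence statements become the elementary preadjoint computations of Lemmas~\ref{lemma13} and~\ref{lemma15} (via Lemma~\ref{lemma14}). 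Without these lemmas, or an equivalent, your net $D(a_{1,s},a_{2,t})$ cannot be shown to converge weak$^*$ to $\I$, so the proof is incomplete at its central step.
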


An analogous result for the Haagerup property was obtained in \cite{RoyHaagerup}, but we note that the terminology used in \cite{RoyHaagerup} is different.
We also note that for generalised Drinfeld doubles the statement on coamenability in Theorem~\ref{thm3} can be shown quite easily using standard properties of bicharacters, 
as discussed in a preprint version of \cite{RoyHaagerup}. 

Before we prove Theorem \ref{thm3} we need to establish a number of auxiliary results. Recall from Section \ref{section isomorphism} the 
construction of a normal CB map $\Theta^l(a)$ on $\LL^{\infty}(\whH)$ and its extension $\Phi(a)\in \CB^\sigma(\B(\LL^2(\HH)))$ for any locally compact 
quantum group $\HH$ and $a\in \M^l_{cb}(\A(\HH))$. 
Moreover, in the proof of Lemma \ref{lemma11} we introduced injective, normal $\star$-homomorphisms $\gamma_1 \colon \LL^{\infty}(\wh{\GG_1})'\rightarrow\LL^{\infty}(\wh{\GG_{\oon{m}}})$ 
and $\gamma_2 \colon \LL^{\infty}(\wh{\GG_2})\rightarrow\LL^{\infty}(\wh{\GG_{\oon{m}}})$. We will now use these maps to transport elements of 
the Fourier algebras $\A(\wh{\GG_1^{\oon{op}}}),\A(\wh{\GG_2})$ to left CB multipliers of $\A(\wh{\GG_{\oon{m}}})$.

\begin{lemma}\label{lemma10}
For $\omega\in \LL^1(\GG_1)$ we have $ \gamma_1(\lambda_1^{\oon{op}}(\omega)) \in \M_{cb}^l(\A(\wh{\GG_{\oon{m}}}))$. The maps associated with $a=\gamma_1(\lambda_1^{\oon{op}}(\omega))$ are
\[
\Theta^l(a)=\Theta^l(\lambda_1^{\oon{op}}(\omega))\otimes \id \in \CB^\sigma(\LL^{\infty}(\GG_{\oon{m}}))=\CB^\sigma(\LL^{\infty}(\GG_1)\bar{\otimes}\LL^\infty(\GG_2))
\]
and
\[
\Phi(a)=\Phi(\lambda_1^{\oon{op}}(\omega))\otimes \id \in \CB^\sigma(\B(\LL^2(\GG_{\oon m})))=
\CB^\sigma(\B(\LL^2(\GG_1))\bar{\otimes} \B(\LL^2(\GG_2))).
\]
\end{lemma}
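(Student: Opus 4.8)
The plan is to verify the two displayed identities directly from the explicit form $\ww_{\oon{m}}=(\Sigma \vv_1^* \Sigma)_{13} Z_{34}^* \ww_{2,24} Z_{34}$, exploiting the key observation that the \emph{first} tensor leg of the ``$\GG_{\oon{m}}$-side'' of $\ww_{\oon{m}}$ occurs \emph{only} in the factor $(\Sigma\vv_1^*\Sigma)_{13}$. First I would record that $a=\gamma_1(\lambda_1^{\oon{op}}(\omega))=\lambda_1^{\oon{op}}(\omega)\otimes\I$ genuinely lies in $\LL^{\infty}(\wh{\GG_{\oon{m}}})$, since $\lambda_1^{\oon{op}}(\omega)\in\mrm{C}_0(\wh{\GG_1^{\oon{op}}})\subseteq\LL^{\infty}(\wh{\GG_1})'$ and $\LL^{\infty}(\wh{\GG_1})'\otimes\I\subseteq\LL^{\infty}(\wh{\GG_{\oon{m}}})$. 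The one structural input I need is the identity $\ww^{\GG_1^{\oon{op}}}=\Sigma\vv_1^*\Sigma$, coming from the standard description of the opposite quantum group (\cite{KVVN}): thus $(\Sigma\vv_1^*\Sigma)_{13}$ is exactly the Kac--Takesaki operator of $\GG_1^{\oon{op}}$ placed on legs $1,3$, and $\lambda_1^{\oon{op}}(\omega)\in\A(\wh{\GG_1^{\oon{op}}})$ is a CB left multiplier whose maps $\Theta^l(\lambda_1^{\oon{op}}(\omega))\in\CB^\sigma(\LL^{\infty}(\GG_1))$ and $\Phi(\lambda_1^{\oon{op}}(\omega))\in\CB^\sigma(\B(\LL^2(\GG_1)))$ are already understood.

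For the claim about $\Theta^l$, set $T=\Theta^l(\lambda_1^{\oon{op}}(\omega))\otimes\id\in\CB^\sigma(\LL^{\infty}(\GG_{\oon{m}}))$, which acts nontrivially only on leg $1$. I would prove the Kac--Takesaki multiplier relation
\[
(\I_{\oon{m}}\otimes a)\ww_{\oon{m}}=(T\otimes\id)\ww_{\oon{m}}.
\]
Writing $\ww_{\oon{m}}=(\Sigma\vv_1^*\Sigma)_{13}W'$ with $W'=Z_{34}^*\ww_{2,24}Z_{34}$ trivial on leg $1$, the left-hand side is $\lambda_1^{\oon{op}}(\omega)$ (on leg $3$) times $(\Sigma\vv_1^*\Sigma)_{13}W'$, and the multiplier relation for $\GG_1^{\oon{op}}$ on legs $1,3$ turns $\lambda_1^{\oon{op}}(\omega)\,(\Sigma\vv_1^*\Sigma)_{13}$ into $(\Theta^l(\lambda_1^{\oon{op}}(\omega))\otimes\id)(\Sigma\vv_1^*\Sigma)_{13}$; since $T\otimes\id$ is a normal module map over the algebra of operators trivial on leg $1$ (which contains $W'$), the right-hand side equals the same quantity times $W'$. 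Slicing this equality by $\rho\in\LL^1(\GG_{\oon{m}})$ in the first variable yields $a\,\lambda_{\GG_{\oon{m}}}(\rho)=\lambda_{\GG_{\oon{m}}}(T_*(\rho))$, so $a\in\M^l_{cb}(\A(\wh{\GG_{\oon{m}}}))$ with $\Theta^l(a)=T$, as asserted.

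For the claim about $\Phi$, I would substitute the defining formula
\[
\I_{\oon{m}}\otimes\Phi(a)(x)=\ww_{\oon{m}}\big((\Theta^l(a)\otimes\id)(\ww_{\oon{m}}^*(\I_{\oon{m}}\otimes x)\ww_{\oon{m}})\big)\ww_{\oon{m}}^*
\]
and again write $\ww_{\oon{m}}=VW'$ with $V=(\Sigma\vv_1^*\Sigma)_{13}$ and $W'$ unitary and trivial on leg $1$. Because $\Theta^l(a)\otimes\id$ is normal and supported on leg $1$ alone, it commutes with conjugation by $W'$, so the right-hand side collapses to $V\,(\Theta^l(a)\otimes\id)(V^*(\I_{\oon{m}}\otimes x)V)\,V^*$. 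Treating legs $2,4$ as spectators, this is precisely the defining formula for $\Phi(\lambda_1^{\oon{op}}(\omega))$ on legs $1,3$ tensored with the identity on leg $4$, whose output is trivial on leg $1$; hence it equals $\I_{[12]}\otimes(\Phi(\lambda_1^{\oon{op}}(\omega))\otimes\id)(x)_{[34]}$, giving $\Phi(a)=\Phi(\lambda_1^{\oon{op}}(\omega))\otimes\id$.

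I expect the main obstacle to be the careful bookkeeping of the four tensor legs, together with the two places where a normal map supported on leg $1$ must be pushed through a leg-$1$-trivial factor: through $W'$ in the multiplier relation, and through both $W'$ and the spectator leg $4$ in the $\Phi$-formula. Each of these is an instance of the module property of $\Theta^l(a)\otimes\id$ over operators of the form $\I_{[1]}\otimes(\,\cdot\,)$, but it must be invoked cleanly. The only genuinely structural ingredient is the identification $\ww^{\GG_1^{\oon{op}}}=\Sigma\vv_1^*\Sigma$; once this is in place, both statements reduce entirely to the already-established multiplier theory for $\GG_1^{\oon{op}}$.
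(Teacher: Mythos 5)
Your proof is correct, and its skeleton coincides with the paper's: both arguments rest on the identification $\ww_1^{\oon{op}}=\Sigma\vv_1^*\Sigma$, on the factorisation $\ww_{\oon{m}}=(\Sigma\vv_1^*\Sigma)_{13}W'$ with $W'=Z_{34}^*\ww_{2,24}Z_{34}$ trivial on leg $1$, and on pushing a normal map supported on leg $1$ through leg-$1$-trivial factors. The differences are in execution. For the first identity, the paper computes at the level of slices: it establishes $\lambda_{\oon{m}}(\omega_1\otimes\omega_2)=\gamma_1(\lambda_1^{\oon{op}}(\omega_1))\gamma_2(\lambda_2(\omega_2))$ (equation \eqref{eq10}) and then uses multiplicativity of $\gamma_1$ and $\lambda_1^{\oon{op}}$, whereas you verify the unitary-level relation $(\I_{\oon{m}}\otimes a)\ww_{\oon{m}}=(T\otimes\id)(\ww_{\oon{m}})$ and slice at the end; these are equivalent formulations of the multiplier property, as recorded in Section~\ref{section multiplier}. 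For the second identity, the paper substitutes $\Theta^l(a)=(\omega\otimes\id)\Delta_1^{\oon{op}}\otimes\id$ and expands explicitly using $(\Delta_1^{\oon{op}}\otimes\id)(\ww_1^{\oon{op}})=\ww_{1,13}^{\oon{op}}\ww_{1,23}^{\oon{op}}$, finishing with the slice formula of Lemma~\ref{lemma9}, while you cancel $W'$ by the module property and recognise $V\,\Theta^l(\lambda_1^{\oon{op}}(\omega))_1(V^*(\I_{\oon{m}}\otimes x)V)\,V^*$ (with $V=(\Sigma\vv_1^*\Sigma)_{13}$) directly as the defining conjugation formula for $\Phi(\lambda_1^{\oon{op}}(\omega))$ amplified by spectator legs. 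Your route is shorter and avoids the pentagon expansion, at the cost of two points that must indeed be invoked cleanly: (i) the defining formula for $\Phi(\lambda_1^{\oon{op}}(\omega))$ survives amplification by an identity leg, which follows since both sides are normal in $x$ and agree on elementary tensors; (ii) the module property you need is that of the amplification of $\Theta^l(\lambda_1^{\oon{op}}(\omega))$ over \emph{all} of legs $2,3,4$ --- not that of $\Theta^l(a)\otimes\id$ over legs $3,4$ only --- because $W'$ is nontrivial on legs $2$ and $4$; this is legitimate precisely because $T$ has the tensor form $\Theta^l(\lambda_1^{\oon{op}}(\omega))\otimes\id$ and every operator involved has its first leg in $\LL^\infty(\GG_1)$, so all products stay in the domain $\LL^\infty(\GG_1)\bar\otimes\B(\LL^2(\GG_2)\otimes\LL^2(\GG_{\oon{m}}))$. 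The paper uses exactly the same mechanism when it pulls $Z_{34}^*\ww_{2,24}^*Z_{34}$ out of the applied map, so this divergence is presentational rather than substantive.
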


\begin{proof}
Take $\omega_1\otimes\omega_2\in \LL^1(\GG_1^{\oon{op}})\wh{\otimes}\LL^1(\GG_2) = \LL^1(\GG_{\oon{m}})$. Using $\ww_1^{\oon{op}}=\Sigma \vv_1^* \Sigma$, see \cite[Section 4]{KVVN}, we get 
\begin{equation}\begin{split}\label{eq10}
\lambda_{\oon{m}}(\omega_1\otimes \omega_2)&=
(\omega_1\otimes \omega_2\otimes\id\otimes\id)\bigl(
(\Sigma \vv_1^*\Sigma)_{13} Z_{34}^* \ww_{2,24} Z_{34}\bigr)\\
&=
\bigl(  (\omega_1\otimes \id)(\Sigma\vv_1^*\Sigma)\otimes \I\bigr) 
Z^*\bigl(\I\otimes  (\omega_2\otimes\id)(\ww_2)\bigr)Z\\
&=
\gamma_1(\lambda_1^{\oon{op}}(\omega_1))\gamma_2(\lambda_2(\omega_2)),
\end{split}\end{equation}
and consequently, writing $\star$ for the product on $\LL^1(\GG_1^{\oon{op}})$,
\begin{align*}
a \lambda_{\oon{m}}(\omega_1\otimes\omega_2)
&= \gamma_1(\lambda_1^{\oon{op}}(\omega)) \lambda_{\oon{m}}(\omega_1\otimes\omega_2)
= \gamma_1\big( \lambda_1^{\oon{op}}(\omega) \lambda_1^{\oon{op}}(\omega_1) \big)\gamma_2(\lambda_2(\omega_2)) \\
&= \gamma_1\big( \lambda_1^{\oon{op}}(\omega \star \omega_1) \big)\gamma_2(\lambda_2(\omega_2))=\lambda_{\oon{m}}\bigl((\omega\star\omega_1)\otimes \omega_2\bigr).
\end{align*}
By linearity and continuity, $a$ maps $\A(\wh{\GG_{\oon{m}}})$ into itself, and $\Theta^l(a)$ has the given form. 

The second assertion is verified using a direct calculation. Indeed, if $x\in \B(\LL^2(\GG_{\oon{m}}))=\B(\LL^2(\GG_1))\bar{\otimes}\B(\LL^2(\GG_2))$ then 
\[\begin{split}
\I_{\oon{m}}\otimes\Phi(a)(x) &=
\ww_{\oon{m}}\bigl( (((\omega\otimes\id)\Delta^{\oon{op}}_1\otimes\id)\otimes\id)(\ww_{\oon{m}}^*(\I_{\oon{m}}\otimes x)\ww_{\oon{m}}) \bigr) \ww_{\oon{m}}^* \\
&= \ww_{\oon{m}}
\bigl( ((\omega\otimes\id)\Delta^{\oon{op}}_1\otimes\id^{\otimes 3})( Z_{34}^* \ww_{2,24}^* Z_{34} \ww^{\oon{op}\,*}_{1,13} x_{34} \ww^{\oon{op}}_{1,13} Z_{34}^* \ww_{2,24} Z_{34})
   \bigr) \ww_{\oon{m}}^* \\
&= \ww_{\oon{m}} Z_{34}^* \ww_{2,24}^* Z_{34} \bigl( ((\omega\otimes\id)\Delta^{\oon{op}}_1\otimes\id^{\otimes 3}) \ww^{\oon{op}\,*}_{1,13} x_{34} \ww^{\oon{op}}_{1,13} \bigr) Z_{34}^* \ww_{2,24} Z_{34} \ww_{\oon{m}}^* \\
&= \ww_{\oon{m}} Z_{34}^* \ww_{2,24}^* Z_{34} \bigl( (\omega\otimes\id^{\otimes 4}) (\ww^{\oon{op}\,*}_{1,24} \ww^{\oon{op}\,*}_{1,14} x_{45} \ww^{\oon{op}}_{1,14} \ww^{\oon{op}}_{1,24}) \bigr) Z_{34}^* \ww_{2,24} Z_{34} \ww_{\oon{m}}^* \\
&= \ww_{\oon{m}} Z_{34}^* \ww_{2,24}^* Z_{34} \ww^{\oon{op}\,*}_{1,13} \big(
  (\omega\otimes\id\otimes\id) (\ww^{\oon{op}\,*}_{1,12} x_{23} \ww^{\oon{op}}_{1,12})
  \big)_{34} \ww^{\oon{op}}_{1,13} Z_{34}^* \ww_{2,24} Z_{34} \ww_{\oon{m}}^* \\
&= \I_{\oon{m}} \otimes (\omega\otimes\id\otimes\id) ( \ww^{\oon{op}\,*}_{1,12} x_{23} \ww^{\oon{op}}_{1,12}).
\end{split}\]
Here we use that $(\Delta^{\oon{op}}_1\otimes\id)\ww^{\oon{op}}_1 = \ww^{\oon{op}}_{1,13} \ww^{\oon{op}}_{1,23}$.  The claim follows from Lemma~\ref{lemma9}.
\end{proof}

\begin{lemma}\label{lemma12}
For $\omega\in \LL^1(\GG_2)$ we have $\gamma_2(\lambda_2(\omega)) \in \M^l_{cb}(\A(\wh{\GG_{\oon{m}}}))$. The associated maps with $b=\gamma_2(\lambda_2(\omega))$ are
\[
\Theta^l(b) = \oon{m}^{-1}(\id\otimes \Theta^l( \lambda_2(\omega))) \oon{m}
\]
and
\[
\Phi(b)\colon \B(\LL^2(\GG_{\oon m}))\ni x \mapsto   Z^* (\id\otimes\Phi(\lambda_2(\omega)))(ZxZ^*) Z\in \B(\LL^2(\GG_{\oon m})).
\]
\end{lemma}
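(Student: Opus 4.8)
The plan is to mirror the proof of Lemma~\ref{lemma10} exactly, transporting every computation through the isomorphism $\oon{m}$ and the implementing unitary $Z$, since $\gamma_2$ is defined by conjugation by $Z$ rather than by a simple leg inclusion. First I would fix $\omega_1\otimes\omega_2\in \LL^1(\GG_1^{\oon{op}})\wh{\otimes}\LL^1(\GG_2) = \LL^1(\GG_{\oon{m}})$ and use the factorisation \eqref{eq10}, namely $\lambda_{\oon{m}}(\omega_1\otimes\omega_2) = \gamma_1(\lambda_1^{\oon{op}}(\omega_1))\gamma_2(\lambda_2(\omega_2))$, already established in the proof of Lemma~\ref{lemma10}. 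Then, writing $b=\gamma_2(\lambda_2(\omega))$ and using that $\gamma_2$ is a $\star$-homomorphism together with multiplicativity of $\lambda_2$, I compute
\[
b\,\lambda_{\oon{m}}(\omega_1\otimes\omega_2) = \gamma_1(\lambda_1^{\oon{op}}(\omega_1))\,\gamma_2\big(\lambda_2(\omega)\lambda_2(\omega_2)\big) = \lambda_{\oon{m}}\big(\omega_1\otimes(\omega\star\omega_2)\big),
\]
where $\star$ is the product on $\LL^1(\GG_2)$. Here I must be slightly careful about the ordering: $\gamma_1$ and $\gamma_2$ have commuting ranges in the relevant sense because $\gamma_1(\LL^\infty(\wh{\GG_1})')$ sits in the first two legs while $\gamma_2(\LL^\infty(\wh{\GG_2}))$ involves $Z$, but the factorisation \eqref{eq10} fixes the order $\gamma_1\gamma_2$, so I would keep $b$ acting on the $\gamma_2$-factor throughout. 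By linearity and continuity this shows $b\,\A(\wh{\GG_{\oon{m}}})\subseteq \A(\wh{\GG_{\oon{m}}})$ and identifies $\Theta^l(b)_*$ as the map $\omega_1\otimes\omega_2\mapsto \omega_1\otimes(\omega\star\omega_2)$. Taking the adjoint and recalling that $\Delta_{\oon{m}}$ is built from $\oon{m}$ via $\Delta_{\oon{m}}=(\id\otimes \chi\oon{m}\otimes\id)(\Delta_1^{\oon{op}}\otimes\Delta_2)$, this dualises to $\Theta^l(b) = \oon{m}^{-1}(\id\otimes \Theta^l(\lambda_2(\omega)))\oon{m}$; complete boundedness is immediate since $\oon{m}$ is a normal $\star$-isomorphism and hence completely isometric.

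For the formula for $\Phi(b)$ I would run the same direct calculation as at the end of the proof of Lemma~\ref{lemma10}, starting from the defining identity
\[
\I_{\oon{m}}\otimes\Phi(b)(x)=\ww_{\oon{m}}\big((\Theta^l(b)\otimes\id)(\ww_{\oon{m}}^*(\I_{\oon{m}}\otimes x)\ww_{\oon{m}})\big)\ww_{\oon{m}}^*,
\]
substituting $\ww_{\oon{m}}=(\Sigma \vv_1^* \Sigma)_{13} Z_{34}^* \ww_{2,24} Z_{34}$ and the just-derived form of $\Theta^l(b)$. The key difference from Lemma~\ref{lemma10} is that now the $\GG_1$-legs $(\Sigma\vv_1^*\Sigma)_{13}$ will cancel (they did not in Lemma~\ref{lemma10}), while the $Z$-conjugated $\GG_2$-part survives and contributes the conjugation by $Z$ seen in the claimed formula. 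I expect the identity $(\id\otimes\Delta_2)\ww_2 = \ww_{2,13}\ww_{2,12}$ (or its appropriate leg-numbered form) to be the pentagon relation that makes the telescoping work, playing the role that $(\Delta_1^{\oon{op}}\otimes\id)\ww_1^{\oon{op}}=\ww^{\oon{op}}_{1,13}\ww^{\oon{op}}_{1,23}$ played before. After the cancellations I should arrive at
\[
\I_{\oon{m}}\otimes\Phi(b)(x) = \I_{\oon{m}}\otimes Z^*(\id\otimes\Phi(\lambda_2(\omega)))(ZxZ^*)Z,
\]
invoking Lemma~\ref{lemma9} to recognise the inner $\GG_2$-expression as $\Phi(\lambda_2(\omega))$ applied in the second leg.

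The main obstacle will be bookkeeping of leg numbers and the interaction between $Z$ and the two Kac--Takesaki operators. In Lemma~\ref{lemma10} the surviving $\gamma_1$-term commutes past $Z_{34}$ because $Z$ acts on legs $3,4$ while $\ww^{\oon{op}}_{1}$ acts on legs $1,3$; in the present case the roles are reversed and I must verify that the $\ww_2$ and $Z$ factors rearrange so that the conjugation $x\mapsto ZxZ^*$ is applied \emph{before} the inner slice map and $Z^*(\cdots)Z$ \emph{after}, matching the stated formula precisely. This requires tracking that $Z$ implements $\oon{m}$ compatibly with the comultiplication legs, which is exactly the content I would lean on from \cite[Section~5]{Doublecrossed} and the structure of $\ww_{\oon{m}}$. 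Apart from this careful but routine leg-chasing, no conceptually new input beyond Lemmas~\ref{lemma9}, \ref{lemma10} and the definitions of Section~\ref{sec:dbl_cp:prelim} is needed.
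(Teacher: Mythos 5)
There is a genuine gap, and it sits at the very first step. Your computation
\[
b\,\lambda_{\oon{m}}(\omega_1\otimes\omega_2) \;=\; \gamma_1(\lambda_1^{\oon{op}}(\omega_1))\,\gamma_2\bigl(\lambda_2(\omega)\lambda_2(\omega_2)\bigr)
\]
requires moving $b=\gamma_2(\lambda_2(\omega))$ from the left past the factor $\gamma_1(\lambda_1^{\oon{op}}(\omega_1))$, i.e.\ it assumes that the images of $\gamma_1$ and $\gamma_2$ commute. They do not: by construction $\LL^{\infty}(\wh{\GG_{\oon{m}}})$ is \emph{generated} by $\LL^{\infty}(\wh{\GG_1})'\otimes\I$ and $Z^*(\I\otimes\LL^{\infty}(\wh{\GG_2}))Z$ with nontrivial commutation relations (for the Drinfeld double of a discrete quantum group, $\gamma_2(x)\gamma_1(U^{\alpha}_{i,j})$ is merely a linear combination of terms of the form $\gamma_1(\cdot)\gamma_2(\cdot)$ --- see the proof of Lemma~\ref{drinfeldmultiplier}); if the ranges commuted, the dual of the double crossed product would essentially be a tensor product. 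The factorisation \eqref{eq10} fixes the order $\gamma_1\gamma_2$ precisely because the reverse order cannot be obtained for free. A symptom of this error is an internal inconsistency in your own proposal: the predual map $\omega_1\otimes\omega_2\mapsto\omega_1\otimes(\omega\star\omega_2)$ that you derive would dualise to $\id\otimes\Theta^l(\lambda_2(\omega))$, \emph{not} to $\oon{m}^{-1}(\id\otimes\Theta^l(\lambda_2(\omega)))\oon{m}$; your final ``dualisation'' step, which inserts the $\oon{m}$-conjugation by appeal to the form of $\Delta_{\oon{m}}$, is a non sequitur. The conjugation by $\oon{m}$ in the true formula is exactly the trace left by the failure of commutation, so any argument producing the unconjugated map must be wrong unless $\oon{m}$ is trivial.

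The missing idea is the paper's antipode device. Since the $\gamma_2$-factors sit on the \emph{right} in \eqref{eq10}, one first proves the right-handed identity $\lambda_{\oon{m}}(\omega_1\otimes\omega_2)\,\gamma_2(\lambda_2(\omega_0))=\lambda_{\oon{m}}(\omega_1\otimes(\omega_2\star\omega_0))$, which is legitimate because no factors need to be exchanged. One then converts right multiplication into left multiplication using the unitary antipodes: $\gamma_2$ and $\lambda_{\oon{m}},\lambda_2$ intertwine the antipodes (as they intertwine the coproducts), $\wh{R}_{\oon{m}}$ is antimultiplicative, and the functionals $R_{\oon{m}*}(\omega_1\otimes\omega_2)$ are linearly dense in $\LL^1(\GG_{\oon{m}})$; taking $\omega_0=\omega\circ R_2$ then exhibits $b$ as a left CB multiplier with $\Theta^l(b)=R_{\oon{m}}\circ T_0\circ R_{\oon{m}}$, where $T_0=\id\otimes(\id\otimes\omega_0)\Delta_2$. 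The identity $R_{\oon{m}}=\oon{m}^{-1}(R_1\otimes R_2)=(R_1\otimes R_2)\oon{m}$ from \cite[Theorem~5.3]{Doublecrossed} is what then yields $\Theta^l(b)=\oon{m}^{-1}(\id\otimes(\omega\otimes\id)\Delta_2)\oon{m}$. Your outline of the $\Phi(b)$ computation is broadly in the right spirit (direct calculation with $\ww_{\oon{m}}$, the pentagon $(\Delta_2\otimes\id)\ww_2=\ww_{2,13}\ww_{2,23}$, and Lemma~\ref{lemma9}), but it cannot be carried out independently of the first part: it takes the $\oon{m}$-conjugated form of $\Theta^l(b)$ as input, and it also needs the identity $(\oon{m}\otimes\id)(\ww_{\oon{m}})=Z_{34}^*\ww_{2,24}Z_{34}\ww^{\oon{op}}_{1,13}$ to absorb the conjugations into $\ww_{\oon{m}}$, neither of which your proposal supplies.
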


\begin{proof}
Using equation \eqref{eq10}, we get for $\omega_0 \in \LL^1(\GG_2)$ and $\omega_1\otimes\omega_2 \in \LL^1(\GG_1^{\oon{op}})\wh{\otimes}\LL^1(\GG_2)= \LL^1(\GG_{\oon{m}})$ the relation
\[\begin{split}
&\quad\;
\lambda_{\oon{m}}(\omega_1\otimes\omega_2) \gamma_2(\lambda_2(\omega_0)) = 
\gamma_1(\lambda_1^{\oon{op}}(\omega_1)) \gamma_2(\lambda_2(\omega_2)) \gamma_2(\lambda_2(\omega_0))
\\
&= \gamma_1(\lambda_1^{\oon{op}}(\omega_1)) \gamma_2(\lambda_2(\omega_2 \star \omega_0))=
\lambda_{\oon{m}}(\omega_1\otimes (\omega_2\star\omega_0)),
\end{split}\]
here with $\star$ the product on $\LL^1(\GG_2)$.
Thus, if $T_0\colon\LL^\infty(\GG_{\oon m})\rightarrow \LL^\infty(\GG_{\oon m})$ is the map given by $T_0 = \id\otimes(\id\otimes\omega_0)\Delta_2$, then $T_0$ is normal, and the pre-adjoint $(T_0)_*$ satisfies
\[ \lambda_{\oon m}(\omega_1\otimes\omega_2) \gamma_2(\lambda_2(\omega_0)) = \lambda_{\oon m}( (T_0)_*(\omega_1\otimes\omega_2) ).
\]
As $\gamma_2$ intertwines the coproducts, it automatically intertwines the unitary antipodes (\cite[Proposition 3.10]{MeyerRoyWoronowicz}). 
The same is true for $\lambda_{\oon m}$ and $\lambda_2$, and hence we get 
\[\begin{split}
&\quad\;
\lambda_{\oon{m}}\bigl( (R_{\oon{m}}\circ T_0\circ R_{\oon{m}})_* (R_{\oon{m} *}(\omega_1\otimes \omega_2))\bigr)=
\lambda_{\oon{m}}( (T_0\circ R_{\oon{m}})_*(\omega_1\otimes \omega_2))= \wh{R}_{\oon{m}}(\lambda_{\oon{m}}((T_0)_*(\omega_1\otimes \omega_2)))\\
&=\wh{R}_{\oon{m}}\bigl(\gamma_2(\lambda_2(\omega_0))\bigr) \wh{R}_{\oon{m}}(\lambda_{\oon{m}}(\omega_1\otimes \omega_2))=
\gamma_2(\lambda_2( R_{2 *}(\omega_0) ))
\lambda_{\oon{m}}( R_{\oon{m} *}(\omega_1\otimes \omega_2)).
\end{split}\] 
Now set $\omega_0 = \omega \circ R_2$ for our given $\omega$.
As the set of functionals of the form $R_{\oon{m} *}(\omega_1\otimes \,\omega_2)$ is linearly dense in $\LL^1(\GG_{\oon{m}})$, we obtain from Lemma \ref{lemma18} that $b=\gamma_2(\lambda_2(\omega))\in \M^l_{cb}(\A(\wh{\GG_{\oon{m}}}))$ and $\Theta^l(b)=R_{\oon{m}}\circ T_0\circ R_{\oon{m}}$. 
By \cite[Theorem~5.3]{Doublecrossed} we know that $R_{\oon m} = {\oon m}^{-1}(R_1\otimes R_2) = (R_1\otimes R_2)\oon{m}$. Therefore
\begin{align*}
R_{\oon m} \circ T_0 \circ R_{\oon m} &= \oon{m}^{-1}(R_1\otimes R_2) (\id\otimes(\id\otimes\omega\circ R_2)\Delta_2)  (R_1\otimes R_2){\oon m} \\
&= {\oon m}^{-1}(\id\otimes R_2\circ (\id\otimes\omega\circ R_2)\Delta_2\circ R_2) \oon{m} \\
&= \oon{m}^{-1}(\id\otimes (\id\otimes\omega)\Delta_2^{\oon{op}}) \oon{m} \\
&= \oon{m}^{-1}(\id\otimes (\omega\otimes\id)\Delta_2) \oon{m},
\end{align*}
and this yields the stated formula for $\Theta^l(b)$.

In order to verify the formula for $\Phi(b)$, recall that the unitary operator $Z$ implements $\oon{m}$ by $\oon{m}(\cdot) = Z\cdot Z^*$, and hence $\oon{m}^{-1}(\cdot) = Z^*\cdot Z$. 
Moreover, from \cite[Proposition~3.5]{Doublecrossed} we know that $(\oon{m}\otimes\id)(\ww_{\oon m}) = Z_{34}^* \ww_{2,24} Z_{34} \ww^{\oon{op}}_{1,13}$.
For $x\in \B(\LL^2(\GG_{\oon m}))$, by applying the expression for $\Theta^l(b)$ just obtained, we get
\begin{align*} 
&\quad\;(\Theta^l(b)\otimes\id)(\ww_{\oon m}^*(\I_{\oon{m}}\otimes x)\ww_{\oon m}) \\
&= (\oon{m}^{-1}\otimes\id\otimes\id) (\id\otimes(\omega\otimes\id)\Delta_2\otimes\id\otimes\id)\\
&\quad
\quad\quad\quad\quad\quad\quad\quad\quad\quad\quad\quad\quad 
\big( \ww^{\oon{op}\,*}_{1,13} Z_{34}^* \ww_{2,24}^* Z_{34} (\I\otimes \I\otimes x) Z_{34}^* \ww_{2,24} Z_{34} \ww^{\oon{op}}_{1,13} \big).
\end{align*}
Now using $(\Delta_2\otimes\id)\ww_2 = \ww_{2,13} \ww_{2,23}$, this expression becomes
\begin{align*}
&\quad\;(\Theta^l(b)\otimes\id)(\ww_{\oon m}^*(\I_{\oon{m}}\otimes x)\ww_{\oon m})\\
&= (\oon{m}^{-1}\otimes\id\otimes\id) \big( \ww^{\oon{op}\,*}_{1,13} Z_{34}^* (\id\otimes\omega\otimes\id^{\otimes 3}) 
    (\ww_{2,35}^* \ww_{2,25}^* (ZxZ^*)_{45} \ww_{2,25} \ww_{2,35})
  Z_{34} \ww^{\oon{op}}_{1,13} \big) \\
&= ({\oon m}^{-1}\otimes\id\otimes\id) \big( \ww^{\oon{op}\,*}_{1,13} Z_{34}^* \ww_{2,24}^*
   (\omega\otimes\id\otimes\id)(\ww_{2,13}^* (ZxZ^*)_{23} \ww_{2,13})_{34}
   \ww_{2,24} Z_{34} \ww^{\oon{op}}_{1,13} \big).
\end{align*}
Finally, we use the form of $\Phi(\lambda_2(\omega))\in \CB^{\sigma}(\B(\LL^2(\GG_2)))$, as in Lemma~\ref{lemma9} to get
\begin{align*}
&\quad\;(\Theta^l(b)\otimes\id)(\ww_{\oon m}^*(\I_{\oon{m}}\otimes x)\ww_{\oon m})\\
&= ({\oon m}^{-1}\otimes\id\otimes\id) \big( \ww^{\oon{op}\,*}_{1,13} Z_{34}^* \ww_{2,24}^*
   (\id\otimes\Phi(\lambda_2(\omega)))(ZxZ^*)_{34}
   \ww_{2,24} Z_{34} \ww^{\oon{op}}_{1,13} \big).
\end{align*}
Using again $(\oon{m}\otimes\id)(\ww_{\oon m}) = Z_{34}^* \ww_{2,24} Z_{34} \ww^{\oon{op}}_{1,13}$, we continue the calculation as
\begin{align*}
&\quad\;(\Theta^l(b)\otimes\id)(\ww_{\oon m}^*(\I_{\oon{m}}\otimes x)\ww_{\oon m})\\
&= (\oon{m}^{-1}\otimes\id\otimes\id) \big( (\oon{m}\otimes\id)(\ww_{\oon m}^*) Z_{34}^*
   (\id\otimes\Phi(\lambda_2(\omega)))(ZxZ^*)_{34}  Z_{34}  (\oon{m}\otimes\id)(\ww_{\oon m}) \big) \\
&= \ww_{\oon m}^* (\oon{m}^{-1}\otimes\id\otimes\id)( Z_{34}^* (\id\otimes\Phi(\lambda_2(\omega)))(ZxZ^*)_{34}  Z_{34} ) \ww_{\oon{m}} \\
&= \ww_{\oon m}^* Z_{12}^* Z_{34}^* (\id\otimes\Phi(\lambda_2(\omega)))(ZxZ^*)_{34}  Z_{34} Z_{12} \ww_{\oon m},
\end{align*}
where at the end we used that $\oon{m}^{-1}(\cdot) = Z^*\cdot Z$.  It follows that
\begin{align*}
\I_{\oon{m}}\otimes\Phi(b)(x) &=
\ww_{\oon m} (\Theta^l(b)\otimes \id)(\ww^*_{\oon m} (\I_{\oon{m}}\otimes x) \ww_{\oon m})\ww^*_{\oon m}
\\
&=
\ww_{\oon m} \big( \ww_{\oon m}^* Z_{12}^* Z_{34}^* (\id\otimes\Phi(\lambda_2(\omega)))(ZxZ^*)_{34}  Z_{34} Z_{12} \ww_{\oon m} \big) \ww_{\oon m}^* \\
&= (Z\otimes Z)^* (\I_{\oon{m}}\otimes (\id\otimes\Phi(\lambda_2(\omega)))(ZxZ^*)) (Z\otimes Z),
\end{align*}
and hence $\Phi(b)(x) = Z^* (\id\otimes\Phi(\lambda_2(\omega)))(ZxZ^*) Z$ as claimed.
\end{proof}

In the next step, we shall establish continuity properties of the maps  $\A(\wh{\GG_1^{\oon {op}}})\rightarrow \M^l_{cb}(\A(\wh{\GG_{\oon m}}))$ and $\A(\wh{\GG_2})\rightarrow \M^l_{cb}(\A(\wh{\GG_{\oon m}}))$ described in Lemmas \ref{lemma10}, \ref{lemma12}. For this we need the following general fact.

\begin{lemma}\label{lemma14}$ $
\begin{itemize}
\item Let $E,F$ be operator spaces.  The map $(E\check\otimes F)\wh\otimes F^* \rightarrow E$ given on simple tensors by $(x\otimes y)\otimes f \mapsto \la f , y \ra  x$ is completely contractive.
\item Let $A,B$ be $C^*$-algebras. The map $(A\otimes B)\wh\otimes(A^*\wh\otimes B^*)\rightarrow A\wh\otimes A^*$ given on simple tensors by $(a\otimes b)\otimes(\mu\otimes\omega) \mapsto \la \omega , b \ra a\otimes \mu$ is completely contractive.
\end{itemize}\end{lemma}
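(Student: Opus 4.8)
The plan is to prove both parts using only the standard functorial properties of the operator space projective and injective tensor products, and to deduce the second statement from the first.

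For the first part, I would invoke the universal property of $\wh\otimes$ in the form of the completely isometric ``exponential law''
\[
\CB\bigl((E\check\otimes F)\wh\otimes F^*,\, E\bigr) \cong \CB\bigl(E\check\otimes F,\, \CB(F^*, E)\bigr),
\]
valid for an arbitrary operator space target (see \cite[Chapter 7]{EffrosRuan}). Under this identification, the asserted map $\Phi$ on $(E\check\otimes F)\wh\otimes F^*$ corresponds to the map $\psi\colon E\check\otimes F \to \CB(F^*, E)$ determined on elementary tensors by $\psi(x\otimes y) = [\,f\mapsto \langle f, y\rangle x\,]$. But this $\psi$ is precisely the canonical slice embedding of the injective tensor product, which is \emph{completely isometric} by the definition of $\check\otimes$ (see \cite[Chapter 8]{EffrosRuan}). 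In particular $\|\psi\|_{cb}\le 1$, and the exponential law then returns a well-defined complete contraction $\Phi$ with $\|\Phi\|_{cb}\le 1$; this simultaneously settles well-definedness and the norm bound.

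For the second part, I would first recall that for $C^*$-algebras the minimal tensor product $A\otimes B$ agrees completely isometrically with the operator space injective tensor product $A\check\otimes B$ (\cite[Chapter 8]{EffrosRuan}), so the domain may be read as $(A\check\otimes B)\wh\otimes(A^*\wh\otimes B^*)$. Writing $\Phi_1\colon (A\check\otimes B)\wh\otimes B^* \to A$ for the complete contraction supplied by the first part (with $E=A$, $F=B$), I would form $\Phi_1\wh\otimes\id_{A^*}$, which is a complete contraction by functoriality of $\wh\otimes$ (\cite[Chapter 7]{EffrosRuan}). It then remains to precompose with the canonical completely isometric rearrangement
\[
(A\check\otimes B)\wh\otimes(A^*\wh\otimes B^*) \cong (A\check\otimes B)\wh\otimes(B^*\wh\otimes A^*) \cong \bigl((A\check\otimes B)\wh\otimes B^*\bigr)\wh\otimes A^*,
\]
obtained from commutativity and associativity of $\wh\otimes$. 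Tracking an elementary tensor $(a\otimes b)\otimes(\mu\otimes\omega)$ through this rearrangement and then through $\Phi_1\wh\otimes\id_{A^*}$ yields $\langle\omega, b\rangle\, a\otimes\mu$, exactly the claimed map; being a composite of complete contractions and complete isometries, it is completely contractive.

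Since the whole argument is bookkeeping with known functorial identities, I do not anticipate a genuine obstacle. The only points demanding care are identifying the operator corresponding to $\Phi$ under the exponential law with the canonical embedding $E\check\otimes F\hookrightarrow\CB(F^*,E)$, and confirming that this embedding is \emph{completely isometric} rather than merely contractive, which is where the definition of $\check\otimes$ is used. One should also note that the exponential law is being applied with a general (non-dual) operator space target $E$, respectively $A\wh\otimes A^*$; this is precisely the feature of $\wh\otimes$ that makes the one-line verification of well-definedness and the norm estimate possible.
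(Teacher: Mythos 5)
Your proposal is correct. For the first bullet it takes a genuinely different route from the paper: the paper composes the tensor-interchange (``shuffle'') complete contraction $F^*\wh\otimes(F\check\otimes E)\rightarrow(F^*\wh\otimes F)\check\otimes E$ of \cite[Theorem~8.1.10]{EffrosRuan} with the evaluation functional $F^*\wh\otimes F\rightarrow\CC$, $f\otimes y\mapsto\la f,y\ra$ (completely contractive because it is the element of $(F^*\wh\otimes F)^*\cong\CB(F^*)$ corresponding to $\id_{F^*}$), tensored with $\id_E$, and then invokes commutativity of both tensor products; you instead combine the universal property $\CB(X\wh\otimes Y,Z)\cong\CB(X,\CB(Y,Z))$ of $\wh\otimes$ with the completely isometric slice embedding $E\check\otimes F\hookrightarrow\CB(F^*,E)$. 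Both of your ingredients are standard, though note that the embedding into $\CB(F^*,E)$ is a theorem about $\check\otimes$ in \cite[Chapter~8]{EffrosRuan} (whose definition there is spatial), not the definition itself; this does not affect the argument. What your route buys is economy: the adjunction settles well-definedness and the norm bound in a single step, and it exposes the first bullet as nothing more than the slice-embedding characterisation of the injective tensor product read through the adjunction, with the caveat (which you correctly flag) that the adjunction must be available for an arbitrary, non-dual target. What the paper's route buys is that it stays entirely at the level of explicit maps between tensor products, the pattern that recurs elsewhere in the paper when several factors must be shuffled and paired. For the second bullet your argument --- minimal agrees with injective on $\cst$-algebras, re-bracket via commutativity and associativity of $\wh\otimes$, then apply the first part with $E=A$, $F=B$ tensored with $\id_{A^*}$ --- is essentially identical to the paper's.
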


\begin{proof}
Due to \cite[Theorem~8.1.10]{EffrosRuan} the ``tensor interchange'' map $F^* \wh\otimes (F\check\otimes E) \rightarrow (F^*\wh\otimes F) \check\otimes E$, which is the formal identity on simple tensors, is a complete contraction. Since $(F^*\wh\otimes F)^* \cong \CB(F^*)$, the identity map $\id_{F^*}$ induces the (completely) contractive linear 
functional $F^*\wh\otimes F\rightarrow\mathbb C\colon f\otimes y\mapsto \la f , y \ra$. Composing these complete contractions shows that $F^* \wh\otimes (F\check\otimes E) \rightarrow E\colon f\otimes (y\otimes x) \mapsto \la f,y\ra x$ is a complete contraction. By commutativity of the projective and the injective tensor products of operators spaces, respectively, 
the first claim follows.

For the second part recall that the injective operator space tensor product agrees with the spatial tensor product on $C^*$-algebras. Using the re-bracketing isomorphism
\[
(A\otimes B)\wh\otimes(A^*\wh\otimes B^*)\cong (((A\otimes B)\wh\otimes B^*)\wh\otimes A^*, 
\]
the assertion hence follows by applying the first part to $ E = A, F = B $ and tensoring with $ A^* $.
\end{proof}

\begin{lemma}\label{lemma13}
Let $(\omega_i)_{i\in I}$ be a net in $\LL^1(\GG_1^{\oon{op}})$ such that $\lambda_1^{\oon{op}}(\omega_i) \xrightarrow[i\in I]{} \I$ weak$^*$ in $\M_{cb}^l(\A(\wh{\GG_1^{\oon{op}}}))$. Consider $ \gamma_1(\lambda_1^{\oon{op}}(\omega_i)) \in \M_{cb}^l(\A(\wh{\GG_{\oon m}}))$ and $\Phi(\gamma_1(\lambda_1^{\oon{op}}(\omega_i)))\in \CB^{\sigma}(\B(\LL^2(\GG_{\oon m}))) $.  Then $\Phi(\gamma_1(\lambda_1^{\oon{op}}(\omega_i)))$ $\xrightarrow[i\in I]{}\id$ weak$^*$, and thus $\gamma_1(\lambda_1^{\oon{op}}(\omega_i))\xrightarrow[i\in I]{} \I_{\oon{m}}$ weak$^*$ in $\M_{cb}^l(\A(\wh{\GG_{\oon m}}))$.
\end{lemma}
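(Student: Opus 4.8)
The plan is to reduce the weak$^*$-convergence of $\Phi(\gamma_1(\lambda_1^{\oon{op}}(\omega_i)))$ to the identity on $\B(\LL^2(\GG_{\oon m}))$ to the assumed weak$^*$-convergence of $\Phi(\lambda_1^{\oon{op}}(\omega_i))$ to the identity on $\B(\LL^2(\GG_1))$, using the tensor-product formula $\Phi(\gamma_1(\lambda_1^{\oon{op}}(\omega_i)))=\Phi(\lambda_1^{\oon{op}}(\omega_i))\otimes\id$ supplied by Lemma~\ref{lemma10}. First I would recall that the map $a\mapsto\Phi(a)$ of \eqref{eq15} is a weak$^*$-homeomorphism onto $\prescript{}{\Linf'}{\CB}_{\Linf'}^{\sigma,\Linfd}(\B(\LdG))$, so the hypothesis $\lambda_1^{\oon{op}}(\omega_i)\xrightarrow[i\in I]{}\I$ weak$^*$ in $\M_{cb}^l(\A(\wh{\GG_1^{\oon{op}}}))$ is equivalent to $\Phi(\lambda_1^{\oon{op}}(\omega_i))\xrightarrow[i\in I]{}\id$ weak$^*$ in $\CB^\sigma(\B(\LL^2(\GG_1)))$, i.e.~in the weak$^*$-topology given by the predual $\mc{K}(\LL^2(\GG_1))\wh\otimes\B(\LL^2(\GG_1))_*$. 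The same homeomorphism property then reduces the goal to showing $\Phi(\lambda_1^{\oon{op}}(\omega_i))\otimes\id\xrightarrow[i\in I]{}\id$ weak$^*$ in $\CB^\sigma(\B(\LL^2(\GG_{\oon m})))=\CB^\sigma(\B(\LL^2(\GG_1))\bar\otimes\B(\LL^2(\GG_2)))$.

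The core step is therefore a general lemma: if $\Psi_i\xrightarrow[i\in I]{}\id$ weak$^*$ in $\CB^\sigma(\B(\msf K_1))$ and the net is bounded in CB norm, then $\Psi_i\otimes\id\xrightarrow[i\in I]{}\id$ weak$^*$ in $\CB^\sigma(\B(\msf K_1)\bar\otimes\B(\msf K_2))$. The relevant preduals are $\mc{K}(\msf K_1)\wh\otimes\B(\msf K_1)_*$ and $\mc{K}(\msf K_1\otimes\msf K_2)\wh\otimes\B(\msf K_1\otimes\msf K_2)_*$; I would test convergence on a dense set of elementary tensors of the form $(k_1\otimes k_2)\otimes(\mu_1\otimes\mu_2)$ with $k_j\in\mc K(\msf K_j)$ and $\mu_j\in\B(\msf K_j)_*$. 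Pairing $\Psi_i\otimes\id$ against such an element collapses, via the second part of Lemma~\ref{lemma14} (applied with $A=\mc K(\msf K_1)$, $B=\mc K(\msf K_2)$), to a pairing of $\Psi_i$ against $\la\mu_2,k_2\ra\,k_1\otimes\mu_1\in\mc K(\msf K_1)\wh\otimes\B(\msf K_1)_*$, which converges to the corresponding pairing for $\id$ by hypothesis. Uniform boundedness in CB norm — which holds because $\|\Phi(\lambda_1^{\oon{op}}(\omega_i))\|_{cb}=\|\lambda_1^{\oon{op}}(\omega_i)\|_{cb}$ and a weak$^*$-convergent net in a dual Banach space is bounded — then upgrades pointwise convergence on the dense set to convergence against all of the predual.

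The main obstacle I anticipate is the bookkeeping around the tensor-product of weak$^*$-topologies: one must be careful that the predual of $\CB^\sigma(\B(\msf K_1)\bar\otimes\B(\msf K_2))$ really does factor compatibly through $\mc K(\msf K_1)\wh\otimes\B(\msf K_1)_*$ after slicing out the second leg, and that the interchange map of Lemma~\ref{lemma14} lands in the right predual. This is exactly where I would invoke Lemma~\ref{lemma14} rather than manipulate tensor norms by hand. Once the general slicing lemma is in place, the remaining assembly is routine: translate back through the homeomorphism \eqref{eq15} to conclude $\gamma_1(\lambda_1^{\oon{op}}(\omega_i))\xrightarrow[i\in I]{}\I_{\oon m}$ weak$^*$ in $\M_{cb}^l(\A(\wh{\GG_{\oon m}}))$, as claimed. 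I would expect the analogous statement for $\gamma_2$ (needed for the proof of Theorem~\ref{thm3}) to follow by the same argument applied to the formula for $\Phi(b)$ in Lemma~\ref{lemma12}, since conjugation by the fixed unitary $Z$ is a weak$^*$-homeomorphism commuting with the relevant slice pairings.
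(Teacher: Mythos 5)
There is a genuine gap in your argument, and it sits exactly at the step you flag as the ``core'': your claim that the net $(\Phi(\lambda_1^{\oon{op}}(\omega_i)))_{i\in I}$ is uniformly bounded in CB norm. You justify this by saying that a weak$^*$-convergent net in a dual Banach space is bounded, but that is false for \emph{nets}: the Banach--Steinhaus argument applies only to sequences, and in any infinite-dimensional dual space one can find unbounded weak$^*$-convergent nets. Nor can boundedness be added as a hypothesis here: the lemma is used in the proof of Theorem~\ref{thm3} precisely with nets coming from the AP of $\wh{\GG_1}$, which by design (this is the whole difference between AP and weak amenability, cf.\ Proposition~\ref{prop7}) need not be CB-bounded. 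So your strategy of checking convergence against elementary tensors in $\bigl(\mc{K}(\LL^2(\GG_1))\otimes\mc{K}(\LL^2(\GG_2))\bigr)\wh\otimes\bigl(\B(\LL^2(\GG_1))_*\wh\otimes\B(\LL^2(\GG_2))_*\bigr)$ and then upgrading by density collapses, because the density upgrade is exactly what needs uniform boundedness.

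The repair is small, and it is what the paper does with the same two ingredients you already have (the formula $\Phi(\gamma_1(\lambda_1^{\oon{op}}(\omega_i)))=\Phi(\lambda_1^{\oon{op}}(\omega_i))\otimes\id$ from Lemma~\ref{lemma10}, and the slicing map from Lemma~\ref{lemma14}). Instead of passing to the limit on a dense set, use Lemma~\ref{lemma14} to produce the completely contractive map
\[
T \colon \bigl(\mc{K}(\LL^2(\GG_1))\otimes\mc{K}(\LL^2(\GG_2)) \bigr)\wh\otimes \bigl( \B(\LL^2(\GG_1))_* \wh\otimes \B(\LL^2(\GG_2))_* \bigr) \rightarrow \mc{K}(\LL^2(\GG_1)) \wh\otimes \B(\LL^2(\GG_1))_*,
\quad (a\otimes b)\otimes(\mu_1\otimes\mu_2)\mapsto \la \mu_2,b\ra\, a\otimes\mu_1,
\]
between the \emph{preduals}, and observe that for each \emph{fixed} $i$ the pairing identity
$\la \Phi(\gamma_1(\lambda_1^{\oon{op}}(\omega_i))), v\ra = \la \Phi(\lambda_1^{\oon{op}}(\omega_i)), T(v)\ra$
holds for elementary tensors $v$ and hence, by norm density and boundedness of the two fixed functionals involved, for all $v$; this gives the exact operator identity $\Phi(\gamma_1(\lambda_1^{\oon{op}}(\omega_i)))=T^*(\Phi(\lambda_1^{\oon{op}}(\omega_i)))$. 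Since $T^*$ is the adjoint of a bounded map between preduals, it is automatically weak$^*$-weak$^*$-continuous, so the hypothesis $\Phi(\lambda_1^{\oon{op}}(\omega_i))\xrightarrow[i\in I]{}\id$ weak$^*$ (via the weak$^*$-homeomorphism \eqref{eq15}, as you correctly set up) yields $\Phi(\gamma_1(\lambda_1^{\oon{op}}(\omega_i)))\xrightarrow[i\in I]{}T^*(\id)=\id$ weak$^*$ with no boundedness of the net whatsoever; translating back through \eqref{eq15} finishes the proof. In short: extend the identity for fixed $i$ by density (which only needs each single map to be bounded), then move the limit through the weak$^*$-continuous map $T^*$ --- rather than moving the limit first and trying to extend by density afterwards.
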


\begin{proof}
By Lemma \ref{lemma10} we have $\Phi(\gamma_1(\lambda_1^{\oon{op}}(\omega_i))) = \Phi(\lambda_1^{\oon {op}}(\omega_i))\otimes\id$ for each $i\in I$. 
Applying Lemma \ref{lemma14} with $A=\mc{K}(\LL^2(\GG_1))$ and $B=\mc{K}(\LL^2(\GG_2))$ we obtain a completely contractive map
\[
T \colon \bigl(\mc{K}(\LL^2(\GG_1))\otimes\mc{K}(\LL^2(\GG_2)) \bigr)\wh\otimes \bigl( \B(\LL^2(\GG_1))_* \wh\otimes \B(\LL^2(\GG_2))_* \bigr) \rightarrow \mc{K}(\LL^2(\GG_1)) \wh\otimes \B(\LL^2(\GG_1))_*
\]
given by $T((a\otimes b)\otimes(\omega_1\otimes\omega_2)) = \langle \omega_2, b\rangle a\otimes\omega_1$ on simple tensors. Recall that for any Hilbert space $\msf{H}$, the space of normal CB maps $\CB^\sigma(\B(\msf{H}))$ has predual $\mc{K}(\msf{H})\wh{\otimes} \B(\msf{H})_*$ (see Section \ref{section isomorphism}). Using this identification
\[\begin{split}
\langle \Phi(\gamma_1(\lambda_1^{\oon{op}}(\omega_i))), (a\otimes b)\otimes(\omega_1\otimes\omega_2) \rangle
&= \langle \Phi(\lambda_1^{\oon {op}}(\omega_i))(a), \omega_1 \rangle \langle b, \omega_2 \rangle\\
&= \langle \Phi(\lambda_1^{\oon {op}}(\omega_i)), T((a\otimes b)\otimes(\omega_1\otimes\omega_2)) \rangle, 
\end{split}\]
and hence $\Phi(\gamma_1(\lambda_1^{\oon{op}}(\omega_i))) = T^*(\Phi(\lambda_1^{\oon{op}}(\omega_i)))$.  As $\lambda_1^{\oon{op}}(\omega_i) \xrightarrow[i\in I]{}\I$ weak$^*$, we know that $\Phi(\lambda_1^{\oon{op}}(\omega_i))$ $\xrightarrow[i\in I]{}\id$ weak$^*$, and so $\Phi(\gamma_1(\lambda_1^{\oon{op}}(\omega_i)))\xrightarrow[i\in I]{} T^*(\id) = \id$ weak$^*$, as required.
\end{proof}

\begin{lemma}\label{lemma15}
Let $(\omega_i)_{i\in I}$ be a net in $\LL^1(\GG_2)$ with $\lambda_2(\omega_i)\xrightarrow[i\in I]{}\I $ weak$^*$ in $\M_{cb}^l(\A(\wh{\GG_2}))$.  
Then $\Phi(\gamma_2(\lambda_2(\omega_i)))\xrightarrow[i\in I]{}\id$ weak$^*$ in $\CB^\sigma(\B(\LL^2(\GG_{\oon m})))$, and consequently $\gamma_2(\lambda_2(\omega_i))\xrightarrow[i\in I]{} \I_{\oon{m}}$ weak$^*$ in $\M^l_{cb}(\A(\wh{\GG_{\oon m}}))$.
\end{lemma}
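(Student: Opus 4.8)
The plan is to mirror the structure of Lemma~\ref{lemma13} exactly, replacing the role of $\gamma_1$ and $T$ by the formula for $\Phi(b)$ established in Lemma~\ref{lemma12}. Recall from that lemma that for $b=\gamma_2(\lambda_2(\omega))$ we have
\[
\Phi(b)\colon x\mapsto Z^*(\id\otimes\Phi(\lambda_2(\omega)))(ZxZ^*)Z,
\]
so the map $b\mapsto\Phi(b)$ is obtained from $\lambda_2(\omega)\mapsto\Phi(\lambda_2(\omega))$ by first conjugating by the fixed unitary $Z$, then tensoring on the first leg with the identity, then conjugating back by $Z$. The key point is that each of these three operations is weak$^*$-weak$^*$-continuous on the relevant spaces of normal CB maps, so their composite sends the net $(\Phi(\lambda_2(\omega_i)))_{i\in I}$, which converges weak$^*$ to $\id$, to a net converging weak$^*$ to $\id_{\oon m}$.

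Concretely, I would proceed as follows. First, the tensoring operation $S\mapsto \id\otimes S$, viewed as a map $\CB^\sigma(\B(\LL^2(\GG_2)))\rightarrow\CB^\sigma(\B(\LL^2(\GG_1))\bar\otimes\B(\LL^2(\GG_2)))$, is weak$^*$-weak$^*$-continuous with a completely contractive preadjoint; this is exactly the kind of computation carried out via Lemma~\ref{lemma14} in the proof of Lemma~\ref{lemma13}, now applied with the roles of the two legs swapped, using the completely contractive ``collapsing'' map $(\mc{K}(\LL^2(\GG_1))\otimes\mc{K}(\LL^2(\GG_2)))\wh\otimes(\B(\LL^2(\GG_1))_*\wh\otimes\B(\LL^2(\GG_2))_*)\rightarrow\mc{K}(\LL^2(\GG_2))\wh\otimes\B(\LL^2(\GG_2))_*$. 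Second, for a fixed unitary $Z$, the conjugation map $\Psi\mapsto Z^*\Psi(Z\cdot Z^*)Z$ on $\CB^\sigma(\B(\LL^2(\GG_{\oon m})))$ is a weak$^*$-homeomorphism: its preadjoint acts on the predual $\mc{K}(\LL^2(\GG_{\oon m}))\wh\otimes\B(\LL^2(\GG_{\oon m}))_*$ by $k\otimes\rho\mapsto ZkZ^*\otimes\rho(Z^*\cdot Z)$, which is a complete isometry because conjugation by a unitary is completely isometric on both $\mc{K}$ and the predual. Composing, I obtain that $\Phi(b)=\Phi(\gamma_2(\lambda_2(\omega)))$ is the image of $\Phi(\lambda_2(\omega))$ under a weak$^*$-weak$^*$-continuous map which sends $\id$ to $\id_{\oon m}$ (since conjugation by $Z$ fixes $\id$ and $\id\otimes\id=\id$).

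With this in hand the conclusion is immediate. Since $\lambda_2(\omega_i)\xrightarrow[i\in I]{}\I$ weak$^*$ in $\M^l_{cb}(\A(\wh{\GG_2}))$, the weak$^*$-homeomorphism \eqref{eq15} for $\GG_2$ gives $\Phi(\lambda_2(\omega_i))\xrightarrow[i\in I]{}\id$ weak$^*$ in $\CB^\sigma(\B(\LL^2(\GG_2)))$. Applying the continuous composite just described yields $\Phi(\gamma_2(\lambda_2(\omega_i)))\xrightarrow[i\in I]{}\id_{\oon m}$ weak$^*$ in $\CB^\sigma(\B(\LL^2(\GG_{\oon m})))$. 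Finally, invoking once more the weak$^*$-homeomorphism \eqref{eq15}, now for $\GG_{\oon m}$, we transport this back to conclude $\gamma_2(\lambda_2(\omega_i))\xrightarrow[i\in I]{}\I_{\oon m}$ weak$^*$ in $\M^l_{cb}(\A(\wh{\GG_{\oon m}}))$, as desired.

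I expect the main obstacle to be the careful bookkeeping of preduals and leg-numbering in the first step, where one must verify that the ``identity-tensoring on the first leg'' map is genuinely weak$^*$-continuous with the right preadjoint; the unitary-conjugation step is formally easy but requires keeping track of the fact that $Z$ lives on $\LL^2(\GG_1)\otimes\LL^2(\GG_2)$ so that conjugation respects the tensor factorisation of the predual. Everything else is a direct transcription of the argument already given for $\gamma_1$ in Lemma~\ref{lemma13}.
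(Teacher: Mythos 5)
Your proposal is correct and follows essentially the same route as the paper: the paper likewise combines the formula from Lemma~\ref{lemma12} with the Lemma~\ref{lemma13}/Lemma~\ref{lemma14} collapsing argument for $\id\otimes\Phi(\lambda_2(\omega_i))$ and the observation that $x\otimes u\mapsto ZxZ^*\otimes ZuZ^*$ is a (complete) isometry on the predual $\mc{K}(\LL^2(\GG_{\oon m}))\wh\otimes\B(\LL^2(\GG_{\oon m}))_*$, so that $\Phi(\gamma_2(\lambda_2(\omega_i)))$ is the image of $\id\otimes\Phi(\lambda_2(\omega_i))$ under a weak$^*$-continuous map fixing $\id$. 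The only cosmetic difference is that the paper packages this as a single pairing computation rather than as your explicit composition of two weak$^*$-continuous operations.
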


\begin{proof}
According to Lemma \ref{lemma12} we have $\Phi(\gamma_2(\lambda_2(\omega_i)))(x) = Z^* (\id\otimes\Phi(\lambda_2(\omega_i)))(ZxZ^*) Z$. We can now argue exactly as in the proof of 
Lemma~\ref{lemma13}. Explicitly, for $x\otimes u$ in $ \mc{K}(\LL^2(\GG_{\oon m})) \wh\otimes \B(\LL^2(\GG_{\oon m}))_*$, the predual of $\CB^\sigma(\B(\LL^2(\GG_{\oon m})))$, consider
\begin{align*}
&\quad\;
\langle \Phi(\gamma_2(\lambda_2(\omega_i))), x\otimes u \rangle
= \langle Z^*(\id\otimes\Phi(\lambda_2(\omega_i)))(ZxZ^*)Z, u \rangle \\
&= \langle (\id\otimes\Phi(\lambda_2(\omega_i)))(ZxZ^*), ZuZ^* \rangle
= \langle (\id\otimes\Phi(\lambda_2(\omega_i))), ZxZ^* \otimes ZuZ^* \rangle.
\end{align*}
Since $x\mapsto ZxZ^*$ is a complete isometry on $\mc{K}(\LL^2(\GG_{\oon m}))$ and $u\mapsto ZuZ^*$ is a complete isometry on $\B(\LL^2(\GG_{\oon m}))_*$, we obtain 
a complete isometry $T$ on $\mc{K}(\LL^2(\GG_{\oon m})) \wh\otimes \B(\LL^2(\GG_{\oon m}))_*$ given on simple tensors by $T(x\otimes u) = ZxZ^* \otimes ZuZ^*$. Thus
\[
\Phi(\gamma_2(\lambda_2(\omega_i))) = T^*(\id\otimes\Phi(\lambda_2(\omega_i)))\xrightarrow[i\in I]{} T^*(\id) = \id
\]
weak$^*$, as required.
\end{proof}

\begin{proof}[Proof of Theorem \ref{thm3}]
Assume that $\wh{\GG_1}$ and $\wh{\GG_2}$ have AP. Due to Proposition \ref{prop11} it follows that $(\GG_1^{\oon {op}})^{\wedge}=(\wh{\GG_1})'$ also has AP. 
Choose nets $(\omega^{(1)}_i)_{i\in I}$ in $\LL^1(\GG_1)$ with $a_i = \lambda^{\oon{op}}_1(\omega^{(1)}_i)\xrightarrow[i\in I]{} \I$ weak$^*$ in $\M^l_{cb}(\A(\wh{\GG_1^{\oon{op}}}))$, and similarly $(\omega^{(2)}_{j})_{j\in J}$ in $\LL^1(\GG_2)$ with $b_j = \lambda_2(\omega^{(2)}_j)\xrightarrow[j\in J]{}\I$ weak$^*$ in $\M^l_{cb}(\A(\wh{\GG_2}))$. Then, by Lemmas \ref{lemma10}, \ref{lemma12}, we have $\gamma_1(a_i),\gamma_2(b_j)\in \M^l_{cb}(\A(\wh{\GG_{\oon m}}))$, and \eqref{eq10} gives
\[
c_{i,j} = \gamma_1(a_i) \gamma_2(b_j) = \lambda_{\oon m}(\omega^{(1)}_i \otimes\omega^{(2)}_j) \in \A(\wh{\GG_{\oon m}}). 
\]
Since $\M^l_{cb}(\A(\wh{\GG_{\oon m}}))$ is a dual Banach algebra, see Proposition \ref{prop18}, it follows from Lemma \ref{lemma13} that $\lim_{i\in I} c_{i,j} = \gamma_2(b_j)$ weak$^*$ 
for each $j\in J$. 
Hence $\gamma_2(b_j)$ is contained in the weak$^*$-closure of $\A(\wh{\GG_{\oon m}})$ in $\M^l_{cb}(\A(\wh{\GG_{\oon m}})$. Taking the limit in $j$ and using Lemma \ref{lemma15} 
we see that $\I_{\oon{m}}$ is contained in the weak$^*$-closure of $\A(\wh{\GG_{\oon m}})$ in $\M^l_{cb}(\A(\wh{\GG_{\oon m}}))$, as required.

The remaining statements regarding weak amenability and coamenability are verified in a similar way: if $\wh{\GG_1},\wh{\GG_2}$ are weakly amenable with 
Cowling-Haagerup constants $\Lambda_{cb}(\wh{\GG_1}), \Lambda_{cb}(\wh{\GG_2})$ then we additionally know that
\[
\|a_i\|_{cb}=\|\Phi(a_i)\|_{cb}\le \Lambda_{cb}(\wh{\GG_1}),\quad
\|b_j\|_{cb}=\|\Phi(b_j)\|_{cb}\le \Lambda_{cb}(\wh{\GG_2}),
\]
and consequently $\|c_{i,j}\|_{cb}\le \Lambda_{cb}(\wh{\GG_1})\,\Lambda_{cb}(\wh{\GG_2})$. The result then follows from Proposition \ref{prop7}.

If $\GG_1,\GG_2$ are coamenable then we can choose $a_i,b_j$ such that $\sup_{i\in I,j\in J}\|c_{i,j}\|_{\A(\wh{\GG_{\oon m}})}=$\\ $\sup_{i\in I, j\in J}\|\omega_i\otimes \omega_j\|<+\infty$. The result then follows from Proposition \ref{prop3}.
\end{proof}

\subsection{Direct products}

The double crossed product construction contains as a special case the direct product of locally compact quantum groups. More precisely, assume that $\HH_1,\HH_2$ are locally compact quantum groups and let $\oon{m}=\id$ be the trivial matching between $\GG_1=\HH_1^{\oon{op}}$ and $\GG_2=\HH_2$. In this case we write $\GG_{\oon{m}}=\HH_1\times \HH_2$ and call this 
the \emph{direct product} of $\HH_1$ and $\HH_2$. Note that this definition agrees with the usual one since
\[\begin{split}
\LL^{\infty}(\HH_1\times \HH_2)&=\LL^{\infty}(\GG_1^{\oon{op}})\bar{\otimes}
\LL^{\infty}(\GG_2)=\LL^{\infty}(\HH_1)\bar{\otimes}\LL^{\infty}(\HH_2),\\
\Delta_{\HH_1\times\HH_2}&=(\id\otimes \chi\otimes\id)(\Delta_{\GG_1^{\oon{op}}}\otimes\Delta_{\GG_2})=
(\id\otimes \chi\otimes\id)(\Delta_{\HH_1 }\otimes\Delta_{\HH_2}), 
\end{split}\]
and we have $\wh{\HH_1\times \HH_2}=\wh{\HH_1}\times \wh{\HH_2}$. Consequently, Corollary~\ref{cor1} and Theorem \ref{thm3} immediately give the following result. 

\begin{proposition}\label{prop9}
The direct product $\HH_1\times \HH_2$ of two locally compact quantum groups $\HH_1,\HH_2$ has AP if and only if $\HH_1$ and $\HH_2$ have AP.
\end{proposition}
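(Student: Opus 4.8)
The plan is to deduce Proposition~\ref{prop9} directly from the results already established for double crossed products, using the observation that the direct product $\HH_1\times\HH_2$ is exactly the double crossed product $\GG_{\oon m}$ associated with the trivial matching $\oon m=\id$ between $\GG_1=\HH_1^{\oon{op}}$ and $\GG_2=\HH_2$. The forward implication and the reverse implication are handled by two different earlier results, so I would organise the proof around these two halves.

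First I would prove the reverse implication: if $\HH_1$ and $\HH_2$ both have AP, then so does $\HH_1\times\HH_2$. The idea is to apply Theorem~\ref{thm3}, which states that if $\wh{\GG_1}$ and $\wh{\GG_2}$ have AP, then $\wh{\GG_{\oon m}}$ has AP. To use this I would pass to duals: since we want AP of $\HH_1\times\HH_2=\GG_{\oon m}$ rather than of its dual, I would instead apply Theorem~\ref{thm3} to a suitable pair whose double crossed product is the dual of $\GG_{\oon m}$, or more cleanly, rephrase everything so that the hypotheses match. Concretely, $\GG_1=\HH_1^{\oon{op}}$ has AP if and only if $\HH_1$ has AP by Proposition~\ref{prop11}, and likewise $\GG_2=\HH_2$ has AP by hypothesis. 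Theorem~\ref{thm3} then yields AP for $\wh{\GG_{\oon m}}$; applying it to the dual picture (noting $\wh{\HH_1\times\HH_2}=\wh{\HH_1}\times\wh{\HH_2}$, so the dual of a direct product is again a direct product of the same shape) gives AP for $\HH_1\times\HH_2$ itself.

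Next I would prove the forward implication: if $\HH_1\times\HH_2$ has AP, then both $\HH_1$ and $\HH_2$ have AP. This is immediate from Corollary~\ref{cor1}, which says that if $\GG_{\oon m}$ has AP then both $\GG_1$ and $\GG_2$ have AP. Since $\GG_1=\HH_1^{\oon{op}}$ and $\GG_2=\HH_2$, this gives AP for $\HH_1^{\oon{op}}$ and $\HH_2$, and then another application of Proposition~\ref{prop11} upgrades AP of $\HH_1^{\oon{op}}$ to AP of $\HH_1$.

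The main obstacle, such as it is, will be bookkeeping around the opposite quantum group and the duality rather than any genuine analytic difficulty: I must be careful that the identifications $\GG_1=\HH_1^{\oon{op}}$, $\GG_2=\HH_2$, together with $\wh{(\,\cdot\,)^{\oon{op}}}=\wh{(\,\cdot\,)}'$ and Proposition~\ref{prop11} (which equates AP of $\GG$, $\GG'$ and $\GG^{\oon{op}}$), are threaded through consistently so that the hypotheses of Theorem~\ref{thm3} and the conclusion of Corollary~\ref{cor1} line up with the statement about $\HH_1,\HH_2$. Once the trivial matching is recognised and these invariance results under passing to opposite and commutant are invoked, the proof is essentially a one-line citation in each direction.
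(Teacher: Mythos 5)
Your proposal is correct and is essentially the paper's own proof: the paper defines $\HH_1\times\HH_2$ as the double crossed product of $\GG_1=\HH_1^{\oon{op}}$ and $\GG_2=\HH_2$ with the trivial matching, notes $\wh{\HH_1\times\HH_2}=\wh{\HH_1}\times\wh{\HH_2}$, and then cites Corollary~\ref{cor1} and Theorem~\ref{thm3} exactly as you do. The one piece of bookkeeping to state precisely in the reverse direction is that Theorem~\ref{thm3}, applied to the pair $(\wh{\HH_1}^{\oon{op}},\wh{\HH_2})$ whose double crossed product is $\wh{\HH_1}\times\wh{\HH_2}$, takes as hypotheses AP of the \emph{duals} of that pair, namely of $\HH_1'$ and $\HH_2$ (not of $\HH_1^{\oon{op}}$ and $\HH_2$), and its conclusion is AP of $\wh{\wh{\HH_1}\times\wh{\HH_2}}=\HH_1\times\HH_2$; Proposition~\ref{prop11} then converts AP of $\HH_1$ into AP of $\HH_1'$, exactly as you anticipate.
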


\section{Categorical AP} \label{section categorical}

In this section we discuss the approximation property in the setting of rigid \cst-tensor categories, building 
on \cite{ARANO_DELAAT_WAHL_fourier}, \cite{ARANO_DELAAT_WAHL_howemoore}, \cite{AVhecke}, \cite{PVcstartensor}. 
As an application we show in particular that the central approximation property for discrete quantum groups is invariant under monoidal equivalence. 

Let us first fix some notation and terminology regarding \cst-tensor categories, referring to \cite{NeshveyevTuset} for more details and background. 
If $ \BT $ is a \cst-category and $ X, Y \in \BT $ are objects we write $ \BT(X,Y) $ for the space of morphisms from $ X $ to $ Y $. 
We denote by $ \id_X $ or $ \id $ the identity morphism in $ \BT(X,X) $.  
By definition, a \cst-tensor category is a \cst-category $ \BT $ together with a bilinear $ * $-functor $ \otimes: \BT \times \BT \rightarrow \BT $, 
a distinguished object $ \one \in \BT $ and unitary natural isomorphisms 
\begin{align*}
\one \otimes X \cong X \cong X \otimes \one, \qquad (X \otimes Y) \otimes Z \cong X \otimes (Y \otimes Z)  
\end{align*}
satisfying certain compatibility conditions. For simplicity we shall always assume that $ \BT $ is \emph{strict}, which means that these unitary natural isomorphisms are 
identities, and we also assume that the tensor unit $ \one $ is simple. 

A \cst-tensor category $ \BT $ is called \emph{rigid} if all objects of $ \BT $ are dualisable. This means that for every object $ X \in \BT $ there exists an 
object $ X^\vee \in \BT $ and morphisms $ s_X \in \BT(X \otimes X^\vee, \one), t_X \in \BT(X^\vee \otimes X, \one) $ which form a standard solution of the conjugate equations. 
That is, we have 
\begin{align*}
(t_X \otimes \id_{X^\vee})(\id_{X^\vee} \otimes s_X^*) = \id_{X^\vee}, \qquad (s_X \otimes \id_X)(\id_X \otimes t_X^*) = \id_X,
\end{align*}
and $ s_X(f \otimes \id) s_X^* = t_X(\id \otimes f) t_X^* $ for all $ f \in \BT(X,X) $. 
The \emph{quantum trace} $ \Tr_q: \BT(X,X) \rightarrow \mathbb{C} $ of $ X $ is defined by $ \Tr_q(f) = s_X(f \otimes \id)s_X^* = t_X(\id \otimes f) t_X^* $, 
and the \emph{quantum dimension} of $ X $ is $ \dim_q(X) = \Tr_q(\id_X) $. 
Every rigid \cst-tensor category $ \BT $ is semisimple, that is, every object of $ \BT $ is isomorphic to a finite direct sum of simple objects. 
We write $ \Irr(\BT) $ for the set of isomorphism classes of simple objects in $ \BT $, and choose representatives $ X_i \in \BT $ for elements $ i = [X_i] \in \Irr(\BT) $, 
with the convention that we also write $ i = 0 $ for the class $ [\one] $. 

The \emph{fusion algebra} $ \mathbb{C}[\BT] $ is the vector space with basis $ \Irr(\BT) $ equipped with the fusion product 
$$
[X_i] \cdot [X_j] = \sum_{k \in \Irr(\BT)} N_{ij}^k [X_k], 
$$
where $ N^k_{ij} = \dim(\BT(X_i \otimes X_j, X_k)) $, and the $ * $-structure determined by $ [X_i]^* = [X_i^\vee] $. We will follow the usual abuse of notation and identify $X\in \Irr(\BT) $ 
with its class $[X]$. The regular representation $ \lambda: \mathbb{C}[\BT] \rightarrow \B(\ell^2(\Irr(\BT))) $ is defined by $ \lambda(X)(Y) = X \cdot Y $. 
The \emph{tube algebra} $ \Tub(\BT) $ is 
\begin{equation}
\Tub(\BT) = \bigoplus_{i, j, k \in \Irr(\BT)} \BT(X_k \otimes X_i, X_j \otimes X_k) 
\label{eq:tub_alg_defn}
\end{equation}
equipped with a suitable multiplication and $ \star $-structure, see \cite[Definition~3.3]{GHOSH_JONES_annularreptheory}. 
Let us only note that $ \Tub(\BT) $ is a non-unital $ \star $-algebra with local units, which are given by the projections $ p_i = \id \in \BT(\one \otimes X_i, X_i \otimes \one) = \BT(X_i, X_i) \subseteq \Tub(\BT) $ for $ i \in \Irr(\BT) $. 
Moreover, the corner $ p_0 \Tub(\BT) p_0 $ corresponding to $ [\one] \in \Irr(\BT) $ is canonically isomorphic to the fusion algebra $ \mathbb{C}[\BT] $, see \cite[Proposition~3.1]{GHOSH_JONES_annularreptheory}.

There is a natural faithful positive trace $ \Tr\colon \Tub(\BT) \rightarrow \mathbb{C} $ which vanishes on $ \BT(X_k \otimes X_i, X_j \otimes X_k) $ 
for $ i \neq j $ or $ X_k \neq \one $, and is given by the quantum trace $ \Tr_q $ on $ \BT(\one \otimes X_i, X_i \otimes \one) $ for all $ i \in \Irr(\BT) $. 
We write $ \LL^2(\Tub(\BT)) $ for the associated GNS-Hilbert space. 
This yields the regular representation $ \lambda\colon \Tub(\BT) \rightarrow \B(\LL^2(\Tub(\BT))) $ of $ \Tub(\BT) $. 
The reduced \cst-algebra $ \mrm{C}^*_\red(\Tub(\BT)) $ and the von Neumann algebra $ \mc{L}(\Tub(\BT)) $ are defined as the closure of $ \lambda(\Tub(\BT)) $ 
in the operator norm and the weak operator topology, respectively. The map $ \Tr $ extends to a n.s.f trace on $ \mc{L}(\Tub(\BT)) $ by \cite[Proposition 3.10]{POPA_SHLYAKHTENKO_VAES_l2betti}. 

Let us next recall the definition of multipliers on rigid \cst-tensor categories from the work of Popa-Vaes \cite[Section~3]{PVcstartensor}. 

\begin{definition} 
Let $ \BT $ be a rigid \cst-tensor category. 
A \emph{multiplier} on $ \BT $ is a family $ \theta = (\theta_{X,Y}) $ of linear maps $ \theta_{X,Y}: \BT(X \otimes Y, X \otimes Y) \rightarrow \BT(X \otimes Y, X \otimes Y) $ 
for $ X, Y \in \BT $ such that 
\begin{align*}
\theta_{X_2,Y_2}(gfh^*) &= g \theta_{X_1, Y_1}(f) h^*, \\
\theta_{X_2 \otimes X_1, Y_1 \otimes Y_2}(\id_{X_2} \otimes f \otimes \id_{Y_2}) &= \id_{X_2} \otimes \theta_{X_1,Y_1}(f) \otimes \id_{Y_2},
\end{align*}  
for all $ X_i, Y_i \in \BT $, $ f \in \BT(X_1 \otimes Y_1, X_1 \otimes Y_1) $ and $ g, h \in \BT(X_1, X_2) \otimes \BT(Y_1, Y_2) \subseteq \BT(X_1 \otimes Y_1, X_2 \otimes Y_2) $. \\
A multiplier $ \theta = (\theta_{X,Y}) $ on $ \BT $ is said to be completely positive (or a \emph{CP multiplier}) if all the maps $ \theta_{X,Y} $ are completely positive. 
A multiplier $ \theta = (\theta_{X,Y}) $ on $ \BT $ is said to be completely bounded (or a \emph{CB multiplier}) if all the maps $ \theta_{X,Y} $ are completely bounded 
and $ \| \theta \|_{cb} = \sup_{X,Y \in \BT} \|\theta_{X,Y}\|_{cb} < \infty $. 
\end{definition} 

It is shown in \cite[Proposition 3.6]{PVcstartensor} that multipliers on $ \BT $ are in canonical bijection with functions $ \Irr(\BT) \rightarrow \mathbb{C} $.  We will often identify a multiplier $\theta = (\theta_{X,Y})$ with its associated function $\theta = (\theta(k))_{k\in\Irr(\BT)}$.  Note that we have $\|(\theta(k))_{k\in\Irr(\BT)}\|_\infty \leq \|\theta\|_{cb}$.

Let us write $ \M_{cb}(\BT) $ for the space of CB multipliers on $ \BT $. Via composition of maps and the CB norm this becomes naturally a Banach algebra.  From the definition of the correspondence between functions on $ \Irr(\BT) $ and multipliers, compare \cite[Formula~(3.5)]{PVcstartensor}, it follows that the product on $ \M_{cb}(\BT) $ corresponds to pointwise multiplication of functions.  It is shown in \cite[Corollary 5.3]{ARANO_DELAAT_WAHL_fourier} that $ \M_{cb}(\BT) $ is a dual Banach algebra, whose predual $ Q(\BT) $ can be constructed using 
the tube algebra of $ \BT $.  More specifically, if $ \theta \in \M_{cb}(\BT) $ is a CB multiplier, define $ M_\theta\colon \Tub(\BT) \rightarrow \Tub(\BT) $ by 
$$
M_\theta(f) = \theta(k) f, \qquad f \in \BT(X_k \otimes X_i, X_j \otimes X_k) \subseteq \Tub(\BT). 
$$
Due to \cite[Proposition 5.1]{AVhecke} the map $ M $ gives an isometric embedding of $ \M_{cb}(\BT) $ into the space $ \CB^\sigma(\mc{L}(\Tub(\BT))) $ 
of normal CB maps on $ \mc{L}(\Tub(\BT)) $, 
and also an isometric embedding with weak$^*$-closed image $ \M_{cb}(\BT) \rightarrow \CB(\mrm{C}^*_{\red}(\Tub(\BT)), \mc{L}(\Tub(\BT))) $.  
Then the predual $ Q(\BT) $ of $ \M_{cb}(\BT) $ obtained in \cite{ARANO_DELAAT_WAHL_fourier} is constructed as the resulting quotient of the 
predual $ \mrm{C}^*_\red(\Tub(\BT)) \wh{\otimes} \mc{L}(\Tub(\BT))_* $ of $ \CB(\mrm{C}^*_\red(\Tub(\BT)), \mc{L}(\Tub(\BT))) $ -- compare Theorem \ref{thm5}. 

We can approximate elements of $ Q(\BT) $ by taking tensor products of elements in $ \Tub(\BT) $ and vector functionals associated to vectors from $ \LL^2(\Tub(\BT)) $. 
Noting that $ \Tub(\BT) $ is dense in $ \LL^2(\Tub(\BT)) $, a linearly dense collection of functionals in $\mc{L}(\Tub(\BT))_*$ is given by $ \mc{L}(\Tub(\BT)) \ni T \mapsto \ismaa{f}{Tg} = \Tr(f^*Tg) = \Tr(Tgf^*)$ for $f,g\in \Tub(\BT) $.  As $\Tub(\BT)$ has local units, we have $\{gf^*\,|\,f,g\in \Tub(\BT)\}=\Tub(\BT)$ and it suffices to look at functionals of the form $T \mapsto \Tr(Tf)$ for $f\in \Tub(\BT)$.  Under this identification, the canonical pairing of $ f \otimes g \in \Tub(\BT) \odot \Tub(\BT) \subseteq \mrm{C}^*_\red(\Tub(\BT)) \wh{\otimes} \mc{L}(\Tub(\BT))_* $ with $ \theta \in \M_{cb}(\BT) $ becomes 
\begin{equation}
\la \theta , f  \otimes g\ra  = \Tr(g M_\theta(f)).  \label{eq:QT_dense}
\end{equation}

Let us define a weighted $ \ell^1 $-norm on $ \mrm{c}_{00}(\Irr(\BT)) $ by 
$$
\|f\|_1 = \sum_{k \in \Irr(\BT)} \dim_q(X_k) |f(k)|, 
$$
and denote by $ \ell^1(\Irr(\BT)) $ the corresponding completion, compare \cite[Remark 10.4]{PVcstartensor}.  
The weighting by quantum dimensions ensures that admissible $ * $-representations of $ \mathbb{C}[\BT] $ in the sense of \cite[Definition 4.1]{PVcstartensor} 
extend to contractive $ * $-representations of $ \ell^1(\Irr(\BT)) $. 
Note that there is a contractive embedding $ \iota\colon \ell^1(\Irr(\BT)) \rightarrow \M_{cb}(\BT)^* $ given by 
$$ 
\iota(\omega)(\theta) = \sum_{k \in \Irr(\BT)} \dim_q(X_k) \omega(k) \theta(k)
\qquad (\omega\in \ell^1(\Irr(\BT)), \theta\in \M_{cb}(\BT)).
$$ 

\begin{lemma} \label{l1density}
Let $ \BT $ be a rigid \cst-tensor category. Then the Banach space $ Q(\BT) $ can be identified with the closure of $ \ell^1(\Irr(\BT)) $ in $ \M_{cb}(\BT)^* $ 
under the embedding $\iota$.
\end{lemma}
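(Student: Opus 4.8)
The goal is to identify the predual $Q(\BT)$ of $\M_{cb}(\BT)$ with the closure of the image of $\iota\colon \ell^1(\Irr(\BT)) \to \M_{cb}(\BT)^*$. The plan is to show that the functionals $\iota(\omega)$ for $\omega \in \ell^1(\Irr(\BT))$ (equivalently, for finitely supported $\omega$) lie in $Q(\BT)$, and conversely that a linearly dense subset of the already-identified space $Q(\BT)$ consists of such functionals. Since $Q(\BT) \subseteq \M_{cb}(\BT)^*$ is by construction norm-closed, it suffices to match up dense subspaces.

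First I would use the concrete description of $Q(\BT)$ recalled in the excerpt: a linearly dense collection of functionals in $Q(\BT)$ is given by elements of the form $f \otimes g$ with $f,g \in \Tub(\BT)$, paired with a multiplier $\theta$ via $\la \theta, f\otimes g\ra = \Tr(g M_\theta(f))$, as in \eqref{eq:QT_dense}. The key computation is to evaluate $\Tr(g M_\theta(f))$ explicitly. By definition $M_\theta$ multiplies a morphism in $\BT(X_k \otimes X_i, X_j \otimes X_k) \subseteq \Tub(\BT)$ by the scalar $\theta(k)$, and the trace $\Tr$ on $\Tub(\BT)$ vanishes except on the corner $\bigoplus_i \BT(\one \otimes X_i, X_i \otimes \one)$ where it is given by the quantum trace $\Tr_q$. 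Decomposing $f,g$ into their homogeneous components $f_{i,j,k} \in \BT(X_k\otimes X_i, X_j\otimes X_k)$, I expect that $\Tr(g M_\theta(f))$ collapses to a sum of the shape $\sum_{k} \theta(k)\, c_k$ for scalars $c_k$ depending on $f,g$, because the trace forces $i=j$ and the surviving $k$-th contribution acquires the factor $\theta(k)$. Comparing this with $\iota(\omega)(\theta) = \sum_k \dim_q(X_k)\,\omega(k)\,\theta(k)$, I would read off that each $f\otimes g$ gives $\iota(\omega)$ for a suitable finitely supported $\omega$, with $\dim_q(X_k)\omega(k) = c_k$; conversely every finitely supported $\omega$, hence (by norm density and closedness of $Q(\BT)$) every $\omega\in\ell^1(\Irr(\BT))$, is realised this way. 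This shows $\iota(\ell^1(\Irr(\BT)))$ and the dense subspace of $Q(\BT)$ have the same closure.

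Along the way I must verify that $\iota$ is well-defined and contractive into $\M_{cb}(\BT)^*$ — but this is already asserted in the excerpt — and that the norm on $Q(\BT)$ restricts compatibly, so that the identification is isometric (or at least an isomorphism of Banach spaces); the norm-closure being taken inside the common ambient space $\M_{cb}(\BT)^*$ makes the two closures literally equal once the dense subspaces coincide. I would also note that the weighting by $\dim_q(X_k)$ in the definition of $\|\cdot\|_1$ is exactly what is needed for the scalars $c_k = \dim_q(X_k)\omega(k)$ arising from the quantum trace to match, which is the structural reason the weighted $\ell^1$-norm (rather than the plain one) appears.

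The main obstacle will be the bookkeeping in the trace computation $\Tr(g M_\theta(f))$: one has to track how the components of $f$ and $g$ in the various summands $\BT(X_k\otimes X_i, X_j\otimes X_k)$ of $\Tub(\BT)$ multiply, identify precisely which products land in the corner $\BT(\one\otimes X_i, X_i\otimes \one)$ on which $\Tr$ is nonzero, and confirm that the scalar $\theta(k)$ factors out cleanly so that the dependence on $\theta$ is exactly linear in the values $(\theta(k))_k$. Getting the role of the conjugate-equation morphisms $s_{X_k}, t_{X_k}$ and the quantum dimension normalisation right is where care is required; once that is done, the matching of $c_k$ with $\dim_q(X_k)\omega(k)$ and the density argument are routine.
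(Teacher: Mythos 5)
Your proposal is correct and follows essentially the same route as the paper: both identify the image of $\Tub(\BT)\odot\Tub(\BT)$ under the pairing \eqref{eq:QT_dense} with $\iota(\mrm{c}_{00}(\Irr(\BT)))$ via the trace computation, and then conclude by norm density and closedness of $Q(\BT)$ in $\M_{cb}(\BT)^*$. The only detail you leave implicit, which the paper makes explicit, is that to realise \emph{every} finitely supported $\omega$ one should take $f,g$ in the fusion algebra corner $p_0\Tub(\BT)p_0\cong\mathbb{C}[\BT]$, where the trace of the product produces exactly the $\dim_q(X_k)$ factors you anticipate.
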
 

\begin{proof} 
It follows from the explicit formulas that the image of the subspace $ \Tub(\BT) \odot \Tub(\BT) \subseteq\mrm{C}^*_\red(\Tub(\BT)) \wh{\otimes} \mc{L}(\Tub(\BT))_* $ 
in $ \M_{cb}(\BT)^* $ agrees with the image of $ \mrm{c}_{00}(\Irr(\BT)) $ under the map $ \iota $.  
Indeed, that we obtain all of $\mrm{c}_{00}(\Irr(\BT))$ in this way can be seen by considering $f,g\in p_0\Tub(\BT)p_0 \cong \mathbb C[\BT]$ in \eqref{eq:QT_dense}.
The claim therefore follows from density of the former space inside $ Q(\BT) $. 
\end{proof} 

Remembering that the weak$^*$-topology on $ \M_{cb}(\BT) $ means the one induced by the predual $Q(\BT)$, we shall now give the following definition. 

\begin{definition} 
Let $ \BT $ be a rigid \cst-tensor category. We say that $ \BT $ has the \emph{approximation property (AP)} if there exists a net of finitely supported CB multipliers of $ \BT $ 
converging to $ \I $ in the weak$^*$-topology of $ \M_{cb}(\BT) $. 
\end{definition}

Comparing with \cite[Definition 5.1]{PVcstartensor} we see that every weakly amenable rigid \cst-tensor category has AP. 
Indeed, a uniformly bounded net of finitely supported CB multipliers converging pointwise to $ \I $ 
converges also in the weak$^*$-topology since $ \mrm{c}_{00}(\Irr(\BT)) $ is dense in $ Q(\BT) $ by Lemma \ref{l1density}.

Next recall the notion of central approximation property for discrete quantum groups from Definition \ref{defcentralAP}. 
We aim to show that the central approximation property for a discrete quantum group $\GGamma$ is equivalent to the approximation property for the rigid \cst-tensor 
category $ \Corep(\GGamma) $ of finite dimensional unitary corepresentations of $\GGamma$ (i.e.~representations of $\wh\GGamma$). To make the notation more coherent, we will write in this section $\CC[\GGamma]=\Pol(\wh\GGamma)$ and $\mrm{C}^*_{\red}(\GGamma)=\mrm{C}(\wh\GGamma)$, $\mc{L}(\GGamma)=\LL^{\infty}(\wh\GGamma)$, and use the same conventions for the Drinfeld double $D(\GGamma)$.

We shall first discuss the relation between categorical AP for $ \Corep(\GGamma) $ and AP for the Drinfeld double $ D(\GGamma) $ of $\GGamma$. 
Recall from Section~\ref{sec:dbl_cp:prelim} that $ \LL^\infty(D(\GGamma)) = \ell^\infty(\GGamma) \bar{\otimes} \mc{L}(\GGamma) $ with the coproduct 
$$
\Delta_{D(\GGamma)} = (\id \otimes \chi \otimes \id)(\id \otimes \ad(\ww) \otimes \id)(\Delta \otimes \wh{\Delta}), 
$$ 
where $ \ww \in \ell^\infty(\GGamma) \bar{\otimes} \mc{L}(\GGamma) $ is the Kac-Takesaki operator of $\GGamma$.  
Note also that we have a canonical identification of the center $ \mc{Z}\ell^\infty(\GGamma) $ of $ \ell^\infty(\GGamma) $ 
with $ \ell^\infty(\Irr(\Corep(\GGamma))) = \ell^\infty(\Irr(\wh \GGamma))$. In particular, 
every multiplier $ \theta \in \M_{cb}(\Corep(\GGamma)) $ can be viewed as a (central) element of $ \ell^\infty(\GGamma) $. 

In what follows we will use the notion of an \emph{algebraic quantum group}, see \cite{VanDaeleAlgebraic}, \cite[Section 3.2]{VoigtYuncken}. By definition, an algebraic quantum group is described via a multiplier Hopf $\star$-algebra for which there exists a positive left invariant functional and a positive right invariant functional.  For example, if $\GGamma$ is a discrete quantum group then $\mrm{c}_{00}(\GGamma)$ 
and $\CC[\GGamma]$ equipped with their respective comultiplications and Haar integrals are examples of algebraic quantum groups. Every algebraic quantum group gives rise to a locally compact quantum group in the sense of Kustermans-Vaes via an appropriate completion procedure, see \cite{KustermansAnalytic}. Moreover, one finds that all elements of the underlying 
multiplier Hopf $\star$-algebra are contained in the Fourier algebra of the locally compact quantum group, see e.g.~the end of Section $1$ in \cite{KustermansAnalytic}.

The Drinfeld double $D(\GGamma)$ of a discrete quantum group $\GGamma$ and its dual $\wh{D(\GGamma)}$ are also algebraic quantum 
groups. The corresponding multiplier Hopf $ * $-algebras are $\mrm{c}_{00}(\GGamma)\odot \CC[\GGamma]\subseteq \LL^{\infty}(D(\GGamma))$ and 
\[
\mc{D}(D(\GGamma))=\CC[\GGamma]\bowtie \mrm{c}_{00}(\GGamma)=\lin\{\gamma_1(\wh x)\gamma_2(x)\,|\,\wh x\in \CC[\GGamma],x\in \mrm{c}_{00}(\GGamma)\}\subseteq 
\mc{L}(D(\GGamma)),
\]
where
\[\begin{split}
\gamma_1&\colon \mc{L}(\GGamma)\rightarrow \mc{L}( D(\GGamma) )\colon \wh x\mapsto \wh x\otimes \I , \\ 
\gamma_2&\colon \ell^{\infty}(\GGamma)\rightarrow\mc{L}( D(\GGamma) )\colon x\mapsto Z^*(\I\otimes x)Z
\end{split}\]
are the maps introduced in Lemma \ref{lemma11}. We will write $\gamma_1(\wh x)\gamma_2(x)=\wh{x}\bowtie x$ for $\wh x\in \CC[\GGamma],x\in \mrm{c}_{00}(\GGamma)$.

\begin{lemma} \label{drinfeldmultiplier}
Let $\GGamma$ be a discrete quantum group and let $ D(\GGamma) $ be its Drinfeld double. There is an isometric embedding 
$$ 
N\colon \M_{cb}(\Corep(\GGamma)) \rightarrow \CB^\sigma(\mc{L}(D(\GGamma)))\colon \theta\mapsto N_\theta
$$ 
given by 
$$ 
N_\theta(U^\alpha_{i,j} \bowtie x_\beta) = \theta(\alpha) U^\alpha_{i,j} \bowtie x_\beta
$$ 
for $ U^\alpha_{i,j}\in \mathbb{C}[\GGamma], x_\beta \in \B(\msf{H}_\beta) \subseteq \mrm{c}_{00}(\GGamma) $. 
If $ \theta \in \M_{cb}(\Corep(\GGamma)) $ then $ \theta \otimes \I \in \M^l_{cb}(\A(D(\GGamma)))$ $ \subseteq \LL^\infty(D(\GGamma)) $
and $ N_\theta = \Theta^l(\theta \otimes \I) $.
\end{lemma}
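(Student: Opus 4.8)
The plan is to reduce the statement to the multiplier theory of $\GGamma$ itself, transport it to the double $D(\GGamma)$ along the embeddings $\gamma_1,\gamma_2$, and then read off the isometry. Throughout I view $\theta\in\M_{cb}(\Corep(\GGamma))$ as a central element of $\ell^\infty(\GGamma)$ via $\mc{Z}\ell^\infty(\GGamma)\cong\ell^\infty(\Irr(\wh\GGamma))$, and I write $\Theta^l(\theta)$ for the associated map on $\mc{L}(\GGamma)=\LL^\infty(\wh\GGamma)$. The first observation is that, because $\theta$ is central, $\theta\otimes\I$ commutes with the Kac--Takesaki operator $\mc{Z}=\ww^\GGamma$ defining the Drinfeld double (the first leg of $\mc{Z}$ lies in $\ell^\infty(\GGamma)$, and $\theta\otimes\I$ carries $\I$ in the second leg), so $\oon{m}(\theta\otimes\I)=\mc{Z}(\theta\otimes\I)\mc{Z}^*=\theta\otimes\I$. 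Thus $\theta\otimes\I$ is fixed by the matching and behaves like a multiplier supported in the $\GG_1^{\oon{op}}=\GGamma$ direction. By Lemma~\ref{lemma4}, read for $\GGamma$, the candidate $\Theta^l(\theta)$ acts by $U^\alpha_{i,j}\mapsto\theta(\alpha)U^\alpha_{i,j}$, since centrality forces $\theta^\alpha=\theta(\alpha)\I$.

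Next I would show that $\theta\otimes\I$ is a left CB multiplier of $\A(D(\GGamma))$ and simultaneously compute $\Theta^l(\theta\otimes\I)$, using the centraliser/module--map characterisation $\M^l_{cb}(\A(D(\GGamma)))\cong\prescript{}{\LL^1(\wh{D(\GGamma)})}{\CB^\sigma}(\mc{L}(D(\GGamma)))$ employed throughout the paper. Since $\mc{D}(D(\GGamma))=\CC[\GGamma]\bowtie\mrm{c}_{00}(\GGamma)$ is, as a vector space, $\CC[\GGamma]\odot\mrm{c}_{00}(\GGamma)$, the rule $N_\theta(\gamma_1(\wh x)\gamma_2(x))=\gamma_1(\Theta^l(\theta)(\wh x))\gamma_2(x)$ is unambiguous and agrees with the stated formula $U^\alpha_{i,j}\bowtie x_\beta\mapsto\theta(\alpha)U^\alpha_{i,j}\bowtie x_\beta$. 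The module--map identity $\wh\Delta_{D(\GGamma)}\circ N_\theta=(N_\theta\otimes\id)\circ\wh\Delta_{D(\GGamma)}$ is then checked on generators: since $\gamma_1,\gamma_2$ are $*$-homomorphisms intertwining the coproducts (Lemma~\ref{lemma11}), one has $\wh\Delta_{D(\GGamma)}(\gamma_1(\wh x)\gamma_2(x))=\sum\gamma_1(\wh x_{(1)})\gamma_2(x_{(1)})\otimes\gamma_1(\wh x_{(2)})\gamma_2(x_{(2)})$, and the identity collapses, via $\wh\Delta\,\Theta^l(\theta)=(\Theta^l(\theta)\otimes\id)\wh\Delta$ for $\GGamma$, to an equality of Sweedler sums. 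Granting that $N_\theta$ is normal and completely bounded, \cite[Corollary~4.4]{JungeNeufangRuan} produces a unique $b\in\M^l_{cb}(\A(D(\GGamma)))$ with $\Theta^l(b)=N_\theta$; to identify $b=\theta\otimes\I$ I would slice the defining relation $(\I\otimes b)\ww^{\wh{D(\GGamma)}}=(N_\theta\otimes\id)\ww^{\wh{D(\GGamma)}}$ against functionals, using $\ww^{\wh{D(\GGamma)}}=\chi(\ww_{\oon{m}}^*)$, the explicit $\ww_{\oon{m}}=(\Sigma\vv_1^*\Sigma)_{13}Z_{34}^*\ww_{2,24}Z_{34}$, and the factorisation \eqref{eq10}, which isolates the $\gamma_1$-leg where $\theta$ acts.

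The isometry of $N$ I would extract from the resulting CB-norm computation. Linearity is clear, and multiplicativity follows since the pointwise product of multipliers corresponds to composition of the maps $N_\theta$. On norms, $\sup_\alpha|\theta(\alpha)|\le\|N_\theta\|\le\|N_\theta\|_{cb}=\|\theta\otimes\I\|_{cb}$, while the reverse bound $\|\theta\otimes\I\|_{cb}\le\|\theta\|_{cb}$ comes from the same transfer of complete boundedness used in the previous step; together these give $\|N_\theta\|_{cb}=\|\theta\|_{cb}$, hence injectivity and an isometric range inside $\CB^\sigma(\mc{L}(D(\GGamma)))$.

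The crux, and the step I expect to be hardest, is exactly the complete boundedness and normality of $N_\theta$ together with the sharp equality $\|N_\theta\|_{cb}=\|\theta\|_{cb}$. The categorical hypothesis enters here and only here, through $\|\theta\|_{cb}=\sup_{X,Y}\|\theta_{X,Y}\|_{cb}<\infty$, equivalently through complete boundedness of $M_\theta$ on $\mc{L}(\Tub(\Corep(\GGamma)))$ from \cite[Proposition~5.1]{AVhecke}, and this boundedness must be moved onto $\mc{L}(D(\GGamma))$. The obstruction is that $\mc{L}(D(\GGamma))$ is a \emph{twisted} combination of $\gamma_1(\mc{L}(\GGamma))$ and $\gamma_2(\ell^\infty(\GGamma))$ (the second embedding involves conjugation by $Z$), so $N_\theta$ is not literally $\Theta^l(\theta)\otimes\id$ on a tensor product and its CB-norm is not visible directly. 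I would resolve this by passing to the $\Phi$-realisation on $\B(\LL^2(D(\GGamma)))$, in the spirit of Lemmas~\ref{lemma10} and~\ref{lemma13}, where the action in the $\gamma_1$-direction becomes an honest ampliation of the central multiplier of $\GGamma$ (whose CB-norm is controlled) while the $\gamma_2$-direction is untouched; alternatively, one transports the bound directly by matching $M_\theta$ on the tube algebra with $N_\theta$ under the standard relationship between $\Tub(\Corep(\GGamma))$ and $D(\GGamma)$.
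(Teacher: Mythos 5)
There is a genuine gap, and it sits exactly where you predicted the difficulty would be, but neither of your proposed resolutions closes it. Your main line (and your preferred fix via the $\Phi$-realisation) is circular: you write $N_\theta(\gamma_1(\wh x)\gamma_2(x))=\gamma_1(\Theta^l(\theta)(\wh x))\gamma_2(x)$ and speak of ``the central multiplier of $\GGamma$ (whose CB-norm is controlled)'', but the hypothesis is only that $\theta$ is a CB multiplier of the \emph{category} $\Corep(\GGamma)$, i.e.\ $\sup_{X,Y}\|\theta_{X,Y}\|_{cb}<\infty$. That such a $\theta$ defines an element of $\M^l_{cb}(\A(\GGamma))$ (equivalently a normal CB map $\Theta^l(\theta)$ on $\mc{L}(\GGamma)$) with norm controlled by the categorical norm is precisely the hard content here: in the paper this transfer is Lemma~\ref{multiplierlemma}, whose proof \emph{uses} Lemma~\ref{drinfeldmultiplier}, and the Popa--Vaes result you would want to quote (\cite[Proposition 6.1]{PVcstartensor}) only gives the opposite inequality $\|\theta\|_{\M_{cb}(\Corep(\GGamma))}\le\|\theta\|_{\M^l_{cb}(\A(\GGamma))}$. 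So both your computation of the ``candidate'' action via Lemma~\ref{lemma4} and resolution (a) assume what is to be proved. Your isometry argument is also flawed as stated: the sandwich $\sup_\alpha|\theta(\alpha)|\le\|N_\theta\|_{cb}\le\|\theta\|_{cb}$ does not give $\|N_\theta\|_{cb}=\|\theta\|_{cb}$, because the sup-norm of the function $(\theta(\alpha))_\alpha$ is in general strictly smaller than the categorical CB norm; the missing lower bound $\|\theta\|_{cb}\le\|N_\theta\|_{cb}$ is the genuinely hard direction.

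Your fallback (b) is indeed the route the paper takes, but it cannot be dismissed in one sentence, and it is where all three missing properties (normality, complete boundedness, isometry) come from simultaneously. The correct statement is not about the tube algebra $\Tub(\Corep(\GGamma))$ itself: one must pass to the larger annular algebra $\Tub(\GGamma)=\bigoplus_{\alpha,\beta}\bigl(\bigoplus_\gamma \Mor(\gamma\otimes\alpha,\beta\otimes\gamma)\bigr)\otimes\B(\msf{H}_{\overline{\alpha}},\msf{H}_{\overline{\beta}})$, to which \cite[Proposition 5.1]{AVhecke} applies and yields an \emph{isometric} embedding $\wt{M}\colon \M_{cb}(\Corep(\GGamma))\rightarrow\CB^\sigma(\mc{L}(\Tub(\GGamma)))$, $\wt{M}_\theta(f)=\theta(\gamma)f$. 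By Neshveyev--Yamashita \cite[Theorem 3.5]{NESHVEYEV_YAMASHITA_tube} this annular algebra is $\star$-isomorphic to $\D(D(\GGamma))=\CC[\GGamma]\bowtie\mrm{c}_{00}(\GGamma)$; since the canonical trace on $\Tub(\GGamma)$ and the invariant functional on $\D(D(\GGamma))$ differ by a positive invertible element of the algebraic multiplier algebra, the two GNS representations are unitarily equivalent, so the isomorphism extends to a normal $\star$-isomorphism $\mc{L}(\Tub(\GGamma))\cong\mc{L}(D(\GGamma))$ carrying $\wt{M}_\theta$ to $N_\theta$. Only after this does one know $N_\theta\in\CB^\sigma(\mc{L}(D(\GGamma)))$ with $\|N_\theta\|_{cb}=\|\theta\|_{cb}$; the final identification $\Theta^l(\theta\otimes\I)=N_\theta$ by slicing $\ww^{\wh{D(\GGamma)}}$ (your last step) is then fine and close to what the paper does.
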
 

\begin{proof}
We wish to apply the results of \cite[Section~3]{NESHVEYEV_YAMASHITA_tube}. For this we need to work with the annular algebra
\[ \Tub(\GGamma) = \bigoplus_{\alpha,\beta\in\Irr(\wh \GGamma)} 
\Big( \bigoplus_{\gamma\in\Irr(\wh \GGamma)} \Mor(\gamma\otimes\alpha, \beta\otimes\gamma) \Big) \otimes \B(\msf H_{\overline{\alpha}}, \msf H_{\overline{\beta}}), \]
which is equipped with the multiplication given by the product from $\Tub(\Corep(\GGamma))$ in \eqref{eq:tub_alg_defn} and the composition of operators between 
the Hilbert spaces $\msf H_\gamma$. 
We refer to \cite[Section~3]{GHOSH_JONES_annularreptheory} for the general definition of annular algebras associated with full weight sets. 
 
We again obtain a trace on $\Tub(\GGamma)$ and so can perform the GNS construction, and construct the associated von Neumann algebra $\mc L(\Tub(\GGamma))$. 
Furthermore, \cite[Proposition 5.1]{AVhecke} applies, and we obtain a map $ \wt{M}\colon \M_{cb}(\Corep(\GGamma)) \rightarrow \CB^\sigma(\mc{L}(\Tub(\GGamma))) $ 
given by $ \wt{M}_\theta(f) = \theta(\gamma) f $ 
for $ f = f' \otimes T \in \Mor(\gamma \otimes \alpha, \beta \otimes \gamma) \otimes 
\B(\msf H_{\overline{\alpha}}, \msf H_{\overline{\beta}}) \subseteq \Tub(\GGamma) \subseteq \mc L(\Tub(\GGamma)) $, which is a well-defined isometric embedding. 

It is shown in \cite[Theorem 3.5]{NESHVEYEV_YAMASHITA_tube} that there is a $\star$-isomorphism between $ \Tub(\GGamma) $ 
and the algebraic convolution algebra $ \D(D(\GGamma)) = \mathbb{C}[\GGamma] \bowtie \mrm{c}_{00}(\GGamma) $ of the Drinfeld double of $\GGamma$. 
Under this isomorphism, the trace $ \Tr $ on $ \Tub(\GGamma) $ does not correspond to the left invariant functional on $ \D(D(\GGamma)) $ on the nose, 
but both functionals can be obtained from one another by multiplication with a positive invertible element in the algebraic multiplier algebra of $ \Tub(\GGamma) \cong \D(D(\GGamma)) $.
It follows that the regular representations of $ \Tub(\GGamma) \cong \D(D(\GGamma))) $ on $ \LL^2(\Tub(\GGamma))$ and $\LL^2(D(\GGamma))$ are unitarily equivalent, which means that the 
isomorphism in \cite[Theorem 3.5]{NESHVEYEV_YAMASHITA_tube} induces a normal $\star$-isomorphism $ \mc{L}(\Tub(\GGamma)) \cong \mc{L}(D(\GGamma)) $, 
which restricts to a $\star$-isomorphism $\mrm{C}^*_{\red}(\Tub(\BT)) \cong \mrm{C}^*_{\red}(D(\GGamma)) $. 

Inspecting the formulas in \cite{NESHVEYEV_YAMASHITA_tube} one checks that $ \wt{M}_\theta\colon \Tub(\GGamma) \rightarrow \Tub(\GGamma) $ identifies under 
the isomorphism $\Tub(\GGamma) \cong \D(D(\GGamma)))$ with the map $ N_\theta\colon \D(D(\GGamma)) \rightarrow \D(D(\GGamma)) $ in the statement of the lemma. 
Consequently, we see that $N_\theta$ extends to a normal CB map on $\mc{L}(D(\GGamma))$.

An explicit formula for the multiplication in $\mc D(D(\GGamma))$ is given in \cite[page~219]{VoigtYuncken}, though be aware that \cite{VoigtYuncken} uses a different convention to us, with the factors $\mathbb C[\GGamma]$ and $\mrm{c}_{00}(\GGamma)$ swapped around; in what follows, we make the necessarily changes to follow our conventions.  It follows that, for $x\in \mrm{c}_{00}(\GGamma)$, we have that $\gamma_2(x) \gamma_1(U^\alpha_{i,j}) \in \lin\{ \gamma_1(U^\alpha_{k,l}) \gamma_2(y) \,|\, 1\leq k,l\leq \dim(\alpha), y\in \mrm{c}_{00}(\GGamma) \}$ and so $N_\theta( \gamma_2(x) \gamma_1(U^\alpha_{i,j})) = \theta(\alpha) \gamma_2(x) \gamma_1(U^\alpha_{i,j})$.

From Section~\ref{sec:dbl_cp:prelim}, we find that
\[ \ww^{D(\GGamma)*} = Z_{34}^* \wh\ww_{24}^* Z_{34} \ww_{13}^* = (\id\otimes\gamma_2)(\wh\ww^*)_{234}(\id\otimes\gamma_1)(\ww^*)_{134}, \]
where $\ww = \ww^\GGamma$.  Given $\alpha,i,j$, there is $\omega\in\ell^1(\GGamma)$ with $(\omega\otimes\id)(\ww^*) = U^\alpha_{i,j}$, because $\ww^* = \chi(\wh\ww)$ and $\wh\ww$ is given by \eqref{eq:hatW_cmpt}.  Then for $\wh\omega\in \LL^1(\wh\GGamma)$,
\begin{align*}
(\omega\otimes\wh\omega\otimes N_\theta)(\ww^{D(\GGamma)*})
&= N_\theta\big( \gamma_2( (\wh\omega\otimes\id)(\wh\ww^*) ) \gamma_1(U^\alpha_{i,j})) \big)
= \theta(\alpha) \gamma_2( (\wh\omega\otimes\id)(\wh\ww^*) ) \gamma_1(U^\alpha_{i,j}))
\end{align*}
using the previous observation.  Given $x_\beta \in \B(\msf H_\beta) \subseteq \mrm{c}_{00}(\GGamma)$, notice that $\theta x_\beta = \theta(\beta) x_\beta$, and so $\omega \theta = \theta(\alpha) \omega$, as $\omega \in \B(\msf{H}_\alpha)^*$.  Thus
\[ (\omega\otimes\wh\omega\otimes N_\theta)(\ww^{D(\GGamma)*})
= \theta(\alpha) (\omega\otimes\wh\omega\otimes\id)(\ww^{D(\GGamma)*})
= (\omega\otimes\wh\omega\otimes\id)( ((\theta\otimes\I)\otimes\I)\ww^{D(\GGamma)*} ). \]
As such $\omega$ are linearly dense, it follows that $(\id\otimes N_\theta)(\ww^{D(\GGamma)*}) = ((\theta\otimes\I)\otimes\I)\ww^{D(\GGamma)*}$ or equivalently, $(N_\theta\otimes\id)(\ww^{\wh{D(\GGamma)}}) = (\I \otimes (\theta\otimes\I) ) \ww^{\wh{D(\GGamma)}}$.

We conclude that $ \theta \otimes \I \in \LL^\infty(D(\GGamma)) = \ell^\infty(\GGamma) \bar{\otimes} \mc{L}(\GGamma) $ satisfies $ \theta\otimes\I \in \M^l_{cb}(D(\GGamma)) $ 
and $ \Theta^l(\theta \otimes \I) = N_\theta $ as claimed. 
\end{proof} 

Similar to Proposition~\ref{prop:new_wstar_cent}, we can restrict $N_\theta$ to a map in $\CB\bigl(\mrm{C}^*_{\red}(D(\GGamma)) , \mc{L}(D(\GGamma))\bigr)$.  As $\mrm{C}^*_{\red}(D(\GGamma))$ is weak$^*$-dense in $\mc{L}(D(\GGamma))$, Kaplansky density shows that this restriction map is a (complete) isometry.

\begin{lemma}\label{lemma23}
Let $\GGamma$ be a discrete quantum group.  The map
\[
N\colon \M_{cb}(\Corep(\GGamma))\rightarrow \CB\bigl( \mrm{C}^*_{\red}(D(\GGamma)),\mc{L}(D(\GGamma))\bigr)
\]
of Lemma \ref{drinfeldmultiplier} is weak$^*$-weak$^*$-continuous.
\end{lemma}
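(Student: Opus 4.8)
\textbf{The plan} is to invoke Lemma~\ref{biduallemma}, exactly as was done for Theorem~\ref{thm5}. The map $N$ is already known to be an isometry with weak$^*$-closed image (via $M$ and the identification $\Tub(\GGamma)\cong\mc D(D(\GGamma))$ in the proof of Lemma~\ref{drinfeldmultiplier}), so by the argument of Proposition~\ref{prop:new_wstar_cent} the codomain $\CB(\mrm{C}^*_\red(D(\GGamma)),\mc L(D(\GGamma)))$ has a canonical predual which is a quotient of $\mrm{C}^*_\red(D(\GGamma))\wh\otimes\mc L(D(\GGamma))_*$. I set $E=Q(\BT)$ for $\BT=\Corep(\GGamma)$, let $F$ be the predual of the image of $N$, and write $\alpha=N\colon E^*=\M_{cb}(\BT)\to F^*$. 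To verify weak$^*$-continuity it suffices, by Lemma~\ref{biduallemma}, to check that $\alpha^*\kappa_F(D)\subseteq\kappa_E(E)$ for a subset $D$ with dense linear span in $F$.

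The natural choice for $D$ is the image in $F$ of the elementary tensors $f\otimes g$ with $f,g\in\mc D(D(\GGamma))=\CC[\GGamma]\bowtie\mrm{c}_{00}(\GGamma)$, since these are weak$^*$-dense (indeed $\mc D(D(\GGamma))$ is norm-dense in $\mrm{C}^*_\red(D(\GGamma))$ and the associated vector functionals are dense in $\mc L(D(\GGamma))_*$). The key computation is then to evaluate, for $\theta\in\M_{cb}(\BT)$,
\[
\la N_\theta, f\otimes g\ra = \Tr\bigl(g\, N_\theta(f)\bigr),
\]
and to show that, as a functional of $\theta$, this lies in $\kappa_E(E)=Q(\BT)$. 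Here I would transport the pairing through the $\star$-isomorphism $\mc L(D(\GGamma))\cong\mc L(\Tub(\GGamma))$ and the isometric embedding $\wt M$ of Lemma~\ref{drinfeldmultiplier}, so that $\Tr(g\,N_\theta(f))$ becomes the pairing $\Tr(g\,\wt M_\theta(f))$ on the annular algebra. Using $\wt M_\theta(f)=\theta(\gamma)f$ for $f$ supported on $\Mor(\gamma\otimes\alpha,\beta\otimes\gamma)\otimes\B(\msf H_{\ov\alpha},\msf H_{\ov\beta})$, this pairing is manifestly of the form $\theta\mapsto\sum_k \dim_q(X_k)\omega(k)\theta(k)$ for a finitely supported $\omega\in\mrm{c}_{00}(\Irr(\BT))$, and hence by Lemma~\ref{l1density} defines an element of $Q(\BT)$ under the embedding $\iota$. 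This is precisely the statement $\alpha^*\kappa_F(f\otimes g)=\iota(\omega)\in\kappa_E(E)$.

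The main obstacle I expect is bookkeeping rather than anything deep: tracking the identification $\Tub(\GGamma)\cong\mc D(D(\GGamma))$ carefully enough to see that the trace pairing on one side matches the $Q(\BT)$-pairing \eqref{eq:QT_dense} on the other. The subtlety flagged in the proof of Lemma~\ref{drinfeldmultiplier} --- that the trace $\Tr$ on $\Tub(\GGamma)$ and the left-invariant functional on $\mc D(D(\GGamma))$ differ by multiplication by a positive invertible multiplier --- must be handled with care, since it affects the precise weights $\dim_q(X_k)$ appearing in the pairing. However, because both preduals are constructed as quotients of the \emph{same} projective tensor product $\mrm{C}^*_\red\wh\otimes\mc L_*$ via the respective embeddings $M$ (for $\BT$) and $N$ (for $D(\GGamma)$), and these embeddings are intertwined by the normal $\star$-isomorphism of Lemma~\ref{drinfeldmultiplier}, the discrepancy is absorbed consistently and the image of $D$ under $\alpha^*\kappa_F$ lands in $\iota(\mrm{c}_{00}(\Irr(\BT)))\subseteq Q(\BT)$. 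Once this is confirmed on the dense set $D$, Lemma~\ref{biduallemma} delivers weak$^*$-weak$^*$-continuity of $N$ immediately.
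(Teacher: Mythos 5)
Your proposal is correct and follows essentially the same route as the paper's proof: identify $N$ with $\wt{M}$ on the annular algebra $\Tub(\GGamma)$, apply Lemma~\ref{biduallemma} to the dense set of elementary tensors built from algebraic annular elements and trace functionals, compute the pairing to see it gives a finitely supported function on $\Irr(\Corep(\GGamma))$, and conclude via Lemma~\ref{l1density}. The only cosmetic difference is that the paper takes $F$ to be the full predual $\mrm{C}^*_{\red}(\Tub(\GGamma))\wh{\otimes}\mc{L}(\Tub(\GGamma))_*$ of $\CB\bigl(\mrm{C}^*_{\red}(\Tub(\GGamma)),\mc{L}(\Tub(\GGamma))\bigr)$ rather than corestricting to the image, so it never needs the weak$^*$-closedness of the image of $\wt{M}$ that your quotient-predual construction requires.
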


\begin{proof}
As in the proof of Lemma~\ref{drinfeldmultiplier}, we identify $\mrm{C}^*_{\red}(D(\GGamma))$ with $\mrm{C}^*_{\red}( \Tub(\GGamma) )$ and $\mc L(D(\GGamma))$ with $\mc L(\Tub(\GGamma))$. 
Then $N$ identifies with $ \widetilde{M}: \M_{cb}(\Corep(\GGamma)) \rightarrow \CB(\mrm{C}^*_{\red}( \Tub(\GGamma) ),\mc{L}( \Tub(\GGamma)))$, 
and we note that $\widetilde{M}$ is again isometric.

It hence suffices to show that $\widetilde{M}$ is weak$^*$-weak$^*$-continuous, for which we shall apply Lemma~\ref{biduallemma} with $\alpha = \widetilde{M}, E = Q(\Corep(\GGamma))$ 
and $ F = \mrm{C}^*_{\red}( \Tub(\GGamma) ) \wh\otimes \mc{L}( \Tub(\GGamma) )_*$. From the definition of $\Tub(\GGamma)$ in the proof of Lemma~\ref{drinfeldmultiplier}, 
we see that the elements of the form $\omega = (f\otimes T) \otimes \Tr(\cdot(g\otimes S))$ with 
$f\otimes T \in \Mor(\gamma \otimes \alpha, \beta \otimes \gamma) \otimes \B(\msf H_{\overline{\alpha}}, \msf H_{\overline{\beta}}) \subseteq \Tub(\GGamma)$ 
and $g\otimes S \in \Mor(\gamma' \otimes \alpha', \beta' \otimes \gamma') \otimes \B(\msf H_{\overline{\alpha'}}, \msf H_{\overline{\beta'}})$ form a linearly dense subset $ D \subseteq F $.

For such an $\omega$, given $\theta\in \M_{cb}(\Corep(\GGamma))$ we calculate that
\[ \la \widetilde{M}_\theta, \omega \ra 
= \la \widetilde{M}_\theta , (f\otimes T) \otimes \Tr(\cdot(g\otimes S)) \ra
= \Tr\big( \widetilde{M}_\theta(f\otimes T) (g\otimes S) \big)
= \theta(\gamma) \Tr( fg \otimes TS). \]
Hence the function $\theta \mapsto \la \widetilde{M}_\theta, \omega \ra$ lies in the image of $\mrm{c}_{00}(\Irr(\wh\GGamma))$ inside $\M_{cb}(\Corep(\GGamma))^*$ -- 
in particular in $Q(\Corep(\GGamma))$ by Lemma~\ref{l1density}. This verifies the condition of Lemma~\ref{biduallemma}, and the claim follows.
\end{proof}

Write $ \mc{Z}\M_{cb}^l(\A(\GGamma)) $ for the center of the Banach algebra $ \M_{cb}^l(\A(\GGamma)) $ and note that $\mc{Z} \M^l_{cb}(\A(\GGamma))=\M^l_{cb}(\A(\GGamma))\cap \mc{Z} \ell^{\infty}(\GGamma)$. Furthermore, observe that $ \mc{Z}\M_{cb}^l(\A(\GGamma)) \subseteq \M_{cb}^l(\A(\GGamma)) $ is weak$^*$-closed, hence $\mc{Z} \M^l_{cb}(\A(\GGamma))$ is a dual space, with distinguished predual being a quotient of $Q^l(\A(\GGamma))$.

\begin{lemma} \label{multiplierlemma}
Let $\GGamma$ be a discrete quantum group. Then there is a canonical isometric isomorphism $\M_{cb}(\Corep(\GGamma)) \cong \mc{Z}\M^l_{cb}(\A(\GGamma)) $, 
and this isomorphism is a weak$^*$-weak$^*$-homeomorphism. 
\end{lemma}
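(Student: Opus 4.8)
The plan is to establish the isomorphism $\M_{cb}(\Corep(\GGamma)) \cong \mc{Z}\M^l_{cb}(\A(\GGamma))$ at the level of the underlying functions on $\Irr(\wh\GGamma)$, and then to promote this algebraic bijection to an isometry and a weak$^*$-homeomorphism. Recall from \cite[Proposition~3.6]{PVcstartensor} that a CB multiplier $\theta$ on $\Corep(\GGamma)$ is the same data as a function $(\theta(\alpha))_{\alpha\in\Irr(\wh\GGamma)}$, and recall from the discussion preceding Definition~\ref{defcentralAP} that $\mc{Z}\ell^\infty(\GGamma)$ consists precisely of families $(x^\alpha)_\alpha$ with each $x^\alpha$ a scalar multiple of the identity, so elements of $\mc{Z}\M^l_{cb}(\A(\GGamma)) = \M^l_{cb}(\A(\GGamma))\cap\mc{Z}\ell^\infty(\GGamma)$ are also indexed by scalar-valued functions on $\Irr(\wh\GGamma)$. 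So first I would define the candidate map by sending a multiplier $\theta$ to the central element $a_\theta = (\theta(\alpha)\I_{\dim(\alpha)})_\alpha \in \mc{Z}\ell^\infty(\GGamma)$, and conversely sending a central multiplier $a = (a^\alpha)_\alpha$ with $a^\alpha = c_\alpha \I$ to the function $\alpha\mapsto c_\alpha$.

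The key step is to verify that these assignments land in the correct spaces and are mutually inverse isometries. For one direction I would use Lemma~\ref{lemma4}: for central $a$ with $a^\alpha = c_\alpha\I$, the formula there gives $\Theta^l(a)(U^\alpha_{i,j}) = c_\alpha U^\alpha_{i,j}$, so $\Theta^l(a)$ acts as the scalar $c_\alpha$ on the block of matrix coefficients of $\alpha$. Comparing with the description of $M_\theta$ in \cite[Proposition~5.1]{AVhecke} (acting by $\theta(\alpha)$ on $\BT(X_k\otimes X_i, X_j\otimes X_k)$), this is exactly the data of the categorical multiplier, and the isometry $\|\theta\|_{cb} = \|a\|_{cb}$ should follow by matching both norms to a common concrete realization. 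Here I expect the cleanest route is to route everything through the Drinfeld double: Lemma~\ref{drinfeldmultiplier} already shows $N_\theta = \Theta^l(\theta\otimes\I)$ with $\theta\otimes\I \in \M^l_{cb}(\A(D(\GGamma)))$, and the embedding $\theta\mapsto\theta\otimes\I$ of $\M_{cb}(\Corep(\GGamma))$ is isometric. The remaining task is to compare $\|\theta\|_{\M_{cb}(\Corep(\GGamma))}$ with $\|a_\theta\|_{\M^l_{cb}(\A(\GGamma))}$; both can be identified with the CB norm of the common map acting as $\theta(\alpha)$ on the $\alpha$-isotypic components, using that the GNS Hilbert spaces and the relevant tube-algebra / Fourier-algebra structures are compatibly organized by $\Irr(\wh\GGamma)$.

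For the weak$^*$-weak$^*$-homeomorphism claim I would apply Lemma~\ref{biduallemma}, exactly as in the proofs of Theorem~\ref{thm5} and Lemma~\ref{lemma23}. Taking $E$ to be the distinguished predual of $\mc{Z}\M^l_{cb}(\A(\GGamma))$ (the relevant quotient of $Q^l(\A(\GGamma))$) and $F = Q(\Corep(\GGamma))$, it suffices to check the pairing condition on a linearly dense subset. By Lemma~\ref{l1density}, $Q(\Corep(\GGamma))$ is the closure of $\iota(\ell^1(\Irr(\BT)))$, so it is enough to test against the finitely supported functionals $\iota(\omega)$ for $\omega\in\mrm{c}_{00}(\Irr(\wh\GGamma))$, pairing $\iota(\omega)(\theta) = \sum_\alpha \dim_q(X_\alpha)\,\omega(\alpha)\theta(\alpha)$. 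On the other side, the central element $a_\theta$ pairs with elements of $\ell^1(\GGamma) \subseteq Q^l(\A(\GGamma))$; choosing suitable central functionals (e.g.\ $\Tr_\alpha$-type functionals weighted by quantum dimensions, as appear in the map $A$ of Proposition~\ref{prop19}) one matches the two pairings up to the $\dim_q$-weighting, showing the preadjoint of our map sends a dense set into the predual. The main obstacle I anticipate is precisely bookkeeping this normalization: the categorical predual weights by $\dim_q(X_\alpha)$ whereas the quantum-group side naturally weights matrix blocks by $\dim(\alpha)$ or by the trace functionals $\Tr_\alpha$, and (in the non-unimodular case) these differ. I would resolve this by carefully tracking the identification of traces already used in Lemma~\ref{drinfeldmultiplier} (where $\Tr$ on $\Tub(\GGamma)$ and the left-invariant functional on $\D(D(\GGamma))$ differ by a positive invertible multiplier) and verifying the resulting scaling is absorbed consistently on both the function-norm and the predual-pairing sides, so that the map is genuinely isometric and weak$^*$-continuous with a weak$^*$-continuous inverse.
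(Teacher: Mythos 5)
Your outline has the right skeleton (function-level identification, the Drinfeld double, Lemma~\ref{biduallemma} for the weak$^*$ statement), but the heart of the lemma --- the two CB-norm inequalities, together with the membership statements they encode --- is not actually proved. You assert that $\|\theta\|_{\M_{cb}(\Corep(\GGamma))}$ and $\|a_\theta\|_{\M^l_{cb}(\A(\GGamma))}$ ``can be identified with the CB norm of the common map acting as $\theta(\alpha)$ on the $\alpha$-isotypic components'', but these are CB norms of maps on genuinely different operator spaces (the morphism spaces $\BT(X\otimes Y,X\otimes Y)$ for all $X,Y$ on one side; $\mc{L}(\GGamma)$, or via \cite{AVhecke} the tube-algebra von Neumann algebra $\mc{L}(\Tub(\GGamma))\cong\mc{L}(D(\GGamma))$, on the other), and their coincidence is precisely the content of the lemma, not bookkeeping. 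Concretely, two separate arguments are missing. (i) To show that a central element $a\in\mc{Z}\M^l_{cb}(\A(\GGamma))$ defines a \emph{categorical} CB multiplier with $\|\theta\|_{\M_{cb}(\Corep(\GGamma))}\le\|a\|_{\M^l_{cb}(\A(\GGamma))}$ one must bound all the maps $\theta_{X,Y}$ by the CB norm of the centraliser; the paper does this by citing \cite[Proposition~6.1]{PVcstartensor}, and nothing in your proposal substitutes for it --- Lemma~\ref{lemma4} only records how $\Theta^l(a)$ acts on matrix coefficients and says nothing about the morphism spaces. (ii) For the converse inequality the Drinfeld double route you suggest does work, but the step you omit is the essential one: by Lemma~\ref{drinfeldmultiplier}, $N_\theta=\Theta^l(\theta\otimes\I)$ leaves the image of $\gamma_1\colon\mc{L}(\GGamma)\rightarrow\mc{L}(D(\GGamma))$ globally invariant (it multiplies $\gamma_1(U^\alpha_{i,j})$ by $\theta(\alpha)$), hence restricts to a normal CB map $L_\theta$ on $\mc{L}(\GGamma)$ with $\|L_\theta\|_{cb}\le\|N_\theta\|_{cb}=\|\theta\|_{\M_{cb}(\Corep(\GGamma))}$; this restriction argument is what simultaneously establishes the membership $a_\theta\in\M^l_{cb}(\A(\GGamma))$ (which Lemma~\ref{lemma4} presupposes rather than proves) and the inequality $\|a_\theta\|_{cb}\le\|\theta\|_{cb}$. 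Without (i) and (ii) you do not even have a well-defined bijection between the two multiplier spaces, only between two spaces of scalar functions.

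By contrast, your treatment of the weak$^*$-homeomorphism is essentially the paper's argument and is fine: apply Lemma~\ref{biduallemma}, test against the finitely supported functionals of Lemma~\ref{l1density}, and match the pairings. The normalisation issue you worry about is real but resolves trivially, with no need to track the trace discrepancy appearing in the proof of Lemma~\ref{drinfeldmultiplier}: since the test functionals on the quantum group side are induced by arbitrary $x\in\mrm{c}_{00}(\GGamma)$, one simply builds the $\dim_q$-weights into the choice of $x$ (the paper takes $\la y,\omega\ra=\sum_{\alpha}\dim_q(\alpha)\Tr_\alpha(y_\alpha x_\alpha)$, which is still a linearly dense family in $\ell^1(\GGamma)\subseteq Q^l(\A(\GGamma))$), after which the two pairings agree on the nose. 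Your choice of which space plays $E$ and which plays $F$ is the reverse of the paper's, but that is immaterial by the final clause of Lemma~\ref{biduallemma}.
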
 

\begin{proof} 
Let $ \theta\colon \Irr(\wh \GGamma) \rightarrow \mathbb{C} $ be a bounded function, identified with a central element of $\ell^{\infty}(\GGamma)$. We shall first verify that $ \theta $ is contained in $ \M_{cb}(\Corep(\GGamma)) $ if and only if it is contained in $ \M_{cb}^l(\A(\GGamma)) $, and that the corresponding CB norms agree. 

Firstly, let $\theta \in \mc Z \M^l_{cb}(\A(\GGamma))$, considered as a map $\Irr(\wh \GGamma)\rightarrow\mathbb C$. In \cite[Section 6]{PVcstartensor}, the associated centraliser is denoted by $\Psi_\theta$, see \cite[Equation~(6.1)]{PVcstartensor}, and then \cite[Proposition 6.1]{PVcstartensor} shows that if $\Psi_\theta$ is completely bounded, which under our assumption it is, then the multiplier on $\Corep(\GGamma)$ given by $\theta$ is also completely bounded, with $ \|\theta\|_{\M_{cb}(\Corep(\GGamma))} \leq \|\theta\|_{\M_{cb}^l(\A(\GGamma))} $.

Conversely, when $\theta \in \M_{cb}(\Corep(\GGamma))$, then by Lemma~\ref{drinfeldmultiplier}, we know that $\theta\otimes\I \in \M^l_{cb}(\A(D(\GGamma)))$.  We again use the normal injective $\star$-homomorphism $\gamma_1 : \mc L(\GGamma) \rightarrow \mc L(D(\GGamma))$ which identifies $\GGamma$ as a closed quantum subgroup of $D(\GGamma)$, see Lemma~\ref{lemma11}.  
As $\gamma_1(\wh x) = \wh x \otimes\I$, it follows from Lemma~\ref{drinfeldmultiplier} that $\Theta^l(\theta\otimes\I) = N_\theta$ leaves the image of $\gamma_1$ invariant, and so we obtain a map $L_\theta \in \CB^\sigma( \mc L(\GGamma) )$ with $\gamma_1 L_\theta = N_\theta \gamma_1$ and $\|L_\theta\|_{cb} \leq \|N_\theta\|_{cb} = \|\theta\|_{cb}$.  In particular, 
$L_\theta(U^\alpha_{i,j}) = \theta(\alpha) U^\alpha_{i,j}$ for each $\alpha,i,j$. 
Thus $\theta \in \M^l_{cb}(\A(\GGamma)) $, with $ \|\theta\|_{\M_{cb}(\Corep(\GGamma))} \geq \|\theta\|_{\M_{cb}^l(\A(\GGamma))} $.

As these identifications are mutual inverses, we have shown that $\M_{cb}(\Corep(\GGamma)) \cong \mc{Z}\M^l_{cb}(\A(\GGamma)) $ isometrically. Let $ \gamma : \M_{cb}(\Corep(\GGamma)) \rightarrow \mc{Z}\M^l_{cb}(\A(\GGamma)) $ be the resulting isometric isomorphism, which we claim is weak$^*$-weak$^*$-continuous.  We again use Lemma~\ref{biduallemma}, with $E = Q(\Corep(\GGamma))$ and $F$ the predual of $\mc{Z}\M^l_{cb}(\A(\GGamma))$, with $D\subseteq F$ to be constructed.  As the predual of $\mc{Z}\M^l_{cb}(\A(\GGamma))$ is a quotient of $Q^l(\A(\GGamma))$, it suffices to take $D$ to be the image under the quotient map of a linearly dense subset $D'$ of $Q^l(\A(\GGamma))$.  We take $D' \subseteq \ell^1(\GGamma) \subseteq Q^l(\A(\GGamma))$ to consist of all linear functionals $\omega$ constructed by choosing $x\in\mrm{c}_{00}(\GGamma)$ and defining 
$\la y, \omega \ra = \sum_{\alpha\in\Irr(\wh\GGamma)} \dim_q(\alpha) \Tr_\alpha(y_\alpha x_\alpha)$ for $y\in\ell^\infty(\GGamma)$. Given $\theta \in \M_{cb}(\Corep(\GGamma))$ and $\omega\in D$ induced by $x\in \mrm{c}_{00}(\GGamma)$, we see that
\[\la \gamma(\theta), \omega \ra = \sum_{\alpha\in\Irr(\wh\GGamma)} \dim_q(\alpha) \theta(\alpha) \Tr_\alpha(x_\alpha), \]
where the sum is finite. Hence if we set $z = (\Tr_\alpha(x_\alpha))_{\alpha\in\Irr(\wh\GGamma)} \in\mrm{c}_{00}(\Irr(\wh\GGamma))$ 
then $\la \gamma(\theta), \omega \ra = \la \theta, \iota(z) \ra$, where $\iota$ is the embedding of $ \ell^1(\Irr(\wh \GGamma)) $ into $ Q(\Corep(\GGamma)) $ as in Lemma~\ref{l1density}. It follows that $\omega\circ\gamma \in Q(\Corep(\GGamma))$, and hence $\gamma^*\kappa_{F}(D) \subseteq \kappa_{E}(E)$. Now Lemma~\ref{biduallemma} yields the claim.
\end{proof} 

Let us now compare the categorical approximation property of $ \Corep(\GGamma) $ with the central approximation property of $ \GGamma $. 

\begin{proposition} \label{centralap}
A discrete quantum group $ \GGamma $ has central AP if and only if $ \Corep(\GGamma) $ has AP. 
\end{proposition}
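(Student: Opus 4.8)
The plan is to prove the equivalence by directly translating between the defining nets for central AP of $\GGamma$ and AP of $\Corep(\GGamma)$, using the isometric weak$^*$-homeomorphism $\gamma\colon \M_{cb}(\Corep(\GGamma)) \cong \mc{Z}\M^l_{cb}(\A(\GGamma))$ established in Lemma~\ref{multiplierlemma}. The key observation is that finitely supported CB multipliers on $\Corep(\GGamma)$ correspond under $\gamma$ precisely to elements of $\mrm{c}_{00}(\GGamma)\cap \mc{Z}(\ell^{\infty}(\GGamma))$, since a central element $\theta = (\theta(\alpha))_{\alpha\in\Irr(\wh\GGamma)}$ is finitely supported as a function on $\Irr(\BT)$ if and only if the corresponding central element of $\ell^{\infty}(\GGamma)$ lies in $\mrm{c}_{00}(\GGamma)$.

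First I would record this correspondence of finitely supported elements explicitly: under the identification $\mc{Z}(\ell^{\infty}(\GGamma)) \cong \ell^{\infty}(\Irr(\wh\GGamma))$, an element of $\mc{Z}\M^l_{cb}(\A(\GGamma))$ lies in $\mrm{c}_{00}(\GGamma)$ if and only if its associated function on $\Irr(\wh\GGamma)$ has finite support, which is exactly the condition defining a finitely supported CB multiplier of $\Corep(\GGamma)$. Moreover $\gamma$ sends the constant function $\I$ (the unit multiplier) to $\I\in \mc{Z}\M^l_{cb}(\A(\GGamma))$.

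Next, for the forward direction, suppose $\GGamma$ has central AP, witnessed by a net $(a_i)_{i\in I}$ in $\mrm{c}_{00}(\GGamma)\cap \mc{Z}(\ell^{\infty}(\GGamma))$ converging weak$^*$ to $\I$ in $\M^l_{cb}(\A(\GGamma))$. Since each $a_i$ is central it actually lies in $\mc{Z}\M^l_{cb}(\A(\GGamma))$, and convergence weak$^*$ in $\M^l_{cb}(\A(\GGamma))$ implies convergence weak$^*$ in $\mc{Z}\M^l_{cb}(\A(\GGamma))$ with respect to its distinguished predual (a quotient of $Q^l(\A(\GGamma))$). Applying the weak$^*$-homeomorphism $\gamma^{-1}$, the net $(\gamma^{-1}(a_i))_{i\in I}$ consists of finitely supported CB multipliers of $\Corep(\GGamma)$ converging weak$^*$ to $\I$ in $\M_{cb}(\Corep(\GGamma))$, so $\Corep(\GGamma)$ has AP. The converse direction runs symmetrically: given a net of finitely supported CB multipliers $(\theta_i)_{i\in I}$ witnessing AP of $\Corep(\GGamma)$, the net $(\gamma(\theta_i))_{i\in I}$ lies in $\mrm{c}_{00}(\GGamma)\cap \mc{Z}(\ell^{\infty}(\GGamma))$ and converges weak$^*$ to $\I$ in $\mc{Z}\M^l_{cb}(\A(\GGamma))$; it remains to upgrade this to weak$^*$ convergence in the full space $\M^l_{cb}(\A(\GGamma))$.

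The main obstacle I anticipate is precisely this last point, namely that weak$^*$ convergence in $\mc{Z}\M^l_{cb}(\A(\GGamma))$ (for the quotient predual) must be promoted to weak$^*$ convergence in $\M^l_{cb}(\A(\GGamma))$ (for the predual $Q^l(\A(\GGamma))$). This is handled by the fact that $\mc{Z}\M^l_{cb}(\A(\GGamma))$ is a weak$^*$-closed subspace of $\M^l_{cb}(\A(\GGamma))$ whose distinguished predual is by construction the quotient of $Q^l(\A(\GGamma))$ by the preannihilator of $\mc{Z}\M^l_{cb}(\A(\GGamma))$; for a net living inside this subspace, the two weak$^*$-topologies coincide. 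Testing against an arbitrary functional $\Omega \in Q^l(\A(\GGamma))$ amounts to testing against its image in the quotient predual, against which the net converges by hypothesis. Thus $\gamma(\theta_i)\xrightarrow{w^*}\I$ in $\M^l_{cb}(\A(\GGamma))$ with each $\gamma(\theta_i)\in\mrm{c}_{00}(\GGamma)\cap\mc{Z}(\ell^{\infty}(\GGamma))$, which is exactly the definition of central AP.
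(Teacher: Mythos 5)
Your proposal is correct and follows essentially the same route as the paper: both rest on Lemma~\ref{multiplierlemma}, using that the isomorphism $\M_{cb}(\Corep(\GGamma)) \cong \mc{Z}\M^l_{cb}(\A(\GGamma))$ is unital, a weak$^*$-weak$^*$-homeomorphism, and restricts to a bijection between finitely supported multipliers and $\mrm{c}_{00}(\GGamma)\cap\mc{Z}(\ell^\infty(\GGamma))$. The only difference is that you spell out the (standard, and correct) fact that on the weak$^*$-closed subspace $\mc{Z}\M^l_{cb}(\A(\GGamma))$ the topology induced by the quotient predual of $Q^l(\A(\GGamma))$ agrees with the relative weak$^*$-topology from $\M^l_{cb}(\A(\GGamma))$, a point the paper leaves implicit.
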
 

\begin{proof} 
The claim follows from Lemma \ref{multiplierlemma}, as the isomorphism $\M_{cb}(\Corep(\GGamma))\simeq \mc{Z} \M^l_{cb}(\A(\GGamma))$ is a unital weak$^*$-weak$^*$-homeomorphism which restricts 
to $\mrm{c}_{00}(\Irr(\wh \GGamma))\simeq \mc{Z} \mrm{c}_{00}(\GGamma)$.
\end{proof} 

As a consequence of Proposition \ref{centralap} we see that central AP is invariant under monoidal equivalence. 

\begin{corollary} 
Let $\GGamma$ and $\LLambda$ be discrete quantum groups such that $\wh\GGamma$ and $\wh{\LLambda}$ are monoidally equivalent. Then $\GGamma$ has central AP if and only if $\LLambda$ has central AP. 
\end{corollary}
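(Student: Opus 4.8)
The plan is to reduce the corollary directly to Proposition~\ref{centralap}, which establishes that a discrete quantum group $\GGamma$ has central AP if and only if the rigid \cst-tensor category $\Corep(\GGamma)$ has categorical AP. The essential observation is that monoidal equivalence of $\wh\GGamma$ and $\wh\LLambda$ means precisely that the representation categories $\Corep(\GGamma)$ and $\Corep(\LLambda)$ are equivalent as rigid \cst-tensor categories. Since categorical AP is a property formulated entirely in terms of the category $\BT$ (via CB multipliers, the tube algebra, and the predual $Q(\BT)$), it is manifestly a monoidal invariant: an equivalence of tensor categories induces a bijection on $\Irr(\BT)$ preserving fusion rules and quantum dimensions, hence an isometric isomorphism $\M_{cb}(\Corep(\GGamma)) \cong \M_{cb}(\Corep(\LLambda))$ which is a weak$^*$-weak$^*$-homeomorphism and carries finitely supported multipliers to finitely supported multipliers.

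First I would recall that by definition the monoidal equivalence of $\wh\GGamma$ and $\wh\LLambda$ yields a unitary tensor equivalence $F\colon \Corep(\GGamma) \to \Corep(\LLambda)$. Such a functor induces a bijection $\Irr(\GGamma) \to \Irr(\LLambda)$ (abusing notation, between the classes of simple objects of the two categories) which intertwines the fusion products and preserves quantum dimensions, since quantum dimension is defined via standard solutions of the conjugate equations and these are preserved by a unitary tensor functor. Next I would note that all the ingredients entering the definition of categorical AP --- the space $\M_{cb}(\BT)$ of CB multipliers with its Banach algebra structure, the identification of multipliers with functions on $\Irr(\BT)$, the tube algebra $\Tub(\BT)$ with its trace, the associated von Neumann algebra $\mc{L}(\Tub(\BT))$, and the predual $Q(\BT)$ --- are functorial in the tensor category and hence are carried, by a unitary monoidal equivalence, to the corresponding objects for the equivalent category. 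In particular $F$ induces an isometric algebra isomorphism $\M_{cb}(\Corep(\GGamma)) \cong \M_{cb}(\Corep(\LLambda))$ which is weak$^*$-weak$^*$-continuous (being dual to an isomorphism of the preduals $Q(\Corep(\GGamma)) \cong Q(\Corep(\LLambda))$) and which sends the unit to the unit and the subspace of finitely supported multipliers $\mrm{c}_{00}(\Irr(\GGamma))$ onto $\mrm{c}_{00}(\Irr(\LLambda))$.

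With these compatibilities in place, the argument is immediate: if $\GGamma$ has central AP, then by Proposition~\ref{centralap} the category $\Corep(\GGamma)$ has AP, so there is a net of finitely supported CB multipliers on $\Corep(\GGamma)$ converging weak$^*$ to $\I$. Applying the induced isomorphism, we obtain a net of finitely supported CB multipliers on $\Corep(\LLambda)$ converging weak$^*$ to $\I$, which shows that $\Corep(\LLambda)$ has AP. Invoking Proposition~\ref{centralap} once more, $\LLambda$ has central AP. The converse follows by symmetry, exchanging the roles of $\GGamma$ and $\LLambda$.

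I expect the only genuine point requiring care --- the ``main obstacle'' --- to be the verification that a unitary monoidal equivalence really does induce a weak$^*$-homeomorphic isometric isomorphism of the multiplier algebras at the level of the specific preduals $Q(\Corep(\GGamma))$ and $Q(\Corep(\LLambda))$ constructed from the tube algebras. The identification of CB multipliers with functions on $\Irr(\BT)$ (and the formula for the CB norm) is transparently monoidally invariant, but one should confirm that the tube algebra construction, the canonical trace $\Tr$, and hence the predual pairing \eqref{eq:QT_dense} are all transported correctly by the equivalence, so that the resulting identification of multiplier algebras is indeed weak$^*$-continuous rather than merely isometric. This is essentially the statement that all the data in the construction of $Q(\BT)$ depend only on the tensor category up to equivalence, which is built into the framework of \cite{ARANO_DELAAT_WAHL_fourier} and \cite{PVcstartensor}; once this is granted, the preservation of finite support and of the unit, together with Proposition~\ref{centralap}, close the argument.
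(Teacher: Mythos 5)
Your proposal is correct and follows essentially the same route as the paper: invoke Proposition~\ref{centralap} to translate central AP into categorical AP of $\Corep(\GGamma)$ and $\Corep(\LLambda)$, and observe that categorical AP is invariant under unitary monoidal equivalence, which is what monoidal equivalence of $\wh\GGamma$ and $\wh\LLambda$ provides. The paper states this invariance in one line where you spell out the transport of $\Irr$, fusion rules, quantum dimensions, the tube algebra data and the predual $Q(\BT)$; that extra detail is a sound (and welcome) justification of the step the paper takes for granted, not a different argument.
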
 

\begin{proof} 
According to the definition of monoidal equivalence \cite{BdRV}, the \cst-tensor categories $\Corep(\GGamma)$ and $\Corep(\LLambda)$ are unitarily monoidally equivalent. 
This means that $\Corep(\GGamma)$ has AP if and only if $\Corep(\LLambda)$ has AP. Due to Proposition~\ref{centralap} this yields the claim. 
\end{proof}

Finally, let us relate AP of $\Corep(\GGamma)$ and $D(\GGamma)$. 

\begin{proposition} \label{drinfelddoubleap}
Let $\GGamma$ be a discrete quantum group such that $\Corep(\GGamma)$ has AP. Then the Drinfeld double $D(\GGamma)$ has AP. If $\GGamma$ is unimodular, then the converse also holds: AP of $D(\GGamma)$ implies AP of $\Corep(\GGamma)$.
\end{proposition}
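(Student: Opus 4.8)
The plan is to prove both directions by relating AP of the Drinfeld double $D(\GGamma)$ to the categorical AP of $\Corep(\GGamma)$, exploiting the machinery built up in Lemmas~\ref{drinfeldmultiplier}, \ref{lemma23}, and \ref{multiplierlemma}, together with Proposition~\ref{centralap}. For the first implication, suppose $\Corep(\GGamma)$ has AP. Then by Proposition~\ref{centralap} the discrete quantum group $\GGamma$ has central AP, and in particular AP. The key point is that I want to transport a net of finitely supported CB multipliers $(\theta_i)_{i\in I}$ on $\Corep(\GGamma)$, converging weak$^*$ to $\I$ in $\M_{cb}(\Corep(\GGamma))$, to a net exhibiting AP of $D(\GGamma)$. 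By Lemma~\ref{drinfeldmultiplier} each $\theta_i$ gives rise to $\theta_i\otimes\I\in \M^l_{cb}(\A(D(\GGamma)))$ with $\Theta^l(\theta_i\otimes\I)=N_{\theta_i}$. Since each $\theta_i$ is finitely supported, $\theta_i\otimes\I$ lies in (or is a limit of elements of) $\A(D(\GGamma))$; more carefully, $\theta_i\otimes\I$ is supported on finitely many blocks $\B(\msf H_\alpha)\otimes\mc L(\GGamma)$, and one checks it is approximable by Fourier algebra elements of $D(\GGamma)$, using that $D(\GGamma)$ is an algebraic quantum group so that the underlying multiplier Hopf $\star$-algebra sits inside $\A(D(\GGamma))$.

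The decisive step is the weak$^*$-continuity of the passage $\theta\mapsto \theta\otimes\I$. This is precisely what Lemma~\ref{lemma23} provides: the embedding $N\colon \M_{cb}(\Corep(\GGamma))\rightarrow \CB(\mrm{C}^*_\red(D(\GGamma)),\mc L(D(\GGamma)))$ is weak$^*$-weak$^*$-continuous, and under the identification of $\M^l_{cb}(\A(D(\GGamma)))$ with the image of the restriction map into $\CB(\mrm{C}^*_\red(D(\GGamma)),\mc L(D(\GGamma)))$ (Theorem~\ref{thm5} applied to $D(\GGamma)$), the map $\theta\mapsto\theta\otimes\I$ becomes weak$^*$-weak$^*$-continuous from $\M_{cb}(\Corep(\GGamma))$ into $\M^l_{cb}(\A(D(\GGamma)))$. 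Thus $\theta_i\otimes\I\xrightarrow[i\in I]{w^*}\I\otimes\I=\I_{\oon m}$ in $\M^l_{cb}(\A(D(\GGamma)))$. Since each $\theta_i\otimes\I$ lies in the weak$^*$-closure of $\A(D(\GGamma))$, it follows that $\I_{\oon m}$ is in this weak$^*$-closure, which gives AP of $D(\GGamma)$. I expect this continuity/identification step to be the main obstacle, as it requires carefully matching the predual $Q(\Corep(\GGamma))$ built from the tube algebra with the predual $Q^l(\A(D(\GGamma)))$ via the isomorphism $\mc L(\Tub(\GGamma))\cong\mc L(D(\GGamma))$ from Lemma~\ref{drinfeldmultiplier}; one must ensure the restriction to $\M^l_{cb}(\A(D(\GGamma)))\cong \CB(\mrm{C}^*_\red(D(\GGamma)),\mc L(D(\GGamma)))$ is compatible, invoking the $D\subseteq F$ density argument of Lemma~\ref{biduallemma} if a direct identification is awkward.

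For the converse, assume $\GGamma$ is unimodular and $D(\GGamma)$ has AP. By Corollary~\ref{cor1}, since $\GGamma^{\oon{op}}$ (equivalently $\GGamma$) is a closed quantum subgroup of $D(\GGamma)=\GG_{\oon m}$ via $\gamma_1$, AP of $D(\GGamma)$ yields AP of $\GGamma$. The remaining task is to upgrade ordinary AP of $\GGamma$ to central AP, and here unimodularity is exactly what Proposition~\ref{prop19} supplies: for a unimodular discrete quantum group, AP and central AP coincide. Hence $\GGamma$ has central AP, and by Proposition~\ref{centralap} this is equivalent to AP of $\Corep(\GGamma)$, completing the proof. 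The only subtlety to confirm is that the closed-quantum-subgroup structure used in Corollary~\ref{cor1} applies to $\GGamma$ rather than merely to $\GGamma^{\oon{op}}$; but AP is preserved under passing to the opposite quantum group by Proposition~\ref{prop11}, so this causes no difficulty.
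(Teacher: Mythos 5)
Your proposal is correct and follows essentially the same route as the paper: for the forward direction, Lemma~\ref{drinfeldmultiplier} to embed $\theta\mapsto\theta\otimes\I$, Lemma~\ref{lemma23} plus Theorem~\ref{thm5} for the weak$^*$-weak$^*$-continuity, and the fact that finitely supported multipliers land in the multiplier Hopf $\star$-algebra and hence in $\A(D(\GGamma))$; for the converse, the closed-quantum-subgroup permanence, Proposition~\ref{prop19}, and Proposition~\ref{centralap}. The only (harmless) deviations are that you route the subgroup step through Corollary~\ref{cor1} and Proposition~\ref{prop11} where the paper applies Theorem~\ref{thm2} directly to $\GGamma\subseteq D(\GGamma)$, and that you hedge on $\theta_i\otimes\I$ being merely in the weak$^*$-closure of $\A(D(\GGamma))$ when in fact it lies in $\A(D(\GGamma))$ itself.
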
 

\begin{proof} 
Due to Lemma~\ref{drinfeldmultiplier} we have an isometric embedding $\M_{cb}(\Corep(\GGamma)) \rightarrow \M_{cb}^l(\A(D(\GGamma))) $ given by $\theta \mapsto \theta \otimes \I$.  As $\Corep(\GGamma)$ has AP, there is a net $(\theta_i)_{i\in I}$ of finitely supported elements in $\M_{cb}(\Corep(\GGamma))$ with $ \theta_i \xrightarrow[i\in I]{}\I $ in the weak$^*$-topology.  By Lemma~\ref{lemma23}, it follows that the net $(N_{\theta_i})_{i\in I}$ converges weak$^*$ to the inclusion map in $\CB\bigl( \mrm{C}^*_{\red}(D(\GGamma)),\mc{L}(D(\GGamma))\bigr)$.  As $N_{\theta_i} = \Theta^l(\theta_i\otimes\I)$ for each $i$, by Theorem~\ref{thm5} this means that $\theta_i\otimes \I \xrightarrow[i\in I]{} \I\otimes \I$ weak$^*$ in $\M_{cb}^l(\A(D(\GGamma)))$. Since the elements $\theta_i\otimes \I$ belong to the multiplier Hopf $\star$-algebra, they also belong to the Fourier algebra $\A(D(\GGamma))$ and we conclude that $D(\GGamma)$ has AP. 

If $D(\GGamma)$ has AP then by Theorem~\ref{thm2} the same is true for $\GGamma$ since $\GGamma$ is a closed quantum subgroup of $D(\GGamma)$. If $\GGamma$ is in addition unimodular, 
then AP of $\GGamma$ implies central AP of $\GGamma$ by Proposition \ref{prop19}, and consequently AP of $\Corep(\GGamma)$ due to Proposition \ref{centralap}.
\end{proof}

\bibliographystyle{plain}
\bibliography{bibliografia}

\end{document}